\newif\iftwosided  \twosidedtrue	
\newif\iftestpage  \testpagefalse	
\newif\iftweakfont \tweakfontfalse	
\font\seal=harvard64
\def\harvard{{\seal H}}
\newdimen\pagehoffset	
\newdimen\pagevoffset	
\newskip\thesisbaselineskip
\newif\ifch@pterhasfigures \ch@pterhasfiguresfalse
\newif\ifch@pterhastables \ch@pterhastablesfalse
\def\ch@ptern@me{}
\def\runningchaptername#1{\xdef\ch@ptern@me{#1}\chaptermark{#1}}
\def\@chapter[#1]#2{\ifnum \c@secnumdepth >\m@ne
  \refstepcounter{chapter}
  \gdef\ch@ptern@me{#2}
  \typeout{\@chapapp\space\thechapter.}
  \addcontentsline{toc}{chapter}{\protect
  \numberline{\thechapter}#1}\else
  \addcontentsline{toc}{chapter}{#1}\fi
  \global\ch@pterhasfiguresfalse
  \global\ch@pterhastablesfalse
  \chaptermark{#1}
  \if@twocolumn\@topnewpage[\@makechapterhead{#2}] 
    \else \@makechapterhead{#2}
    \@afterheading \fi}
\def\@schapter#1{{\let\@mkboth\@gobbletwo
  \gdef\ch@ptern@me{#1}%
  \typeout{#1.}%
  }%
  \if@twocolumn \@topnewpage[\@makeschapterhead{#1}]
  \else \@makeschapterhead{#1} 
  \@afterheading\fi}
\def\@makechapterhead#1{\vspace*{50pt}{\leftskip=0pt
  \rightskip0pt plus1fil \parindent0pt\parskip0pt\parfillskip=0pt
  \ifnum \c@secnumdepth >\m@ne
    \LARGE\rm{\Large\rm
    \edef\temp{\@chapapp{}}\uppercase\expandafter{\temp}}\ \thechapter\par
    \vskip20pt\fi \Large\bf \uppercase{#1}\par
  \nobreak\vskip40pt plus5pt }}
\def\@makeschapterhead#1{\vspace*{50pt}{\leftskip=0pt
  \rightskip0pt plus1fil \parindent0pt\parskip0pt\parfillskip=0pt
  \Large\bf \uppercase\expandafter{#1}\par 
  \nobreak\vskip40pt plus5pt }}
\def\section{\@startsection{section}{1}{\z@}{-3.5ex plus -1ex minus
 -.2ex}{1.2ex plus .2ex}{\large\bf\boldmath}}
\def\subsection{\@startsection{subsection}{2}{\z@}{-2.5ex plus -1ex minus
 -.2ex}{1ex plus .2ex}{\normalsize\bf\boldmath}}
\def\@sect#1#2#3#4#5#6[#7]#8{\ifnum #2>\c@secnumdepth
     \def\@svsec{}\else 
     \refstepcounter{#1}\edef\@svsec{\csname the#1\endcsname.\hskip.5em }\fi
     \@tempskipa #5\relax
      \ifdim \@tempskipa>\z@ 
        \begingroup #6\relax
          \@hangfrom{\hskip #3\relax\@svsec}{\interlinepenalty \@M #8\par}%
        \endgroup
       \csname #1mark\endcsname{#7}\addcontentsline
         {toc}{#1}{\ifnum #2>\c@secnumdepth \else
                      \protect\numberline{\csname the#1\endcsname}\fi
                    #7}\else
        \def\@svsechd{#6\hskip #3\relax  
                   \@svsec #8\csname #1mark\endcsname
                      {#7}\addcontentsline
                           {toc}{#1}{\ifnum #2>\c@secnumdepth \else
                             \protect\numberline{\csname the#1\endcsname}\fi
                       #7}}\fi
     \@xsect{#5}}
\def\thesection{\arabic{section}}
\def\theequation{\arabic{equation}}
\def\thebibliography#1{\chapter*{References\@mkboth
 {REFERENCES}{REFERENCES}}\list
 {[\arabic{enumi}]}{\settowidth\labelwidth{[#1]}\leftmargin\labelwidth
 \advance\leftmargin\labelsep
 \usecounter{enumi}}
 \def\newblock{\hskip .11em plus .33em minus .07em}
 \sloppy\clubpenalty4000\widowpenalty4000
 \sfcode`\.=1000\relax}
\def\l@chapter#1#2{\addpenalty{-\@highpenalty}
 \vskip .5em plus 1pt \@tempdima 1.5em \begingroup
 \parindent \z@ \rightskip \@pnumwidth
 \parfillskip -\@pnumwidth 
 \bf \leavevmode \advance\leftskip\@tempdima \hskip -\leftskip #1\nobreak\hfil
   \nobreak\hbox to\@pnumwidth{\hss #2}\par
 \penalty\@highpenalty \endgroup}
\def\l@section{\@dottedtocline{1}{1.25em}{1.25em}}
\def\l@figure{\@dottedtocline{1}{1.25em}{4.5em}}
\let\l@table\l@figure
\def\numberline#1{\hbox to\@tempdima{#1.\hfil}}
\def\@pnumwidth{1em}
\def\@dotsep{10}
\def\hg@ldenr@ti@{0.309017}
\def\@dottedtocline#1#2#3#4#5{\ifnum #1>\c@tocdepth \else
  {\leftskip #2\relax \rightskip \@tocrmarg \parfillskip -\rightskip
    \parindent #2\relax\@afterindenttrue
   \interlinepenalty\@M
   \leavevmode
   \@tempdima #3\relax \advance\leftskip \@tempdima \hbox{}\hskip -\leftskip
    #4\nobreak\leaders\hbox{$\m@th
      \kern\hg@ldenr@ti@\baselineskip.\kern\hg@ldenr@ti@\baselineskip$}\hfill
    \nobreak \hbox to\@pnumwidth{\hfil\rm #5}\par}\fi}
\def\l@frontmatter#1#2{\addpenalty{-\@highpenalty}
 \addvspace{.25em plus 1pt}\@tempdima 1.5em \begingroup
 \parindent \z@ \rightskip \@pnumwidth 
 \parfillskip 0pt plus 1fil
 \rm \leavevmode \advance\leftskip\@tempdima \hskip -\leftskip #1\nobreak\quad
 \nobreak#2\par
 \penalty\@highpenalty \endgroup}
\def\thefigure{\@arabic\c@figure}
\def\thetable{\@arabic\c@table}
\def\listoffigures{\@restonecolfalse\if@twocolumn\@restonecoltrue\onecolumn
 \fi\chapter*{Figures\@mkboth
 {FIGURES}{FIGURES}}\@starttoc{lof}\if@restonecol
 \twocolumn\fi}
\def\listoftables{\@restonecolfalse\if@twocolumn\@restonecoltrue\onecolumn
 \fi\chapter*{Tables\@mkboth
 {TABLES}{TABLES}}\@starttoc{lot}\if@restonecol
 \twocolumn\fi}
\def\endfigure{\vskip 0pt plus5pt \end@float}
\long\def\@caption#1[#2]#3{\par
  \expandafter\ifx \csname ext@#1\endcsname \ext@figure
    \ifch@pterhasfigures\else \global\ch@pterhasfigurestrue
      \addtocontents{\csname ext@#1\endcsname}{\protect\addvspace{10pt}\protect
      \leftline{\bf Chapter \thechapter.}}\fi
    \addcontentsline{\csname 
       ext@#1\endcsname}{#1}{\protect\numberline{Figure \csname 
      the#1\endcsname}{\ignorespaces #2}}\begingroup
      \@parboxrestore
      \normalsize
      \@makecaption{{\sc\csname fnum@#1\endcsname}}{\ignorespaces #3}\par
    \endgroup
  \else \expandafter\ifx \csname ext@#1\endcsname \ext@table
    \ifch@pterhastables\else \global\ch@pterhastablestrue
      \addtocontents{\csname ext@#1\endcsname}{\protect\addvspace{10pt}\protect
      \leftline{\bf Chapter \thechapter.}}\fi
    \addcontentsline{\csname 
       ext@#1\endcsname}{#1}{\protect\numberline{Table \csname
      the#1\endcsname}{\ignorespaces #2}}\begingroup
      \@parboxrestore
      \normalsize
      \@maketablecaption{{\sc\csname fnum@#1\endcsname}}{\ignorespaces #3}\par
    \endgroup
  \else
    \typeout{Don't know about #1s!!!}\fi\fi}
\long\def\@makecaption#1#2{
  \vskip 10pt 
  \setbox\@tempboxa\hbox{#1.\ \small#2}
  \ifdim \wd\@tempboxa >\hsize #1.\ \small#2\par
  \else \hbox to\hsize{\hfil\box\@tempboxa\hfil}\fi}
\long\def\@maketablecaption#1#2{\centerline{\strut #1.}\medskip
  \centerline{\strut\it #2}\medskip}
\newtheorem{THEOREM}{Theorem}[section]
\newtheorem{PROPOSITION}[THEOREM]{Proposition}
\newtheorem{DEFINITION}[THEOREM]{Definition}
\newtheorem{LEMMA}[THEOREM]{Lemma}
\newtheorem{COROLLARY}[THEOREM]{Corollary}
\newtheorem{EXAMPLE}[THEOREM]{Example}
\newtheorem{REMARK}[THEOREM]{Remark}
\newtheorem{REMARKS}[THEOREM]{Remarks}
\newtheorem{DISCUSSION}[THEOREM]{}
\newtheorem{ALGORITHM}[THEOREM]{Algorithm}
\newtheorem{PROBLEM}[THEOREM]{Problem}
\def\thmstretch{plus.5em minus.1em}
\def\thmskip{0pt \thmstretch}
\def\thmstart{\hskip\thmskip\ignorespaces}
\let\thmtextfont=\it
\newif\ifthmtitle \thmtitletrue
\newenvironment{theorem}%
  {\let\thmtextfont=\it \thmtitletrue \begin{THEOREM}}%
{\end{THEOREM}}
\newenvironment{proposition}%
  {\let\thmtextfont=\it \thmtitletrue \begin{PROPOSITION}}%
  {\end{PROPOSITION}}
\newenvironment{definition}%
  {\let\thmtextfont=\rm \thmtitletrue \begin{DEFINITION}}%
  {\end{DEFINITION}}
  {\let\thmtextfont=\it \thmtitletrue \begin{LEMMA}}%
  {\end{LEMMA}}
\newenvironment{corollary}%
  {\let\thmtextfont=\it \thmtitletrue \begin{COROLLARY}}%
  {\end{COROLLARY}}
\newenvironment{example}%
  {\let\thmtextfont=\rm \thmtitletrue \begin{EXAMPLE}}%
  {\end{EXAMPLE}}
\newenvironment{remark}%
  {\let\thmtextfont=\rm \thmtitletrue \begin{REMARK}}%
  {\end{REMARK}}
  {\let\thmtextfont=\rm \thmtitletrue \begin{REMARKS}}%
  {\end{REMARKS}}
  {\let\thmtextfont=\rm \thmtitlefalse \begin{DISCUSSION}}%
  {\end{DISCUSSION}}
\newenvironment{algorithm}%
  {\let\thmtextfont=\rm \thmtitletrue \begin{ALGORITHM}}%
  {\end{ALGORITHM}}
\newenvironment{problem}%
  {\let\thmtextfont=\rm \thmtitletrue \begin{PROBLEM}}%
  {\end{PROBLEM}}
\newenvironment{steps}{\begin{description}\interlinepenalty=1000
  \abovedisplayskip=9pt plus 3pt minus 3pt
  \abovedisplayshortskip=1pt plus 2pt
  \belowdisplayskip=9pt plus 3pt minus 3pt
  \belowdisplayshortskip=6pt plus 3pt minus 1pt }%
  {\end{description}}
\def\step[#1]{\item[\rm{\it Step\/}~#1]}
\def\theoremhooks{\thmtextfont \topsep=9pt plus 3pt minus 6pt }
\def\@begintheorem#1#2{\theoremhooks \trivlist \ifthmtitle
  \item[\hskip \labelsep{\bf #1\ #2.}]\else
  \item[\hskip \labelsep{\bf #2.}]\fi\thmstart}
\def\@opargbegintheorem#1#2#3{\thmtextfont \trivlist
      \item[\hskip\labelsep{\bf #1\ #2\ {\rm(#3)}.}]\thmstart}
\newif\ifmathtomb \mathtombfalse
\def\tombstone{\unskip\penalty50   
  \hskip 0pt plus-1fill \null\nobreak\hskip 0pt plus1fill
  \enskip \vrule width.3333em height.7em depth.2em
  \ifmmode \global\mathtombtrue \else \global\mathtombfalse \fi}
\newenvironment{proof}
  {\futurelet\next\pr@oftext}
  {\ifmathtomb \else \tombstone \fi \widowpenalty=10000  
   \par \ifmathtomb \else \addvspace{\medskipamount}\fi \global\mathtombfalse}
\def\pr@oftext{\ifx\next[\let\temp\opr@@ftext\else\let\temp\pr@@ftext\fi\temp}
\def\pr@@ftext{\beginpr@@f{Proof}}
\def\opr@@ftext[#1]{\beginpr@@f{#1}}
\def\beginpr@@f#1{\par \addvspace{\bigskipamount}%
  \noindent{\bf #1.}\hskip\labelsep\thmstretch\ignorespaces}
\gdef\hysep@chicago{\ }\gdef\hisep@chicago{;}\gdef\heyesep@chicago{,}
\def\citationstyle#1{%
\global\@namedef{@hysep}{\csname hysep@#1\endcsname}%
\global\@namedef{@hisep}{\csname hisep@#1\endcsname}%
\global\@namedef{@heyesep}{\csname heyesep@#1\endcsname}}
\def\@citexasnoun[#1]#2{%
\if@filesw\immediate\write\@auxout{\string\citation{#2}}\fi%
\@citeasnoun{{\@ifundefined%
{b@#2}%
{{\bf ?}\@warning{Citation `#2' on page \thepage \space undefined}}%
{{{\def\&{and}\def\3{\unskip,\ and}\csname b@#2\endcsname}\ (\csname bhy@#2\endcsname}%
\global\@namedef{b@#2}{\csname bha@#2\endcsname}}%
}}{#1}}
\def\@citexyear[#1]#2{\if@filesw\immediate\write\@auxout{\string\citation{#2}}\fi
  \def\@citeayear{}\@cite{\@for\@citebyear:=#2\do
    {\@citeayear\def\@citeayear{\@heyesep\penalty\@m\ }\@ifundefined
       {b@\@citebyear}{{\bf ?}\@warning
       {Citation `\@citebyear' on page \thepage \space undefined}}%
{{\csname bhy@\@citebyear\endcsname}%
}%
}}{#1}}
\def\@chapabb{Chap.}
\def\@sectionabb{\S}
\def\runningsize{\ixpt}
\def\numbersize{\tenrm}
\def\pagenumsize{\tenrm}
\def\ps@headings{\let\@mkboth\markboth
  \def\@oddfoot{}\def\@evenfoot{}%
  \def\@evenhead{{\pagenumsize\thepage}\quad{\runningsize\leftmark}\hfil}%
  \def\@oddhead{\null\hfil{\runningsize\rightmark}\quad{\pagenumsize\thepage}}%
  \def\chaptermark##1{\markboth{\uppercase{##1\hfill\ifnum\c@secnumdepth>\m@ne
    \@chapabb\ {\numbersize\thechapter}.\fi}}{\hfill\uppercase{##1}}}%
  \def\sectionmark##1{\markboth{\uppercase{\ch@ptern@me\hfill
    \ifnum\c@secnumdepth>\m@ne\@chapabb\ 
      {\numbersize\thechapter},\fi}}{\ifnum\c@secnumdepth>\m@ne
    \@sectionabb\thinspace{\numbersize\thesection}.\fi\hfill\uppercase{##1}}}}
\def\ps@headings{\let\@mkboth\markboth
  \def\@oddfoot{}\def\@evenfoot{}%
  \def\@oddhead{\null\hfil{\runningsize\rightmark}\quad{\pagenumsize\thepage}}%
  \def\chaptermark##1{\markright{\ifnum\c@secnumdepth>\m@ne
    \uppercase{\@chapabb\ {\numbersize\thechapter}.\fi \hfill ##1}}}
  \def\sectionmark##1{\markright{\ifnum\c@secnumdepth>\m@ne
    \uppercase{\@chapabb\ {\numbersize\thechapter},\ \ 
      \@sectionabb\thinspace{\numbersize\thesection}.\fi
    \hfill \ch@ptern@me}}}}
\def\ps@plain{\let\@mkboth\@gobbletwo
  \def\@oddhead{}\def\@oddfoot{\hfil\pagenumsize\thepage}%
  \def\@evenhead{}\def\@evenfoot{\pagenumsize\thepage\hfil}}
\def\ps@headings{\let\@mkboth\markboth
  \def\@oddfoot{\xpt\tt\hfil z\hfil}\def\@evenfoot{\xpt\tt\hfil z\hfil}%
  \def\@oddhead{\null\hfil{\runningsize\rightmark}\quad{\pagenumsize\thepage}}%
  \def\chaptermark##1{\markright{\ifnum\c@secnumdepth>\m@ne
    \uppercase{\@chapabb\ {\numbersize\thechapter}.\fi \hfill ##1}}}
  \def\sectionmark##1{\markright{\ifnum\c@secnumdepth>\m@ne
    \uppercase{\@chapabb\ {\numbersize\thechapter},\ \ 
      \@sectionabb\thinspace{\numbersize\thesection}.\fi
    \hfill \ch@ptern@me}}}}
\def\ps@plain{\let\@mkboth\@gobbletwo
  \def\@oddhead{}\def\@oddfoot{\hfil{\xpt\tt z}\hfil\pagenumsize\thepage}%
  \def\@evenhead{\hfil{\xpt\tt z}\hfil}\def\@evenfoot{\pagenumsize\thepage\hfil}}
\def\thinskip{\hskip .16667em }
\gdef\references{\catcode`|=\active \catcode`\!=\active
  \def\!{\char`\!}%
  \def|{\bgroup\sc \let|=\egroup}     
  \def!{\bgroup\it \let!=\egroup}     
  \def\<##1>{{\bf##1}\futurelet\next\number@ptional}  
  \def\number@ptional{\ifx\next(\def\@temp{\n@mber}\else
    \ifx\next:\def\@temp{\p@ges}\else\def\@temp{}\fi\fi \@temp}%
  \def\n@mber(##1){\thinspace(##1)\futurelet\next\page@ptional}%
  \def\page@ptional{\ifx\next:\def\@temp{\p@ges}\else
    \def\@temp{}\fi \@temp}%
  \def\p@ges:{\thinspace:\penalty-20\thinskip}%
  \frenchspacing}}
\def\authorbar{$\vcenter{\hbox{\vrule width3em height.4pt}}\,$}
\def\gnulog 1e{\futurelet\next\gnul@g}
\def\gnul@g{\ifx\next+\let\temp\gnul@gp\else\let\temp\gnul@gm\fi\temp}
\def\gnul@gp+{\afterassignment\gnul@gg\count10=}
\def\gnul@gm-{\afterassignment\gnul@gg-\count10=}
\def\gnul@gg{\the\count10}
\newenvironment{gnuplot}[1]{\vskip-.25in \hbox to\textwidth\bgroup
  \kern#1\xpt}{\hss\egroup \vskip-.15in}
\newenvironment{gnupicture}[1]{\hbox to#1\bgroup\xpt\hss}{\hss\egroup}
\def\eqnarray{\stepcounter{equation}\let\@currentlabel=\theequation
  \global\@eqnswtrue
  \global\@eqcnt\z@\tabskip\@centering\let\\=\@eqncr
  $$\halign to \displaywidth\bgroup\@eqnsel\hskip\@centering
    $\displaystyle\tabskip\z@{##}$&\global\@eqcnt\@ne 
    ${{}##{}}$&\global\@eqcnt\tw@$\displaystyle\tabskip\z@{##}$\hfil
    \tabskip\@centering&\llap{##}\tabskip\z@\cr}
\font\fivrm=cmr5
\font\fivmi=cmmi5
\font\fivsy=cmsy5
\font\sixrm=cmr6
\font\sixmi=cmmi6
\font\sixsy=cmsy6
\font\sevrm=cmr7
\font\sevit=cmti7
\font\sevsy=cmsy7
\font\sevmi=cmmi7
\font\egtrm=cmr8
\font\egtit=cmti8
\font\egtmi=cmmi8
\font\egtsy=cmsy8
\font\ninrm=cmr9
\font\ninbf=cmbx9
\font\ninit=cmti9
\font\nintt=cmtt9
\font\ninmi=cmmi9
\font\ninsy=cmsy9
\font\tenrm=cmr10
\font\tenit=cmti10
\font\tenbf=cmbx10
\font\tentt=cmtt10
\font\tensl=cmsl10
\font\tensf=cmss10
\font\tensy=cmsy10
\font\tenmi=cmmi10
\font\eightex=cmex8
\font\sevenbf=cmbx7
\font\fivebf=cmbx5
\font\elvex=cmex10 scaled\magstephalf
\font\eightex=cmex8
\font\eightbf=cmbx8
\font\sixbf=cmbx6
\font\twlrm=cmr12
\font\twlbf=cmbx12
\font\twltt=cmtt12
\font\twlsf=cmss12
\font\twlsl=cmsl10 at 12pt
\font\twlit=cmti10 at 12pt
\font\twlmi=cmmi10 at 12pt
\font\twlex=cmex10 at12pt
\font\twlsy=cmsy10 at 12pt
\font\eightex=cmex8
\font\eightbf=cmbx8
\font\sixbf=cmbx6
\font\frtnrm=cmr12 scaled\magstep1
\font\frtnmi=cmmi12 scaled\magstep1
\font\frtnbf=cmbx12 scaled\magstep1
\font\frtnex=cmex10 scaled\magstep2
\font\tenex=cmex10
\font\sevenex=cmex7
\font\tenbf=cmbx10
\font\sevenbf=cmbx7
\font\svtnmi=cmmi12 scaled\magstep2
\font\twtymi=cmmi12 scaled\magstep3
\font\fivly=lasy5
\font\sixly=lasy6
\font\sevly=lasy7
\font\egtly=lasy8
\font\ninly=lasy9
\font\tenly=lasy10
\font\elvly=lasy10 scaled\magstephalf
\font\twlly=lasy10 scaled\magstep1
\newif\if@bold
\def\@setstrut{\setbox\strutbox=\hbox{\vrule \@height .7\baselineskip
    \@depth .3\baselineskip \@width\z@}}
\def\ixpt{\textfont\z@\ninrm
  \scriptfont\z@\sixrm \scriptscriptfont\z@\fivrm
\textfont\@ne\ninmi \scriptfont\@ne\sixmi \scriptscriptfont\@ne\fivmi
\textfont\tw@\ninsy \scriptfont\tw@\sixsy \scriptscriptfont\tw@\fivsy
\textfont\thr@@\tenex \scriptfont\thr@@\tenex \scriptscriptfont\thr@@\tenex
\def\prm{\fam\z@\ninrm}%
\def\unboldmath{\everymath{}\everydisplay{}\@nomath\unboldmath
    \@boldfalse}\@boldfalse
\def\boldmath{\@subfont\boldmath\unboldmath}%
\def\pit{\fam\itfam\ninit}\textfont\itfam\ninit
   \scriptfont\itfam\sevit \scriptscriptfont\itfam\sevit
\def\psl{\@getfont\psl\slfam\@ixpt{cmsl9}}%
\def\pbf{\fam\bffam\ninbf}\textfont\bffam\ninbf
   \scriptfont\bffam\ninbf \scriptscriptfont\bffam\ninbf
\def\ptt{\fam\ttfam\nintt}\textfont\ttfam\nintt
   \scriptfont\ttfam\nintt \scriptscriptfont\ttfam\nintt
\def\psf{\@getfont\psf\sffam\@ixpt{cmss9}}%
\def\psc{\@getfont\psc\scfam\@ixpt{\@mcsc \@ptscale9}}%
\def\ly{\fam\lyfam\ninly}\textfont\lyfam\ninly
   \scriptfont\lyfam\sixly \scriptscriptfont\lyfam\fivly
  \ninebmit
  \ninefrak
\@setstrut \rm}
\def\xpt{\textfont\z@\tenrm
  \scriptfont\z@\sevrm \scriptscriptfont\z@\fivrm
\textfont\@ne\tenmi \scriptfont\@ne\sevmi \scriptscriptfont\@ne\fivmi
\textfont\tw@\tensy \scriptfont\tw@\sevsy \scriptscriptfont\tw@\fivsy
\textfont\thr@@\tenex \scriptfont\thr@@\eightex \scriptscriptfont\thr@@\eightex
\def\unboldmath{\everymath{}\everydisplay{}\@nomath\unboldmath
          \textfont\@ne\tenmi
          \textfont\tw@\tensy \textfont\lyfam\tenly
          \@boldfalse}\@boldfalse
\def\boldmath{\@ifundefined{tenmib}{\global\font\tenmib\@mbi
   \global\font\tensyb\@mbsy
   \global\font\tenlyb\@lasyb\relax\@addfontinfo\@xpt
   {\def\boldmath{\everymath{\mit}\everydisplay{\mit}\@prtct\@nomathbold
        \textfont\@ne\tenmib \textfont\tw@\tensyb
        \textfont\lyfam\tenlyb \@prtct\@boldtrue}}}{}\@xpt\boldmath}%
\def\prm{\fam\z@\tenrm}%
\def\pit{\fam\itfam\tenit}\textfont\itfam\tenit \scriptfont\itfam\sevit
    \scriptscriptfont\itfam\sevit
\def\psl{\fam\slfam\tensl}\textfont\slfam\tensl
     \scriptfont\slfam\tensl \scriptscriptfont\slfam\tensl
\def\pbf{\fam\bffam\tenbf}\textfont\bffam\tenbf
    \scriptfont\bffam\sevenbf \scriptscriptfont\bffam\fivebf
\def\ptt{\fam\ttfam\tentt}\textfont\ttfam\tentt
    \scriptfont\ttfam\tentt \scriptscriptfont\ttfam\tentt
\def\psf{\fam\sffam\tensf}\textfont\sffam\tensf
    \scriptfont\sffam\tensf \scriptscriptfont\sffam\tensf
\def\psc{\@getfont\psc\scfam\@xpt{\@mcsc}}%
\def\ly{\fam\lyfam\tenly}\textfont\lyfam\tenly
   \scriptfont\lyfam\sevly \scriptscriptfont\lyfam\fivly
  \tenbmit
  \tenfrak
  \tenbfit
\@setstrut \rm}
\def\xipt{\textfont\z@\elvrm
  \scriptfont\z@\egtrm \scriptscriptfont\z@\sixrm
\textfont\@ne\elvmi \scriptfont\@ne\egtmi \scriptscriptfont\@ne\sixmi
\textfont\tw@\elvsy \scriptfont\tw@\egtsy \scriptscriptfont\tw@\sixsy
\textfont\thr@@\elvex \scriptfont\thr@@\eightex \scriptscriptfont\thr@@\eightex
\def\unboldmath{\everymath{}\everydisplay{}\@nomath\unboldmath
          \textfont\@ne\elvmi \textfont\tw@\elvsy
          \textfont\lyfam\elvly \@boldfalse}\@boldfalse
\def\boldmath{\@ifundefined{elvmib}{\global\font\elvmib\@mbi\@halfmag
         \global\font\elvsyb\@mbsy\@halfmag
         \global\font\elvlyb\@lasyb\@halfmag\relax\@addfontinfo\@xipt
         {\def\boldmath{\everymath{\mit}\everydisplay{\mit}\@prtct\@nomathbold
                \textfont\@ne\elvmib \textfont\tw@\elvsyb
                \textfont\lyfam\elvlyb\@prtct\@boldtrue}}}{}\@xipt\boldmath}%
\def\prm{\fam\z@\elvrm}%
\def\pit{\fam\itfam\elvit}\textfont\itfam\elvit
   \scriptfont\itfam\egtit \scriptscriptfont\itfam\sevit
\def\psl{\fam\slfam\elvsl}\textfont\slfam\elvsl
    \scriptfont\slfam\tensl \scriptscriptfont\slfam\tensl
\def\pbf{\fam\bffam\elvbf}\textfont\bffam\elvbf
   \scriptfont\bffam\eightbf \scriptscriptfont\bffam\sixbf
\def\ptt{\fam\ttfam\elvtt}\textfont\ttfam\elvtt
   \scriptfont\ttfam\nintt \scriptscriptfont\ttfam\nintt
\def\psf{\fam\sffam\elvsf}\textfont\sffam\elvsf
    \scriptfont\sffam\tensf \scriptscriptfont\sffam\tensf
\def\psc{\@getfont\psc\scfam\@xipt{\@mcsc\@halfmag}}%
\def\ly{\fam\lyfam\elvly}\textfont\lyfam\elvly
   \scriptfont\lyfam\egtly \scriptscriptfont\lyfam\sixly
  \setbox\strutbox=\hbox{\vrule height9pt depth4pt width\z@}%
  \def\big##1{{\hbox{$\left##1\vbox to9\p@{}\right.\n@space$}}}%
  \def\Big##1{{\hbox{$\left##1\vbox to12.5\p@{}\right.\n@space$}}}%
  \def\bigg##1{{\hbox{$\left##1\vbox to16\p@{}\right.\n@space$}}}%
  \def\Bigg##1{{\hbox{$\left##1\vbox to19\p@{}\right.\n@space$}}}%
  \elevenbmit
  \elevenfrak
  \elevenbfit
\@setstrut \rm}
\def\xiipt{\textfont\z@\twlrm
  \scriptfont\z@\egtrm \scriptscriptfont\z@\sixrm
\textfont\@ne\twlmi \scriptfont\@ne\egtmi \scriptscriptfont\@ne\sixmi
\textfont\tw@\twlsy \scriptfont\tw@\egtsy \scriptscriptfont\tw@\sixsy
\textfont\thr@@\twlex \scriptfont\thr@@\eightex \scriptscriptfont\thr@@\eightex
\def\unboldmath{\everymath{}\everydisplay{}\@nomath\unboldmath
          \textfont\@ne\twlmi
          \textfont\tw@\twlsy \textfont\lyfam\twlly
          \@boldfalse}\@boldfalse
\def\boldmath{\@ifundefined{twlmib}{\global\font\twlmib\@mbi\@magscale1\global
        \font\twlsyb\@mbsy \@magscale1\global\font
         \twllyb\@lasyb\@magscale1\relax\@addfontinfo\@xiipt
              {\def\boldmath{\everymath
                {\mit}\everydisplay{\mit}\@prtct\@nomathbold
                \textfont\@ne\twlmib \textfont\tw@\twlsyb
                \textfont\lyfam\twllyb\@prtct\@boldtrue}}}{}\@xiipt\boldmath}%
\def\prm{\fam\z@\twlrm}%
\def\pit{\fam\itfam\twlit}\textfont\itfam\twlit \scriptfont\itfam\egtit
   \scriptscriptfont\itfam\sevit
\def\psl{\fam\slfam\twlsl}\textfont\slfam\twlsl
     \scriptfont\slfam\tensl \scriptscriptfont\slfam\tensl
\def\pbf{\fam\bffam\twlbf}\textfont\bffam\twlbf
   \scriptfont\bffam\eightbf \scriptscriptfont\bffam\sixbf
\def\ptt{\fam\ttfam\twltt}\textfont\ttfam\twltt
   \scriptfont\ttfam\nintt \scriptscriptfont\ttfam\nintt
\def\psf{\fam\sffam\twlsf}\textfont\sffam\twlsf
    \scriptfont\sffam\tensf \scriptscriptfont\sffam\tensf
\def\psc{\@getfont\psc\scfam\@xiipt{\@mcsc\@magscale1}}%
\def\ly{\fam\lyfam\twlly}\textfont\lyfam\twlly
   \scriptfont\lyfam\egtly \scriptscriptfont\lyfam\sixly
  \setbox\strutbox=\hbox{\vrule height10pt depth4pt width\z@}%
  \def\big##1{{\hbox{$\left##1\vbox to10\p@{}\right.\n@space$}}}%
  \def\Big##1{{\hbox{$\left##1\vbox to13\p@{}\right.\n@space$}}}%
  \def\bigg##1{{\hbox{$\left##1\vbox to17.5\p@{}\right.\n@space$}}}%
  \def\Bigg##1{{\hbox{$\left##1\vbox to21\p@{}\right.\n@space$}}}%
  \twelvebmit
  \twelvefrak
  \twelvebfit
  \@setstrut \rm}
\def\xivpt{\textfont\z@\frtnrm
  \scriptfont\z@\tenrm \scriptscriptfont\z@\sevrm
\textfont\@ne\frtnmi \scriptfont\@ne\tenmi \scriptscriptfont\@ne\sevmi
\textfont\tw@\frtnsy \scriptfont\tw@\tensy \scriptscriptfont\tw@\sevsy
\textfont\thr@@\frtnex \scriptfont\thr@@\tenex \scriptscriptfont\thr@@\sevenex
\def\unboldmath{\everymath{}\everydisplay{}\@nomath\unboldmath
          \textfont\@ne\frtnmi \textfont\tw@\frtnsy
          \textfont\lyfam\frtnly \@boldfalse}\@boldfalse
\def\boldmath{\@ifundefined{frtnmib}{\global\font
        \frtnmib\@mbi\@magscale2\global\font\frtnsyb\@mbsy\@magscale2
         \global\font\frtnlyb\@lasyb\@magscale2\relax\@addfontinfo\@xivpt
               {\def\boldmath{\@prtct\@nomathbold
  \textfont\z@\frtnbf   \scriptfont\z@\tenbf   \scriptscriptfont\z@\sevenbf
  \textfont\@ne\frtnib  \scriptfont\@ne\tenib  \scriptscriptfont\@ne\sevenib
  \textfont\tw@\frtnsyb \scriptfont\tw@\tensyb \scriptscriptfont\tw@\sevsy
  \def\it{\fam\@ne}%
  \textfont\lyfam\frtnlyb\@prtct\@boldtrue}}}{}\@xivpt\boldmath}%
\def\prm{\fam\z@\frtnrm}%
\def\pit{\@getfont\pit\itfam\@xivpt{cmti10\@magscale2}}%
\def\psl{\@getfont\psl\slfam\@xivpt{cmsl10\@magscale2}}%
\def\pbf{\fam\bffam\frtnbf}\textfont\bffam\frtnbf
   \scriptfont\bffam\tenbf \scriptscriptfont\bffam\sixbf
\def\ptt{\@getfont\ptt\ttfam\@xivpt{cmtt10\@magscale2}\@nohyphens\ptt\@xivpt}%
\def\psf{\@getfont\psf\sffam\@xivpt{\@mss\@magscale2}}%
\def\psc{\@getfont\psc\scfam\@xivpt{\@mcsc\@magscale2}}%
\def\ly{\fam\lyfam\frtnly}\textfont\lyfam\frtnly
   \scriptfont\lyfam\tenly \scriptscriptfont\lyfam\sevly
  \setbox\strutbox=\hbox{\vrule height12pt depth4.5pt width0pt}%
  \def\big##1{{\hbox{$\left##1\vbox to12\p@{}\right.\n@space$}}}%
  \def\Big##1{{\hbox{$\left##1\vbox to16.5\p@{}\right.\n@space$}}}%
  \def\bigg##1{{\hbox{$\left##1\vbox to21\p@{}\right.\n@space$}}}%
  \def\Bigg##1{{\hbox{$\left##1\vbox to25\p@{}\right.\n@space$}}}%
\fourteenbmit
\fourteenfrak
\fourteenbfit
\@setstrut \rm}
\def\eqalign#1{\null\,\vcenter{\openup\jot\m@th
  \ialign{\strut\hfil$\displaystyle{##}$&$\displaystyle{{}##}$\hfil
      \crcr#1\crcr}}\,}
\def\eqalignno#1{\displ@y \tabskip\@centering
  \halign to\displaywidth{\hfil$\@lign\displaystyle{##}$\tabskip\z@skip
    &$\@lign\displaystyle{{}##}$\hfil\tabskip\@centering
    &\llap{$\@lign##$}\tabskip\z@skip\crcr
    #1\crcr}}
\def\eqaligncond#1{\null\,\vcenter{\openup\jot\m@th
  \ialign{\strut\hfil\rm##\quad&\hfil$\displaystyle{##}$&$\displaystyle
      {{}##}$\hfil&\quad##\hfil\crcr#1\crcr}}\,}
\def\doubleeqalign#1{\null\,\vcenter{\openup\jot\m@th
  \ialign{\strut\hfil$\displaystyle{##}$&$\displaystyle{{}##}$\hfil
         &\qquad\hfil$\displaystyle{##}$&$\displaystyle{{}##}$\hfil
      \crcr#1\crcr}}\,}
\def\diffeqalign#1{\null\,\vcenter{\openup\jot\m@th
  \ialign{\strut\hfil$\displaystyle{##}$&$\displaystyle{{}##}$\hfil
      &$\displaystyle\qquad##$\hfil\crcr#1\crcr}}\,}
\def\dosuperejectLaTeX{\ifnum\@floatpenalty<\z@ 
  \line{}\kern-\topskip\nobreak\vfill\supereject\fi}
\font\frtnib=cmmib10 scaled\magstep2  \font\sevenib=cmmib7
\font\frtnsyb=cmbsy10 scaled\magstep2 \font\tensyb=cmbsy10
\font\twlib=cmmib10 at12pt \font\eightib=cmmib8 \font\sixib=cmmib6
\font\elvib=cmmib10 scaled\magstephalf          \font\fiveib=cmmib5
\font\tenib=cmmib10 \font\nineib=cmmib9
\font\twlsyb=cmbsy10 scaled\magstep1
\def\fourteenbmit{\textfont\bmitfam=\frtnib
  \scriptfont\bmitfam=\tenib \scriptscriptfont\bmitfam=\sevenib
  \def\bmit{\fam\bmitfam\frtnib}}
\def\twelvebmit{\textfont\bmitfam=\twlib
  \scriptfont\bmitfam=\eightib \scriptscriptfont\bmitfam=\sixib
  \def\bmit{\fam\bmitfam\twlib}}
\def\elevenbmit{\textfont\bmitfam=\elvib
  \scriptfont\bmitfam=\sevenib \scriptscriptfont\bmitfam=\fiveib
  \def\bmit{\fam\bmitfam\elvib}}
\def\tenbmit{\textfont\bmitfam=\tenib
  \scriptfont\bmitfam=\sevenib \scriptscriptfont\bmitfam=\fiveib
  \def\bmit{\fam\bmitfam\tenib}}
\def\ninebmit{\textfont\bmitfam=\nineib
  \scriptfont\bmitfam=\sixib \scriptscriptfont\bmitfam=\fiveib
  \def\bmit{\fam\bmitfam\nineib}}
\let\smallbmit=\tenbmit
\def\redeflcgreek{\mathchardef\alpha="710B
  \mathchardef\beta="710C
  \mathchardef\gamma="710D
  \mathchardef\delta="710E
  \mathchardef\epsilon="710F
  \mathchardef\zeta="7110
  \mathchardef\eta="7111
  \mathchardef\theta="7112
  \mathchardef\iota="7113
  \mathchardef\kappa="7114
  \mathchardef\lambda="7115
  \mathchardef\mu="7116
  \mathchardef\nu="7117
  \mathchardef\xi="7118
  \mathchardef\pi="7119
  \mathchardef\rho="711A
  \mathchardef\sigma="711B
  \mathchardef\tau="711C
  \mathchardef\upsilon="711D
  \mathchardef\phi="711E
  \mathchardef\chi="711F
  \mathchardef\psi="7120
  \mathchardef\omega="7121
  \mathchardef\varepsilon="7122
  \mathchardef\vartheta="7123
  \mathchardef\varpi="7124
  \mathchardef\varrho="7125
  \mathchardef\varsigma="7126
  \mathchardef\varphi="7127
  \mathchardef\imath="717B
  \mathchardef\jmath="717C
  \mathchardef\ell="7160
  \mathchardef\wp="717D
  \mathchardef\partial="7140
  \mathchardef\flat="715B
  \mathchardef\natural="715C
  \mathchardef\sharp="715D }
\font\frtneuf=eufm10 scaled\magstep2 \font\teneuf=eufm10 \font\seveneuf=eufm7
\font\twleuf=eufm10 at12pt \font\eighteuf=eufm8 \font\sixeuf=eufm6
\font\elveuf=eufm10 scaled\magstephalf
\font\teneuf=eufm10
\font\nineeuf=eufm9
\font\seveneuf=eufm7
\font\fiveeuf=eufm5
\def\fourteenfrak{\textfont\frakfam=\frtneuf \scriptfont\frakfam=\teneuf
  \scriptscriptfont\frakfam=\seveneuf}
\def\twelvefrak{\textfont\frakfam=\twleuf \scriptfont\frakfam=\eighteuf
  \scriptscriptfont\frakfam=\sixeuf}
\def\elevenfrak{\textfont\frakfam=\elveuf \scriptfont\frakfam=\eighteuf
  \scriptscriptfont\frakfam=\sixeuf}
\def\tenfrak{\textfont\frakfam=\teneuf
  \scriptfont\frakfam=\seveneuf \scriptscriptfont\frakfam=\fiveeuf}
\def\ninefrak{\textfont\frakfam=\nineeuf
  \scriptfont\frakfam=\sixeuf \scriptscriptfont\frakfam=\fiveeuf}
\def\frak{\fam\frakfam}
\let\smallfrak=\tenfrak
\font\frtnbit=cmmib10 scaled\magstep2
\font\twlbit=cmbxti10 at12pt
\font\elvbit=cmbxti10 scaled\magstephalf
\font\tenbit=cmbxti10
\def\fourteenbfit{\textfont\bitfam=\frtnbit
  \def\bfit{\fam\bitfam\frtnbit}}
\def\twelvebfit{\textfont\bitfam=\twlbit
  \def\bfit{\fam\bitfam\twlbit}}
\def\elevenbfit{\textfont\bitfam=\elvbit
  \def\bfit{\fam\bitfam\elvbit}}
\def\tenbfit{\textfont\bitfam=\tenbit
  \def\bfit{\fam\bitfam\tenbit}}
\def\R{{\bf R}}                     
\def\vf{{\frak X}}                  
\def\SO(#1){\mathord{\bfit S\!O}({#1})}
\def\GL(#1){\mathord{\bfit G\!L}({#1})}
\def\Tri(#1){\mathord{\bmit T}({#1})}
\let\ScandOslash=\O
\def\O{\ifmmode \def\temp{\OLiegp}\else \def\temp{\ScandOslash}\fi \temp}
\def\OLiegp(#1){\mathord{\bmit O}({#1})}
\let\Scandoslash=\o
\def\o{\ifmmode \def\temp{\oLiealg}\else \def\temp{\Scandoslash}\fi \temp}
\def\oLiealg(#1){\mathord{\frak o}({#1})}
\def\so(#1){\mathord{\frak so}({#1})}
\def\gl(#1){\mathord{\frak gl}({#1})}
\def\tri(#1){\mathord{\frak t}({#1})}
\def\g{{\frak g}}
\def\h{{\frak h}}
\def\k{{\frak k}}
\def\m{{\frak m}}
\def\stiefel(#1,#2){{V_{#1,#2}}}
\def\grassmann(#1,#2){{G_{#1,#2}}}
\def\svd{\bigl(\OLiegp(n)\times\OLiegp(k)\bigr)/\Dg(D)\OLiegp(n-k)}
\def\Dg(#1){{\mathord{\mit\Delta}_{#1}}}
\def\Sym(#1){\mathord{\bmit S}_{\bmit #1}}
\def\K(#1){\mathord{\bmit K}_{\bmit #1}}
\def\phat{{\hat p}}
\def\xhat{{\hat x}}
\def\Thetahat{{\hat\Theta}}
\def\Xtilde{{\tilde X}}
\def\Ytilde{{\tilde Y}}
\def\Htilde{{\tilde H}}
\def\Omegatilde{{\tilde\Omega}}
\def\etal{et~al.\null}
\def\conditions#1{\vbox{\halign{\rm\hfil##&\quad##\hfil&\qquad##\hfil
  &&\qquad\rm\hfil##&\quad##\hfil&\qquad##\hfil\crcr
    #1\crcr}}}
\def\T{{\scriptscriptstyle\rm T}}   
\def\by{\ifmmode $\hbox{-by-}$\else \leavevmode\hbox{-by-}\fi}
\def\hyphen{\ifmmode\hbox{-}\else-\fi}
\def\f{{\!f}}
\def\kf{\varphi}  
\def\geodesic#1{\gamma_{\lower1pt\hbox{$\scriptstyle#1$}}}
\def\ddt{{d\over dt}\Big|_{t=0}}
\def\covD#1{\nabla_{\!#1}}          
\def\D#1{{\nabla\!#1}}              
\def\Dsqr#1{{\nabla^2\!#1}}         
\def\grad{\mathop{\rm grad}\nolimits}
\def\tr{\mathop{\rm tr}\nolimits}
\def\diag{\mathop{\rm diag}\nolimits}
\def\sign{\mathop{\rm sign}\nolimits}
\def\argmax{\mathop{\rm arg\,max}}
\def\Riemann(#1,#2){\mathop{R(#1,#2)}}
\def\Aut{\mathord{\rm Aut}}
\def\End{\mathord{\rm End}}
\def\id{\mathop{\rm id}\nolimits}
\def\Ad{\mathop{\rm Ad}\nolimits}
\def\ad{\mathop{\rm ad}\nolimits}
\mathchardef\rdot="0201             
\def\Ball{{B_\epsilon(\phat)}}
\def\Diff{{\mit\Delta}}
\def\vrem{{\mit\Xi}}
\def\hot{{\rm h.o.t.}}
\def\nupushf{{\nu_{\mskip-1.5mu*}\mskip-2mu f}}
\def\lyapunov{\tr\Theta^\T Q\Theta N}
\def\adjarrow{\mathop{\vcenter{\hbox{\dimen1=40pt \dimen3=1.5pt
  \raise\dimen3\rlap{\hbox to\dimen1{\rightarrowfill}}\lower\dimen3
  \hbox to\dimen1{\leftarrowfill}}}}}
\def\smallchoice#1{\mathchoice{{\textstyle#1}}{{\textstyle#1}}%
  {{\scriptstyle#1}}{{\scriptscriptstyle#1}}}
\def\oneoverx#1{{1\over#1}}
\def\half{{\smallchoice{\oneoverx2}}}
\def\third{{\smallchoice{\oneoverx3}}}
\def\quarter{{\smallchoice{\oneoverx4}}}
\def\[{[\mkern-3mu[} \def\]{]\mkern-3mu]}
\begin{document}

\baselineskip=\thesisbaselineskip
\parskip=0pt plus 2pt
\setcounter{page}{1}
\pagenumbering{roman}



\iftestpage \begingroup \makeatletter
\pagenumbering{arabic}
\def\@oddhead{\runningsize\uppercase\expandafter{\@chapabb\ {\numbersize1},\ \ 
      \@sectionabb\thinspace{\numbersize2}.\hfil
      The Running Headline\quad{\pagenumsize3}}}
\def\@oddfoot{\hfil Page number \pagenumsize4}

\vbox to0in{\offinterlineskip\centerline{\vrule height.4pt width.5in\hfill
  \vrule width.5in}\centerline{\vrule height.5in\hfill\vrule}\vss}

\vfil
\centerline{A test page.}
\vfil

\vbox to0in{\offinterlineskip\vss\centerline{\vrule height.5in\hfill\vrule}
  \centerline{\vrule height.4pt width.5in\hfill \vrule width.5in}}
\eject

\endgroup \fi  

\setcounter{page}{1}
\pagenumbering{roman}
\thispagestyle{empty}
\begingroup \leftskip=0pt plus1fil \rightskip=\leftskip
\parindent=0pt \parskip=0pt \parfillskip=0pt
\baselineskip=15pt
\obeylines

\null

\vfil

\Large\bf
Geometric Optimization Methods for Adaptive Filtering

\vfil

\xiipt
A thesis presented
by

\vskip.25in

\Large
Steven Thomas Smith

\vskip.25in

\xiipt
to
The Division of Applied Sciences

\vskip.1in

in partial fulfillment
for the degree of Doctor of Philosophy
in the subject of
Applied Mathematics

\vfil

Harvard University
Cambridge, Massachusetts

\vskip.25in

May 1993

\vfil

\endgroup
\eject

\thispagestyle{empty}
\null\vfil
\centerline{\harvard}
\vfil
\vbox to0in{\vss
\leftline{Copyright \copyright\ 1993 by Steven T. Smith. All rights reserved.}}
\eject

\thispagestyle{empty}
\begingroup \leftskip=0pt \rightskip=0pt plus1fil
\parindent=0pt \parskip=0pt \parfillskip=0pt
\large\it\baselineskip=14pt
\obeylines

\null\vskip2in

To Laura

\vfil

\ixpt\it\baselineskip=10pt
\def\\{\vskip 4pt}

L'aura soave al sole spiega et vibra
l'auro ch' Amor di sua man fila et tesse;
l\`a da' belli occhi et de le chiome stesse
lega 'l cor lasso e i lievi spirti cribra.
\\
Non \`o medolla in osso o sangue in fibra
ch' i' non senta tremar pur ch' i' m'apresse
dove \`e chi morte et vita inseme, spesse
volte, in frale bilancia apprende et libra,
\\
vedendo ardere i lumi ond' io m'accendo,
et folgorare i nodi ond' io son preso
or su l'omero destro et or sul manco.
\\
I' nol posso ridir, ch\'e nol comprendo,
da ta' due luci \`e l'intelletto offeso
et di tanta dolcezza opresso et stanco.
\\\\
\noindent\hskip1in Petrarch, Rime sparse  

%
%
%
%

\vskip1in
\endgroup
\eject

\begingroup

\makeatletter  
\def\@makeschapterhead#1{\vspace*{0pt}{\leftskip=0pt
  \rightskip0pt plus1fil \parindent0pt\parskip0pt\parfillskip=0pt
  \Large\bf \uppercase{#1}\par 
  \nobreak\vskip5pt plus5pt }}
\makeatother

\chapter*{Acknowledgments}
\addcontentsline{toc}{frontmatter}{Acknowledgments}

I would like to thank my advisor Professor Roger Brockett for
directing me in three directions that eventually coalesced: subspace
tracking, gradient flows on Lie groups, and conjugate gradient methods
on symmetric spaces.  I am also indebted to Tony Bloch for the
invitation to speak at the Fields Institute workshop on Hamiltonian
and gradient flows in April~1992, and to the Fields Institute itself
for their generous support during my visit.  I greatly appreciated the
opportunity to present my work to the distinguished participants of
this workshop.  My exposition was influenced by the teaching style of
Professor Guillemin, and I benefited from many helpful discussions
with Professor Anderson, who has always been generous with his time
and advice.  I would also like to thank the members of my thesis
committee: Professors James Clark, Petros Maragos, and David Mumford,
not only for their evaluation of this work, but also for the
intellectual and social qualities that they and their students brought
to the Harvard Robotics Laboratory.

My life in graduate school would have been much more difficult without
the pleasant and supportive atmosphere that my classmates provided.
The help and advice of my predecessors was invaluable in learning the
graduate student ropes. I will always remember sitting on the bank of
Walden Pond with Bill Nowlin, Nicola Ferrier, and Ann Stokes on my
first day off after a very long first year.  ``I'm supposed to pick a
project this summer,'' I said.  Bill was quick with his advice: ``The
summer after first year is just a waste.''  I also remember Nicola
bringing cookies to me and Bob Hewes late one night because she
remembered how heavy the work load was.  Ken Keeler and Frank Park
enriched the cultural life of Pierce~G14 by founding an ever growing
museum. Peter Hallinan and Tai~Sing Lee taught me some climbing
techniques at Acadia Natl.\ Park. Gaile Gordon hosted many enjoyable
and memorable social events.  Ed~Rak convinced me that I could hack
\TeX\ macros and still graduate. Ann Stokes's friendship made office
life enjoyable even in the batcave. John Page and George Thomas worked
very hard to ensure that the lab ran smoothly.  Leonid Faybusovich
taught me about the gradient.  I thoroughly enjoyed the discussions I
shared with Dan Friedman, Jeff Kosowsky, Bob Hewes, Peter Belhumeur,
and others.  We all owe a debt to Navin Saxena for making our
Wednesday afternoon lab tea a thriving event.\looseness=-1\par

Finally, I thank my family, especially my parents for their
encouragement, empathy, and love.  Most of all, I thank my wife Laura,
whom I love very much.  Her constant thoughtfulness, support,
patience, and love made many difficult days good, and many good days
great.\looseness=-1\par

\eject 
\endgroup

\chapter*{Abstract}
\addcontentsline{toc}{frontmatter}{Abstract}

\begingroup

The techniques and analysis presented in this thesis provide new
methods to solve optimization problems posed on Riemannian manifolds.
These methods are applied to the subspace tracking problem found in
adaptive signal processing and adaptive control.  A new point of view
is offered for the constrained optimization problem.  Some classical
optimization techniques on Euclidean space are generalized to
Riemannian manifolds.  Several algorithms are presented and their
convergence properties are analyzed employing the Riemannian structure
of the manifold.  Specifically, two new algorithms, which can be
thought of as Newton's method and the conjugate gradient method on
Riemannian manifolds, are presented and shown to possess quadratic and
superlinear convergence, respectively.  These methods are applied to
several eigenvalue and singular value problems, which are posed as
constrained optimization problems.  New efficient algorithms for the
eigenvalue problem are obtained by exploiting the special homogeneous
space structure of the constraint manifold.  It is shown that Newton's
method applied to the Rayleigh quotient on a sphere converges
cubically, and that the Rayleigh quotient iteration is an efficient
approximation of Newton's method.  The Riemannian version of the
conjugate gradient method applied to this function gives a new
algorithm for finding the eigenvectors corresponding to the extreme
eigenvalues of a symmetric matrix.  The Riemannian version of the
conjugate gradient method applied to a generalized Rayleigh quotient
yields a superlinearly convergent algorithm for computing the $k$
eigenvectors corresponding to the extreme eigenvalues of an $n\by n$
matrix.  This algorithm requires $O(nk^2)$ operations and $O(k)$
matrix-vector multiplications per step. Several gradient flows are
analyzed that solve eigenvalue and singular value problems.  The new
optimization algorithms are applied to the subspace tracking problem
of adaptive signal processing.  A new algorithm for subspace tracking
is given, which is based upon the conjugate gradient method applied to
the generalized Rayleigh quotient.  The results of several numerical
experiments demonstrating the convergence properties of the new
algorithms are given.\parfillskip=0pt

\endgroup

\tableofcontents
\vfil\eject

\listoffigures
\addcontentsline{toc}{frontmatter}{Figures}
\vfil\eject

\listoftables
\addcontentsline{toc}{frontmatter}{Tables}
\vfil\eject

\clearpage
\setcounter{page}{1}
\pagenumbering{arabic}

\chapter{Introduction}

Optimization is the central idea behind many problems in science and
engineering.  Indeed, determination of ``the best'' is both a
practical and an aesthetic problem that is encountered almost
universally.  Thus it is not surprising to find in many areas of study
a variety of optimization methods and vocabulary.  While the statement
of the optimization problem is simple---given a set of points and an
assignment of a real number to each point, find the point with the
largest or smallest number---its solution is not.  In general, the
choice of optimization algorithm depends upon many factors and
assumptions about the underlying set and the real-valued function
defined on the set.  If the set is discrete, then a simple search and
comparison algorithm is appropriate.  If the discrete set is endowed
with a topology, then a tree searching algorithm can yield a local
extremum.  If the set is a finite-dimensional vector space and the
function is continuous, then the simplex method can yield a local
extremum.  If the set is a Euclidean space, i.e., a finite-dimensional
vector space with inner product, and the function is differentiable,
then gradient-based methods may be used.  If the set is a polytope,
i.e., a subset of Euclidean space defined by linear inequality
constraints, and a linear function, then linear programming techniques
are appropriate.  This list indicates how successful optimization
techniques exploit the given structure of the underlying space.  This
idea is an important theme of this thesis, which explains how the
metric structure on a manifold may be used to develop effective
optimization methods on such a space.

Manifolds endowed with a metric structure, i.e., Riemannian manifolds,
arise naturally in many applications involving optimization problems.
For example, the largest eigenvalue of a symmetric matrix corresponds
to the point on a sphere maximizing the Rayleigh quotient.  This
eigenvalue problem and its generalizations are encountered in diverse
fields: signal processing, mechanics, control theory, estimation
theory, and others.  In most cases the so-called principal invariant
subspace of a matrix must be computed.  This is the subspace spanned
by the eigenvectors or singular vectors corresponding to the largest
eigenvalues or singular values, respectively.  Oftentimes there is an
adaptive context so that the principal invariant subspaces change over
time and must be followed with an efficient tracking algorithm.  Many
algorithms rely upon optimization techniques such as gradient
following to perform this tracking.

A few analytic optimization methods are quite old, but, as in most
computational fields, the invention of electronic computers was the
impetus for the development of modern optimization theory and
techniques.  Newton's method has been a well-known approach for
solving optimization problems of one or many variables for centuries.
The method of steepest descent to minimize a function of several
variables goes back to Cauchy.  Its properties and performance are
well-known; see, e.g., the books of \citeasnoun{Polak},
\citeasnoun{Luenberger}, or \citeasnoun{Fletcher} for a description
and analysis of this technique.  Modern optimization algorithms
appeared in the middle of this century, with the introduction of
linear and quadratic programming algorithms, the conjugate gradient
algorithm of \citeasnoun{HestenesStiefel}, and the variable metric
algorithm of \citeasnoun{Davidon}.  It is now understood how these
algorithms may be used to compute the point in~$\R^n$ at which a
differentiable function attains its maximum value, and what
performance may be expected of them.

Of course, not all optimization problems are posed on a Euclidean
space, and much research has been done on the constrained optimization
problem, specifically when the underlying space is defined by equality
constraints on Euclidean space.  Because all Riemannian manifolds may
be defined in this way, this approach is general enough for the
purposes of this thesis.  What optimization algorithms are appropriate
on such a space?  \citeasnoun[pp.~254ff\/]{Luenberger} considers this
question in his exposition of the constrained optimization problem. He
describes an idealized steepest descent algorithm on the constraint
surface that employs geodesics in gradient directions, noting that
this approach is in general not computationally feasible.  For this
reason, other approaches to the constrained optimization problem have
been developed. All of these methods depend upon the imbedding of the
constraint surface in $\R^n$.  Projective methods compute a gradient
vector tangent to the constraint surface, compute a minimum in $\R^n$
along this direction, then project this point onto the constraint
surface.  Lagrange multiplier methods minimize a function defined
on~$\R^n$ constructed from the original function to be minimized and
the distance to the constraint surface.  However, this so-called
extrinsic approach ignores the intrinsic structure that the manifold
may have.  With specific examples, such as a sphere and others to be
discussed later, intrinsic approaches are computationally feasible,
but the study of intrinsic optimization algorithms is absent from the
literature.

Optimization techniques have long been applied to the fields of
adaptive filtering and control.  There is a need for such algorithms
in these two fields because of their reliance on error minimization
techniques and on the minimax characterization of the eigenvalue
problem.  Also, many scenarios in adaptive filtering and control have
slowly varying parameters which corresponding to the minimum point of
some function that must be estimated and tracked.  Gradient-based
algorithms are desirable in this situation because the minimum point
is ordinarily close to the current estimate, and the gradient provides
local information about the direction of greatest decrease.

Many researchers have applied constrained optimization techniques to
algorithms that compute the static or time varying principal invariant
subspaces of a symmetric matrix.  This problem may be viewed as the
problem of computing $k$ orthonormal vectors in~$\R^n$ that maximize a
generalized form of the Rayleigh quotient.  Orthonormality imposes the
constraint surface.  \citeasnoun{BradFletch} propose a projective
formulation of the constrained conjugate gradient method to solve the
symmetric eigenvalue problem.  \citeasnoun{Fried} proposes a very
similar method for application to finite element eigenvalue problems.
\citeasnoun{Chenetal} are the first to apply this projective conjugate
gradient method to the problem of adaptive spectral estimation for
signal processing.  However, these conjugate gradient algorithms are
based upon the classical unconstrained conjugate gradient method on
Euclidean space. They apply this algorithm to the constrained problem
without accounting for the curvature terms that naturally arise.  In
general, the superlinear convergence guaranteed by the classical
conjugate gradient method is lost in the constrained case when these
curvature terms are ignored.

\citeasnoun{FuhrLiu} recognize this fact in their constrained
conjugate gradient algorithm for maximizing the Rayleigh quotient on
a sphere.  They correctly utilize the curvature of the sphere to
develop a conjugate gradient algorithm on this space analogous to the
classical superlinearly convergent conjugate gradient algorithm.
Insofar as they use maximization along geodesics on the sphere instead
of maximization along lines in~$\R^n$ followed by projection, their
approach is the first conjugate gradient method employing instrinsic
ideas to appear.  However, they use an azimuthal projection to
identify points on the sphere with points in tangent planes, which is
not naturally defined because it depends upon the choice of imbedding.
Thus their method is extrinsic.  Although the asymptotic performance
of their constrained conjugate gradient algorithm is the same as one
to be presented in this thesis, their dependence on azimuthal
projection does not generalize to other manifolds.  We shall see that
completely intrinsic approaches on arbitrary Riemannian manifolds are
possible and desirable.

There are many other algorithms for computing the principal invariant
subspaces that are required for some methods used in adaptive
filtering \cite{ComonGolub}.  Of course, one could apply the |QR|
algorithm at each step in the adaptive filtering procedure to obtain a
full diagonal decomposition of a symmetric matrix, but this requires
$O(n^3)$ floating point operations ($n$ is the dimension of the
matrix), which is unnecessarily expensive. Also, many applications
require only the principal invariant subspace corresponding to the $k$
largest eigenvalues, thus a full decomposition involves wasted effort.
Furthermore, this technique does not exploit previous information,
which is important in most adaptive contexts.  So other techniques for
obtaining the eigenvalue decomposition are used.  In addition to the
constrained conjugate gradient approaches mentioned in the preceding
paragraphs, pure gradient based methods and other iterative techniques
are popular.  The use of gradient techniques in adaptive signal
processing was pioneered in the 1960s. See \citeasnoun{WidrowStearns}
for background and references.  Several algorithms for the adaptive
eigenvalue problem use such gradient ideas \cite{Owsley,Larimore,Hu}.

Iterative algorithms such as Lanczos methods are very important in
adaptive subspace tracking problems.  Lanczos methods compute a
sequence of tridiagonal matrices (or bidiagonal matrices in the case
where singular vectors are required) whose eigenvalues approximate the
extreme eigenvalues of the original matrix.  The computational
requirements of the classical Lanczos algorithm are modest: only
$O(nk^2)$ operations and $O(k)$ matrix-vector multiplications are
required to compute $k$ eigenvectors of an $n\by n$ symmetric matrix.
Thus Lanczos methods are well-suited for sparse matrix extreme
eigenvalue problems.  However, the convergence properties of the
classical Lanczos methods are troublesome, and they must be modified
to yield useful algorithms \cite{ParlettScott,Parlett,GVL,CullumWill}.

This thesis arose from the study of gradient flows applied to the
subspace tracking problem as described by
\citeasnoun{Brockett:subspace}, and from the study of gradient flows
that diagonalize matrices \cite{Brockett:sort}.  While the resulting
differential equation models are appealing from the perspective of
learning theory, it is computationally impractical to implement them
on conventional computers.  A desire to avoid the ``small step''
methods found in the integration of gradient flows while retaining
their useful optimization properties led to the investigation of
``large step'' methods on manifolds, analogous to the optimization
algorithms on Euclidean space discussed above.  A theory of such
methods was established, and then applied to the subspace tracking
problem, whose homogeneous space structure allows efficient and
practical optimization algorithms.

The following contributions are contained within this thesis. In
Chapter~\ref{chap:geom}, a geometric framework is provided for a large
class of problems in numerical linear algebra.  This chapter reviews
the natural metric structure of various Lie groups and homogeneous
spaces, along with some useful formulae implied by this structure,
which will be used throughout the thesis.  This geometric framework
allows one to solve problems in numerical linear algebra, such as the
computation of eigenvalues and eigenvectors, and singular values and
singular vectors, with gradient flows on Lie groups and homogeneous
spaces.

In Chapter~\ref{chap:grad} a gradient flow that yields the extreme
eigenvalues and corresponding eigenvectors of a symmetric matrix is
given, together with a gradient flow that yields the singular value
decomposition of an arbitrary matrix.  (Functions whose gradient flows
yield the extreme singular values and corresponding left singular
vectors of an arbitrary matrix are also discussed in
Chapter~\ref{chap:af}.)

Chapter~\ref{chap:orm} develops aspects of the theory of optimization
of differentiable functions defined on Riemannian manifolds.  New
methods and a new point of view for solving constrained optimization
problems are provided.  Within this chapter, the usual versions of
Newton's method and the conjugate gradient method are generalized to
yield new optimization algorithms on Riemannian manifolds.  The method
of steepest descent on a Riemannian manifold is first analyzed.
Newton's method on Riemannian manifolds is developed next and a proof
of quadratic convergence is given.  The conjugate gradient method is
in then introduced with a proof of superlinear convergence.  Several
illustrative examples are offered throughout this chapter.  These
three algorithms are applied to the Rayleigh quotient defined on the
sphere, and the function $f(\Theta)=\tr\Theta^\T Q\Theta N$ defined on
the special orthogonal group.  It is shown that Newton's method
applied to the Rayleigh quotient converges cubically, and that this
procedure is efficiently approximated by the Rayleigh quotient
iteration.  The conjugate gradient algorithm applied to the Rayleigh
quotient on the sphere yields a new superlinearly convergent algorithm
for computing the eigenvector corresponding to the extreme eigenvalue
of a symmetric matrix, which requires two matrix-vector
multiplications and $O(n)$ operations per iteration.

Chapter~\ref{chap:af} applies the techniques developed in the
preceding chapters to the subspace tracking problem of adaptive signal
processing.  The idea of tracking a principal invariant subspace is
reviewed in this context, and it is shown how this problem may be
viewed as maximization of the generalized Rayleigh quotient on the
so-called Stiefel manifold of matrices with orthonormal columns.  An
efficient conjugate gradient method that solves this optimization
problem is developed next.  This algorithm, like Lanczos methods,
requires $O(nk^2)$ operations per step and $O(k)$ matrix-vector
multiplications.  This favorable computational cost is dependent on
the homogeneous space structure of the Stiefel manifold; a description
of the algorithms implementation is provided.  Superlinear convergence
of this algorithm to the eigenvectors corresponding to the extreme
eigenvalues of a symmetric matrix is assured by results of
Chapter~\ref{chap:orm}.  This algorithm also has the desirable feature
of maintaining the orthonormality of the $k$ vectors at each step.  A
similar algorithm for computing the largest left singular vectors
corresponding to the extreme singular values of an arbitrary matrix is
discussed.  Finally, this algorithm is applied to the subspace
tracking problem.  A new algorithm for subspace tracking is given,
which is based upon the conjugate gradient method applied to the
generalized Rayleigh quotient.  The results of several numerical
experiments demonstrating the tracking properties of this algorithm
are given.

\chapter{Riemannian geometry of Lie groups and homogeneous
spaces}\runningchaptername{Geometry of Lie groups and homogeneous
spaces}\label{chap:geom}

Both the analysis and development of optimization algorithms presented
in this thesis rely heavily upon the geometry of the space on which
optimization problems are posed.  This chapter provides a review of
pertinent ideas from differential and Riemannian geometry, Lie groups,
and homogeneous spaces that will be used throughout the thesis.  It
may be skipped by those readers familiar with Riemannian geometry.
Sections \ref{sec:riemannian} and \ref{sec:liegphomsp} contain the
necessary theoretical background.  Section~\ref{sec:examples} provides
formulae specific to the manifolds to be used throughout this thesis,
which are derived from the theory contained in the previous sections.

\section{Riemannian manifolds}\label{sec:riemannian}

In this section the concepts of Riemannian structures, affine
connections, geodesics, parallel translation, and Riemannian
connections on a differentiable manifold are reviewed.  A background
of differentiable manifolds and tensor fields is assumed, e.g.,
Chapters 1--5 of \citeasnoun[Vol.~1]{Spivak} or the introduction of
\citeasnoun{GandG}.  The review follows Helgason's \citeyear{Helgason}
and Spivak's \citeyear[Vol.~2]{Spivak} expositions.

Let $M$ be a $C^\infty$ differentiable manifold.  Denote the set
of~$C^\infty$ functions on~$M$ by $C^\infty(M)$, the tangent plane
at~$p$ in~$M$ by~$T_p$ or $T_pM$, and the set of $C^\infty$ vector
fields on~$M$ by~$\vf(M)$.

\subsection{Riemannian structures}

\begin{definition} Let $M$ be a differentiable manifold. A {\it
Riemannian structure\/} on~$M$ is a tensor field $g$ of type~$(0,2)$
which for~all $X$, $Y\in\vf(M)$ and $p\in M$ satisfies
$$\conditions{(i)&$g(X,Y)=g(Y,X)$,\cr (ii)&$g_p\colon T_p\times
T_p\to\R$ is positive definite.\cr}$$ \end{definition}

A {\it Riemannian manifold\/} is a connected differentiable manifold
with a Riemannian structure.  For every $p$ in~$M$, the Riemannian
structure $g$ provides an inner product on~$T_p$ given by the
nondegenerate symmetric bilinear form $g_p\colon T_p\times T_p\to\R$.
The notation $\(X,Y\)=g_p(X,Y)$ and $\|X\|=g_p(X,X)^{1/2}$, where $X$,
$Y\in T_p$, is often used.  Let $t\mapsto\gamma(t)$, $t\in[a,b]$, be a
curve segment in~$M$.  The length of~$\gamma$ is defined by the
formula $$L(\gamma)=\int_a^b
g_{\gamma(t)}\bigl(\dot\gamma(t),\dot\gamma(t)\bigr)^{1/2}\,dt.$$
Because $M$ is connected, any two points $p$ and $q$ in~$M$ can be
joined by a curve. The infimum of the length of all curve segments
joining $p$ and $q$ yields a metric on~$M$ called the {\it Riemannian
metric} and denoted by~$d(p,q)$.

\begin{definition} Let $M$ be a Riemannian manifold with Riemannian
structure $g$ and $f\colon M\to\R$ a $C^\infty$ function on~$M$.  The
{\it gradient\/} of~$f$ at~$p$, denoted by $(\grad\f)_p$, is the
unique vector in~$T_p$ such that $df_p(X)=\((\grad\f)_p,X\)$ for~all
$X$ in~$T_p$.  \end{definition}

The corresponding vector field $\grad\f$ on~$M$ is clearly smooth.

The expression of the preceding ideas using coordinates is often
useful. Let $M$ be an $n\hyphen$dimensional Riemannian manifold with
Riemannian structure~$g$, and $(U,x^1,\ldots,x^n)$ a coordinate chart
on~$M$.  There exist $n^2$ functions $g_{ij}$, $1\le i,j\le n$, on~$U$
such that $$g=\sum_{i,j} g_{ij}\,dx^i\otimes dx^j.$$ Clearly
$g_{ij}=g_{ji}$ for~all $i$ and $j$.  Because $g_p$ is nondegenerate
for~all $p\in U\subset M$, the symmetric matrix $(g_{ij})$ is
invertible. The elements of its inverse are denoted by $g^{kl}$, i.e.,
$\sum_l g^{il}g_{lj}=\delta^i{}_j$, where $\delta^i{}_j$ is the
Kronecker delta.  Furthermore, given $f\in C^\infty(M)$, we have
$$df=\sum_i \Bigl({\partial\f\over\partial x^i}\Bigr)\,dx^i.$$
Therefore, from the definition of~$\grad\f$ above, we see that
$$\grad\f=\sum_{i,l} g^{il} \Bigl({\partial\f\over\partial
x^l}\Bigr)\, {\partial\over\partial x^i}.$$

\subsection{Affine connections}

Let $M$ be a differentiable manifold.  An {\it affine connection\/}
on~$M$ is a function~$\nabla$ which assigns to each vector field
$X\in\vf(M)$ an $\R$-linear map $\covD{X}\colon\vf(M)\to\vf(M)$ which
satisfies $$\conditions{(i)&$\covD{f\!X+gY}=f\covD{X}+g\covD{Y}$,\cr
(ii)&$\covD{X}(fY) =f\covD{X}Y+(X\f)Y$,\cr}$$ for~all $f\!$, $g\in
C^\infty(M)$, $X$, $Y\in\vf(M)$.  The map $\covD{X}$ may be applied to
tensors of arbitrary type.  Let $\nabla$ be an affine connection
on~$M$ and $X\in\vf(M)$.  Then there exists a unique $\R$-linear map
$A\mapsto\covD{X}A$ of $C^\infty$ tensor fields into $C^\infty$ tensor
fields which satisfies $$\conditions{(i)&$\covD{X}f=X\f\!$,\cr
(ii)&$\covD{X}Y$ is given by~$\nabla$,\cr (iii)&$\covD{X}(A\otimes B)
=\covD{X}A\otimes B +A\otimes\covD{X}B$,\cr (iv)&$\covD{X}$ preserves
the type of tensors,\cr (v)&$\covD{X}$ commutes with
contractions,\cr}$$ where $f\in C^\infty(M)$, $Y\in\vf(M)$, and $A$,
$B$ are $C^\infty$ tensor fields.  If $A$ is of type~$(k,l)$, then
$\covD{X}A$, called the {\it covariant derivative\/} of~$A$ along~$X$,
is of type~$(k,l)$, and $\D A\colon X\mapsto\covD{X}A$, called the
{\it covariant differential\/} of~$A$, is of type~$(k,l+1)$.

The expression of these ideas using coordinates is useful.  Let $M$ be
an $n\hyphen$dimensional differentiable manifold with affine
connection~$\nabla$, and $(U,x^1,\ldots,x^n)$ a coordinate chart
on~$M$.  These coordinates induce the canonical basis
$(\partial/\partial x^1)$, \dots, $(\partial/\partial x^n)$
of~$\vf(U)$.  There exist $n^3$ functions $\Gamma_{ij}^k$, $1\le
i,j,k\le n$, on~$U$ such that $$\covD{(\partial/\partial
x^i)}{\partial\over\partial x^j} = \sum_{i,j,k}\Gamma_{ij}^k\,
{\partial\over\partial x^k}.$$ The $\Gamma_{ij}^k$ are called the {\it
Christoffel symbols\/} of the connection.

The convergence proofs of later chapters require an analysis of the
second order terms of real-valued functions near critical points.
Consider the second covariant differential $\nabla\nabla\f=\nabla^2\f$
of a smooth function $f\colon M\to\R$.  If $(U,x^1,\ldots,x^n)$ is a
coordinate chart on~$M$, then this $(0,2)$ tensor takes the form
$$\nabla^2\f =\sum_{i,j} \biggl(\Bigl({\partial^2\f\over\partial
x^i\partial x^j}\Bigr) -\sum_k \Gamma_{ji}^k
\Bigl({\partial\f\over\partial x^k}\Bigr)\biggr)\, dx^i\otimes dx^j.$$

\subsection{Geodesics and parallelism}

Let $M$ be a differentiable manifold with affine connection~$\nabla$.
Let $\gamma\colon I\to M$ be a smooth curve with tangent vectors
$X(t)=\dot\gamma(t)$, where $I\subset\R$ is an open interval.  The
curve $\gamma$ is called a {\it geodesic\/} if $\covD{X}X=0$ for~all
$t\in I$.  Let $Y(t)\in T_{\gamma(t)}$ ($t\in I$) be a smooth family
of tangent vectors defined along $\gamma$.  The family $Y(t)$ is said
to be {\it parallel\/} along $\gamma$ if $\covD{X}Y=0$ for~all $t\in
I$.

For every $p$ in~$M$ and $X\ne0$ in~$T_p$, there exists a unique
geodesic $t\mapsto\geodesic{X}(t)$ such that $\geodesic{X}(0)=p$ and
$\dot\geodesic{X}(0)=X$.  We define the {\it exponential map\/}
$\exp_p\colon T_p\to M$ by $\exp_p(X)=\geodesic{X}(1)$ for~all $X\in
T_p$ such that $1$ is in the domain of~$\geodesic{X}$.  Oftentimes the
map $\exp_p$ will be denoted by ``$\exp$'' when the choice of tangent
plane is clear, and $\geodesic{X}(t)$ will be denoted by $\exp tX$.  A
neighborhood $N_p$ of~$p$ in~$M$ is a {\it normal neighborhood\/} if
$N_p=\exp N_0$, where $N_0$ is a star-shaped neighborhood of the
origin in~$T_p$ and $\exp$ maps $N_0$ diffeomorphically onto~$N_p$.
Normal neighborhoods always exist.

Given a curve $\gamma\colon I\to M$ such that $\gamma(0)=p$, for each
$Y\in T_p$ there exists a unique family $Y(t)\in T_{\gamma(t)}$ ($t\in
I$) of tangent vectors parallel along $\gamma$ such that $Y(0)=Y$.  If
$\gamma$ joins the points $p$ and~$\gamma(\alpha)=q$, the parallelism
along $\gamma$ induces an isomorphism $\tau_{pq}\colon T_p\to T_q$
defined by $\tau_{pq}Y=Y(\alpha)$.  If $\mu\in T_p^*$, define
$\tau_{pq}\mu\in T_q^*$ by the formula
$(\tau_{pq}\mu)(X)=\mu(\tau_{pq}^{-1}X)$ for~all $X\in T_q$.  The
isomorphism $\tau_{pq}$ can be extended in an obvious way to mixed
tensor products of arbitrary type.

Let $M$ be a manifold with an affine connection $\nabla$, and $N_p$ a
normal neighborhood of~$p\in M$.  Define the vector field $\Xtilde$
on~$N_p$ {\it adapted\/} to the tangent vector $X$ in~$T_p$ by putting
$\Xtilde_q=\tau_{pq}X$, the parallel translation of $X$ along the
unique geodesic segment joining $p$ and $q$.

Let $(U,x^1,\ldots,x^n)$ be a coordinate chart on an
$n\hyphen$dimensional differentiable manifold with affine
connection~$\nabla$.  Geodesics in~$U$ satisfy the $n$ second order
nonlinear differential equations $${d^2x^k\over dt^2} +\sum_{i,j}
{dx^i\over dt} {dx^j\over dt}\Gamma_{ij}^k=0.$$ For example, geodesics
on the imbedded $2$-sphere in~$\R^3$ with respect to the connection
given by $\Gamma_{ij}^k=\delta_{ij}x^k$ (the Levi-Civita connection on
the sphere), $1\le i,j,k\le3$, are segments of great circles, as shown
in Figure~\ref{fig:expmap}.  Let $t\mapsto\gamma(t)$ be a curve
in~$U$, and let $Y=\sum_k Y^k\,(\partial/\partial x^k)$ be a vector
field parallel along $\gamma$.  Then the functions $Y^k$ satisfy the
$n$ first order linear differential equations $${dY^k\over dt}
+\sum_{i,j} {dx^i\over dt} Y^j \Gamma_{ij}^k=0.$$ For example, if
$\gamma$ is a segment of a great circle on the sphere, then parallel
translation of vectors along $\gamma$ with respect to the connection
given by $\Gamma_{ij}^k=\delta_{ij}x^k$ is equivalent to rotating
tangent planes along the great circle.  The parallel translation of a
vector tangent to the north pole around a geodesic triangle on $S^2$
is illustrated in Figure~\ref{fig:part}. Note that the tangent vector
obtained by this process is different from the original tangent
vector.

\makeatletter
\def\addfig#1{\refstepcounter{figure}\let\@currentlabel=\thefigure
  \addcontentsline{lof}{figure}{\protect\numberline{Figure \thefigure}%
    {\ignorespaces #1}}}
\makeatother

\begin{figure}
\makeatletter
\ifch@pterhasfigures\else \global\ch@pterhasfigurestrue
  \addtocontents{lof}{\protect\addvspace{10pt}\protect
    \leftline{\bf Chapter \thechapter.}}\fi
\makeatother
\def\clap#1{\hbox to0pt{\hss#1\hss}}
\hbox to\textwidth{\hfil\dimen0=1.75in
\clap{\pdfximage width 1.75in {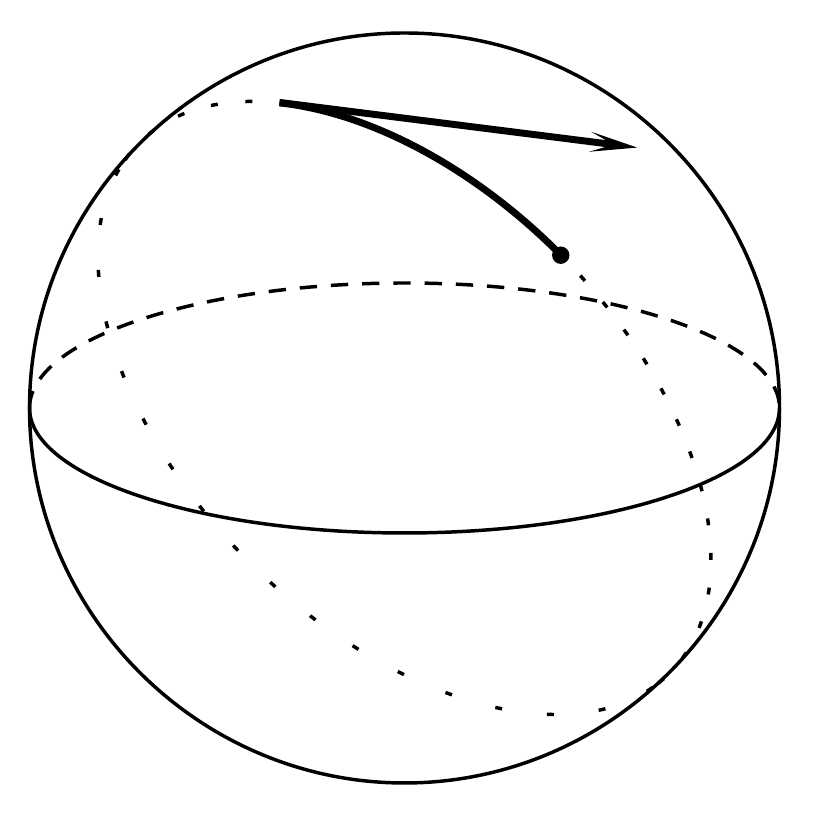}\pdfrefximage\pdflastximage}\hfil\hfil
\clap{\pdfximage width 1.75in {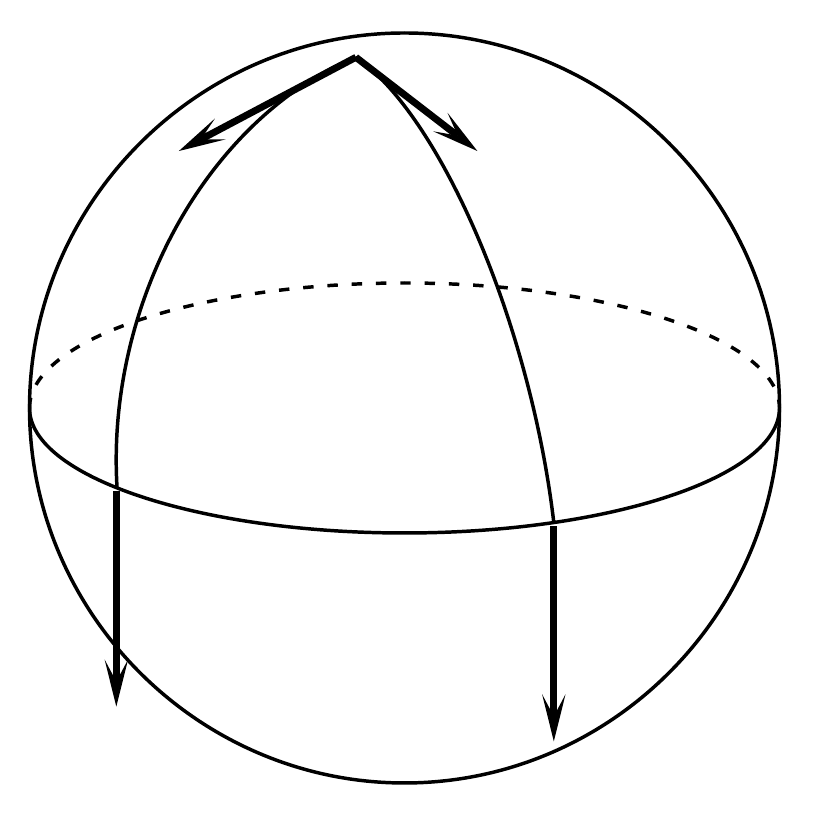}\pdfrefximage\pdflastximage}\hfil}
\vskip18pt
\hbox to\textwidth{\hfil
\clap{{\sc Figure} \addfig{Geodesic on a sphere}\thefigure.\
\small Geodesic on a sphere\label{fig:expmap}}\hfil\hfil
\clap{{\sc Figure} \addfig{Parallel translation on a sphere}\thefigure.\
\small Parallel translation on a sphere\label{fig:part}}\hfil}
\end{figure}


\begingroup \dimen0=2.5in
\hangafter=2\hangindent=\dimen0\indent Parallel translation
and covariant differentiation are related in the following way.  Let
$X$ be a vector field on~$M$, and $t\mapsto \gamma(t)$ an integral
curve of~$X$.  Denote the parallelism along~$\gamma$
from~$p=\gamma(0)$ to~$\gamma(h)$, $h$ small, by $\tau_h$. Then for an
arbitrary tensor field $A$ on~$M$, \begin{equation}
(\covD{X}A)_p=\lim_{h\to0}{1\over h} (\tau_h^{-1} A_{\gamma(h)}
-A_p). \end{equation} The covariant differentiation of a vector field
$Y$ along a vector field $X$ is illustrated in
Figure~\ref{fig:covd} at the left.\parfillskip=0pt\par
\nobreak\nointerlineskip
\leftline{\vbox to0pt{\vss
\begin{gnupicture}{\dimen0}
\setlength{\unitlength}{0.240900pt}
\ifx\plotpoint\undefined\newsavebox{\plotpoint}\fi
\begin{picture}(750,225)(60,100)
\tenrm
\put(264,124){\makebox(0,0){$\scriptstyle\bullet$}}
\put(264,104){\makebox(0,0)[t]{$p$}}
\put(264,124){\vector(4,1){200}}
\put(400,200){\makebox(0,0)[l]{$X_p$}}
\put(264,124){\vector(-1,2){100}}
\put(210,210){\makebox(0,0)[r]{$Y_p$}}
\put(264,124){\vector(1,4){60}}
\put(320,290){\makebox(0,0)[l]{$\tau_h^{-1}Y_{\gamma(h)}$}}
\put(164,324){\vector(4,1){160}}
\put(300,390){\makebox(0,0)[r]{$h(\covD XY)_p$}}
\put(264,124){\rule[-0.175pt]{0.964pt}{0.350pt}}
\put(268,125){\rule[-0.175pt]{1.204pt}{0.350pt}}
\put(273,126){\rule[-0.175pt]{0.964pt}{0.350pt}}
\put(277,127){\rule[-0.175pt]{0.964pt}{0.350pt}}
\put(281,128){\rule[-0.175pt]{0.964pt}{0.350pt}}
\put(285,129){\rule[-0.175pt]{1.204pt}{0.350pt}}
\put(290,130){\rule[-0.175pt]{0.964pt}{0.350pt}}
\put(294,131){\rule[-0.175pt]{0.964pt}{0.350pt}}
\put(298,132){\rule[-0.175pt]{0.964pt}{0.350pt}}
\put(302,133){\rule[-0.175pt]{1.204pt}{0.350pt}}
\put(307,134){\rule[-0.175pt]{0.964pt}{0.350pt}}
\put(311,135){\rule[-0.175pt]{0.964pt}{0.350pt}}
\put(315,136){\rule[-0.175pt]{0.964pt}{0.350pt}}
\put(319,137){\rule[-0.175pt]{1.204pt}{0.350pt}}
\put(324,138){\rule[-0.175pt]{1.927pt}{0.350pt}}
\put(332,139){\rule[-0.175pt]{0.964pt}{0.350pt}}
\put(336,140){\rule[-0.175pt]{1.204pt}{0.350pt}}
\put(341,141){\rule[-0.175pt]{0.964pt}{0.350pt}}
\put(345,142){\rule[-0.175pt]{2.168pt}{0.350pt}}
\put(354,143){\rule[-0.175pt]{0.964pt}{0.350pt}}
\put(358,144){\rule[-0.175pt]{1.927pt}{0.350pt}}
\put(366,145){\rule[-0.175pt]{1.204pt}{0.350pt}}
\put(371,146){\rule[-0.175pt]{1.927pt}{0.350pt}}
\put(379,147){\rule[-0.175pt]{2.168pt}{0.350pt}}
\put(388,148){\rule[-0.175pt]{1.927pt}{0.350pt}}
\put(396,149){\rule[-0.175pt]{3.132pt}{0.350pt}}
\put(409,150){\rule[-0.175pt]{1.927pt}{0.350pt}}
\put(417,151){\rule[-0.175pt]{4.336pt}{0.350pt}}
\put(435,152){\rule[-0.175pt]{16.381pt}{0.350pt}}
\put(503,151){\rule[-0.175pt]{2.891pt}{0.350pt}}
\put(515,150){\rule[-0.175pt]{3.132pt}{0.350pt}}
\put(528,149){\rule[-0.175pt]{2.168pt}{0.350pt}}
\put(537,148){\rule[-0.175pt]{1.927pt}{0.350pt}}
\put(545,147){\rule[-0.175pt]{2.168pt}{0.350pt}}
\put(554,146){\rule[-0.175pt]{1.927pt}{0.350pt}}
\put(562,145){\rule[-0.175pt]{1.204pt}{0.350pt}}
\put(567,144){\rule[-0.175pt]{1.927pt}{0.350pt}}
\put(575,143){\rule[-0.175pt]{0.964pt}{0.350pt}}
\put(579,142){\rule[-0.175pt]{1.204pt}{0.350pt}}
\put(584,141){\rule[-0.175pt]{0.964pt}{0.350pt}}
\put(588,140){\rule[-0.175pt]{1.927pt}{0.350pt}}
\put(596,139){\rule[-0.175pt]{1.204pt}{0.350pt}}
\put(601,138){\rule[-0.175pt]{0.964pt}{0.350pt}}
\put(605,137){\rule[-0.175pt]{0.964pt}{0.350pt}}
\put(609,136){\rule[-0.175pt]{1.204pt}{0.350pt}}
\put(614,135){\rule[-0.175pt]{0.964pt}{0.350pt}}
\put(618,134){\rule[-0.175pt]{0.964pt}{0.350pt}}
\put(622,133){\rule[-0.175pt]{0.482pt}{0.350pt}}
\put(624,132){\rule[-0.175pt]{0.482pt}{0.350pt}}
\put(626,131){\rule[-0.175pt]{1.204pt}{0.350pt}}
\put(631,130){\rule[-0.175pt]{0.964pt}{0.350pt}}
\put(635,129){\rule[-0.175pt]{0.964pt}{0.350pt}}
\put(639,128){\rule[-0.175pt]{0.964pt}{0.350pt}}
\put(643,127){\rule[-0.175pt]{0.602pt}{0.350pt}}
\put(645,126){\rule[-0.175pt]{0.602pt}{0.350pt}}
\put(645,126){\makebox(0,0){$\scriptstyle\bullet$}}
\put(645,106){\makebox(0,0)[t]{$\gamma(h)$}}
\put(645,126){\vector(1,4){60}}
\put(660,240){\makebox(0,0)[r]{$Y_{\gamma(h)}$}}
\put(648,125){\rule[-0.175pt]{0.964pt}{0.350pt}}
\put(652,124){\rule[-0.175pt]{0.964pt}{0.350pt}}
\put(656,123){\rule[-0.175pt]{0.482pt}{0.350pt}}
\put(658,122){\rule[-0.175pt]{0.482pt}{0.350pt}}
\put(660,121){\rule[-0.175pt]{1.204pt}{0.350pt}}
\put(665,120){\rule[-0.175pt]{0.482pt}{0.350pt}}
\put(667,119){\rule[-0.175pt]{0.482pt}{0.350pt}}
\put(669,118){\rule[-0.175pt]{0.964pt}{0.350pt}}
\put(673,117){\rule[-0.175pt]{0.482pt}{0.350pt}}
\put(675,116){\rule[-0.175pt]{0.482pt}{0.350pt}}
\put(677,115){\rule[-0.175pt]{1.204pt}{0.350pt}}
\put(682,114){\rule[-0.175pt]{0.482pt}{0.350pt}}
\put(684,113){\rule[-0.175pt]{0.482pt}{0.350pt}}
\end{picture}
\end{gnupicture}
\hbox to\dimen0{\strut\hss{\sc Figure} 
  \addfig{Covariant differentiation}\thefigure.\label{fig:covd}\hss}}}
\endgroup

\subsection{Riemannian connections}

Given a Riemannian structure $g$ on a differentiable manifold $M$,
there exists a unique affine connection $\nabla$ on~$M$, called the
Riemannian or Levi-Civita connection, which for~all $X$, $Y\in\vf(M)$
satisfies $$\conditions{(i)&$\covD{X}Y-\covD{Y}X=[X,Y]$&($\nabla$ is
symmetric or torsion-free),\cr (ii)&$\nabla g=0$&(parallel translation
is an isometry).\cr}$$ Length minimizing curves on $M$ are geodesics
of the Levi-Civita connection.  We shall use this connection
throughout the thesis.  For every $p\in M$, there exists a normal
neighborhood $N_p=\exp N_0$ of~$p$ such that $d(p,\exp_pX)=\|X\|$
for~all $X\in N_0$, where $d$ is the Riemannian metric corresponding
to~$g$.

If $(U,x^1,\ldots,x^n)$ is a coordinate patch on~$M$, then the
Christoffel symbols $\Gamma_{ij}^k$ of the Levi-Civita connection are
related to the functions $g_{ij}$ by the formula
$$\Gamma_{ij}^k={1\over2}\sum_l g^{kl} \Bigl({\partial
g_{li}\over\partial x^j} -{\partial g_{ij}\over\partial x^l}
+{\partial g_{jl}\over\partial x^i} \Bigr).$$ By inspection it is seen
that $\Gamma_{ij}^k=\Gamma_{ji}^k$.

\section{Lie groups and homogeneous spaces}\label{sec:liegphomsp}

The basic structure of Lie groups and homogeneous spaces is reviewed
in this section, which follows Helgason's \citeyear{Helgason},
Warner's \citeyear{Warner}, Cheeger and Ebin's \citeyear{Cheegin}, and
Kobayashi and Nomizu's \citeyear[Chap.~10]{KobayashiandNomizu}
expositions.

\subsection{Lie groups}

\begin{definition} A {\it Lie group\/} $G$ is a differentiable
manifold and a group such that the map $G\times G\to G$ defined by
$(g,k)\mapsto gk^{-1}$ is $C^\infty$.
\end{definition}

The identity in~$G$ will be denoted by~$e$ in the general case, and
by~$I$ if $G$ is a matrix group.

\begin{definition} A {\it Lie algebra\/} $\g$ over~$\R$ is a vector
space over~$\R$ with a bilinear operation $[\,{,}\,]\colon
\g\times\g\to\g$ (called the {\it bracket\/}) such that for~all $x$,
$y$, $z\in\g$, $$\conditions{(i)&$[x,x]=0$&(implies anticommutivity),\cr
(ii)&$\bigl[x,[y,z]\bigr] +\bigl[y,[z,x]\bigr] +\bigl[z,[x,y]\bigr]
=0$&(Jacobi identity).\cr}$$ \end{definition}

Let $G$ be a Lie group and $g\in G$.  Left multiplication by~$g$ is
denoted by the map $l_g\colon G\to G$, $k\mapsto gk$, and similarly
for right multiplication $r_g\colon k\mapsto kg$.  Let $X$ be a vector
field on~$G$.  $X$ is said to be {\it left invariant\/} if for each
$g\in G$, $$l_g{}_*(X)=X\circ l_g.$$ The notation $f_*$ is used here
and elsewhere to denote $df$, the differential of a map~$f$.
Specifically, note that if $X$ is a left invariant vector field, then
$X_g=l_g{}_*X_e$, i.e., the value of~$X$ at any point $g\in G$ is
determined by its value at the identity~$e$.  Thus there is a
one-to-one correspondence between left invariant vector fields on~$G$
and tangent vectors in~$T_eG$.  Given a finite dimensional Lie
group~$G$, the vector space of left invariant vector fields on~$G$ or,
equivalently, the vector space $T_eG$, together with the Lie
derivative $L_XY=[X,Y]=XY-YX$ as the bracket operation yields a finite
dimensional Lie algebra~$\g$, in this thesis denoted by a lower-case
German letter.  We shall define $\g$ to be the vector space $T_eG$,
and for $X\in\g$, oftentimes denote the corresponding left invariant
vector field by~$\Xtilde$.

For every element $X$ in~$\g$, there is a unique homomorphism
$\phi\colon\R\to G$, called the {\it one-parameter subgroup\/} of~$G$
generated by~$X$, such that $\dot\phi(0)=X$.  Define the {\it
exponential map\/} $\exp\colon\g\to G$ by setting $\exp X=\phi(1)$.
The one-parameter subgroup $t\mapsto\phi(t)$ generated by~$X$ is
denoted by~$t\mapsto\exp tX$.  For matrix groups, the exponential map
corresponds to matrix exponentiation, i.e., $\exp tX
=e^{Xt}=I+tX+(t^2/2!)X^2+\cdots\,$.  It will be seen in the next
section in what sense the exponential map for a Lie group is related
to the exponential map for a manifold with an affine connection.

Let $G$ be a Lie group with Lie algebra $\g$.  Consider the action
of~$G$ on itself by conjugation, i.e., $a\colon(g,k)\mapsto gkg^{-1}$,
which has a fixed point at the identity. Denote the automorphism
$k\mapsto gkg^{-1}$ of~$G$ by~$a_g$.  Define the {\it adjoint
representation\/} $\Ad\colon G\to\Aut(\g)$ by the map
$g\mapsto(da_g)_e$, where $\Aut(\g)$ is the group of automorphisms of
the Lie algebra $\g$.  If $G$ is a matrix group with $g\in G$ and
$\omega\in\g$, we have $\Ad(g)(\omega)=g\omega g^{-1}$.  Furthermore,
we denote the differential of~$\Ad$ at the identity by~$\ad$, i.e.,
$$\ad=d\mkern-.6\thinmuskip\Ad_e$$ so that $\ad\colon\g\to\End(\g)$ is
a map from the Lie algebra $\g$ to its vector space of endomorphisms
$\End(\g)$. The notation $\Ad_g=\Ad(g)$ ($g\in G$) and $\ad_X=\ad(X)$
($X\in\g$) is often used.  It may be verified that $\ad_XY=[X,Y]$ for
$X$ and $Y$ in~$\g$.  If $G$ is a matrix group, then $\ad_XY=XY-YX$.
The functions $\Ad\colon G\to\Aut(\g)$ and $\ad\colon\g\to\End(\g)$
are related by $$\Ad\circ\exp=\exp\circ\ad,$$ i.e., for $X\in\g$,
$\Ad_{\exp X}=e^{\ad_X}$.

\begin{definition} Let $\g$ be a Lie algebra.  The {\it Killing
form\/} of~$\g$ is the bilinear form $\kf$ on~$\g\times\g$ defined by
$$\kf(X,Y)=\tr(\ad_X\circ\ad_Y).$$
\end{definition}

\subsection{Homogeneous spaces}

Let $G$ be a Lie group and $H$ a closed subgroup of~$G$.  Then the
(left) {\it coset space\/} $G/H=\{\,gH:g\in G\,\}$ admits the
structure of a differentiable manifold such that the natural
projection $\pi\colon G\to G/H$, $g\mapsto gH$, and the action of~$G$
on~$G/H$ defined by $(g,kH)\mapsto gkH$ are~$C^\infty$.  The dimension
of~$G/H$ is given by~$\dim G/H=\dim G-\dim H$.  Define the origin
of~$G/H$ by~$o=\pi(e)$.

\begin{definition} Let $G$ be a Lie group and $H$ a closed subgroup
of~$G$. The differentiable manifold $G/H$ is called a {\it homogeneous
space}.  \end{definition}

Let $\g$ and $\h$ be the Lie algebras of $G$ and $H$, respectively,
and let $\m$ be a vector subspace of~$\g$ such that $\g=\m+\h$ (direct
sum).  Then there exists a neighborhood of~$0\in\m$ which is mapped
homeomorphically onto a neighborhood of the origin~$o\in G/H$ by the
mapping $\pi\circ\exp|_\m$.  The tangent plane $T_o(G/H)$ at the
origin can be identified with the vector subspace~$\m$.

\begin{definition} A {\it Lie transformation group\/} $G$ acting on a
differentiable manifold $M$ is a Lie group $G$ which acts on~$M$ (on
the left) such that (i)~every element $g\in G$ induces a
diffeomorphism of~$M$ onto itself, denoted by $p\mapsto g\cdot p$ or
$p\mapsto l_g(p)$, (ii)~the map from~$G\times M$ to~$M$ defined
by~$(g,p)\mapsto g\cdot p$ is~$C^\infty$, and (iii)~$g\cdot(k\cdot p)=
gk\cdot p$ for $p\in M$, $g$, $k\in G$ (the action is transitive).
\end{definition}

For example, the Lie group $G$ is clearly a Lie transformation group
of the homogeneous space $G/H$.

The action of~$G$ on~$M$ is said to be {\it effective\/} if for any
$g\in G$, $l_g=\id$ on~$M$ implies that $g=e$.  Define the {\it
isotropy group\/} $H_p$ at~$p$ in~$M$ by $$H_p=\{\,g\in G:g\cdot
p=p\,\}.$$ The isotropy group at~$p$ is a closed subgroup of~$G$, and
the mapping $$g\cdot p\mapsto gH_p$$ of~$M$ onto~$G/H_p$ is a
diffeomorphism.  Therefore, we can identify $M$ with the homogeneous
space $G/H_p$.  Note that $H_p$ is not uniquely determined by~$M$, as
it may be replaced by~$H_{g\cdot p}=gH_pg^{-1}$ for any $g$ in~$G$.
The element $g$ in~$G$ is called a {\it coset representative\/} of the
point $g\cdot p$ in~$M$ and the point $gH$ in~$G/H$.  Every element
$h$ in~$H_p$ fixes~$p$, and therefore induces a linear transformation
$(dl_h)_p$ on the tangent plane $T_pM$.  The set $\tilde
H_p=\{\,(dl_h)_p:h\in H_p\,\}$ is called the {\it linear isotropy
group\/} at~$p$.

Let $G/H$ be a homogeneous space of~$G$, and $\pi\colon G\to G/H$ the
natural projection.  The tangent plane $T_o(G/H)$ at the origin
$o=\pi(e)$ may be identified with the quotient space $\g/\h$, because
for any function $f\in C^\infty(G/H)$, $$\bar f_*(\h)=0,$$ where $\bar
f$ is the unique lift in~$C^\infty(G)$ such that $\bar f=f\circ\pi$.
A tensor field~$A$ on~$G/H$ is $G$-invariant if and only if $A_o$ is
invariant under the linear isotropy group at~$o$, thus a computation
of the map $l_h{}_*\colon T_o\to T_o$ is desirable.  Let $\bar
l_g\colon G\to G$ and $l_g\colon G/H\to G/H$ denote left translation
by~$g\in G$.  Note that \begin{equation} l_g\circ\pi=\pi\circ\bar l_g,
\label{eq:pil=lpi} \end{equation} and for any $h\in H$, $g\in G$,
$\pi(hg)=\pi(hgh^{-1})$, i.e., \begin{equation} \pi\circ\bar
l_h=\pi\circ a_h, \label{eq:pil=pia} \end{equation} where $a_h$
denotes conjugation by~$h$.  Therefore, by applying
Equation~(\ref{eq:pil=lpi}) to Equation~(\ref{eq:pil=pia}) and
evaluating the differential of both sides at the identity~$e$, it is
seen that $$l_h{}_*\circ\pi_*=\pi_*\circ\Ad_h,$$ i.e., the action
of~$l_h{}_*$ on~$T_o$ corresponds to the action of~$\Ad_h$ on~$\g$,
which in turn corresponds to the action of~$\Ad_h$ on~$\g/\h$ because
$\h$ is $\Ad_H$-invariant.

Let $M$ be a differentiable manifold, and $G$ a Lie transformation
of~$M$.  To every $X\in\g$, there corresponds a unique vector
field~$\Xtilde$ on~$M$ defined by the equation \begin{equation}
\label{eq:homspvf} (\Xtilde\f)_p ={d\over dt}\Bigl|_{t=0} f(\exp
tX\cdot p) \end{equation} for $f\in C^\infty(M)$.  The vector field
$\Xtilde$ on~$M$ is said to be {\it induced\/} by the one-parameter
subgroup $\exp tX$.  For $p_0\in M$, $g\in G$, note that
\begin{equation} \Xtilde_{g\cdot p_0} =(l_g{}_*\circ \pi_*\circ
\Ad_{g^{-1}})(X), \label{eq:Xinduced} \end{equation} where $\pi$ is
the projection $g\mapsto g\cdot p_0$ from $G$ onto~$M$. Thus $\Xtilde$
is not left invariant in general. Equation~(\ref{eq:Xinduced}) will be
useful when specific vector fields on homogeneous spaces are
considered.  Furthermore, if $\Xtilde$ and $\Ytilde$ are vector fields
on~$M$ induced by $X$ and $Y$ in~$\g$, then $$[\Xtilde,\Ytilde]
=-\widetilde{[X,Y]}.$$

\begin{definition} Let $G$ be a connected Lie group, $H$ a closed
subgroup of~$G$, and $\g$ and $\h$ the Lie algebras of $G$ and $H$,
respectively.  The homogeneous space $G/H$ is said to be {\it
reductive\/} if there exists a vector subspace $\m$ of~$\g$ such that
$\g=\m+\h$ (direct sum), and $\m$ is $\Ad_H$-invariant, i.e.,
$\Ad_H(\m)\subset\m$.  \end{definition}

For example, the homogeneous space $G/H$ is reductive if $H$ is
compact.  Our interest in reductive homogeneous spaces lies solely
with this class of examples; for others, see \citeasnoun{Nomizu} or
\citeasnoun[Chap.~10]{KobayashiandNomizu}.

\subsection{Invariant affine connections}

\begin{definition} Let $G$ be a Lie transformation group acting on a
differentiable manifold $M$.  An affine connection $\nabla$ on~$M$ is
said to be {\it $G$-invariant\/} if for~all $g\in G$, $X$,
$Y\in\vf(M)$, $$l_g{}_*(\covD XY) =\covD{(l_g{}_*X)}(l_g{}_*Y).$$
\end{definition}

If $M=G$ is a Lie group with Lie algebra $\g$, we have the following
useful classification.  Let $\Xtilde$ and $\Ytilde$ be left invariant
vector fields on~$G$ corresponding to $X$ and $Y\in\g$, respectively.
There is a one-to-one correspondence between invariant affine
connections on~$G$ and the set of bilinear functions
$\alpha\colon\g\times\g\to\g$ given by the formula
$$\alpha(X,Y)=(\covD \Xtilde \Ytilde)_e.$$ Geodesics on~$G$ coincide
with one-parameter subgroups if and only if $\alpha(X,X)=0$ for~all
$X\in\g$.  The classical Cartan-Schouten invariant affine connections
on~$G$ correspond to $\alpha(X,Y)\equiv0$ (the $(-)\hyphen$connection),
$\alpha(X,Y)=\half[X,Y]$ (the $(0)\hyphen$connection), and
$\alpha(X,Y)=[X,Y]$ (the $(+)\hyphen$connection).

Let $G/H$ be a reductive homogeneous space with a fixed decomposition
of the Lie algebra $\g=\m+\h$, $\Ad_H(\m)\subset\m$, and $\pi\colon
G\to G/H$ the natural projection.  Any element $X\in\g$ can be
uniquely decomposed into the sum of elements in $\m$ and $\h$, which
will be denoted by $X_\m$ and $X_\h$, respectively.  There is a
one-to-one correspondence between invariant affine connections
on~$G/H$ and the set of bilinear functions
$\alpha\colon\m\times\m\to\m$ which are $\Ad_H$-invariant, i.e.,
$\Ad_h\cdot\alpha(X,Y)=\alpha(\Ad_hX,\Ad_hY)$ for~all $X$, $Y\in\m$,
$h\in H$.

Let $t\mapsto \exp tX$ be the one-parameter subgroup generated by~$X$
in~$\m$, and denote the curve $t\mapsto\pi(\exp tX)$ in~$G/H$ by
$t\mapsto\gamma_X(t)$.  In addition to the requirement that the
connection be complete, consider the following conditions on the
invariant affine connection on~$G/H$.

$(\rm a)$\enskip The curve $\gamma_X$ is a geodesic in~$G/H$.

$(\rm b)$\enskip Parallel translation of the tangent vector $Y\in T_o$
corresponding to $y\in\m$ along the curve $\gamma_X$ is given by the
differential of~$\exp tX$ acting on~$G/H$.

Nomizu \citeyear{Nomizu} established the following results concerning
invariant affine connections on reductive homogeneous spaces.  Recall
that the {\it torsion\/} of a connection $\nabla$ on a manifold~$M$ is
a tensor $T$ of type~$(1,2)$ defined by $T(X,Y)=\covD XY -\covD YX
-[X,Y]$, $X$,~$Y\in\vf(M)$.  The connection is said to be torsion-free
if $T\equiv0$.

\begin{theorem}[Nomizu] On a reductive homogeneous space $G/H$, there
exists a unique invariant connection which is torsion-free and
satisfies\/~$(\rm a)$.  It is defined by the function
$\alpha(X,Y)=\half[X,Y]_\m$ on~$\m\times\m$. \end{theorem}

This connection is called the {\it canonical torsion-free
connection\/} on~$G/H$ with respect to the fixed decomposition
$\g=\m+\h$.  In the case of a Lie group, it is the Cartan-Schouten
$(0)\hyphen$connection.

\begin{theorem}[Nomizu] On a reductive homogeneous space $G/H$, there
exists a unique invariant connection which satisfies\/~$(\rm b)$.  It
is defined by the function $\alpha(X,Y)\equiv0$ on~$\m\times\m$.
\end{theorem}

This connection is called the {\it canonical connection\/} on~$G/H$
with respect to the fixed decomposition $\g=\m+\h$.  In the case of a
Lie group, it is the Cartan-Schouten $(-)\hyphen$connection.

If $G/H$ is a {\it symmetric\/} homogeneous space, then these two
connections coincide. (See \citeasnoun{Helgason},
\citeasnoun{KobayashiandNomizu}, or \citeasnoun{Wolf} for background on
symmetric spaces.)  From the point of view of the applications we have
in mind, the choice of the canonical torsion-free connection on~$G/H$
facilitates the computation of geodesics on~$G/H$.  The choice of the
canonical connection on~$G/H$ facilitates the computation of parallel
translation along curves of the form $t\mapsto\pi(\exp tX)$.  In the
case of a symmetric space, the canonical connection allows both the
computation of geodesics and parallel translation along geodesics by
conditions $(\rm a)$ and $(\rm b)$ above.

\subsection{Invariant Riemannian metrics}

\begin{definition} Let $G$ be a Lie transformation group acting on a
differentiable manifold $M$.  A tensor field $A$ on~$M$ is said to be
{\it $G$-invariant\/} if for~all $g\in G$, $p\in M$, $$l_g{}_*(A)=
A\circ l_g.$$ \end{definition}

In particular, a Riemannian structure $g$ on $M$ is said to be (left)
{\it invariant\/} if it is $G$-invariant as a tensor field on~$G/H$.
That is, $$g_{k\cdot p}(l_k{}_*X,l_k{}_*Y)=g_p(X,Y)$$ for~all $p\in
M$, $k\in G$, $X$, $Y\in T_p$.  In the case of a Lie group $G$, a {\it
bi-invariant\/} metric on~$G$ is a Riemannian structure on~$G$ that is
invariant with respect to the left and right action of~$G$ on itself.

There may not exist an invariant Riemannian metric on the homogeneous
space $G/H$; however, \citeasnoun{Cheegin} provide a proposition that
describes the invariant metric structure of all homogeneous spaces
considered in this thesis.  The following proposition paraphrases
Proposition~3.16 of Cheeger and Ebin.

\begin{proposition}\label{prop:cheegin}\ignorespaces $(1)$\enspace The
set of\/~$G$-invariant metrics on $G/H$ is naturally isomorphic to the
set of bilinear forms\/ $\(\,{,}\,\)$ on\/~$\g/\h\times\g/\h$ which
are $\Ad_H$-invariant.  \par $(2)$\enskip If\/ $H$ is connected, the
bilinear form\/ $\(\,{,}\,\)$ on\/~$\g/\h\times\g/\h$ is
$\Ad_H$-invariant if and only if for~all $\eta\in\h$, $\ad_\eta$ is
skew-symmetric with respect to\/ $\(\,{,}\,\)$. \par $(3)$\enskip If\/
$G$ acts effectively on~$G/H$, then $G/H$ admits an invariant metric
if and only if the closure of the group $\Ad_H$ in~$\Aut(\g)$, the
group of automorphisms of\/~$\g$, is compact.  \par $(4)$\enskip If\/
$G$ acts effectively on~$G/H$ and $G/H$ is reductive with the fixed
decomposition\/ $\g=\m+\h$, then there is a one-to-one correspondence
between $G$-invariant metrics on~$G/H$ and $\Ad_H$-invariant bilinear
forms on\/~$\m\times\m$. If\/ $G/H$ admits a left invariant metric,
then $G$ admits a left invariant metric which is right invariant with
respect to~$H$; the restriction of this metric to~$H$ is bi-invariant.
\par Setting\/ $\m=\h^\perp$ provides such a decomposition. \par
$(5)$\enskip If\/ $H$ is connected, then the condition
$\Ad_H(\m)\subset\m$ is equivalent to\/ $[\h,\m]\subset\m$.
\end{proposition}

Let $G$ be a Lie group which admits a bi-invariant metric
$\(\,{,}\,\)$.  Then there is a corresponding left invariant metric,
called the {\it normal\/} metric, on the homogeneous space $G/H$ with
fixed decomposition $\g=\m+\h$, $\m=\h^\perp$, arising from the
restriction of~$\(\,{,}\,\)$ to~$\m$.  For example, let $G$ be a
compact semisimple Lie group with Lie algebra $\g$.  The Killing form
$\kf$ of~$\g$ is negative definite; therefore, $-\kf$ naturally defines an
invariant Riemannian metric on~$G$. The Levi-Civita connection of this
metric is the $(0)\hyphen$connection of~$G$.  Let $H$ be a closed subgroup
of~$G$ such that $G$ acts effectively on~$G/H$.  Then setting
$\m=\h^\perp$ with respect to~$-\kf$ yields a subspace $\m$ of~$\g$ such
that $\Ad_H(\m)\subset\m$, i.e., $G/H$ is a reductive homogeneous
space.  Furthermore, $-\kf$ restricted to~$\m$ yields an
$\Ad_H$-invariant bilinear form on~$\m\times\m$ and therefore yields
a left invariant Riemannian metric on~$G/H$.  The Levi-Civita
connection of this metric is the canonical torsion-free connection
on~$G/H$.

\subsection{Formulae for geodesics and parallel translation along geodesics}

Let $G$ be a Lie group with the $(0)\hyphen$connection, $\g$ the Lie
algebra of~$G$, and $X$ a left invariant vector field on~$G$
corresponding to~$x\in\g$.  Then the unique geodesic in~$G$ emanating
from $g$ in direction $X_g$ is given by the curve $t\mapsto\gamma(t)$,
where $$\gamma(t)=g\exp tx.$$ Let $t\mapsto\gamma_x(t)$ be the
geodesic in~$G$ emanating from the identity~$e$ with direction
$x\in\g$, and let $Y(t)$ be the parallel translation of~$y\in\g$
from~$e$ to~$\exp tX$ along~$\gamma_X$.  Then \begin{equation}
\label{eq:expliegp} Y(t)= l_{e^{xt}}{}_* \Ad_{e^{-{t\over2}x}} (y),
\end{equation} where $e^x=\exp x$.  For computations involving vector
fields on~$G$, it is oftentimes convenient to represent a tangent
vector $X_g$ in~$T_gG$ ($g\in G$) by a corresponding element $x_g$
in~$\g$ defined by the equation $X_g=l_g{}_*x_g$.  Letting $y(t)\in\g$
correspond to~$Y(t)\in T_{e^{xt}}G$ in this way, it is seen that
\begin{equation} y(t)=\Ad_{e^{-{t\over2}x}}(y). \label{eq:Liegppt}
\end{equation}

Let $G$ be a Lie group with bi-invariant metric $g$ also denoted by
$\(\,{,}\,\)$, and $G/H$ a reductive homogeneous space with the normal
metric and the fixed decomposition $\g=\m+\h$, $\m=\h^\perp$. Denote
the natural projection from~$G$ onto $G/H$ by~$\pi$ and let $o=\pi(e)$
be the origin in~$G/H$.  We wish to compute a formula for parallel
translation along geodesics in~$G/H$.  To do this, we view $G$ as
principal fiber bundle over~$G/H$ with structure group $H$, i.e., we
consider the fiber bundle $G(G/H,H)$ with its canonical torsion-free
connection \cite[Chap.~1, \S\thinspace5; Chap.~2]
{KobayashiandNomizu}.

For every element $x\in\m$, there is a unique element $X_o\in
T_o(G/H)$ given by Equation~(\ref{eq:homspvf}). For $t$ small enough,
define the vector field $X$ along the geodesic $t\mapsto\exp tx\cdot
o$ in~$G/H$ by setting $X_{e^{xt}}=l_{e^{xt}}{}_*X_o$. There is a
unique horizontal lift $\bar X\in\vf(G)$ of a smooth extension of~$X$.
Let $Y$ be a parallel vector field along the geodesic $t\mapsto
\exp(tx)\cdot o$ and denote $Y$ evaluated at the point $\exp(tx)\cdot
o$ by~$Y(t)$.  For each $t\in\R$, define $Y_o(t)\in T_o(G/H)$ and
$y(t)\in\m$ by the equation $$Y(t)=l_{e^{xt}}{}_*Y_o(t),$$ such that
$Y_o(t)$ corresponds to~$y(t)$. Let $\bar Y\in\vf(G)$ be the
horizontal lift a smooth extension of~$Y$, and let $\bar Z$ be the
horizontal lift in~$\vf(G)$ of a smooth extension of the vector field
$Z$ along $t\mapsto\exp tx\cdot o$ defined
by~$Z_{e^{xt}}=l_{e^{xt}}{}_*Z_o$, where $Z_o\in T_o(G/H)$ corresponds
to~$z\in\m$.  The projection onto the horizontal and vertical
components in~$\vf(G)$ will be denoted by superscript $H$ and $V$,
respectively, i.e., the vector field $A\in\vf(G)$ decomposes uniquely
as $A=A^H+A^V$.  At the identity, the horizontal and vertical
components of~$\vf(G)$ coincide with the $\m$-component and
$\h$-component of~$\g$, respectively.  The projection of~$\g$ onto
these components will be denoted by a subscript $\m$ and $\h$,
respectively, i.e., $x\in\g$ decomposes uniquely as $x=x_\m+x_\h$.

By the definition of the Levi-Civita connection, we have
\begin{equation} X\(Y,Z\)=\(\covD{X}Y,Z\) +\(Y,\covD{X}Z\).
\label{eq:lcdef} \end{equation} It is seen that \begin{equation}
X\(Y,Z\)=\(dy/dt,z\) \label{eq:lcdeflft} \end{equation} by the chain
of equalities $$\def\ddt{\mathop{(d/dt)}}
\eqalign{\bigl(X\(Y,Z\)\bigr)_{e^{xt}\cdot o} &=\ddt g_{e^{xt}\cdot
o}(l_{e^{xt}}{}_*Y_o(t),l_{e^{xt}}{}_*Z_o)\cr &=\ddt
g_o(Y_o(t),Z_o)\cr &=\ddt\(y(t),z\).\cr}$$ The vector field $Y$ is
parallel along the geodesic; therefore, \begin{equation} \covD{X}Y=0
\label{eq:lcdefz} \end{equation} by definition.  Computing the
rightmost term of Equation~(\ref{eq:lcdef}), we find that
\begin{eqnarray} \(Y,\covD{X}Z\) &=& \(\bar Y, \overline{\covD{X}Z}\)
\nonumber \\ &=& \bigl\(\bar Y, \covD{\bar X}\bar Z -\half[\bar X,
\bar Z]^V\bigr\) \qquad\hbox{(cf.\ Cheeger and Ebin, p.~67)} \nonumber
\\ &=& \bigl\(\bar Y, \half[\bar X, \bar Z] -\half[\bar X, \bar
Z]^V\bigr\) \nonumber \\ &=& \bigl\(\bar Y, \half[\bar X, \bar
Z]^H\bigr\) \nonumber \\ &=& \bigl\(y, \half[x,z]_\m\bigr\) \nonumber
\\ &=& \bigl\(-\half[x,y]_\m,z\bigr\).  \label{eq:lcdefrt}
\end{eqnarray} Combining Equations (\ref{eq:lcdef}),
(\ref{eq:lcdeflft}), (\ref{eq:lcdefz}), and (\ref{eq:lcdefrt}), we
have proved:

\begin{proposition}\label{prop:ptredhom}\ignorespaces Let $M=G/H$ be a
reductive homogeneous space which admits an invariant metric
$\(\,{,}\,\)$ and which has the fixed decomposition\/ $\g=\m+\h$,
$\m=\h^\perp$. Denote the origin of~$M$ by $o=\pi(e)$, where
$\pi\colon G\to M$ is the natural projection.  Let $x$ and $y_0$ be
vectors in\/~$\m$ corresponding to the tangent vectors $X$ and $Y_0$
in~$T_o(G/H)$.  The parallel translation $Y(t)$ of~$Y_0$ along the
geodesic $t\mapsto \exp_o tX =e^{xt}\cdot o$ in~$M$ is given in the
following way.  Define $Y_o(t)\in T_o(G/H)$ by the equation $Y(t)=
l_{e^{xt}}{}_*Y_o(t)$, and let $y(t)\in\m$ correspond to $Y_o(t)\in
T_o(G/H)$. The vector $y(t)$ satisfies the ordinary linear
differential equation \begin{equation} \label{eq:homspcpt} \dot
y=-\half[x,y]_\m;\qquad y(0)=y_0.  \end{equation} \end{proposition}

In the case of a Lie group $M=G$, we may take $\m=\g$; thus
Equation~(\ref{eq:homspcpt}) reduces to $$\dot y=-\half[x,y];\qquad
y(0)=y_0\in\g,$$ whose solution is given by
Equation~(\ref{eq:Liegppt}).  In the case where $G/H$ is a symmetric
space, the decomposition $\g=\m+\h$ satisfies the properties
$$[\m,\m]\subset\h,\qquad [\h,\m]\subset\m,\qquad [\h,\h]\subset\h.$$
Therefore $\dot y\equiv0$ because the $\m$-component of~$[x,y]$
vanishes.  Thus $y(t)\equiv y_0$.

\section{Examples}\label{sec:examples}

\subsection{The orthogonal group}

The {\it general linear group\/} $\GL(n)$ is the set of all real
$n\by n$ invertible matrices.  $\GL(n)$ is easily verified to be a
Lie group of dimension $n^2$ whose Lie algebra $\gl(n)$ is the vector
space of all $n\by n$ matrices with bracket $[X,Y]=XY-YX$.  The {\it
orthogonal group\/} $\O(n)$ is the subgroup of~$\GL(n)$ given by
$$\O(n)= \{\,\Theta\in\GL(n): \Theta^\T\Theta=I\,\}.$$ It is
well-known that $\O(n)$ is a Lie group of dimension $n(n-1)/2$ with
two connected components.  The identity component of~$\O(n)$ is called
the {\it special orthogonal group\/} $\SO(n)$, which is defined by
$$\SO(n)= \{\,\Theta\in\GL(n): \Theta^\T\Theta=I,\ \det\Theta=1\,\}.$$
The Lie algebra of~$\SO(n)$, denoted by~$\so(n)$, is the set of all
$n\by n$ skew-symmetric matrices, i.e., $$\so(n)
=\{\,\Omega\in\gl(n): \Omega+\Omega^\T =0\,\}.$$ The orthogonal group
is the group of all isometries of the vector space $\R^n$ (endowed
with the standard inner product $\(x,y\)=x^\T y=\sum_i x^iy^i$) which
fix the origin.  The special orthogonal group is the group of all
orientation preserving isometries of~$\R^n$ which fix the origin.

The orthogonal group $\O(n)$ is compact, and therefore admits a
bi-invariant metric, which is given by the negative of the Killing
form of~$\so(n)$.  A computation shows that for $X$, $Y\in\so(n)$,
\begin{equation} \label{eq:killingson} \kf(X,Y) =\tr(\ad_X\circ\ad_Y)
=(n-2)\tr XY, \end{equation} where the first trace is the trace of
endomorphisms of~$\so(n)$, and the second trace is the trace of $n\by
n$ matrices.  In case $n=2$, take the bilinear form $(X,Y)\mapsto\tr
XY$.  Furthermore, $\O(n)$ is semisimple, therefore $-\kf$ is positive
definite and thus defines a bi-invariant metric on~$\O(n)$.  This is
the natural bi-invariant metric on~$\O(n)$.

The Levi-Civita connection of this metric is the
$(0)\hyphen$connection.  The unique geodesic in~$\O(n)$ emanating from
the identity~$I$ in direction $X\in\so(n)$ is given by the formula
\begin{equation} \label{eq:geodesicson} t\mapsto e^{Xt},
\end{equation} where $e^{Xt}=I+tX+(t^2/2!)X^2+\cdots$ denotes matrix
exponentiation of~$X$.  Because the $(0)\hyphen$connection is
invariant, geodesics anywhere on~$\O(n)$ may be obtained by left
translation of geodesics emanating from the identity.  The parallel
translation $Y(t)$ of a tangent vector $Y_0\in\so(n)$ along the
geodesic $t\mapsto e^{Xt}$ is given by the formula \begin{equation}
Y_0(t)=e^{-{t\over2}X}Y_0e^{{t\over2}X}, \label{eq:SOnpt}
\end{equation} where $Y(t)\in T_{e^{xt}}$ corresponds to
$Y_0(t)\in\so(n)$ via left translation by~$e^{Xt}$, i.e.,
$Y(t)=e^{Xt}Y_0(t)$.  This formula may be used to compute the parallel
translation along any geodesic in~$\O(n)$ by the invariance of the
canonical connection.  Thus both geodesics in~$\O(n)$ and parallel
translation along geodesics in~$\O(n)$ may be computed via matrix
exponentiation of skew-symmetric matrices, for which there exist
stable efficient algorithms (Ward \& Gray 1978a, 1978b).

\subsection{The sphere}

Endow $\R^n$ with the standard inner product $\(x,y\)=x^\T y=\sum_i
x^iy^i$.  The $(n-1)$-sphere $S^{n-1}$ is an imbedded manifold
in~$\R^n$ defined by $$S^{n-1} =\{\,x\in\R^n: x^\T x=1\,\}.$$ The
standard inner product on~$\R^n$ induces a Riemannian metric
on~$S^{n-1}$.  As is well-known, geodesics on the sphere are great
circles and parallel translation along a geodesic is equivalent to
rotating the tangent plane along the corresponding great circle.
The tangent plane of the sphere at~$x$ in~$S^{n-1}$ is characterized
by $$T_xS^{n-1} =\{\,v\in\R^n: x^\T v=0\,\}.$$ Let $x\in S^{n-1}$, and
let $h\in T_x$ be any tangent vector at~$x$ having unit length, i.e.,
$h^\T h=1$, and $v\in T_x$ any tangent vector.  Then the unique
geodesic in~$S^{n-1}$ emanating from~$x$ in direction $h$, the
parallel translation of~$h$ along this geodesic, and the parallel
translation of~$v$ along this geodesic are given by the equations
$$\eqalign{\exp_x th &=x\cos t +h\sin t,\cr \tau h &=h\cos t -x\sin
t,\cr \tau v &=v-(h^\T v)\bigl(x\sin t +h(1-\cos t)\bigr),\cr}$$ where
$\tau$ is the parallelism along the geodesic $t\mapsto\exp th$.

The special orthogonal group $\SO(n)$ is a Lie transformation group of
the sphere $S^{n-1}$.  At any point on the sphere, say
$(1,0,\ldots,0)$, there is a closed subgroup $\SO(n-1)$ of~$\SO(n)$
that fixes this point. Therefore, we may make the identification
$$S^{n-1}\cong\SO(n)/\SO(n-1).$$ In fact, the homogeneous space
$\SO(n)/\SO(n-1)$ is a symmetric space.  We do not use the homogeneous
space structure of the sphere explicitly in this thesis, although the
sphere is a special case in the next example to be consider.  The
symmetric space structure of the sphere is described by
\citeasnoun[Chap.~11, \S\thinspace10]{KobayashiandNomizu}.

\subsection{The Stiefel manifold}

The compact {\it Stiefel manifold\/} $\stiefel(n,k)$ is defined to be
the set of all real $n\by k$ matrices, $k\le n$, with orthonormal
columns, i.e., $$\stiefel(n,k) =\{\,U\in\R^{n\times k}: U^\T
U=I\,\}.$$ Note that $\stiefel(n,n)=\O(n)$ and
$\stiefel(n,1)=S^{n-1}$.  The orthogonal group $\O(n)$ is naturally a
Lie transformation group of~$\stiefel(n,k)$ where the group action is
given by matrix multiplication on the left, i.e.,
$(\Theta,U)\mapsto\Theta U$.  Fix the origin $o=\bigl({I\atop0}\bigr)$
in~$\stiefel(n,k)$.  The isotropy group $H$ of this action at the
point~$o$ is the closed subgroup $$H=\left\{\,\left({I\atop0}\>{0\atop
h}\right)\in\SO(n): h\in\SO(n-k)\,\right\}.$$ Thus the Stiefel
manifold $\stiefel(n,k)$ may be identified with the homogeneous space
given by $$\stiefel(n,k)\cong \O(n)/\O(n-k),$$ which is a
differentiable manifold of dimension $k(k-1)/2+(n-k)k$.

For notational convenience, set $M=\stiefel(n,k)$, $G=\O(n)$, and
$H=\O(n-k)$ the isotropy group at~$o=\bigl({I\atop0}\bigr)$ in~$M$.
The Lie group $G$ has a bi-invariant metric, and acts transitively and
effectively on~$M$; therefore, the homogeneous space $G/H$ is
reductive with the fixed decomposition $\g=\m+\h$, where $$\g=\so(n)$$
is the Lie algebra of~$G$, $$\h=\left\{\,\left({0\atop0}\>{0\atop
\omega}\right)\in\so(n): \omega\in\so(n-k)\,\right\}$$ is the Lie
algebra of~$H$, and $\m=\h^\perp$ is the vector subspace
$$\m=\left\{\,\left({a\atop b}\>{-b^\T\atop0}\right)\in\so(n):
a\in\so(k)\,\right\}.$$ 

Let $H_p$ denote the isotropy group of an arbitrary point $p\in M$,
and let $g$ be a coset representative of~$p=g\cdot o$.  Then, as seen
above, $H_p=gH_og^{-1}$.  We identify tangent vectors in $T_pM$ with
elements of~$\m$ in the following way.  Let $\h_p$ denote the Lie
algebra of~$H_p$, and set $\m_p=\h_p^\perp$.  Then we have the
decomposition $\g=\m_p+\h_p$ (direct sum).  Clearly,
$$\m_p=\Ad_g(\m),\qquad \h_p=\Ad_g(\h).$$ An element $x$ in~$\m$
corresponds to an element $x_p$ in~$\m_p$ by the equation
$x_p=\Ad_g(x)$; the element $x_p$ induces a tangent vector $X$
in~$T_pM$ by the equation $X\f=(d/dt)_{t=0}f(e^{x_pt}\cdot p)$ for any
$f$ in~$C^\infty(M)$.  Combining these ideas, it is seen that $X$ is
defined by $$X\f={d\over dt}\Big|_{t=0} f(e^{x_pt}g\cdot o) ={d\over
dt}\Big|_{t=0} f(ge^{xt}\cdot o).$$ It is important to note that this
identification of elements $x\in\m$ with tangent vectors $X\in T_pM$
depends upon the choice of coset representative $g$.  The reason for
making this identification will be clear when we consider in
Chapter~\ref{chap:af}, Section~\ref{sec:cg-genray}, the computational
aspects of computing geodesics in $\stiefel(n,k)$.

The negative of the Killing form of~$\g$ restricted to~$\m$ yields an
invariant Riemannian metric on~$M$. The Levi-Civita connection of this
metric coincides with the canonical torsion-free affine connection
of~$G/H$.  Let $p$ be a point in~$M$, $g$ a coset representative
of~$p$ such that $p=g\cdot o$, and $X$ a tangent vector in~$T_pM$
corresponding to the element $x$ in~$\m$ as described in the preceding
paragraph.  Then the unique geodesic emanating from~$p$ in direction
$X$ is given by $$t\mapsto ge^{xt}\cdot o.$$ Thus geodesics
in~$\stiefel(n,k)$ may be computed by matrix exponentiation of
elements in~$\m$.  However, the Stiefel manifold is not a symmetric
space, so parallel translation along geodesics may not be computed as
easily as in the previous examples. Indeed, partition any element $x$
in~$\m$ as $$x=\pmatrix{x_1&-x_2^\T\cr x_2&0\cr},\qquad\hbox{$x_1$
in~$\so(k)$.}$$ The parallel translation of a tangent vector in $T_oM$
corresponding to $y_0\in\m$ along the geodesic $t\mapsto e^{xt}\cdot
o$ is given by Equation~(\ref{eq:homspcpt}).  In the case of the
Stiefel manifold, and after rescaling the parameter $t$ by~$-1/2$,
this equation becomes the pair coupled linear differential equations
\begin{equation} \label{eq:stiefelpt} \diffeqalign{\dot y_1 &=
[x_1,y_1] +y_2^\T x_2 -x_2^\T y_2 &y_1(0)=\hbox{given},\cr \dot y_2
&=x_2y_1-y_2x_1 & y_2(0)=\hbox{given}.\cr} \end{equation}

In the case $k=n$, i.e., $\stiefel(n,n)=\O(n)$, the linear operator
$y\mapsto [x,y]$ of~$\so(n)$ onto itself has eigenvalues
$\lambda_i-\lambda_j$, $1\le i,j\le n$, where the $\lambda_i$ are the
eigenvalues of the skew-symmetric matrix $x$. Thus the differential
equation in~(\ref{eq:stiefelpt}) has the relatively simple solution
given by Equation~(\ref{eq:SOnpt}). In the case $k=1$, i.e.,
$\stiefel(n,1)=S^{n-1}$, the linear operator $y\mapsto [x,y]_\m$
of~$\m$ onto itself is identically zero, thus the differential
equation of~(\ref{eq:stiefelpt}) also has a simple solution.  In all
other cases where $\stiefel(n,k)$ is not a symmetric case, i.e., $k\ne
n$ or~$1$, the solution to the differential equation of
(\ref{eq:stiefelpt}) may be obtained by exponentiating the linear
operator $y\mapsto[x,y]_\m$, which is skew-symmetric with respect to
the Killing form of~$\g$ restricted to~$\m$.  However, this
exponentiation corresponds to the problem of computing the matrix
exponential of a $\bigl(k(k-1)/2 +(n-k)k\bigr)\by\bigl(k(k-1)/2
+(n-k)k\bigr)$ skew-symmetric matrix, which is computationally much
more expensive than computing the matrix exponential of an $n\by n$
skew-symmetric matrix as in the case $\stiefel(n,n)=\O(n)$.

\chapter{Gradient flows on Lie groups and homogeneous
spaces}\label{chap:grad}

To develop a theory of optimization on smooth manifolds, it is natural
to begin with a study of gradient flows, which provide local
information about the direction of greatest increase or decrease of a
real-valued function defined on the manifold.  The study of gradient
flows is also desirable from the perspective of applications because
we will later apply optimization theory to the problem of principal
component analysis, which may be expressed as a smooth optimization
problem.  This approach has received wide attention in the fields of
adaptive signal processing
\cite{WidrowStearns,Schmidt,RoyKailath,Larimore,Fuhrmann} and neural
networks (Oja 1982, 1989; Bourland \& Kamp 1988; Baldi \& Hornik 1989;
Rubner \& Tavan 1989; Rubner \& Schulten 1990), where the problem of
tracking a principal invariant subspace is encountered
\cite{Brockett:subspace}.

Let $M$ be a Riemannian manifold with Riemannian structure~$g$, and
$f\colon M\to\R$ a smooth function on~$M$.  Then the gradient of~$f$,
denoted by~$\grad\f$, is a smooth vector field on~$M$ and the
one-parameter groups of diffeomorphisms generated by~$\grad\f$ are
called the gradient flows of~$f$.  In this chapter we will consider a
variety of problems whose solutions correspond to the stable critical
points of the gradient of a function, i.e., the problems will be
restated as local optimization problems on a manifold.  These
optimization problems will then be solved by computing an integral
curve of the gradient.  As our concern will be principal component
analysis, we shall consider the algebraic task of computing the
eigenvalues and eigenvectors of a symmetric matrix, and the singular
values and singular vectors of an arbitrary matrix.  Of course,
efficient algorithms already exist to solve these eigenvalue problems
and the methods described within this chapter---integrating
differential equations on Lie groups and homogeneous spaces---are not
in the least way competetive with standard techniques.  Our interest
in gradient flows to solve the problems in numerical linear algebra
arises in part from the intent to illuminate and provide a framework
for the practical large step optimization algorithms that will appear
in Chapter~\ref{chap:orm}.  There is also a general interest in
studying the class of problems that may be solved via dynamical
systems \cite{Brockett:subspace,Leonid,Chu:grad}.

From the perspective of optimization theory, there is a very natural
setting for the symmetric eigenvalue problem and the singular value
problem.  Indeed, finding the eigenvalues of a symmetric matrix may be
posed as an optimization problem \cite{Wilkinson,GVL}.  Let $Q$ be an
$n\by n$ symmetric matrix.  The largest (smallest) eigenvalue of~$Q$
is the maximum (resp., minimum) value taken by the Rayleigh quotient
$x^\T Qx/x^\T x$ over all vectors $x\ne0$ in~$\R^n$.  The
Courant-Fisher minimax characterization describes the general case.
Denote the $k$th largest eigenvalue of~$Q$ by~$\lambda_k$, and let
$S\subset\R^n$ be a vector subspace.  Then for $k=1$, \dots,~$n$,
$$\lambda_k=\max_{\dim S=k\vphantom\backslash}\; \min_{x\in
S\backslash\{0\}} {x^\T Qx\over x^\T x}.$$ The situation for the
singular value problem is similar.  Let $K$ be an $m\by n$ matrix,
$S\subset\R^n$ and $T\subset\R^m$ vector subspaces, and denote the
$k$th largest singular value of~$K$ by~$\sigma_k$.  Then by
Theorem~8.3-1 of \citeasnoun{GVL}, for $k=1$, \dots,~$\min(m,n)$,
$$\sigma_k=\max_{\dim S=k\vphantom\backslash \atop \dim
T=k\vphantom\backslash}\; \min_{x\in S\backslash\{0\}\atop y\in
T\backslash\{0\}} {y^\T\!Ax\over \|x\|\>\|y\|} =\max_{\dim
S=k\vphantom\backslash}\; \min_{x\in S\backslash\{0\}} {\|Ax\|\over
\|x\|}.$$ Several practical algorithms for the eigenvalue problem,
specifically Jacobi methods and Lanczos methods, can be developed on
the basis of such optimization requirements.  Thus we see that the
eigenvalue problem and singular value problems can be viewed as
optimization problems on the manifold of $k$-planes in $\R^n$, i.e.,
the Grassmann manifold $\grassmann(n,k)$.  Although this particular
minimax characterization and manifold will not be used within this
chapter, several equivalent optimization problems will be
investigated.

\section{The diagonalization of a matrix}\label{sec:doublebracket}

This section briefly describes pertinent elements of the work of
Brockett \citeyear{Brockett:match,Brockett:sort}, who provides a
gradient flow on the special orthogonal group, or under a change of
variables, on the space of symmetric matrices with fixed spectrum.
This material is covered to motivate some contributions of this thesis
that will appear in subsequent sections, and to illustrate some
techniques that will be used throughout the thesis.

In the investigation of some least squares matching problems in
computer vision, Brockett \citeyear{Brockett:match} considers the
function $f\colon\Theta\mapsto\tr\Theta^\T Q\Theta N$ on the special
orthogonal group $\SO(n)$, where $Q$ is a fixed real symmetric matrix
and $N$ is a real diagonal matrix with distinct diagonal elements.  We
wish to compute the gradient flow of~$f$, which will lead to the
eigenvalue decomposition of~$Q$.  Using the definition of the
differential, we have $df_\Theta(\Omegatilde) =(d/dt)|_{t=0}
f\bigl(c(t)\bigr)$, where $c\colon \SO(n)\to\R$ is any smooth curve
such that $c(0)=\Theta$ and $\dot c(0)=\Omegatilde\in T_\Theta\SO(n)$.
As shown in Chapter~\ref{chap:geom}, the unique geodesic in $\SO(n)$
through $\Theta$ with direction $\Omegatilde\in T_\Theta$ is given by
$\exp t\Omegatilde= \Theta e^{\Omega t}$, where $\Omega$ is the unique
vector in~$\so(n)$ determined by the equation
$\Omegatilde=L_{\Theta*}\Omega$.

Therefore, taking $c(t)=\Theta e^{\Omega t}$ and setting $H=\Theta^\T
Q\Theta$, we have $$\eqalign{df_\Theta(\Omegatilde) &= \ddt f(\Theta
e^{\Omega t})\cr &=\ddt \tr(\Theta e^{\Omega t})^\T Q(\Theta e^{\Omega
t})N\cr &=\ddt \tr\Ad_{e^{-\Omega t}}(H)N\cr &=-\tr[\Omega,H]N\cr
&=-\tr\Omega[H,N]\cr &=\bigl\(\Omega,[H,N]\bigr\).\cr}$$ The expansion
$\Ad_{e^{Xt}}(Y) =e^{t\ad_X}\cdot Y=Y +t\ad_XY
+(t^2/2!)\ad^2_XY+\cdots$ and the identity $\tr ABC=\tr BCA=\tr CAB$
are used in this chain of equalities.  Equivalently, we may also use
the fact that with respect to the Killing form on $\so(n)$,
$\(\ad_xy,z\)=-\(y,\ad_xz\)$, following Brockett
\citeyear{Brockett:grad}.  From the definition of the gradient, i.e.\
$df_p(X)=\((\grad\f)_p,X\)$ for~all $X\in T_p$, we see that with
respect to the natural invariant metric on~$\SO(n)$ the gradient
of~$f$ is given by \begin{equation} (\grad\f)_\Theta=\Theta[\Theta^\T
Q\Theta, N]. \label{eq:SOgrad} \end{equation}

Let $\Sym(\lambda)$ denote the set of real symmetric matrices with the
fixed set of eigenvalues ${\bmit\lambda} =\{\lambda_1,\ldots,
\lambda_n\}$.  If the eigenvalues are distinct, then this set is a
$C^\infty$ differentiable manifold of dimension $n(n-1)/2$.  To see
why this is so, observe that the Lie group $\SO(n)$ acts effectively
and transitively on $\Sym(\lambda)$ by the action $(\theta,s)\mapsto
\theta s\theta^{-1}$.  If the eigenvalues
${\bmit\lambda}=\{\lambda_1,\ldots,\lambda_n\}$ are distinct, the
isotropy group of this action at the point
$\diag(\lambda_1,\ldots,\lambda_n)$ is the discrete subgroup
$\diag(\pm1,\ldots,\pm1)$, which we denote by~$D$.  Therefore, we may
make the natural identification $\Sym(\lambda)\cong\SO(n)/D$.  Thus
the manifold $\Sym(\lambda)$ inherits a Riemannian structure from the
natural invariant structure on $\SO(n)$.

The so-called double bracket equation, also known as Brockett's
equation, can be obtained from Equation~(\ref{eq:SOgrad}) by making
the change of variables \begin{equation} H=\Theta^\T Q\Theta.
\label{eq:ThetatoH} \end{equation} Differentiating both sides of
Equation~(\ref{eq:ThetatoH}) and rearranging terms yields the
isospectral flow \begin{equation} \dot H=[H,[H,N]].
\label{eq:doublebracket} \end{equation} Remarkably,
Equation~(\ref{eq:doublebracket}) is equivalent to a Toda flow in the
case where $H$ is tridiagonal and $N=\diag(1,\ldots,n)$ (Bloch 1990;
Bloch et~al.\ 1990, 1992); therefore, it is an example of a flow that
is both Hamiltonian and gradient.

The fixed points of Equations (\ref{eq:SOgrad}) and
(\ref{eq:doublebracket}) may be computed in a straightforward way.
Consider the function $H\mapsto \tr HN$ on the set of real symmetric
matrices with fixed spectrum, where $N$ is a real diagonal matrix with
distinct diagonal entries.  Computing as above, we see that
$$\eqalign{{d\over dt}\tr HN&= \tr [H,[H,N]]N\cr &= -\tr [H,N]^2\cr
&=\bigl\|[H,N]\bigr\|^2.}$$ This derivative is nonnegative and bounded
from above because the set $\Sym(\lambda)$ is compact. Therefore, $\tr
HN$ has a limit and its derivative approaches zero as $$[H,N]\to0.$$
In the limit, this becomes $$h_{ij}(n_{jj}-n_{ii})=0.$$ Therefore, $H$
approaches a diagonal matrix with the prescribed eigenvalues along its
diagonal, i.e., $H=\diag(\lambda_{\pi(1)},\ldots,\lambda_{\pi(n)})$
for some permutation $\pi$ of the integers $1$, \dots,~$n$.

Inspecting the second order terms of $\tr HN$ at a critical point
$H=\diag(\lambda_{\pi(1)},\ldots,\penalty1000\lambda_{\pi(n)})$ will
show which of these $n!$ points are asymptotically stable.  Let $H$ be
the parameterized matrix $(\Theta e^{\Omega\epsilon})^\T Q(\Theta
e^{\Omega\epsilon})$, where $\Theta^\T Q\Theta
=\diag(\lambda_{\pi(1)},\ldots,\lambda_{\pi(n)})$ and
$\Omega\in\so(n)$.  The second order terms of $\tr HN$ are
\begin{equation} -\sum_{1\le i<j\le n} (n_{ii}-n_{jj})
(\lambda_{\pi(i)} -\lambda_{\pi(j)}) (\epsilon\omega_{ij})^2.
\label{eq:d2bracket} \end{equation} This quadratic form is negative
(positive) definite if and only if the sets $\{\lambda_i\}$ and
$\{n_{ii}\}$ are similarly (resp., oppositely) ordered.  Therefore, of
the $n!$ critical points of Equation~(\ref{eq:doublebracket}), one is
a sink, one is a source, and the remainder are saddle points.  Of the
$2^nn!$ critical points of Equation~(\ref{eq:SOgrad}), $2^n$ are
sinks, $2^n$ are sources, and the remainder are saddle points.

Equations (\ref{eq:SOgrad}) and (\ref{eq:doublebracket}) play a role
in the study of interior point methods for linear programming
\cite{Leonid} and the study of continuous versions of the |QR|
algorithm \cite{Lagarias,WatkinsElsner:laa}, but this work will not be
discussed here.

\section{The extreme eigenvalues of a matrix}\label{sec:extremeig}

In the previous section an optimization problem was considered whose
solution corresponds to the complete eigenvalue decomposition of a
symmetric matrix.  However, oftentimes only a few eigenvalues and
eigenvectors are required. If given an $n\by n$ symmetric matrix~$Q$
with distinct eigenvalues, the closest rank~$k$ symmetric matrix is
desired, this is determined by the sum $\sum
\lambda_ix_i^{\vphantom{\T}}x_i^\T$, $i=1$, \dots,~$k$, where
$\lambda_i$ is the $i$th largest eigenvalue of~$Q$ and $x_i$ is the
corresponding eigenvector.  Some signal processing applications
\cite{BienvenuKopp,Larimore,RoyKailath} require knowledge of the
smallest eigenvalues and corresponding eigenvectors to estimate
signals in the presence of noise.  In this section we will consider a
function whose gradient flow yields the eigenvectors corresponding to
the extreme eigenvalues of a given matrix.

\subsection{The generalized Rayleigh quotient}

Consider the compact Stiefel manifold $\stiefel(n,k)$ of real
$n\by k$ matrices, $k\le n$, with orthonormal columns.  As discussed
in Chapter~\ref{chap:geom}, Section~\ref{sec:examples},
$\stiefel(n,k)$ may be identified with the reductive homogeneous space
$\O(n)/\O(n-k)$ of dimension $k(k-1)/2+(n-k)k$.  Let $G=\O(n)$,
$o=\bigl({I\atop0}\bigr)$ the origin of~$\stiefel(n,k)$, $H=\O(n-k)$
the isotropy group at~$o$, $\g$ and $\h$ the Lie algebra of $G$ and
$H$, respectively. Set $M=\stiefel(n,k)$.  There is a subspace $\m$
of~$\g$ such that $\g=\m+\h$ (direct sum) and $\Ad_H(\m)=\m$ obtained
by choosing $\m=\h^\perp$ with respect to the Killing form of~$\g$.
The tangent plane $T_oM$ is identified with the subspace $\m$ in the
standard way.  Let $g$ be a coset representative
of~$p\in\stiefel(n,k)$, i.e., $p=g\cdot o$. Tangent vectors in $T_pM$
will be represented by vectors in~$\m$ via the correspondence
described in Chapter~\ref{chap:geom}, Section~\ref{sec:examples}.
The reductive homogeneous space structure of~$\stiefel(n,k)$ will be
exploited in this section to describe the gradient flow of a function
defined on~$\stiefel(n,k)$ and will be especially important in later
chapters when efficient algorithms for computing a few extreme
eigenvalues of a symmetric matrix are developed.

\begin{definition} Let $1\le k\le n$, $A$ be a real $n\by n$
symmetric matrix, and $N$ a real $n\by n$ diagonal matrix.  Define
the {\it generalized Rayleigh quotient\/} to be the function
$\rho\colon V_{n,k}\to\R$ given by $$\rho(p)=\tr p^\T\!ApN.$$
\end{definition}

\subsection{Gradient flows}

\begin{proposition}\label{prop:genraygrad}\ignorespaces Let $p$ be a
point in~$\stiefel(n,k)$, $A$ a real $n\by n$ symmetric matrix, and
$N$ a real $n\by n$ diagonal matrix.\par $(1)$\enskip The element $v$
in\/~$\m$ corresponding to the gradient of the generalized Rayleigh
quotient $\rho$ at~$p$ with respect to the canonical invariant metric
is given by $$v =[g^\T\!Ag,oNo^\T] =g^\T\!ApNo^\T-oNp^\T\!Ag.$$\par
$(2)$\enskip If the diagonal elements $\nu_i$ of~$N$ are distinct,
with $\nu_i>0$ for $i=1$, \dots,~$r$, and $\nu_i<0$ for $i=r+1$,
\dots~$k$, and the largest $r$ eigenvalues and smallest $k-r$
eigenvalues of~$A$ are distinct, then with the exception of certain
initial points contained within codimension\/~$1$ submanifolds
of~$\stiefel(n,k)$, the gradient flow associated with
$v=[g^\T\!Ag,oNo^\T]\in\m$ converge exponentially to points $p_\infty$
such that the first $r$ columns contain the eigenvectors of~$A$
corresponding to its largest eigenvalues, and the last $k-r$ columns
contain the eigenvectors corresponding to the smallest eigenvalues.
\end{proposition}

\begin{proof} Let $X$ a tangent vector in $T_pM$. Then for $f\in
C^\infty(M)$, $X$ corresponds to~$x\in\m$ by $$(X\f)_p=\ddt
f(ge^{xt}\cdot o).$$ By the definition of~$\rho$, it is seen that for
any $X\in T_pM$ $$\eqalign{d\rho_p(X) &=\ddt\rho(ge^{xt}\cdot o)\cr
&=\ddt\tr (ge^{xt}\cdot o)^\T\!A(ge^{xt}\cdot o)N\cr &=\ddt\tr
o^\T\!\Ad_{e^{-xt}}(g^\T\!Ag)oN\cr &=-\tr[x,g^\T\!Ag]oNo^\T\cr &=-\tr
x [g^\T\!Ag, oNo^\T]\cr &=\bigl\(x,[g^\T\!Ag, oNo^\T]\bigr\).\cr}$$
This establishes the first part.

Let $t\mapsto p_t$ be an integral curve of a gradient flow of the
$\rho$ on~$\stiefel(n,k)$, and $g_t$ a coset representative of~$p_t$
such that $p_t= g_t\cdot o$ for~all $t\in\R$.  For simplicity, denote
the $n\by n$ symmetric matrix $g_t^\T\!Ag_t$ by~$H$.  The manifold
$\stiefel(n,k)$ is compact and thus $\rho$ is bounded from above.  As
the derivative $${d\over dt}\rho(p_t) =-\tr [H,oNo^\T]^2$$ is
nonnegative, the value of~$\rho(p_t)$ has a limit and its derivative
approaches zero as $$[H,oNo^\T]\to0.$$ In the limit these asymptotics
become $$\diffeqalign{h_{ij}(\nu_j-\nu_i)&=0 &\hbox{for $1\le i,j\le
k$},\cr h_{ij}\nu_j&=0 &\hbox{for $k<i\le n$, $1\le j\le k$.}\cr}$$
Because the $\nu_i$ are assumed to be distinct, these conditions imply
that in the limit,
\def\smallmatrix#1{\vcenter{\baselineskip=.5\normalbaselineskip
\ialign{\hfil$\textstyle##$\hfil&&\,\hfil$\textstyle##$\hfil\crcr
\mathstrut\crcr\noalign{\kern-\baselineskip}#1\crcr
\mathstrut\crcr\noalign{\kern-\baselineskip}}}} $$H
=\pmatrix{\smallmatrix{\lambda_{\pi(1)}\cr
&\ddots&&\cr&&\lambda_{\pi(k)}\cr}&0\cr 0&H_1\cr},$$ where $\pi$ is a
permutation of the integers $1$, \dots,~$n$, and $H_1$ is an
$(n-k)\by(n-k)$ symmetric matrix with eigenvalues
$\lambda_{\pi(k+1)}$, \dots,~$\lambda_{\pi(n)}$.

The second order terms of~$\rho(p_t)$ at the critical points
corresponding to $H=\diag(\lambda_{\pi(1)},\penalty1000 \ldots,
\lambda_{\pi(k)}, H_1)$ indicate which of these points are
asymptotically stable.  Because the coset representative $g_t$
of~$p_t$ is arbitrary, choose $g_t$ such that
$H=g_t^\T\!Ag_t=\diag(\lambda_{\pi(1)}, \ldots, \lambda_{\pi(n)})$.
Let $X$ be tangent vector in~$T_{p_0}M$ corresponding to~$x\in\m$.
The Taylor expansion of~$\rho(p_t)$ about $t=0$ is
$$\rho(p_t)=\rho(p_0) +t(\D\rho)_{p_0}(X)
+{t^2\over2}(\Dsqr\rho)_{p_0}(X,X)+\cdots$$ (this formula will be
established rigorously in Chapter~\ref{chap:orm}).  The second order
terms of~$\rho(p_t)$ at the critical points of~$\rho$ corresponding to
$H=\diag(\lambda_{\pi(1)}, \ldots, \lambda_{\pi(n)})$ are given by the
Hessian $$\eqalign{(d^2\!\rho)_{p_0}(X,X) &=-\sum_{1\le j<i\le k}
x_{ij}^2(\lambda_{\pi(i)}-\lambda_{\pi(j)})(\nu_i-\nu_j)\cr
&\hphantom{{}={}}{-}\sum_{k<i\le n\atop 1\le j\le k} x_{ij}^2
(\lambda_{\pi(j)}-\lambda_{\pi(i)})\nu_j,\cr}$$ where $x_{ij}$ are the
elements of the matrix $x$.

This quadratic form is negative definite if and only if
\begin{itemize}
\item[(i)] The eigenvalues $\lambda_{\pi(i)}$ and the numbers $\nu_i$,
$1\le i\le k$, are similarly ordered.
\item[(ii)] If $\nu_j>0$, then $\lambda_{\pi(j)}$ is greater than all
the eigenvalues of the matrix~$H_1$; if $\nu_j<0$, then
$\lambda_{\pi(j)}$ is less than all the eigenvalues of the matrix
$H_1$.
\end{itemize}
This establishes the second part of the proposition.
\end{proof}

Note that the second equality of part~1 of
Proposition~\ref{prop:genraygrad} is more suitable for computations
because it requires $O(k)$ matrix-vector multiplications, as opposed
to the first equality which requires $O(n)$ matrix-vector
multiplications.

\begin{remark} If $A$ or~$N$ in Proposition~\ref{prop:genraygrad} has
repeated eigenvalues, then exponential stability, but not asymptotic
stability, is lost.
\end{remark}

\begin{corollary}\label{cor:raygencrit}\ignorespaces Let $A$ and $N$
be as in part\/~$(2)$ of Proposition~\ref{prop:genraygrad}.  Then the
generalized Rayleigh quotient $\rho$ has\/ $2^k\,{}_nP_k$ critical
points ($_nP_k=n!/(n-k)!$ is the number of permutations of~$n$ objects
taken $k$ at a time), of which one is a sink, one is a source, and the
remainder are saddle points.  \end{corollary}

\begin{corollary} Let $A$ and $N$ be as in part\/~$(2)$ of
Proposition~\ref{prop:genraygrad}.  Then near the critical points
corresponding to $H=\diag(\lambda_{\pi(1)}, \ldots, \lambda_{\pi(k)},
H_1)$ the gradient flow of $\rho$ has the exponential rates of
convergence $\mu_{ij}$ given by $$\mu_{ij}=\cases{-(\lambda_{\pi(i)}
-\lambda_{\pi(j)})(\nu_i-\nu_j), &for $1\le i,j\le k$;\cr
-(\lambda_{\pi(j)} -\lambda_{\pi(i)})\nu_j, &for $k<i\le n$, $1\le
j\le k$.\cr}$$
\end{corollary}

\section{The singular value decomposition}

The singular value decomposition (|SVD|) is an important decomposition
in numerical linear algebra.  It has applications in least squares
theory, matrix inversion, subspace comparisons, and spectral analysis.
Golub and Van~Loan \citeyear{GVL} provide background and examples.
There has been interest recently in the application of dynamical
systems to the solution of problems posed in the domain of numerical
linear algebra.  Brockett [1988] \citeyear{Brockett:sort} introduces
the double bracket equation $\dot H=[H,[H,N]]$, discussed in
Section~\ref{sec:doublebracket}, and shows that it can solve certain
problems of this type.  This work motivated Perkins et~al.\
\citeyear{balreal} to formulate a gradient algorithm which finds
classes of balanced realizations of finite dimensional linear systems.
In particular, they give a gradient algorithm for the |SVD|.  Also,
several researchers have constructed neuron-like networks that perform
principal component analysis.  For example, Oja \citeyear{Oja:jmb}
describes a network algorithm that extracts the principal component of
a statistically stationary signal; Rubner and Schulten
\citeyear{RubnerSchulten} generalize this method so that all principal
components are extracted.  There is a link between the matrix double
bracket equation and the least squares problems studied by
\citeasnoun{Brockett:subspace}, and the analysis of neural network
principal component analysis provided by Baldi and Hornik
\citeyear{BaldiHornik}.  Baldi and Hornik describe the level set
structure of a strictly convex function defined on real $n\by n$
matrices of rank~$k$.  This level set structure becomes identical to
that of the Lyapunov function~$-\tr HN$ if the strictly convex
function is restricted to matrices with fixed singular values.  See
also the work of Watkins and Elsner
\citeyear{WatkinsElsner:laa,WatkinsElsner:maa} for a discussion of
self-similar and self-equivalent flows and a continuous version of the
|QR|~algorithm for eigenvalues and singular values.
\citeasnoun{Chu:grad} and \citeasnoun{UweJohn} also provide gradient
flows similar to the ones described here that yield the singular value
decomposition of a matrix.  Deift et~al.\ \citeyear{DDLT:1,DDLT:1}
describe how a certain flow of bidiagonal matrices that leads to the
singular value decomposition can be viewed as a Hamiltonian flow with
respect to the so-called Sklyanin structure, which is described by
\citeasnoun{LiParmentier} and \citeasnoun{DeiftLi}. \looseness=-1\par

This section describes a gradient flow on the space of real $n\by
k$ matrices with fixed singular values whose solutions converge
exponentially to the |SVD| of a given matrix provided that its
singular values are distinct.  This dynamic system has, therefore,
potential application to the problems mentioned above.  Also, as a
generalization of the symmetric version of the matrix double bracket
equation, it inherits the capability to sort lists, diagonalize
matrices, and solve linear programming problems.  Viewed as an
algorithm for the |SVD|, this method is less efficient than the
variant of the |QR|~algorithm described by Golub and Van~Loan; however
the motivation here is to describe analog systems capable of this
task.  As opposed to Perkins et~al.'s method which requires matrix
inversion, matrix multiplication and addition are the only operations
required.  First presented are some results from differential geometry
and a suitable representation of the set of real $n\by k$ matrices
with prescribed singular values. A Riemannian structure is defined on
this space so that the gradient operator is well defined.  Next, the
main result is given with ensuing corollaries.  Finally, the results
of a numerical simulation are provided.

\subsection{Matrices with fixed singular values}

Recall the following standard mathematical notation and concepts.  Let
$\R^{n\times k}$ denote the set of all real $n\by k$ matrices. Let
$\O(n)$ and $\o(n)$ represent the real orthogonal group and its Lie
algebra of skew-symmetric matrices, respectively, such that for
$\Theta\in\O(n)$ and $\Omega\in\o(n)$, $\Theta^\T\Theta=I$ and
$\Omega+\Omega^\T=0$.  Both spaces have dimension~$n(n-1)/2$.  The
notation $\diag(\alpha_1,\ldots,\alpha_k)$ represents a $k\by k$
diagonal matrix whose diagonal elements are $\alpha_i$, and
$\diag_{n\times k}(\alpha_1,\ldots,\alpha_k)$ represents the $n\by k$
matrix $$\diag_{n\times k}(\alpha_1,\ldots,\alpha_k)
=\pmatrix{\diag(\alpha_1,\ldots,\alpha_k)\cr \hbox{\large0}\cr},$$
where, in this instance, $n\geq k$.  Let $D$ represent the discrete
subgroup of~$\O(k)$ consisting of matrices of the form
$\diag(\pm1,\ldots,\pm1)$.  Finally, let $\K(\sigma)$ denote the
manifold of real $n\by k$ matrices with the set of singular values
${\bmit\sigma}=\{\sigma_1,\ldots,\sigma_k\}$.  In this section it is
assumed that the singular values~$\sigma_i$ are distinct, and unless
stated otherwise, nonzero.

Let $K\in\K(\sigma)$ and assume, without loss of generality, that
$n\geq k$.  Then $K$ has the |SVD| \begin{equation} K=U\diag_{n\times
k}(\sigma_1,\ldots,\sigma_k)V^\T, \label{eq:svddef} \end{equation}
where $U\in\O(n)$, $V\in\O(k)$, and $\sigma_i\geq0$ for~$i=1$,
\dots,~$k$.  This decomposition is also expressible as
\begin{equation} \label{eq:svdrel} Kv_i=\sigma_iu_i
\qquad\hbox{or}\qquad K^\T u_i=\sigma_iv_i, \end{equation} where the
$\sigma_i$ are called the singular values of~$K$, and the $u_i$ and
$v_i$ are called the left and right singular vectors of~$K$,
respectively, for~$i=1$, \dots,~$k$.  If the singular values are
distinct, the left and right singular vectors are unique up to
multiplication of $u_i$ and $v_i$ by~$\pm1$.

\begin{remark}\label{rem:svdman}\ignorespaces The set $\K(\sigma)$ is
a differentiable manifold of dimension $nk-k$ if the~$\sigma_i$ are
distinct and nonzero.  This fact can be inferred from the existence of
a map~$p$ from~$\R^{n\times k}$ to the coefficients of the polynomials
of degree~$k$ over~$\R$ whose Jacobian has constant rank, viz., $$p(K)
=\det(\lambda I-K^\T\!K) -(\lambda-\sigma_1^2)
\ldots(\lambda-\sigma_k^2).$$ The differential of~$p$ at~$K$ is given
by $$dp_K(X) =-2\det(\lambda I-K^\T\!K) \tr(\lambda
I-K^\T\!K)^{-1}K^\T\!X.$$ This mapping has rank~$k$ for all
$K\in\K(\sigma)$; therefore the inverse image~$\K(\sigma)=p^{-1}(0)$
is a (compact) submanifold of~$\R^{n\times k}$ of dimension~$nk-k$.
It will be shown later that if $n>k$, then $\K(\sigma)$ is connected,
if $n=k$, then $\K(\sigma)$ has two connected components, and if $n=k$
and the elements of~$\K(\sigma)$ are restricted to be symmetric, then
$\K(\sigma)$ restricted to the symmetric matrices has $2^k$ connected
components.

A similar argument shows that if $\{\sigma_i\}$ has $r$ nonzero
distinct elements and $k-r$ zero elements, then ${\bmit
K}_{\{\sigma_1,\ldots,\sigma_r,0,\ldots,0\}}$ is a manifold of
dimension~$nr+kr-r^2-r$.  In particular, if $r=k-1$, then ${\bmit
K}_{\{\sigma_1,\ldots,\sigma_{k-1},0\}}$ is a manifold of
dimension~$nk-n$.  \end{remark}

The statement of the main result of this section contains statements
about the gradient of a certain function defined on~$\K(\sigma)$.  The
definition of the gradient on a manifold depends upon the choice of
Riemannian metric; therefore a metric must be chosen if the gradient
is to be well defined.  The approach of this section is standard:
$\K(\sigma)$ is identified with a suitable homogeneous space on which
a Riemannian metric is defined (see, e.g.,
\citeasnoun{KobayashiandNomizu}).

\begin{remark}\label{rem:svdhomspc}\ignorespaces The product group
$\O(n)\times\O(k)$ acts effectively on~$\K(\sigma)$ via the map
$\bigl((\theta,\vartheta),K\bigr) \mapsto\theta K\vartheta^\T$.
Clearly this action is transitive; therefore $\K(\sigma)$ is a
homogeneous space with the transformation group $\O(n)\times\O(k)$.
If the $\sigma_i$ are distinct and nonzero, then the isotropy group or
stabilizer of this action at the point $\diag_{n\times
k}(\sigma_1,\ldots,\sigma_k)\in\K(\sigma)$ is the closed subgroup
$\bigl\{\,\bigl(\diag(\Delta,\Psi),\Delta\bigr): \Psi\in\O(n-k),
\Delta\in D\,\bigr\}$, as can be verified from an elementary
calculation.  Note that this subgroup is the semidirect product
of~$\O(n-k)$ and $\Dg(D)\buildrel{\rm
def}\over=\bigl\{\,\bigl(\diag(\Delta,I),\Delta\bigr): \Delta\in
D\,\bigr\}$, the set theoretic diagonal of $\diag(D,I)\times D$;
therefore it will be represented by the notation $$\Dg(D)\O(n-k)
\buildrel{\rm def}\over=
\bigl\{\,\bigl(\diag(\Delta,\Psi),\Delta\bigr): \Psi\in\O(n-k),
\Delta\in D\,\bigr\}.$$ Thus $\K(\sigma)$ may be identified with the
homogeneous space $\svd$ of dimension $nk-k$.  Let $K\in\K(\sigma)$
have the |SVD| $K=U\diag_{n\times k}(\sigma_1,\ldots,\sigma_k)V^\T$.
It is straightforward to show that the map
$\psi\colon\K(\sigma)\to\svd$ defined by the action $\psi\colon
K\mapsto (U,V)\Dg(D)\O(n-k)$ is a bijection.  Because matrix
multiplication as an operation on~$\R^{n\times k}$ is smooth,
$\psi^{-1}$ is $C^\infty$; therefore $\psi$ is a
diffeomorphism.

A similar argument shows that if the set $\{\sigma_i\in\R\}$ has $r$
nonzero distinct elements, then ${\bmit
K}_{\{\sigma_1,\ldots,\sigma_r,0,\ldots,0\}}$ can be identified with
the homogeneous space $\bigl(\O(n)\times\O(k)\bigr)/
\Dg(D)\bigl(\O(n-r) \penalty1000\times\O(k-r)\bigr)$ of
dimension~$nr+kr-r^2-r$, where
$$\eqalign{\Dg(D)\bigl(\O(n-r)\times\O(k-r)\bigr)&\buildrel {\rm
def}\over =\bigl\{\,\bigl(\diag(\Delta,\Psi),
\diag(\Delta,\Upsilon)\bigr):\cr &\qquad \Psi\in\O(n-r),
\Upsilon\in\O(k-r), \Delta\in D\subset\O(r)\,\bigr\}.\cr}$$ In
particular, if $r=k-1$, then ${\bmit
K}_{\{\sigma_1,\ldots,\sigma_{k-1},0\}}$ can be identified with the
homogeneous space $\bigl(\O(n)\times\O(k)\bigr)/
\Dg(D)\bigl(\O(n-k+1)\times\O(1)\bigr)$ of dimension~$nk-n$.
\end{remark}

\begin{remark}\label{rem:svdreductive}\ignorespaces The homogeneous
space $\svd$ is reductive; i.e., there exists a linear subspace
$\k\times\o(k)$ of~$\o(n)\times\o(k)$ such that $$\o(n)\times\o(k)
=\o(n-k)\times0 +\k\times\o(k)\rlap{\quad\quad(direct sum)}$$ and
$\Ad_{\Dg(D)\O(n-k)} \bigl(\k\times\o(k)\bigr) \subset\k\times\o(k)$,
viz., $$\k =\left\{\left(\hbox{$a\atop
b$}\;\hbox{$-b^\T\atop0$}\right)\in\o(n)\right\}.$$ This is the
perpendicular subspace given by~Proposition~\ref{prop:cheegin} of
Chapter~\ref{chap:geom}.  Therefore there is a natural correspondence
between $\Ad_{\Dg(D)\O(n-k)}$-invariant nondegenerate symmetric
bilinear forms on~$\k\times\o(k)$ and $\O(n)\times\O(k)$-invariant
Riemannian metrics on $\svd$.  A general exposition of these ideas is
given by \citeasnoun[Chap.~10]{KobayashiandNomizu}.  \end{remark}

The object of these remarks is to establish the identification
$$\K(\sigma)\cong\svd$$ when the $\sigma_i$
are distinct and nonzero, where $\Dg(D)\O(n-k)$ is the closed subgroup
of~$\O(n)\times\O(k)$ defined in Remark~\ref{rem:svdhomspc}, and to
assert that a positive definite quadratic form on~$\k\times\o(k)$
defines a Riemannian metric on~$\K(\sigma)$, where $\k$ is
the linear subspace defined in Remark~\ref{rem:svdreductive}.

\begin{proposition}\label{prop:svdmetric}\ignorespaces The
nondegenerate symmetric bilinear form on\/ $\k\times\o(k)$ defined by
\begin{equation} g\bigl((\Gamma_1,\Phi_1),(\Gamma_2,\Phi_2)\bigr)
={n-2\over2}\tr\Gamma_1^\T\Gamma_2 +{k-2\over2}\tr\Phi_1^\T\Phi_2,
\label{eq:svdmetric} \end{equation} where\/ $\Phi_1,\Phi_2\in\o(k)$,
$\Gamma_1,\Gamma_2\in\k$, and $n\geq k\geq3$, defines an
$\O(n)\times\O(k)$-invariant Riemannian metric on\/ $\svd$.  If\/ $n$
or~$k$ equals~2, replacing the coefficients\/ $(n-2)$ or\/~$(k-2)$
by~unity, respectively, yields an $\O(n)\times\O(k)$-invariant
Riemannian metric on~$\svd$.  \end{proposition}

\begin{proof} The product space $\O(n)\times\O(k)$ is a compact
semisimple Lie group, $n,k\geq3$; therefore the Killing form
$\kf\bigl((\Gamma_1,\Phi_1), (\Gamma_2,\Phi_2)\bigr)
=(n-2)\tr\Gamma_1\Gamma_2 +(k-2)\tr\Phi_1\Phi_2$ of~$\o(n)\times\o(k)$
is strictly negative definite.  From \citeasnoun[Chap.~10,
Coroll.~3.2]{KobayashiandNomizu}, or \citeasnoun[Chap.~4,
Prop.~3.4]{Helgason}, it can be seen that there is a natural
correspondence between $\O(n)\times\O(k)$-invariant Riemannian metrics
on~$\svd$ and nondegenerate symmetric
bilinear forms on~$\k\times\o(k)$.  Therefore the form
$g=-\half\varphi$ restricted to~$\k\times\o(k)$ defines such a metric.
If~$n$ or~$k$ equals~2, the nondegenerate symmetric bilinear form
$(\Omega_1,\Omega_2)\mapsto\half\tr\Omega_1^\T\Omega_2$ on~$\o(2)$
defines an $\O(2)$-invariant Riemannian metric on~$\O(2)$.  Therefore
replacing the expressions $(n-2)$ or~$(k-2)$ by unity in
Equation~(\ref{eq:svdmetric}) yields an $\O(n)\times\O(k)$-invariant
Riemannian metric on~$\svd$.  \end{proof}

\begin{proposition}\label{prop:svdtan}\ignorespaces Let\/
$\Sigma\colon\R\to\K(\sigma)$ be a smoothly parameterized curve
in\/~$\K(\sigma)$. Then the tangent vector to the curve\/~$\Sigma$
at~$t$ is of the form \begin{equation} \dot\Sigma(t)
=\Sigma(t)\Phi-\Gamma\Sigma(t), \label{eq:svdtan} \end{equation}
where\/ $\Phi\in\o(k)$ and\/ $\Gamma\in\k$.  \end{proposition}

\begin{proof} Let $K\in\K(\sigma)$.  Then $\Sigma(t)=U^\T(t)KV(t)$ for
$U(t)\in\O(n)$ and $V(t)\in\O(k)$.  The perturbations $U(t)\to
Ue^{(t+\epsilon)\Gamma}$ and $V(t)\to Ve^{(t+\iota)\Phi}$ for
$U\in\O(n)$, $V\in\O(k)$ $\Gamma\in\k$, $\Phi\in\o(k)$, and real
$\epsilon$ and $\iota$, give rise to the tangent vector of
Equation~(\ref{eq:svdtan}) under the change of coordinates
$\Sigma(t)=U^\T(t)KV(t)$.  The elements of~$\Gamma$ and $\Phi$
parameterize the tangent plane of~$\K(\sigma)$ at~$\Sigma(t)$
completely; therefore this set of tangent vectors is complete.
\end{proof}

\subsection{Gradient flows}

Consider the extremization problem $$\max_{\Sigma\in\K(\sigma)}\tr
N^\T\Sigma,$$ where $N=\diag_{n\times k}(\nu_1,\ldots,\nu_k)$ and the
$\nu_i$ are real (cf.\ von~Neumann [1937] (1962)).  For the remainder
of this section, assume that $n\geq k\geq3$.  The following results
may be extended to include the cases where $n$ or~$k$ equals~2 by
replacing the expressions $(n-2)$ or~$(k-2)$ by unity, respectively.
The notation $\[\,{,}\,\]\colon\R^{m\times l}\times\R^{m\times
l}\to\o(m)$ defined for $m\geq3$ by the bilinear operation
$\[A,B\]=(AB^\T-BA^\T)/(m-2)$ is employed in the statement of the
following proposition.

\begin{proposition}\label{prop:svdflow}\ignorespaces Let\/
$\Sigma$, $K\in\K(\sigma)$.  The gradient ascent equation
on\/~$\K(\sigma)$ for the function\/ $\tr N^\T\Sigma$ with respect to
the Riemannian metric defined above is $$\makeatletter
\refstepcounter{equation}\let\@currentlabel=\theequation
\label{eq:svdflow} \dot\Sigma =\Sigma\[\Sigma^\T,N^\T\]
-\[\Sigma,N\]\Sigma; \qquad\Sigma(0)=K.\eqno(\theequation{\rm a})$$
Equivalently, let $U\in\O(n)$ and\/ $V\in\O(k)$. The gradient ascent
equations on~$\O(n)\times\O(k)$ for the function\/ $\tr N^\T U^\T\!KV$
with respect to the Riemannian metric defined above are
$$\diffeqalign{\dot U &=U\[U^\T\!KV,N\]; & U(0)=I,\cr \dot V
&=V\[V^\T\!K^\T U,N^\T\]; & V(0)=I.\cr}\eqno(\ref{eq:svdflow}{\rm
b})$$ Furthermore, if\/ $\{\sigma_i\}$ and\/ $\{\nu_i\}$ have distinct
elements, then with the exception of certain initial points contained
within a finite union of codimension\/~$1$ submanifolds
of\/~$\K(\sigma)\times \O(n)\times\O(k)$, the triple\/~$(\Sigma,U,V)$
converges exponentially to the singular value decomposition of~$K$ (up
to the signs of the singular values).  If\/~$\Sigma$ is nonsquare
or~nonsymmetric, then in the limit the moduli of the\/~$\pm\sigma_i$
and the\/~$\nu_i$ are similarly ordered.  If\/~$\Sigma$ is square and
symmetric, then in the limit the eigenvalues~$\lambda_i=\pm\sigma_i$
of\/~$\Sigma$ and the\/~$\nu_i$ are similarly ordered.
\end{proposition}

\begin{proof} Let $K$ have the |SVD| $K=U_1\diag_{n\times
k}(\sigma_1,\ldots,\sigma_k)V_1^\T$ where the singular values are
distinct and nonzero.  Denote the isotropy group at~$K$ by $H=
(U_1,V_1)\Dg(D)\O(n-k)\* (U_1^\T,V_1^\T)$ (n.b.\
Remark~\ref{rem:svdhomspc}).  The gradient of the function
$f\colon\svd\to\R$ at the point~$H(U,V)$ is uniquely defined by the
equality $$df_{H(U,V)}(\Gamma,\Phi) =g\bigl((\grad\f)_{H(U,V)},
(\Gamma,\Phi)\bigr).$$ For $f(H(U,V))=\tr N^\T U^\T\!KV$, it can be
seen that $$df_{H(U,V)}(\Gamma,\Phi) =\half\tr\Gamma^\T(\Sigma
N^\T-N\Sigma^\T) +\half\tr\Phi^\T(\Sigma^\T\!N-N^\T\Sigma),$$ where
the identities $\tr ABC=\tr BCA=\tr CAB$ and $\tr
A^\T\!B=\tr(A^\T\!B+B^\T\!A)/2$ are employed.  From the definition of
the Riemannian metric in Proposition~\ref{prop:svdmetric}, it is clear
that the gradient directions of~$\tr N^\T U^\T\!KV$ are
$$\diffeqalign{\grad\f&={1\over n-2}\bigl(\Sigma
N^\T-N\Sigma^\T\bigr)=\[\Sigma,N\]\in\k &\hbox{($\k$-component)},\cr
\grad\f &={1\over k-2}\bigl(\Sigma^\T\! N-N^\T\Sigma\bigr)
=\[\Sigma^\T,N^\T\]\in\o(k) &\hbox{($\o(k)$-component)}.\cr}$$ This
with Proposition~\ref{prop:svdtan} proves the first part.

Because the derivative $${d\over dt}\tr N^\T\Sigma
={(n-2)^2\over2(k-2)}\tr\[\Sigma,N\]^\T\[\Sigma,N\]
+{(k-2)^2\over2(n-2)}\tr\[\Sigma^\T,N^\T\]^\T\[\Sigma^\T,N^\T\]$$ is
nonnegative and $\tr N^\T\Sigma$ is bounded from above ($\K(\sigma)$
is a compact subset of~$\R^{n\times k}$), $\tr N^\T\Sigma$ has a limit
and its derivative approaches zero as $\[\Sigma,N\]$ and
$\[\Sigma^\T,N^\T\]$ approach zero. In the limit these become, for
$1\leq i,j\leq k$, $$\sigma_{ij}\nu_j-\nu_i\sigma_{ji}=0, \qquad
\sigma_{ji}\nu_j-\nu_i\sigma_{ij}=0$$ and, for~$1\leq i\leq k$,
$k<j\leq n$, $$\nu_i\sigma_{ji}=0,$$ where the $\sigma_{ij}$ are
elements of~$\Sigma$.  If the $\nu_i$ are distinct, these conditions
imply that $\sigma_{ij}=0$ for $i\neq j$.  Therefore the critical
points of Equations (\ref{eq:svdflow}a) and (\ref{eq:svdflow}b) occur
when the prescribed singular values are along the diagonal
of~$\Sigma$; i.e., $\Sigma=\diag_{n\times
k}(\pm\sigma_{\pi(1)},\ldots,\pm\sigma_{\pi(k)})$ for some
permutation~$\pi$ of the integers~$1$, \dots,~$k$.

Inspecting the second order terms of~$\tr N^\T\Sigma$ at a critical
point $\Sigma=\diag_{n\times
k}(\sigma_{\pi(1)},\penalty1000\ldots,\sigma_{\pi(k)})$ will show
which of these points is asymptotically stable.  Let $\Sigma$ be the
parameterized matrix $(Ue^{\epsilon\Gamma})^\T\!K(Ve^{\iota\Phi})$,
where $U^\T\!KV= \diag_{n\times
k}(\sigma_{\pi(1)},\ldots,\sigma_{\pi(k)})$.  The second order terms
of~$\tr N^\T\Sigma$ are
\begin{eqnarray} &\displaystyle
\matrix{\half\pmatrix{\epsilon&\iota\cr}\cr\cr} \pmatrix{\tr
N^\T\Gamma^{2T}\Sigma&\tr N^\T\Gamma^\T\Sigma\Phi\cr \tr
N^\T\Gamma^\T\Sigma\Phi&\tr N^\T\Sigma\Phi^2\cr}
\pmatrix{\epsilon\cr\iota\cr} \nonumber\\ &\displaystyle
\quad{}={-}\half\sum_{1\leq i<j\leq k}
\matrix{\pmatrix{\epsilon\gamma_{ij} &\iota\phi_{ij}\cr}\cr\cr}
\pmatrix{\nu_i\sigma_{\pi(i)} +\nu_j\sigma_{\pi(j)}
&-(\nu_i\sigma_{\pi(j)}+\nu_j\sigma_{\pi(i)})\cr
-(\nu_i\sigma_{\pi(j)}+\nu_j\sigma_{\pi(i)})
&\nu_i\sigma_{\pi(i)}+\nu_j\sigma_{\pi(j)}\cr}
\pmatrix{\epsilon\gamma_{ij}\cr\iota\phi_{ij}\cr} \nonumber\\
&\displaystyle \quad\hphantom{{}={}}{-}\half\sum_{\scriptstyle 1\leq
i\leq k\atop \scriptstyle k<j\leq n}
\nu_i\sigma_{\pi(i)}(\epsilon\gamma_{ij})^2, \label{eq:svd2d}
\end{eqnarray} where $\gamma_{ij}$ and $\phi_{ij}$ are the elements of
$\Gamma$ and $\Phi$, respectively.  Thus $\tr N^\T\Sigma$ is negative
definite if and only if this quadratic form is negative definite.
Three cases must be considered.

\indent {\it Case I.\enskip $\Sigma$ nonsquare}\par\nobreak

The quadratic form of Equation~(\ref{eq:svd2d}) is negative definite
if and only if the $2\by2$ matrices in the first sum are positive
definite and the coefficients~$\nu_i\sigma_{\pi(i)}$ of the second sum
are positive.  The matrices will be inspected first.  A $2\by2$
symmetric matrix is positive definite if and only if its
$(1,1)$~element and its determinant are positive.  In this case, these
conditions imply that $$\nu_i\sigma_{\pi(i)}+\nu_j\sigma_{\pi(j)}>0,
\qquad (\nu_i^2-\nu_j^2)(\sigma_{\pi(i)}^2-\sigma_{\pi(j)}^2)>0.$$ The
condition that the determinant be positive implies that the moduli
of~$\nu_i$ and $\pm\sigma_{\pi(i)}$ must be similarly ordered
and that the singular values must be distinct.  Given that the moduli
are similarly ordered, the condition that the $(1,1)$~element
$\nu_i\sigma_{\pi(i)}+\nu_j\sigma_{\pi(j)}$ be positive demands that
$$\sign\sigma_{\pi(i)}=\sign\nu_i \rlap{$\qquad i=1,\ldots,k-1$,}$$
because if $|\nu_i|>|\nu_j|$ (implying that
$|\sigma_{\pi(i)}|>|\sigma_{\pi(j)}|$) and
$\sign\sigma_{\pi(i)}=-\sign\nu_i$, then the $(1,1)$~element would be
negative.  This argument asserts nothing about the sign of the
smallest singular value, which, without loss of generality, may be
taken as~$\sigma_k$.  As stated previously, the
coefficients~$\nu_i\sigma_{\pi(i)}$ of the second sum of
Equation~(\ref{eq:svd2d}) must be positive.  Therefore
$$\sign\sigma_k=\sign\nu_{\pi^{-1}(k)}.$$

\indent {\it Case II.\enskip $\Sigma$ square and nonsymmetric}\par\nobreak

In this case the second sum of Equation~(\ref{eq:svd2d}) vanishes and
cannot be used to determine the sign of~$\sigma_k$, but the additional
structure of square matrices compensates for this loss.  Consider the
(square) decomposition $\Sigma=U^\T\!KV$, where $K$ is nonsymmetric and
$U,V\in\SO(n)$ (the special orthogonal group
$\SO(n)=\{\,\Theta\in\O(n):\det\Theta=1\,\}$).  Then $$\det\Sigma=\det
K$$ and the sign of~$\sigma_k$ is determined.

\indent {\it Case III.\enskip $\Sigma$ square and symmetric}\par\nobreak

When $\Sigma$ is square and symmetric, Equation~(\ref{eq:svdflow}a)
reduces to the matrix double bracket equation described by
\citeasnoun{Brockett:sort}; i.e., $\dot\Sigma=[\Sigma,[\Sigma,N]]$;
$\Sigma(0)=K=K^\T$ defined over real $k\by k$ symmetric matrices
with fixed eigenvalues (where the time parameter is scaled by~$k-2$).
Thus the flow of~$\Sigma$ on~$\K(\sigma)$ is isospectral and
$\Sigma(t)$ is symmetric for all~$t$.  The critical points of
Equation~(\ref{eq:svdflow}a) occur when the
eigenvalues~$\lambda_i=\pm\sigma_i$ of~$K$ are along the diagonal
of~$\Sigma$; i.e., $\Sigma =\diag(\lambda_{\pi(1)},\ldots
,\lambda_{\pi(n)})$ for some permutation~$\pi$ of the integers $1$,
\dots,~$k$.  A square symmetric matrix~$K$ in
Equation~(\ref{eq:svdflow}b) implies that $U\equiv V$; i.e., $\dot
V=V[V^\T\!KV,N]$; $V(0)=I$ or~$\diag(-1,1,\ldots,1)$ (where the time
parameter is scaled by~$k-2$).  Therefore $\gamma_{ij}=\phi_{ij}$ in
Equation~(\ref{eq:svd2d}), which reduces to the sum $$-\sum_{1\leq
i<j\leq k}(\nu_i-\nu_j)(\lambda_{\pi(i)}-\lambda_{\pi(j)})
(\epsilon\phi_{ij})^2.$$ This sum is negative definite if and only if
$\{\lambda_i\}$ and $\{\nu_i\}$ are similarly ordered.

If one of the singular values vanishes, the proof holds if the
homogeneous space $\svd$ is replaced by
$\bigl(\O(n)\times\O(k)\bigr)/\penalty1000
\Dg(D)\bigl(\O(n-k+1)\penalty1000\times\O(1)\bigr),$ and the linear
space~$\k$ is replaced by the linear space~$\k'$ defined by the
orthogonal decomposition $\o(n)=\diag(0,\o(n-k+1))+\k'$ (direct sum).
This completes the proof of the second part.  \end{proof}

\begin{remark} If $K$ or~$N$ in Proposition~\ref{prop:svdflow} has
repeated singular values, exponential stability, but not asymptotic
stability, is lost.
\end{remark}

\begin{corollary} Let the $\sigma_i$ and the $\nu_i$ be distinct and
nonzero.  The following hold: \par $(1)$\enskip Let
$\Sigma\in\K(\sigma)$ be nonsquare.  Then\/ $\K(\sigma)$ is connected
and Equation\/~$\rm(\ref{eq:svdflow}a)$ has\/ $2^kk!$ critical points,
of which one is a sink, one is a source, and the remainder are saddle
points.  Also, the set of critical points of
Equation\/~$\rm(\ref{eq:svdflow}b)$ is a submanifold
of~$\O(n)\times\O(k)$ of dimension\/~$(n-k)(n-k-1)/2$. \par
$(2)$\enskip Let $\Sigma\in\K(\sigma)$ be square and nonsymmetric.
Then $\K(\sigma)$ has two connected components corresponding to the
sign of\/~$\det\Sigma$.  On each connected component
Equation\/~$\rm(\ref{eq:svdflow}a)$ has\/ $2^{k-1}k!$ critical points,
of which one is a sink, one is a source, and the remainder are saddle
points.  Also, Equation\/~$\rm(\ref{eq:svdflow}b)$ has $2^{2k}k!$
critical points, of which\/ $2^{2k}$ are sinks, $2^{2k}$ are sources,
and the remainder are saddle points. \par $(3)$\enskip Let
$\Sigma\in\K(\sigma)$ be square and symmetric. Then
$\K(\sigma)\cap\{\,Q\in\R^{k\times k}: Q=Q^\T\,\}$ has $2^k$ connected
components corresponding to matrices with eigenvalues
$\{\pm\sigma_i\}$.  On each connected component
Equation\/~$\rm(\ref{eq:svdflow}a)$ has $k!$ critical points, of which
one is a sink, one is a source, and the remainder are saddle points.
Also, Equation\/~$\rm(\ref{eq:svdflow}b)$ has\/ $2^kk!$ critical
points, of which\/ $2^k$ are sinks, $2^k$ are sources, and the
remainder are saddle points.  \end{corollary}

\begin{proof} Without loss of generality, let $N=\diag_{n\times
k}(k,\ldots,1)$.  In the nonsquare case every trajectory with initial
point $K\in\K(\sigma)$ converges to the point $\diag_{n\times
k}(\sigma_1,\ldots,\sigma_k)$, except for a finite union of
codimension~1 submanifolds of~$\K(\sigma)$.  But the closure of this
set of initial points is $\K(\sigma)$; therefore $\K(\sigma)$ is path
connected, and, as seen in the proof of
Proposition~\ref{prop:svdflow}, Equation~(\ref{eq:svdflow}a) has
$2^kk!$ critical points.  Furthermore, for every critical point of
Equation~(\ref{eq:svdflow}a) there is a corresponding critical point
$(U,V)$ of Equation~(\ref{eq:svdflow}b).  But every point in the coset
$(U,V)\Dg(D)\O(n-k)$ is also a critical point of
Equation~(\ref{eq:svdflow}b).

In the square nonsymmetric case every trajectory with initial point
$K\in\K(\sigma)$ converges to the point
$\diag\bigl(\sigma_1,\ldots,(\sign\det K)\sigma_k\bigr)$, except for a
finite union of codimension~1 submanifolds of~$\K(\sigma)$.  The
closures of these sets of initial points with positive and negative
determinants are path connected and disjoint; therefore $\K(\sigma)$
has two connected components, and there are $2^{k-1}k!$ critical
points in each connected component.  Furthermore, for every critical
point of Equation~(\ref{eq:svdflow}a) there is a corresponding
critical point $(U,V)$ of Equation~(\ref{eq:svdflow}b).  But every
point in the coset $(U,V)\Dg(D)$ is also a critical point of
Equation~(\ref{eq:svdflow}b).

 In the square symmetric case every trajectory with initial point
$K\in\K(\sigma)\cap\{\,Q\in\R^{k\times k}: Q=Q^\T\,\}$ converges to
the point $\diag_{n\times k}(\lambda_1,\ldots,\lambda_k)$, where the
$\lambda_i=\pm\sigma_i$ are the ordered eigenvalues of~$K$, except for
a finite union of codimension~1 submanifolds of~$\K(\sigma)$.  The
closures of these isospectral sets of initial points are path
connected and disjoint; therefore $\K(\sigma)\cap\{\,Q\in\R^{k\times
k}: Q=Q^\T\,\}$ has $2^k$ connected components, and there are $k!$
critical points in each connected component.  Furthermore, for all
critical points $\diag(\lambda_{\pi(1)},\ldots,\lambda_{\pi(k)})$ of
Equation~(\ref{eq:svdflow}a) there is a corresponding critical
point~$V$ of Equation~(\ref{eq:svdflow}b).  But every point in the
coset $VD$ is also a critical point of Equation~(\ref{eq:svdflow}b).
\end{proof}

\begin{corollary} Let the $\sigma_i$ and the $\nu_i$ be distinct and
nonzero.  The function\/ $\tr N^\T\Sigma$ mapping\/ $\K(\sigma)$ to
the real line has\/ $2^kk!$ critical points, of which one is a global
minimum (and one is a local minimum if $n=k$), one is a global maximum
(and one is a local maximum if $n=k$), and the remainder are saddle
points.  Furthermore, if~$n>k$, the submanifold~$\Dg(D)\O(n-k)$
of~$\O(n)\times\O(k)$ is a nondegenerate critical manifold of the
function\/ $\tr N^\T U^\T\!KV$ mapping $\O(n)\times\O(k)$ to the real
line.  If~$n=k$ and $K$ is nonsymmetric, the function\/ $\tr
N^\T U^\T\!KV$ has\/ $2^{2k}k!$ critical points, of which\/ $2^k$ are
global minima, $2^k$ are local minima, $2^k$ are global maxima, $2^k$
are local maxima, and the remainder are saddle points.  If~$n=k$ and
$K$ is symmetric, the function\/ $\tr N^\T U^\T\!KV$ has\/ $2^kk!$
critical points, of which\/ $2^k$ are global minima, $2^k$ are global
maxima, and the remainder are saddle points.  \end{corollary}

\begin{corollary} Let $l$ be a positive integer.  The
matrix\/~$(\Sigma^\T\Sigma)^l$ evolves isospectrally on flows of
Equation\/~$\rm(\ref{eq:svdflow}a)$.
\end{corollary}

\begin{proof} The corollary follows from the fact that $${d\over
dt}(\Sigma^\T\Sigma)^l=\left[(\Sigma^\T\Sigma)^l,{k-2\over
n-2}\[\Sigma^\T,N^\T\]\right]$$ is in standard isospectral form.
\end{proof}

\begin{proposition}\label{prop:svdconvrate}\ignorespaces Let\/
$\{\sigma_i\}$ and\/ $\{\nu_i\}$ have distinct nonzero elements. \par
$(1)$\ Near the critical points\/ $\Sigma=\diag_{n\times
k}(\pm\sigma_{\pi(1)},\ldots,\penalty1000 \pm\sigma_{\pi(k)})$ the
off-diagonal elements $\sigma_{ij}$ of Equation\/
$\rm(\ref{eq:svdflow}a)$ converge exponentially with rates
$r^{(1)}_{ij}$ given by the eigenvalues of the matrix
$$R_1=\pmatrix{\displaystyle {\nu_i\sigma_i\over k-2}
+{\nu_j\sigma_j\over n-2}& \displaystyle -{\nu_j\sigma_i\over k-2}
-{\nu_i\sigma_j\over n-2}\cr \noalign{\bigskip} \displaystyle
-{\nu_j\sigma_i\over n-2} -{\nu_i\sigma_j\over k-2}& \displaystyle
{\nu_i\sigma_i\over n-2} +{\nu_j\sigma_j\over k-2}\cr}$$ for $1\le
i,j\le k$, and by $$r^{(1)}_{ij}={\nu_i\sigma_i\over n-2}$$ for
$k<i\le n$, $1\le j\le k$.  \par $(2)$\ Near the critical
points\/ $(U,V)$ such that $U^\T\!KV=\diag_{n\times
k}(\pm\sigma_{\pi(1)}, \ldots,\pm\sigma_{\pi(k)})$, the elements of
Equation\/ $\rm(\ref{eq:svdflow}b)$ converge exponentially with rates
$r^{(2)}_{ij}$ given by the eigenvalues of the matrix
$$R_2=\pmatrix{\displaystyle {\nu_i\sigma_i+\nu_j\sigma_j\over n-2}
&\displaystyle -{\nu_i\sigma_j+\nu_j\sigma_i\over n-2}\cr
\noalign{\bigskip} \displaystyle -{\nu_i\sigma_i+\nu_j\sigma_j\over
k-2} &\displaystyle {\nu_i\sigma_i+\nu_j\sigma_j\over k-2}\cr}$$ for
$1\le i,j\le k$, and by $$r^{(2)}_{ij}={\nu_i\sigma_i\over n-2}$$ for
$k<i\le n$, $1\le j\le k$. \par $(3)$\enskip For~all $i$ and $j$,
$r^{(1)}_{ij}=r^{(2)}_{ij}$.  \end{proposition}

\begin{proof} Let $\delta U=U\delta\Gamma$ and $\delta V=V\delta\Phi$
be first order perturbations of~$(U,V)\in\O(n)\times\O(k)$, i.e.,
$\delta\Gamma\in\so(n)$ and $\delta\Phi\in\so(k)$.  Then
$$\delta\Sigma=\Sigma(\delta\Phi)-(\delta\Gamma)\Sigma$$ is a first order
perturbation of~$\Sigma\in\K(\sigma)$.  Computing the first order
perturbation of Equation~(\ref{eq:svdflow}a) at the critical point
$\Sigma=\diag_{n\times k}(\pm\sigma_{\pi(1)},
\ldots,\pm\sigma_{\pi(k)})$, it is seen that $$\delta\dot\Sigma
=\Sigma\[(\delta\Sigma)^\T, N^\T\] -\[(\delta\Sigma),N\]\Sigma.$$ This
differential equation is equivalent to the set of differential
equations $$\biggl({\delta\dot\sigma_{ij}\atop
\delta\dot\sigma_{ji}}\biggr)= -R_1\biggl({\delta\sigma_{ij}\atop
\delta\sigma_{ji}}\biggr)$$ for $1\le i,j\le k$, and
$\delta\dot\sigma_{ij}=-r^{(1)}_{ij}\delta\sigma_{ij}$ for $k<i\le n$,
$1\le j\le k$.  This establishes the first part.

Computing the first order perturbation of Equation~(\ref{eq:svdflow}b)
at the critical point $(U,V)$ such that $U^\T\!KV=\diag_{n\times
k}(\pm\sigma_{\pi(1)}, \ldots,\pm\sigma_{\pi(k)})$, it is seen that
$$\delta\dot\Gamma =-\[(\delta\Gamma)\Sigma,N\]
+\[\Sigma(\delta\Phi),N\],\qquad \delta\dot\Phi
=-\[(\delta\Phi)\Sigma^\T,N^\T\]+\[\Sigma^\T(\delta\Gamma),N^\T\].$$
These differential equations are equivalent to the set of differential
equations $$\biggl({\delta\dot\gamma_{ij}\atop
\delta\dot\phi_{ij}}\biggr)= -R_2\biggl({\delta\gamma_{ij}\atop
\delta\phi_{ij}}\biggr)$$ for $1\le i,j\le k$,
$\delta\dot\gamma_{ij}=-r^{(2)}_{ij}\delta\gamma_{ij}$ for $k<i\le n$,
$1\le j\le k$, and $\delta\dot\gamma_{ij}=0$ for $k<i,j\le n$.  This
establishes the second part.

The final part follows immediately from the equalities $\tr R_1=\tr
R_2$ and $\det R_1=\det R_2$.
\end{proof}

\begin{remark} Equations\/ $\rm(\ref{eq:svdflow}a)$ and\/
$\rm(\ref{eq:svdflow}b)$ become $$\diffeqalign{\dot\Sigma &={1\over
n-2}\Sigma\Sigma^\T\!N -\left({1\over n-2}+{1\over k-2}\right)\Sigma
N^\T\Sigma +{1\over k-2}N\Sigma^\T\Sigma; &\Sigma(0)=K,\cr \dot U
&={1\over n-2}\bigl(KVN^\T-UNV^\T\!K^\T U\bigr); &U(0)=I,\cr \dot V
&={1\over k-2}\bigl(K^\T UN-VN^\T U^\T\!KV\bigr); &V(0)=I,\cr}$$ when
the notation $\[\,{,}\,\]$ is expanded and $n\geq k\geq3$.
\end{remark}

\subsection{Experimental results}

The system of Equation~(\ref{eq:svdflow}b) was simulated with a
Runge--Kutta algorithm with $K=\diag_{7\times 5}(1,2,3,\relax4,5)$,
$N=\diag_{7\times 5}(5,4,3,2,1)$, and the initial conditions
$$\def\quad{\hskip.5em}{\ixpt\pmatrix{\hfill-0.210&\hfill -0.091&\hfill
0.455&\hfill 0.668&\hfill -0.217&\hfill 0.490&\hfill 0.085\cr \hfill
0.495&\hfill 0.365&\hfill 0.469&\hfill 0.291&\hfill 0.183&\hfill
-0.413&\hfill -0.335\cr \hfill 0.191&\hfill 0.647&\hfill 0.058&\hfill
-0.237&\hfill -0.578&\hfill 0.154&\hfill 0.356\cr \hfill 0.288&\hfill
-0.285&\hfill 0.403&\hfill -0.539&\hfill -0.089&\hfill 0.461&\hfill
-0.404\cr \hfill -0.490&\hfill -0.022&\hfill 0.633&\hfill
-0.339&\hfill 0.130&\hfill -0.340&\hfill 0.333\cr \hfill -0.426&\hfill
0.598&\hfill -0.064&\hfill -0.088&\hfill 0.438&\hfill 0.364&\hfill
-0.353\cr \hfill -0.412&\hfill -0.005&\hfill -0.046&\hfill
0.017&\hfill -0.607&\hfill -0.325&\hfill -0.595\cr}}, \
{\ixpt\pmatrix{\hfill 0.679&\hfill 0.524&\hfill 0.091&\hfill
-0.438&\hfill 0.253\cr \hfill -0.521&\hfill 0.427&\hfill 0.406&\hfill
0.137&\hfill 0.602\cr \hfill 0.504&\hfill -0.108&\hfill 0.315&\hfill
0.788&\hfill 0.120\cr \hfill -0.032&\hfill -0.089&\hfill 0.839&\hfill
-0.255&\hfill -0.472\cr \hfill 0.113&\hfill -0.723&\hfill 0.156&\hfill
-0.322&\hfill 0.579\cr}}$$
representing $U(0)$ and $V(0)$ chosen at random using Gram-Schmidt
orthogonalization from $\O(7)$ and $\O(5)$, respectively.
Figure~\ref{fig:svdflow} illustrates the convergence of the diagonal
elements of~$\Sigma$ to the singular values $5$, $4$, $3$, $2$, $1$
of~$K$.  Figure~\ref{fig:svdconv} illustrates the rates of convergence
of a few off diagonal elements of~$\Sigma$, which are tabulated in
Table~\ref{tab:convrate} with the predicted convergence rates of
Proposition~\ref{prop:svdconvrate}.

\begin{table}[h]
\caption[Convergence rates of selected elements of $\Sigma$]
  {\label{tab:convrate}\ignorespaces
    Convergence rates of selected elements of\/ $\Sigma$}
\vbox{\tabskip=0pt \def\opentable{\noalign{\vskip4pt}}
\halign to\textwidth{\strut\quad$#$\hfil\tabskip=1em plus2em
  &\hfil$#$\hfil&\hfil$#$\quad&\hfil$#$\quad \tabskip=0pt\cr
\noalign{\hrule}\opentable
\omit Convergence Rate\hfil &\omit\hfil Exact Value\hfil &\omit 
  Approximate Value\hfil &\omit Measured Value*\hfil\cr
\opentable\noalign{\hrule}\opentable
r_{12}&(164-\sqrt{25681})/15&0.24980&0.2498\cr
r_{43}&(52-\sqrt{2329})/15&0.24935&0.2471\cr
r_{31}&(136-8\sqrt{229})/15&0.99587&0.969\hphantom0\cr
r_{24}&(80-4\sqrt{265})/15&0.99231&0.989\hphantom0\cr
r_{75}&1/5&\omit\hfil---\qquad&0.1999\cr \opentable\noalign{\hrule}\opentable
\multispan4\footnotesize* Based upon linear regression analysis.\hfil\cr}}
\end{table}

\makeatletter
\if@twoside    
\eject
\markboth{\uppercase{\ch@ptern@me\hfill\ifnum\c@secnumdepth>\m@ne
    \@chapabb\ {\numbersize\thechapter}.\fi}}{}
\fi
\makeatother

\begin{figure}
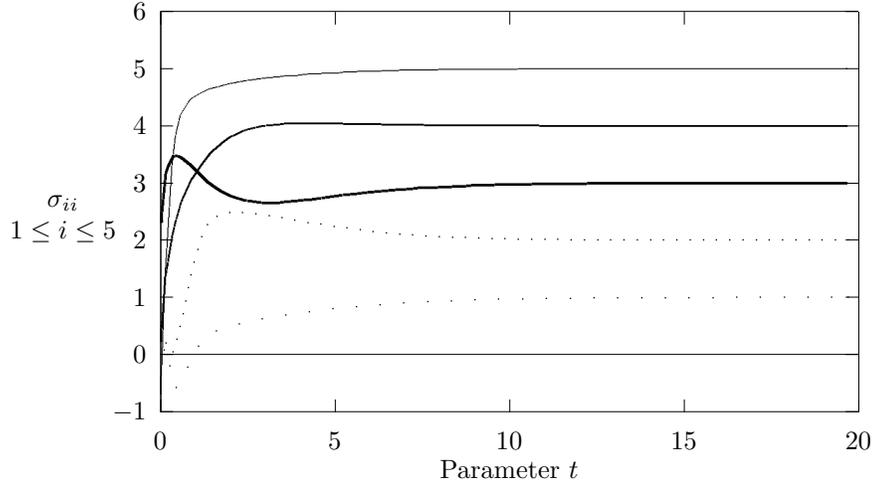

\begin{gnuplot}{10pt}
\input svd7x5flow
\end{gnuplot}
\caption[Gradient flow of singular value
decomposition]{\protect\smallbmit Gradient flow of the singular value
decomposition on ${\bmit K}_{\{1,2,3,4,5\}}\subset\R^{7\times 5}$.
The five diagonal elements of $\Sigma(t)$ satisfying
Eq.~(\ref{eq:svdflow}a) are shown.\label{fig:svdflow}}
\end{figure}

\begin{figure}
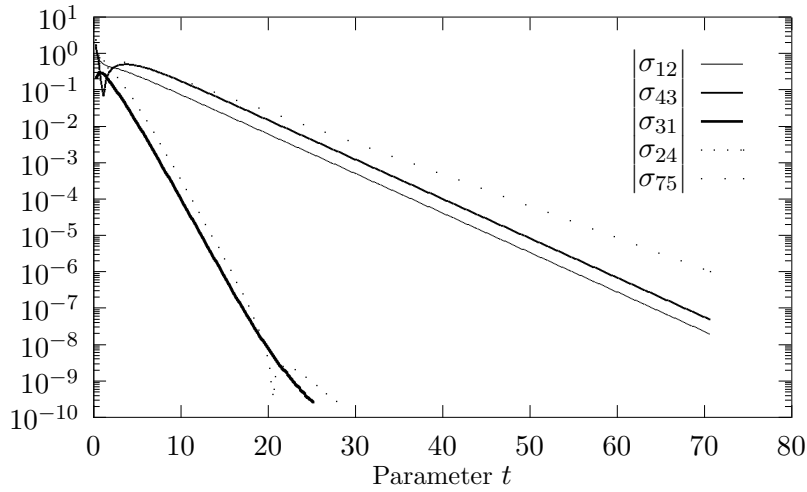

\begin{gnuplot}{10pt}
\input svd7x5offd
\end{gnuplot}
\caption[Off-diagonal convergence of {\tenrm SVD} gradient flow]
{Off-diagonal convergence of {\tenrm SVD} gradient flow on ${\bmit
K}_{\{1,2,3,4,5\}} \subset\R^{7\times 5}$. Selected off-diagonal
elements of~$\Sigma(t)$ satisfying Eq.~(\ref{eq:svdflow}a) are
shown.\label{fig:svdconv}}
\end{figure}

\chapter{Optimization on Riemannian Manifolds}\label{chap:orm}

The preponderance of optimization techniques address problems posed on
Euclidean spaces.  Indeed, several fundamental algorithms have arisen
from the desire to compute the minimum of quadratic forms on Euclidean
space.  However, many optimization problems are posed on non-Euclidean
spaces.  For example, finding the largest eigenvalue of a symmetric
matrix may be posed as the maximization of the Rayleigh quotient
defined on the sphere.  Optimization problems subject to nonlinear
differentiable equality constraints on Euclidean space also lie within
this category.  Many optimization problems share with these examples
the structure of a differentiable manifold endowed with a Riemannian
metric.  This is the subject of this chapter: the extremization of
functions defined on Riemannian manifolds.

The minimization of functions on a Riemannian manifold is, at least
locally, equivalent to the smoothly constrained optimization problem
on a Euclidean space, because every $C^\infty$ Riemannian manifold can
be isometrically imbedded in some Euclidean
space~\cite[Vol.~5]{Spivak}.  However, the dimension of the Euclidean
space may be larger than the dimension of the manifold; practical and
aesthetic considerations suggest that one try to exploit the intrinsic
structure of the manifold.  Elements of this spirit may be found
throughout the field of numerical methods, such as the emphasis on
unitary (norm preserving) transformations in numerical linear
algebra~\cite{GVL}, or the use of feasible direction
methods~\cite{Fletcher,GillMurray,Sargent}.

An intrinsic approach leads one from the extrinsic idea of vector
addition to the exponential map and parallel translation, from
minimization along lines to minimization along geodesics, and from
partial differentiation to covariant differentiation.  The computation
of geodesics, parallel translation, and covariant derivatives can be
quite expensive. For an $n\hyphen$dimensional manifold, the
computation of geodesics and parallel translation requires the
solution of a system of $2n$ nonlinear and $n$ linear ordinary
differential equations.  Nevertheless, many optimization problems are
posed on manifolds that have an underlying structure that may be
exploited to greatly reduce the complexity of these computations.  For
example, on a real compact semisimple Lie group endowed with its
natural Riemannian metric, geodesics and parallel translation may be
computed via matrix exponentiation~\cite{Helgason}. Several algorithms
are available to perform this computation~\cite{GVL,nineteendubious}.
This structure may be found in the problems posed by
Brockett~\citeyear{Brockett:match,Brockett:sort,Brockett:grad}, Bloch
et~al.~\citeyear{BBR1,BBR2}, Smith~\citeyear{Me},
Faybusovich~\citeyear{Leonid}, Lagarias~\citeyear{Lagarias}, Chu
et~al.~\citeyear{Chu:sphere,Chu:grad}, Perkins
et~al.~\citeyear{balreal}, and Helmke~\citeyear{Uwe}.  This approach
is also applicable if the manifold can be identified with a symmetric
space or, excepting parallel translation, a reductive homogeneous
space~\cite{Nomizu,KobayashiandNomizu}.  Perhaps the simplest
nontrivial example is the sphere, where geodesics and parallel
translation can be computed at low cost with trigonometric functions
and vector addition.  If the reductive homogeneous space does not have
a symmetric space structure, the result of
Proposition~\ref{prop:ptredhom} of Chapter~\ref{chap:geom} can be used
to compute the parallel translation of arbitrary vectors along
geodesics.  Furthermore, Brown and Bartholomew-Biggs~\citeyear{Bcubed}
show that in some cases function minimization by following the
solution of a system of ordinary differential equations can be
implemented so as to make it competitive with conventional techniques.

The outline of the chapter is as follows. In Section~\ref{sec:prelim},
the optimization problem is posed and conventions to be held
throughout the chapter are established.  The method of steepest
descent on a Riemannian manifold is described in
Section~\ref{sec:steepdesc}.  To fix ideas, a proof of linear
convergence is given.  The examples of the Rayleigh quotient on the
sphere and the function $\lyapunov$ on the special orthogonal group
are presented.  In Section~\ref{sec:newton}, Newton's method on a
Riemannian manifold is derived.  As in Euclidean space, this algorithm
may be used to compute the extrema of differentiable functions.  It is
proved that this method converges quadratically.  The example of the
Rayleigh quotient is continued, and it is shown that Newton's method
applied to this function converges cubically, and is approximated by
the Rayleigh quotient iteration.  The example considering $\lyapunov$
is continued.  In a related example, it is shown that Newton's method
applied to the sum of the squares of the off-diagonal elements of a
symmetric matrix converges cubically.  This provides an example of a
cubically convergent Jacobi-like method.  The conjugate gradient
method is presented in Section~\ref{sec:conjgrad} with a proof of
superlinear convergence.  This technique is shown to provide an
effective algorithm for computing the extreme eigenvalues of a
symmetric matrix.  The conjugate gradient method is applied to the
function $\lyapunov$.

\section{Preliminaries}\label{sec:prelim}

This chapter is concerned with the following problem.

\begin{problem}\label{mainproblem}\ignorespaces Let $M$ be a complete
Riemannian manifold, and $f$ a $C^\infty$ function on~$M$.  Compute
$$\min_{p\in M}f(p)$$ and find the minimizing point~$p$.
\end{problem}

There are many well-known algorithms for solving this problem in the
case where $M$ is a Euclidean space.  This section generalizes several
of these algorithms to the case of complete Riemannian manifolds by
replacing the Euclidean notions of straight lines and ordinary
differentiation with geodesics and covariant differentiation.  These
concepts are reviewed in Chapter~\ref{chap:geom}.

Unless otherwise specified, all manifolds, vector fields, and
functions are assumed to be smooth.  When considering a function $f$
to be minimized, the assumption that $f$ is differentiable of
class~$C^\infty$ can be relaxed throughout the chapter, but $f$ must
be continuously differentiable at least beyond the derivatives that
appear.  As the results of this chapter are local ones, the assumption
that $M$ be complete may also be relaxed in certain instances.

We will use the the following definitions to compare the convergence
rates of various algorithms.

\begin{definition} Let $\{p_i\}$ be a Cauchy sequence in~$M$ that
converges to $\phat$.  (i)~The sequence $\{p_i\}$ is said to converge
(at least) {\it linearly\/} if there exists an integer $N$ and a
constant $\theta\in[0,1)$ such that $d(p_{i+1},\phat)\le\theta
d(p_i,\phat)$ for~all $i\ge N$.  (ii)~The sequence $\{p_i\}$ is said
to converge (at least) {\it quadratically\/} if there exists an
integer $N$ and a constant $\theta\ge0$ such that
$d(p_{i+1},\phat)\le\theta d^2(p_i,\phat)$ for~all $i\ge N$.
(iii)~The sequence $\{p_i\}$ is said to converge (at least) {\it
cubically\/} if there exists an integer $N$ and a constant
$\theta\ge0$ such that $d(p_{i+1},\phat)\le \theta d^3(p_i,\phat)$
for~all $i\ge{N}$. (iv)~The sequence $\{p_i\}$ is said to converge
{\it superlinearly\/} if it converges faster than any sequence that
converges linearly.
\end{definition}

\section{Steepest descent on Riemannian manifolds}\label{sec:steepdesc}

The method of steepest descent on a Riemannian manifold is
conceptually identical to the method of steepest descent on Euclidean
space.  Each iteration involves a gradient computation and
minimization along the geodesic determined by the gradient.
Fletcher~\citeyear{Fletcher},
Botsaris~\citeyear{Botsaris:grad,Botsaris:class,Botsaris:geod}, and
Luenberger~\citeyear{Luenberger} describe this algorithm in Euclidean
space.  Gill and Murray~\citeyear{GillMurray} and Sargent~\citeyear{Sargent}
apply this technique in the presence of constraints.  In this section
we restate the method of steepest descent described in the literature
and provide an alternative formalism that will be useful in the
development of Newton's method and the conjugate gradient method on
Riemannian manifolds.

\begin{algorithm}[The method of steepest
descent]\label{al:steepman}\ignorespaces Let $M$ be a Riemannian
manifold with Riemannian structure $g$ and Levi-Civita connection
$\nabla$, and let $f\in C^\infty(M)$.
\begin{steps}
\step[0.] Select $p_0\in M$, compute
     $G_0=-(\grad\f)_{p_0}$, and set $i=0$.
\step[1.] Compute $\lambda_i$ such that
     $$f(\exp_{p_i}\lambda_iG_i)\le f(\exp_{p_i}\lambda G_i)$$ for all
     $\lambda\ge0$.
\step[2.] Set $$\eqalign{p_{i+1} &=\exp_{p_i}\lambda_iG_i,\cr
     G_{i+1} &=-(\grad\f)_{p_{i+1}},\cr}$$ increment $i$, and go~to Step~1.
\end{steps}
\end{algorithm}

It is easy to verify that $\(G_{i+1},\tau G_i\)=0$, for $i\ge0$, where
$\tau$ is the parallelism with respect to the geodesic from $p_i$ to
$p_{i+1}$.  By assumption, the function $\lambda\mapsto f(\exp\lambda
G_i)$ is minimized at $\lambda_i$. Therefore, we have $0
={(d/dt)|_{t=0}}\penalty0{f(\exp(\lambda_i+t)G_i)} =df_{p_{i+1}}(\tau
G_i) =\((\grad\f)_{p_{i+1}},\tau G_i\)$.  Thus the method of steepest
descent on a Riemannian manifold has the same deficiency as its
counterpart on a Euclidean space, i.e., it makes a ninety degree turn
at every step.

The convergence of Algorithm~\ref{al:steepman} is linear.  To prove
this fact, we will make use of a standard theorem of the calculus,
expressed in differential geometric language.  The covariant
derivative $\covD X\f$ of~$f$ along $X$ is defined to be $X\f$.  For
$k=1$, $2$,~\dots, define $\covD{X}^k\f
=\covD{X}\circ\cdots\circ\covD{X}\f$ ($k$~times), and let
$\covD{X}^0\f=f$.

\begin{remark}[Taylor's formula]\label{rem:taylorf}\ignorespaces Let
$M$ be a manifold with an affine connection $\nabla$, $N_p$ a normal
neighborhood of~$p\in M$, the vector field $\Xtilde$ on~$N_p$ adapted
to~$X$ in~$T_p$, and $f$ a $C^\infty$ function on~$M$.  Then there
exists an $\epsilon>0$ such that for every $\lambda\in[0,\epsilon)$
\begin{equation}\label{eq:taylorf}
\eqalign{f(\exp_p \lambda X) &=f(p) +\lambda(\covD\Xtilde\f)(p)
+\cdots+{\lambda^{n-1}\over(n-1)!}(\covD\Xtilde^{n-1}\f)(p)\cr
&\quad{}+{\lambda^n\over(n-1)!}\int_0^1(1-t)^{n-1}
(\covD\Xtilde^n\f)(\exp_p t\lambda X)\,dt.\cr}
\end{equation}
\end{remark}

\begin{proof} Let $N_0$ be a star-shaped neighborhood of $0\in
T_p$ such that $N_p=\exp N_0$.  There exists $\epsilon>0$ such that
$\lambda X\in N_0$ for~all $\lambda\in[0,\epsilon)$.  The map
$\lambda\mapsto f(\exp\lambda X)$ is a real $C^\infty$ function on
$[0,\epsilon)$ with derivative $(\covD\Xtilde\f)(\exp\lambda X)$.  The
statement follows by repeated integration by parts.
\end{proof}

Note that if $M$ is an analytic manifold with an analytic affine
connection $\nabla$, the representation $$f(\exp_p\lambda X)
=\sum_{k=0}^\infty{\lambda^k\over k!}(\covD\Xtilde^k\f)(p)$$ is valid
for~all $X\in T_p$ and all $\lambda\in[0,\epsilon)$.
Helgason~\citeyear{Helgason} provides a proof.

The following special cases of Remark~\ref{rem:taylorf} will be
particularly useful.  When $n=2$, Equation~(\ref{eq:taylorf}) yields
\begin{equation} \label{eq:taylorf2}
\eqalign{f(\exp_p\lambda X) &=f(p) +\lambda(\covD\Xtilde\f)(p)
+\lambda^2\int_0^1(1-t) (\covD\Xtilde^2\f)(\exp_pt\lambda X)\,dt.\cr}
\end{equation} Furthermore, when $n=1$, Equation~(\ref{eq:taylorf})
applied to the function $\Xtilde\f=\covD\Xtilde\f$ yields
\begin{equation} \label{eq:dtaylorf1} (\Xtilde\f)(\exp_p\lambda X)
=(\Xtilde\f)(p) +\lambda\int_0^1 (\covD\Xtilde^2\f)(\exp_p t\lambda
X)\,dt. \end{equation}

The second order terms of~$f$ near a critical point are required for
the convergence proofs.  Consider the second covariant differential
$\nabla\nabla\f=\nabla^2\f$ of a smooth function $f\colon M\to\R$.  If
$(U,x^1,\ldots,x^n)$ is a coordinate chart on~$M$, then at~$p\in U$
this $(0,2)$ tensor takes the form
\begin{equation}\label{eq:d2f} (\nabla^2\f)_p =\sum_{i,j}
\biggl(\Bigl({\partial^2\f\over\partial x^i\partial x^j}\Bigr)_p
-\sum_k \Gamma_{ji}^k \Bigl({\partial\f\over\partial
x^k}\Bigr)_p\biggr)\, dx^i\otimes dx^j, \end{equation} where
$\Gamma_{ij}^k$ are the Christoffel symbols at~$p$.  If $\phat$ in~$U$
is a critical point of~$f\!$, then $(\partial\f/\partial x^k)_\phat=0$,
$k=1$,~\dots, $n$. Therefore $(\nabla^2\f)_\phat=(d^2\f)_\phat$, where
$(d^2\f)_\phat$ is the Hessian of~$f$ at the critical point $\phat$.
Furthermore, for $p\in M$, $X$, $Y\in T_p$, and $\Xtilde$ and
$\Ytilde$ vector fields adapted to $X$ and $Y$, respectively, on a
normal neighborhood $N_p$ of~$p$, we have $(\nabla^2\f)
(\Xtilde,\Ytilde) =\covD\Ytilde\covD\Xtilde\f$ on~$N_p$.  Therefore
the coefficient of the second term of the Taylor expansion of $f(\exp
tX)$ is $(\covD\Xtilde^2\f)_p =(\nabla^2\f)_p(X,X)$.  Note that the
bilinear form $(\nabla^2\f)_p$ on $T_p\times T_p$ is symmetric if and
only if $\nabla$ is symmetric, which true of the Levi-Civita
connection by definition.

\begin{theorem}\label{th:linearconv}\ignorespaces Let $M$ be a complete
Riemannian manifold with Riemannian structure $g$ and Levi-Civita
connection $\nabla$.  Let $f\in C^\infty(M)$ have a nondegenerate
critical point at $\phat$ such that the Hessian $(d^2\f)_\phat$ is
positive definite.  Let $p_i$ be a sequence of points in $M$
converging to~$\phat$ and $H_i\in T_{p_i}$ a sequence of tangent
vectors such that $$\eqaligncond{(i)& p_{i+1}
&=\exp_{p_i}\lambda_iH_i &for $i=0$, $1$,~\dots,\cr (ii)&
\(-(\grad\f)_{p_i},H_i\) &\ge c\,\|(\grad\f)_{p_i}\|\>\|H_i\| &for
$c\in(0,1]$,\cr}$$ where $\lambda_i$ is chosen such that
$f(\exp\lambda_iH_i)\le f(\exp\lambda H_i)$ for all $\lambda\ge0$.
Then there exists a constant $E$ and a $\theta\in[0,1)$ such that
for~all $i=0$, $1$,~\dots, $$d(p_i,\phat)\le E\theta^i.$$
\end{theorem}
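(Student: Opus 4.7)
The plan is to adapt the standard Euclidean argument for linear convergence of line-search methods, using Taylor's formula in the Riemannian setting to tie together $f$, $\|(\grad f)_p\|$, and $d(p,\phat)$ near the critical point. The first step is to invoke Taylor's formula~(\ref{eq:taylorf2}) at $\phat$, together with positive definiteness of $(d^2 f)_\phat=(\nabla^2 f)_\phat$, to obtain on a sufficiently small normal neighborhood $U$ of $\phat$ a two-sided quadratic bound
$${a\over 2}d(p,\phat)^2 \le f(p)-f(\phat) \le {b\over 2}d(p,\phat)^2;$$
applying the first-order expansion~(\ref{eq:dtaylorf1}) to the components of $\grad f$ in a normal coordinate chart at~$\phat$ yields a comparable bound
$$\alpha\,d(p,\phat) \le \|(\grad f)_p\| \le \beta\,d(p,\phat),$$
for constants $0<a\le b$ and $0<\alpha\le\beta$ determined by the extreme eigenvalues of $(d^2 f)_\phat$.

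Next I would derive a per-step decrease of~$f$. Pick a compact neighborhood of $\phat$ on which $|(\nabla^2 f)_q(X,X)|\le M\|X\|^2$ holds uniformly, and restrict attention to $i\ge N$ large enough that every iterate $p_i$ lies in~$U$. Applying~(\ref{eq:taylorf2}) along the geodesic $\lambda\mapsto\exp_{p_i}\lambda H_i$ yields
$$f(\exp_{p_i}\lambda H_i)\le f(p_i)-\lambda\(-(\grad f)_{p_i},H_i\)+{M\over 2}\lambda^2\|H_i\|^2$$
for $\lambda\ge 0$ keeping the arc inside the neighborhood. Minimising the right-hand side over~$\lambda$ produces a trial step whose $f$-decrease is at least $\(-(\grad f)_{p_i},H_i\)^2/(2M\|H_i\|^2)$, which by hypothesis~(ii) is bounded below by $c^2\|(\grad f)_{p_i}\|^2/(2M)$. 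Exact line minimisation along~$H_i$ can only do better, so
$$f(p_i)-f(p_{i+1})\ge{c^2\over 2M}\|(\grad f)_{p_i}\|^2.$$

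Setting $\phi_i=f(p_i)-f(\phat)$, the gradient lower bound combined with $\phi_i\le(b/2)d(p_i,\phat)^2$ gives $\|(\grad f)_{p_i}\|^2\ge(2\alpha^2/b)\phi_i$, and hence
$$\phi_{i+1}\le\Bigl(1-{c^2\alpha^2\over Mb}\Bigr)\phi_i.$$
Choosing the Hessian bounds sharp enough (close to the extreme eigenvalues of $(d^2 f)_\phat$) makes the factor on the right lie in $[0,1)$; call it $\theta^2$. Iterating and combining with $\phi_i\ge(a/2)d(p_i,\phat)^2$ yields $d(p_i,\phat)\le E\theta^i$ for $i\ge N$, and a final adjustment of~$E$ absorbs the finitely many earlier iterates.

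The main obstacle is the uniformity required in the Hessian bound: $(\nabla^2 f)$ must be controlled not merely at $p_i$ and $\phat$ individually but along each geodesic arc used in the Taylor estimate, and the trial minimiser $\lambda^*$ of the quadratic model must keep that arc inside the controlled neighborhood. Both issues are handled by the hypothesis $p_i\to\phat$: convergence of the iterates ensures $p_i\in U$ for $i\ge N$, and the Cauchy--Schwarz estimate $\lambda^*\|H_i\|\le\|(\grad f)_{p_i}\|/M\to 0$ forces the trial arc length to zero, so the quadratic bound is valid on it for all sufficiently large~$i$.
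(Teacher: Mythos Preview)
Your argument is correct and follows essentially the same route as the paper: Taylor bounds at~$\phat$ give the two-sided quadratic control on $f(p)-f(\phat)$ and the lower bound on $\|(\grad f)_p\|$, then the second-order Taylor estimate along $H_i$ combined with hypothesis~(ii) yields the per-step decrease $f(p_i)-f(p_{i+1})\ge (c^2/2M)\|(\grad f)_{p_i}\|^2$, and chaining these gives geometric decay of $f(p_i)-f(\phat)$ and hence of $d(p_i,\phat)$. The only cosmetic difference is that the paper uses a single pair of Hessian constants $k\le K$ throughout (so your $a,\alpha=k$ and $b,\beta,M=K$), which makes the contraction factor $1-(ck/K)^2$ manifestly in $[0,1)$ without the extra ``choosing bounds sharp enough'' step; your explicit attention to keeping the trial arc inside the controlled neighborhood is a point the paper glosses over.
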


\begin{proof} The proof is a generalization of the one given in
Polak~\citeyear[p.~242ff\/]{Polak} for the method of steepest descent on
Euclidean space.

The existence of a convergent sequence is guaranteed by the smoothness
of~$f$.  If $p_j=\phat$ for some integer $j$, the assertion becomes
trivial; assume otherwise.  By the smoothness of~$f\!$, there exists an
open neighborhood $U$ of~$\phat$ such that $(\nabla^2\f)_p$ is
positive definite for~all $p\in U$. Therefore, there exist constants
$k>0$ and $K\ge k>0$ such that for all $X\in T_p$ and all $p\in U$,
\begin{equation} \label{eq:d2fbounds} k\|X\|^2\le
(\nabla^2\f)_p(X,X)\le K\|X\|^2. \end{equation}

Define $X_i\in T_{\phat}$ by the relations $\exp X_i=p_i$, $i=0$, $1$,
\dots\spacefactor=3000\relax\space By assumption, $df_\phat=0$ and
from Equation~(\ref{eq:taylorf2}), we have \begin{equation}
\label{eq:taylorfsp} f(p_i)-f(\phat) =\int_0^1
(1-t)(\covD{\Xtilde_i}^2f)(\exp_\phat tX_i)\,dt.\end{equation}
Combining this equality with the inequalities of (\ref{eq:d2fbounds})
yields \begin{equation} \label{eq:fbounds} \half kd^2(p_i,\phat)\le
f(p_i)-f(\phat)\le \half Kd^2(p_i,\phat). \end{equation} Similarly, we
have by Equation~(\ref{eq:dtaylorf1}) $$(\Xtilde_if)(p_i) =\int_0^1
(\covD{\Xtilde_i}^2f)(\exp_\phat tX_i)\,dt.$$ Next, use
(\ref{eq:taylorfsp}) with Schwarz's inequality and the first
inequality of~(\ref{eq:fbounds}) to obtain
$$\eqalign{kd^2(p_i,\phat)=k\|X_i\|^2 &\le\int_0^1
(\covD{\Xtilde_i}^2f)(\exp_\phat tX_i)\,dt =(\Xtilde_if)(p_i)\cr
&=df_{p_i}\bigl((\Xtilde_i)_{p_i}\bigr) =df_{p_i}(\tau X_i)
=\((\grad\f)_{p_i},\tau X_i\)\cr &\le
\|(\grad\f)_{p_i}\|\>\|\tau X_i\| =\|(\grad\f)_{p_i}\|\>
d(p_i,\phat).\cr}$$ Therefore, \begin{equation} \label{eq:gradlb}
\|(\grad\f)_{p_i}\|\ge kd(p_i,\phat).\end{equation}

Define the function $\Diff\colon T_p\times\R\to\R$ by the equation
$\Diff(X,\lambda)=f(\exp_p\lambda X)-f(p)$. By
Equation~(\ref{eq:taylorf2}), the second order Taylor formula, we have
$$\Diff(H_i,\lambda) =\lambda(\Htilde_if)(p_i) +\half\lambda^2\int_0^1
(1-t)(\covD{\Htilde_i}^2f)(\exp_{p_i}\lambda H_i)\,dt.$$ Using
assumption~(ii) of the theorem along with (\ref{eq:d2fbounds}) we
establish for $\lambda\ge0$ \begin{equation}\label{eq:diffbounds}
\Diff(H_i,\lambda)\le -\lambda c\|(\grad\f)_{p_i}\|\>\|H_i\|
+\half\lambda^2K\|H_i\|^2. \end{equation}

We may now compute an upper bound for the rate of linear convergence
$\theta$.  By assumption~(i) of the theorem, $\lambda$ must be chosen
to minimize the right hand side of~(\ref{eq:diffbounds}). This
corresponds to choosing $\lambda=c\|(\grad\f)_{p_i}\|\big/K\|H_i\|$.
A computation reveals that $$\Diff(H_i,\lambda_i)\le
-{c^2\over2K}\|(\grad\f)_{p_i}\|^2.$$ Applying (\ref{eq:fbounds}) and
(\ref{eq:gradlb}) to this inequality and rearranging terms yields
\begin{equation} \label{eq:fconv} f(p_{i+1})-f(\phat)\le
\theta\bigl(f(p_i)-f(\phat)\bigr), \end{equation} where
$\theta=\bigl(1-(ck/K)^2\bigr)$.  By assumption, $c\in(0,1]$ and
$0<k\le K$, therefore $\theta\in[0,1)$.  (Note that Schwarz's
inequality bounds $c$ below unity.)  From (\ref{eq:fconv}) it is seen
that $\bigl(f(p_i)-f(\phat)\bigr)\le E\theta^i$, where
$E=\bigl(f(p_0)-f(\phat)\bigr)$.  From (\ref{eq:fbounds}) we conclude
that for $i=0$, $1$,~\dots, \begin{equation} \label{eq:linearconv}
d(p_i,\phat)\le \sqrt{2E\over k}\bigl(\sqrt\theta\,\bigr)^i.\tombstone
\end{equation}
\end{proof}

\begin{corollary} If Algorithm~\ref{al:steepman} converges to a local
minimum, it converges linearly.
\end{corollary}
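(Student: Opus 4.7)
The plan is to invoke Theorem~\ref{th:linearconv} directly, with the search directions $H_i$ identified as the negative gradients $G_i=-(\grad\f)_{p_i}$ prescribed by the algorithm. Under this identification, hypothesis~(i) of the theorem is exactly Step~2 of Algorithm~\ref{al:steepman}, and the line-search condition on the step length $\lambda_i$ is exactly Step~1. So the only substantive items are hypothesis~(ii) and the requirement that $(d^2\f)_\phat$ be positive definite.

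First I would dispatch hypothesis~(ii). With $H_i=-(\grad\f)_{p_i}$ one has
$$\(-(\grad\f)_{p_i},H_i\)=\|(\grad\f)_{p_i}\|^2=\|(\grad\f)_{p_i}\|\>\|H_i\|,$$
so the hypothesis holds with the extreme choice $c=1$, which lies in the admissible interval $(0,1]$. Thus the steepest-descent iteration trivially satisfies both structural hypotheses of Theorem~\ref{th:linearconv}.

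Next I would address the spectral assumption. The corollary posits convergence to a local minimum $\phat$, so $(\grad\f)_\phat=0$ and $(d^2\f)_\phat$ is at least positive semidefinite; the corollary should be read with the usual nondegeneracy qualification, namely that $(d^2\f)_\phat$ is in fact positive definite, since this is what the theorem requires (and what is needed for linear convergence to hold in general). Granting this, all hypotheses of Theorem~\ref{th:linearconv} are in force, and its conclusion gives constants $E\ge0$ and $\theta\in[0,1)$ with $d(p_i,\phat)\le E\theta^i$ for every $i$. From the definition of linear convergence it then follows that $d(p_{i+1},\phat)\le\sqrt{\theta}\,d(p_i,\phat)$ holds eventually (in fact, the bound $E\theta^i$ already witnesses at-least-linear convergence directly), completing the argument.

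There is no real obstacle beyond the bookkeeping above; the substantive work has been done in Theorem~\ref{th:linearconv}, and the corollary amounts to the observation that the steepest-descent choice $H_i=-(\grad\f)_{p_i}$ saturates hypothesis~(ii) with $c=1$. The only delicate point worth flagging is the implicit nondegeneracy of the Hessian at the limiting minimizer, without which the theorem's bounds $k\|X\|^2\le(\nabla^2\f)_p(X,X)\le K\|X\|^2$ near $\phat$ are unavailable.
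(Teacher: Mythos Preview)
Your proposal is correct and follows exactly the paper's approach: the paper's entire argument is the single observation that the choice $H_i=-(\grad\f)_{p_i}$ yields $c=1$ in hypothesis~(ii) of Theorem~\ref{th:linearconv}, which is precisely the computation you carry out. Your additional remarks flagging the implicit nondegeneracy of $(d^2\f)_\phat$ are a welcome clarification that the paper leaves tacit.
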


The choice $H_i=-(\grad\f)_{p_i}$ yields $c=1$ in the second
assumption the Theorem~\ref{th:linearconv}, which establishes the
corollary.

\begin{example}[Rayleigh quotient on the
sphere]\label{eg:rayqgrad}\ignorespaces Let $S^{n-1}$ be the imbedded
sphere in~$\R^n$, i.e., $S^{n-1}=\{\,x\in\R^n:x^\T x=1\,\}$, where
$x^\T y$ denotes the standard inner product on~$\R^n$, which induces a
metric on~$S^{n-1}$.  Geodesics on the sphere are great circles and
parallel translation along geodesics is equivalent to rotating the
tangent plane along the great circle.  Let $x\in S^{n-1}$ and $h\in
T_x$ have unit length, and $v\in T_x$ be any tangent vector.  Then
$$\eqalign{\exp_x th &=x\cos t +h\sin t,\cr \tau h &=h\cos t -x\sin
t,\cr \tau v &=v-(h^\T v)\bigl(x\sin t +h(1-\cos t)\bigr),\cr}$$ where
$\tau$ is the parallelism along the geodesic $t\mapsto\exp th$.  Let
$Q$ be an $n\by n$ positive definite symmetric matrix with distinct
eigenvalues and define $\rho\colon S^{n-1}\to\R$ by $\rho(x)=x^\T Qx$.
A computation shows that \begin{equation}\label{eq:raygrad}
\half(\grad\rho)_x =Qx-\rho(x)x.  \end{equation} The function $\rho$
has a unique minimum and maximum point at the eigenvectors
corresponding to the smallest and largest eigenvalues of~$Q$,
respectively. Because $S^{n-1}$ is geodesically complete, the method
of steepest descent in the opposite direction of the gradient
converges to the eigenvector corresponding to the smallest eigenvalue
of~$Q$; likewise for the eigenvector corresponding to the largest
eigenvalue.  Chu~\citeyear{Chu:sphere} considers the continuous limit
of this problem.  A computation shows that $\rho(x)$ is maximized
along the geodesic $\exp_x th$ ($\|h\|=1$) when $a\cos2t-b\sin2t=0$,
where $a=2x^\T Qh$ and $b=\rho(x)-\rho(h)$.  Thus $\cos t$ and $\sin
t$ may be computed with simple algebraic functions of $a$ and~$b$
(which appear below in Algorithm~\ref{al:raysphere}).  The results of
a numerical experiment demonstrating the convergence of the method of
steepest descent applied to maximizing the Rayleigh quotient
on~$S^{20}$ are shown in Figure~\ref{fig:raysphere} on
page~\pageref{fig:raysphere}.  \end{example}

\def\temp{\citeyear{Brockett:sort,Brockett:grad}}
\begin{example}[Brockett~\temp]\label{eg:trHNgrad}\ignorespaces
Consider the function $f(\Theta)=\lyapunov$ on the special orthogonal
group $\SO(n)$, where $Q$ is a real symmetric matrix with distinct
eigenvalues and $N$ is a real diagonal matrix with distinct diagonal
elements. It will be convenient to identify tangent vectors in
$T_\Theta$ with tangent vectors in $T_I\cong\so(n)$, the tangent plane
at the identity, via left translation.  The gradient of~$f$ (with
respect to the negative Killing form of~$\so(n)$, scaled by~$1/(n-2)$)
at $\Theta\in\SO(n)$ is $\Theta[H,N]$, where
$H=\Ad_{\Theta^\T}(Q)=\Theta^\T Q\Theta$.  The group $\SO(n)$ acts on
the set of symmetric matrices by conjugation; the orbit of~$Q$ under
the action of~$\SO(n)$ is an isospectral submanifold of the symmetric
matrices.  We seek a $\Thetahat$ such that $f(\Thetahat)$ is
maximized.  This point corresponds to a diagonal matrix whose diagonal
entries are ordered similarly to those of~$N$.  A related example is
found in Smith~\citeyear{Me}, who considers the homogeneous space of
matrices with fixed singular values, and in Chu~\citeyear{Chu:grad}.

The Levi-Civita connection on~$\SO(n)$ is bi-invariant and invariant
with respect to inversion; therefore, geodesics and parallel
translation may be computed via matrix exponentiation of elements
in~$\so(n)$ and left (or right) translation~\cite[Chap.~2,
Ex.~6]{Helgason}.  The geodesic emanating from the identity in
$\SO(n)$ in direction $X\in\so(n)$ is given by the formula
$\exp_ItX=e^{Xt}$, where the right hand side denotes regular matrix
exponentiation.  The expense of geodesic minimization may be avoided
if instead one uses Brockett's estimate~\cite{Brockett:grad} for the
step size.  Given $\Omega\in\so(n)$, we wish to find $t>0$ such that
$\phi(t)=\tr\Ad_{e^{-\Omega t}}(H)N$ is minimized.  Differentiating
$\phi$ twice shows that $\phi'(t)=-\tr\Ad_{e^{-\Omega t}}(\ad_\Omega
H)N$ and $\phi''(t)=-\tr\Ad_{e^{-\Omega t}}(\ad_\Omega H)\ad_\Omega N$,
where $\ad_\Omega A=[\Omega,A]$.  Hence, $\phi'(0)=2\tr H\Omega N$
and, by Schwarz's inequality and the fact that $\Ad$ is an isometry,
$|\phi''(t)|\le \|\ad_\Omega H\|\;\|\ad_\Omega N\|$.  We conclude that
if $\phi'(0)>0$, then $\phi'$ is nonnegative on the interval
\begin{equation}\label{eq:Brockettest} 0\le t\le {2\tr H\Omega
N\over\|\ad_\Omega H\|\;\|\ad_\Omega N\|}, \end{equation} which
provides an estimate for the step size of Step~1 in
Algorithm~\ref{al:steepman}.  The results of a numerical experiment
demonstrating the convergence of the method of steepest descent
(ascent) in~$\SO(20)$ using this estimate are shown in
Figure~\ref{fig:SOnconv}.
\end{example}

\section{Newton's method on Riemannian manifolds}\label{sec:newton}

As in the optimization of functions on Euclidean space, quadratic
convergence can be obtained if the second order terms of the Taylor
expansion are used appropriately.  In this section we present Newton's
algorithm on Riemannian manifolds, prove that its convergence is
quadratic, and provide examples.  Whereas the convergence proof for
the method of steepest descent relies upon the Taylor expansion of the
function $f\!$, the convergence proof for Newton's method will rely upon
the Taylor expansion of the one-form $df$.  Note that Newton's method
has a counterpart in the theory of constrained optimization, as
described by, e.g., Fletcher~\citeyear{Fletcher},
Bertsekas~\citeyear{Bertsekas:newton,Bertsekas}, or
Dunn~\citeyear{Dunn:newton,Dunn:grad}.  The Newton method presented in
this section has only local convergence properties. There is a theory
of global Newton methods on Euclidean space and computational
complexity; see the work of Hirsch and Smale~\citeyear{HirschSmale},
Smale~\citeyear{Smale:fta,Smale:eff}, and Shub and
Smale~\citeyear{ShubSmale:I,ShubSmale:II}.

Let $M$ be an $n\hyphen$dimensional Riemannian manifold with Riemannian
structure $g$ and Levi-Civita connection $\nabla$, let $\mu$ be a
$C^\infty$ one-form on~$M$, and let $p$ in~$M$ be such that the
bilinear form $(\D\mu)_p\colon T_p\times T_p\to\R$ is nondegenerate.
Then, by abuse of notation, we have the pair of isomorphisms
$$T_p\mathrel{\adjarrow^{(\D\mu)_p}_
{(\D\mu)_p^{\rlap{$\scriptscriptstyle-1$}}}}T_p^*$$ with the forward
map defined by $X\mapsto (\covD X\mu)_p= (\D\mu)_p(\rdot,X)$, which is
nonsingular.  The notation $(\D\mu)_p$ will henceforth be used for
both the bilinear form defined by the covariant differential of~$\mu$
evaluated at~$p$ and the homomorphism from $T_p$ to~$T_p^*$ induced by
this bilinear form.  In case of an isomorphism, the inverse can be
used to compute a point in~$M$ where $\mu$ vanishes, if such a point
exists.  The case $\mu=df$ will be of particular interest, in which
case $\D\mu=\nabla^2\f$.  Before expounding on these ideas, we make
the following remarks.

\begin{remark}[The mean value theorem]\label{rem:mvt}\ignorespaces Let
$M$ be a manifold with affine connection~$\nabla$, $N_p$ a normal
neighborhood of~$p\in M$, the vector field $\Xtilde$ on~$N_p$ adapted
to~$X\in T_p$, $\mu$ a one-form on~$N_p$, and $\tau_\lambda$ the
parallelism with respect to $\exp tX$ for $t\in[0,\lambda]$.  Denote
the point $\exp\lambda X$ by~$p_\lambda$.  Then there exists an
$\epsilon>0$ such that for every $\lambda\in[0,\epsilon)$, there is an
$\alpha\in[0,\lambda]$ such that $$\tau_\lambda^{-1}\mu_{p_\lambda}
-\mu_p =\lambda(\covD\Xtilde\mu)_{p_\alpha}\circ\tau_\alpha.$$
\end{remark}

\begin{proof} As in the proof of Remark~\ref{rem:taylorf},
there exists an $\epsilon>0$ such that $\lambda X\in N_0$ for~all
$\lambda\in[0,\epsilon)$.  The map $\lambda\mapsto
(\tau_\lambda^{-1}\mu_{p_\lambda})(A)$, for any $A$ in~$T_p$, is a
$C^\infty$ function on~$[0,\epsilon)$ with derivative
$(d/dt)(\tau_t^{-1}\mu_{p_t})(A) =(d/dt)\mu_{p_t}(\tau_tA)
=\covD\Xtilde\bigl(\mu_{p_t}(\tau_tA)\bigr)
=(\covD\Xtilde\mu)_{p_t}(\tau_tA)+\mu_{p_t}\bigl(\covD\Xtilde(\tau_tA)\bigr)
=(\covD\Xtilde\mu)_{p_t}(\tau_tA)$.  The lemma follows from the mean
value theorem of real analysis.
\end{proof}

This remark can be generalized in the following way.

\begin{remark}[Taylor's theorem]\label{rem:taylort}\ignorespaces Let
$M$ be a manifold with affine connection~$\nabla$, $N_p$ a normal
neighborhood of~$p\in M$, the vector field $\Xtilde$ on~$N_p$ adapted
to~$X\in T_p$, $\mu$ a one-form on~$N_p$, and $\tau_\lambda$ the
parallelism with respect to $\exp tX$ for $t\in[0,\lambda]$.  Denote
the point $\exp\lambda X$ by~$p_\lambda$.  Then there exists an
$\epsilon>0$ such that for every $\lambda\in[0,\epsilon)$, there is an
$\alpha\in[0,\lambda]$ such that \begin{equation}\label{eq:taylort}
\tau_\lambda^{-1}\mu_{p_\lambda} =\mu_p +\lambda(\covD\Xtilde\mu)_p
+\cdots+{\lambda^{n-1}\over(n-1)!} (\covD\Xtilde^{n-1}\mu)_p
+{\lambda^n\over n!} (\covD\Xtilde^n\mu)_{p_\alpha}\circ\tau_\alpha.
\end{equation}
\end{remark}

The remark follows by applying Remark~\ref{rem:mvt} and the Taylor's
theorem of real analysis to the function $\lambda\mapsto
(\tau_\lambda^{-1}\mu_{p_\lambda})(A)$ for any $A$ in~$T_p$.

Remarks \ref{rem:mvt} and \ref{rem:taylort} can be generalized to
$C^\infty$ tensor fields, but we will only require
Remark~\ref{rem:taylort} for case $n=2$ to make the following
observation.

Let $\mu$ be a one-form on~$M$ such that for some $\phat$ in~$M$,
$\mu_\phat=0$.  Given any $p$ in a normal neighborhood of~$\phat$, we
wish to find $X$ in~$T_p$ such that $\exp_pX=\phat$.  Consider the
Taylor expansion of~$\mu$ about $p$, and let $\tau$ be the parallel
translation along the unique geodesic joining $p$ to~$\phat$. We have
by our assumption that $\mu$ vanishes at~$\phat$, and from
Equation~(\ref{eq:taylort}) for $n=2$, $$0=\tau^{-1}\mu_\phat
=\tau^{-1}\mu_{\exp_pX} =\mu_p +(\D\mu)_p(\rdot,X) +\hot$$ If the
bilinear form $(\D\mu)_p$ is nondegenerate, the tangent vector $X$ may
be approximated by discarding the higher order terms and solving the
resulting linear equation $$\mu_p +(\D\mu)_p(\rdot,X) =0$$ for~$X$,
which yields $$X=-(\D\mu)_p^{-1}\mu_p.$$ This approximation is the
basis of the following algorithm.

\begin{algorithm}[Newton's method]\label{al:newtonman}\ignorespaces Let
$M$ be a complete Riemannian manifold with Riemannian structure $g$
and Levi-Civita connection $\nabla$, and let $\mu$ be a $C^\infty$
one-form on~$M$.
\begin{steps}
\step[0.] Select $p_0\in M$ such that $(\D\mu)_{p_0}$ is
     nondegenerate, and set $i=0$.
\step[1.] Compute $$\eqalign{H_i
     &=-(\D\mu)_{p_i}^{-1}\mu_{p_i}\cr p_{i+1} &=\exp_{p_i}H_i,\cr}$$
     (assume that $(\D\mu)_{p_i}$ is nondegenerate), increment $i$, and
     repeat.
\end{steps}
\end{algorithm}

It can be shown that if $p_0$ is chosen suitably close (within the
so-called domain of attraction) to a point $\phat$ in~$M$ such
that $\mu_\phat=0$ and $(\D\mu)_\phat$ is nondegenerate, then
Algorithm~\ref{al:newtonman} converges quadratically to~$\phat$.  The
following theorem holds for general one-forms; we will consider the
case where $\mu$ is exact.

\begin{theorem}\label{th:newtonquad}\ignorespaces Let $f\in
C^\infty(M)$ have a nondegenerate critical point at $\phat$.  Then
there exists a neighborhood $U$ of~$\phat$ such that for any $p_0\in
U$, the iterates of Algorithm~\ref{al:newtonman} for $\mu=df$ are well
defined and converge quadratically to~$\phat$.
\end{theorem}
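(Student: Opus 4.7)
The plan is to apply the one-form Taylor expansion of Remark~\ref{rem:taylort} with $n=2$ to $\mu=df$ along the minimizing geodesic from $p_i$ to $\phat$, and to combine the vanishing of $df$ at $\phat$ with the defining equation of the Newton step to control the error between the exact ``correction'' tangent vector $X_i\in T_{p_i}$ (the vector satisfying $\exp_{p_i}X_i=\phat$) and the Newton direction $H_i$.

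First I would fix a neighborhood on which all the estimates are uniform. Since $(\nabla^2\f)_\phat$ is nondegenerate and $\nabla^2\f$, $\nabla^3\f$ are continuous, there is an open set $W\ni\phat$ lying inside a convex normal neighborhood of $\phat$ and such that on $W$ the bilinear form $(\nabla^2\f)_p$ is invertible with $\|(\nabla^2\f)_p^{-1}\|\le C_1$, the third covariant differential obeys $\|(\nabla^3\f)_p\|\le C_2$, and the exponential maps $\exp_p$ based at points $p\in W$ are bi-Lipschitz onto their images with constant $C_3$. These choices guarantee that Algorithm~\ref{al:newtonman} is well defined whenever the current iterate lies in $W$, and allow manifold distances to be compared with tangent-space norms up to absolute constants.

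Next comes the core estimate. For $p_i\in W$ let $X_i\in T_{p_i}$ be the unique vector with $\exp_{p_i}X_i=\phat$, so that $\|X_i\|=d(p_i,\phat)$. Applying Remark~\ref{rem:taylort} with $n=2$ to $\mu=df$ along $t\mapsto\exp_{p_i}(tX_i)$ and using $(df)_\phat=0$ yields, for some $\alpha\in[0,1]$,
$$0=(df)_{p_i}+(\nabla^2\f)_{p_i}(\rdot,X_i)+\half(\covD{\Xtilde_i}^2 df)_{p_\alpha}\circ\tau_\alpha.$$
Subtracting the Newton equation $(df)_{p_i}+(\nabla^2\f)_{p_i}(\rdot,H_i)=0$ and inverting $(\nabla^2\f)_{p_i}$ gives
$$X_i-H_i=-\half(\nabla^2\f)_{p_i}^{-1}\bigl((\covD{\Xtilde_i}^2 df)_{p_\alpha}\circ\tau_\alpha\bigr),$$
and the uniform bounds $C_1$, $C_2$ together with the isometric character of parallel transport produce $\|X_i-H_i\|\le\half C_1C_2\,\|X_i\|^2$.

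Finally, since $\phat=\exp_{p_i}X_i$ and $p_{i+1}=\exp_{p_i}H_i$, the bi-Lipschitz property of $\exp_{p_i}$ on $W$ gives $d(p_{i+1},\phat)\le C_3\|X_i-H_i\|$, so that
$$d(p_{i+1},\phat)\le\half C_1C_2C_3\,d(p_i,\phat)^2.$$
Choosing $U\subseteq W$ small enough that $\half C_1C_2C_3\,d(p_0,\phat)<1$ ensures by induction that every iterate remains in $W$ and that the sequence converges quadratically to $\phat$. The main technical obstacle is the uniformity of $C_1$, $C_2$, $C_3$ across varying basepoints and the bookkeeping of parallel transport between different tangent spaces; restricting everything to a fixed convex normal neighborhood and invoking compactness of its closure is what makes these constants uniform, and this is the only nontrivial geometric input beyond the one-form Taylor theorem.
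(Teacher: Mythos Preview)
Your argument is correct, and the analytic core---applying Remark~\ref{rem:taylort} with $n=2$ to $df$ along the geodesic from $p_i$ to $\phat$, then subtracting the Newton equation to bound $\|X_i-H_i\|$ quadratically---is exactly what the paper does in equations~(\ref{eq:quadproof1})--(\ref{eq:quadproof3}). Where you diverge from the paper is in the passage from the tangent-space error $\|X_i-H_i\|$ to the manifold distance $d(p_{i+1},\phat)$. You invoke a uniform Lipschitz bound for $\exp_{p_i}$ and write $d(p_{i+1},\phat)=d(\exp_{p_i}H_i,\exp_{p_i}X_i)\le C_3\|X_i-H_i\|$ directly. The paper instead introduces a geodesic triangle with vertices $p_i$, $p_{i+1}$, $\phat$, defines a defect vector $\vrem_i$ by $X_i=H_i+\tau^{-1}X_{i+1}+\vrem_i$ (with $\tau$ the parallel transport along the Newton step), and then bounds $\|\vrem_i\|$ cubically using Karcher's curvature estimate~(\ref{eq:Karcher}). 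Your route is shorter and avoids the explicit appearance of sectional curvature; the paper's route has the virtue of isolating the curvature contribution as a separate, higher-order term, which matters for the later cubic-convergence remarks where one wants to see precisely that the quadratic obstruction is $(\nabla^3\f)_\phat(\rdot,X,X)$ alone. Both are valid; yours is the more economical proof, the paper's the more geometrically transparent one.
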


The proof of this theorem is a generalization of the corresponding
proof for Euclidean spaces, with an extra term containing the
Riemannian curvature tensor (which of course vanishes in the latter
case).

\begin{proof} If $p_j=\phat$ for some integer $j$, the assertion
becomes trivial; assume otherwise.  Define $X_i\in T_{p_i}$ by the
relations $\phat=\exp X_i$, $i=0$, $1$, \dots, so that
$d(p_i,\phat)=\|X_i\|$ (n.b.\ this convention is opposite that used in
the proof of Theorem~\ref{th:linearconv}).  Consider the geodesic
triangle with vertices $p_i$, $p_{i+1}$, and $\phat$, and sides $\exp
tX_i$ from $p_i$ to~$\phat$, $\exp tH_i$ from $p_i$ to~$p_{i+1}$, and
$\exp tX_{i+1}$ from $p_{i+1}$ to~$\phat$, for $t\in[0,1]$.  Let
$\tau$ be the parallelism with respect to the side $\exp tH_i$ between
$p_i$ and $p_{i+1}$.  There exists a unique tangent vector $\vrem_i$
in~$T_{p_i}$ defined by the equation \begin{equation}\label{eq:vrem}
X_i=H_i+\tau^{-1}X_{i+1}+\vrem_i \end{equation} ($\vrem_i$ may be
interpreted as the amount by which vector addition fails).  If we use
the definition $H_i=-(\nabla^2\f)_{p_i}^{-1}df_{p_i}$ of
Algorithm~\ref{al:newtonman}, apply the isomorphism
$(\nabla^2\f)_{p_i}\colon T_{p_i}\to T_{p_i}^*$ to both sides of
Equation~(\ref{eq:vrem}), we obtain the equation
\begin{equation}\label{eq:quadproof1} (\nabla^2\f)_{p_i}
(\tau^{-1}X_{i+1}) =df_{p_i} +(\nabla^2\f)_{p_i}X_i
-(\nabla^2\f)_{p_i}\vrem_i. \end{equation} By Taylor's theorem, there
exists an $\alpha\in[0,1]$ such that
\begin{equation}\label{eq:quadproof2} \tau_1^{-1}df_\phat =df_{p_i}
+(\covD{\Xtilde_i}df)_{p_i} +\half(\covD{\Xtilde_i}^2df)_{p_\alpha}
\circ\tau_\alpha, \end{equation} where $\tau_t$ is the parallel
translation from~$p_i$ to~$p_t=\exp tX_i$.  The trivial identities
$(\covD{\Xtilde_i}df)_{p_i} =(\nabla^2\f)_{p_i}X_i$ and
$(\covD{\Xtilde_i}^2df)_{p_\alpha} =(\nabla^3\f)_{p_\alpha}
(\tau_\alpha\rdot,\tau_\alpha X_i,\tau_\alpha X_i)$ will be used to
replace the last two terms on the right hand side of
Equation~(\ref{eq:quadproof2}).  Combining the assumption that $df_\phat=0$
with Equations (\ref{eq:quadproof1}) and (\ref{eq:quadproof2}), we obtain
\begin{equation}\label{eq:quadproof3} (\nabla^2\f)_{p_i}
(\tau^{-1}X_{i+1})= -\half(\covD{\Xtilde_i}^2df)_{p_\alpha}
\circ\tau_\alpha -(\nabla^2\f)_{p_i}\vrem_i. \end{equation}

By the smoothness of~$f$ and $g$, there exists an $\epsilon>0$ and
constants $\delta'$, $\delta''$, $\delta'''$, all greater than zero,
such that whenever $p$ is in the convex normal ball $\Ball$,
$$\eqaligncond{(i)& \|(\nabla^2\f)_p(\rdot,X)\|&\ge\delta'\|X\|
&for~all $X\in T_p$,\cr (ii)&
\|(\nabla^2\f)_p(\rdot,X)\|&\le\delta''\|X\| &for~all $X\in T_p$,\cr
(iii)& \|(\nabla^3\f)_p(\rdot,X,X)\|&\le\delta'''\|X\|^2 &for~all
$X\in T_p$,\cr}$$ where the induced norm on~$T_p^*$ is used in all
three cases.  Taking the norm of both sides of
Equation~(\ref{eq:quadproof3}), applying the triangle inequality to the
right hand side, and using the fact that parallel translation is an
isometry, we obtain the inequality
\begin{equation}\label{eq:quadproof4} \delta'd(p_{i+1},\phat)\le
\delta'''d^2(p_i,\phat) +\delta''\|\vrem_i\|. \end{equation}

The length of~$\vrem_i$ can be bounded by a cubic expression
in~$d(p_i,\phat)$ by considering the distance between the points
$\exp(H_i+\tau^{-1}X_{i+1})$ and $\exp X_{i+1}=\phat$.  Given $p\in
M$, $\epsilon>0$ small enough, let $a$, $v\in T_p$ be such that
$\|a\|+\|v\|\le\epsilon$, and let $\tau$ be the parallel translation
with respect to the geodesic from~$p$ to~$q=\exp_p a$.
Karcher~\citeyear[App.~C2.2]{Karcher} shows that \begin{equation}
\label{eq:Karcher} d\bigl(\exp_p(a+v), \exp_q(\tau v)\bigr)\le
\|a\|\cdot{\rm const.}\,(\max|K|)\cdot\epsilon^2,\end{equation} where
$K$ is the sectional curvature of~$M$ along any section in the tangent
plane at any point near~$p$.

There exists a constant $c>0$ such that $\|\vrem_i\|\le c\,
d\bigl(\phat, {\exp(H_i +\tau^{-1}X_{i+1})}\bigr)$.
By~(\ref{eq:Karcher}), we have $\|\vrem_i\|\le {\rm
const.}\,\|H_i\|\epsilon^2$.  Taking the norm of both sides of the
Taylor formula $df_{p_i} =-\int_0^1 (\covD{\Xtilde_i}df) (\exp
tX_i)\,dt$ and applying a standard integral inequality and
inequality~(ii) from above yields $\|df_{p_i}\|\le\delta''\|X_i\|$ so
that $\|H_i\|\le {\rm const.}\,\|X_i\|$.  Furthermore, we have the
triangle inequality $\|X_{i+1}\|\le \|X_i\|+\|H_i\|$, therefore
$\epsilon$ may be chosen such that $\|H_i\|+\|X_{i+1}\|\le\epsilon\le
{\rm const.}\,\|X_i\|$.  By~(\ref{eq:Karcher}) there exists
$\delta^{\rm iv}>0$ such that $\|\vrem_i\|\le\delta^{\rm
iv}d^3(p_i,\phat)$.
\end{proof}

\begin{corollary} If\/ $(\nabla^2\f)_\phat$ is positive (negative)
definite and Algorithm~\ref{al:newtonman} converges to~$\phat$, then
Algorithm~\ref{al:newtonman} converges quadratically to a local
minimum (maximum) of~$f$.
\end{corollary}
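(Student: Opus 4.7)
The plan is to reduce the corollary to Theorem~\ref{th:newtonquad} by first verifying that $\phat$ is a critical point of~$f$ and then invoking the theorem on the tail of the sequence. It suffices to treat the positive definite case, the negative case following by replacing $f$ with $-f$.

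First I would show $df_\phat=0$. Since $(\nabla^2\f)_\phat$ is positive definite, it is in particular nondegenerate, and by continuity the same is true on a neighborhood of~$\phat$; hence the Newton iteration $H_i=-(\nabla^2\f)_{p_i}^{-1}df_{p_i}$, $p_{i+1}=\exp_{p_i}H_i$ is well defined for all $i$ sufficiently large. Because $p_i\to\phat$ we have $d(p_{i+1},p_i)\to 0$, and since $\exp$ is a local diffeomorphism uniformly near $\phat$ (a standard consequence of smoothness of $\exp$ and compactness of a small closed normal ball), this forces $\|H_i\|\to 0$. Taking limits in the identity $(\nabla^2\f)_{p_i}H_i=-df_{p_i}$ and using continuity of $\nabla^2\f$ and $df$ together with nondegeneracy of $(\nabla^2\f)_\phat$ yields $df_\phat=0$.

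With $df_\phat=0$ and $(\nabla^2\f)_\phat$ positive definite, $\phat$ is a nondegenerate critical point, and Equation~(\ref{eq:taylorf2}) applied in an arbitrary direction at $\phat$ (together with the lower bound in~(\ref{eq:d2fbounds}) on a small neighborhood) shows that $\phat$ is a strict local minimum of~$f$. Theorem~\ref{th:newtonquad} then supplies a neighborhood $U$ of~$\phat$ on which the algorithm is well defined and converges quadratically to~$\phat$. Since $p_i\to\phat$, there is an index $N$ with $p_N\in U$; applying the theorem to the subsequence $(p_i)_{i\ge N}$ produces the integer $N$ and constant $\theta\ge0$ demanded by the definition of quadratic convergence.

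The only mildly delicate point is the passage from $p_i\to\phat$ to $\|H_i\|\to 0$, which is needed to conclude that the Newton step itself collapses in the limit; once this is established, the proof is a bookkeeping exercise combining the second derivative characterization of local extrema with the quadratic convergence theorem already in hand.
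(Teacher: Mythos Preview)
The paper offers no proof of this corollary, evidently regarding it as immediate from Theorem~\ref{th:newtonquad}: a positive definite $(\nabla^2\f)_\phat$ is in particular nondegenerate, so the theorem supplies quadratic convergence, and the second-order Taylor expansion (as in~(\ref{eq:taylorf2}) and~(\ref{eq:d2fbounds})) shows that such a critical point is a strict local minimum. Under this reading the hypothesis $df_\phat=0$ is tacitly inherited from the theorem.

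You have read the corollary more literally---noticing that it does not explicitly assume $\phat$ is critical---and supplied an argument that convergence of the Newton iterates forces $df_\phat=0$. This is more scrupulous than the paper, and the plan is sound. However, the step you yourself flag as ``mildly delicate'' is not actually closed. From $d(p_{i+1},p_i)\to0$ and $p_{i+1}=\exp_{p_i}H_i$, the local-diffeomorphism property of $\exp$ gives $d(p_{i+1},p_i)=\|H_i\|$ only once $\|H_i\|$ is already known to lie below the injectivity radius; it does not by itself exclude a large $H_i$ whose geodesic returns near~$p_i$. A cleaner route is to observe that in any smooth local trivialization of $TM$ near~$\phat$ the vector $H_i=-(\nabla^2\f)_{p_i}^{-1}df_{p_i}$ depends continuously on~$p_i$, hence converges to $H_\infty=-(\nabla^2\f)_\phat^{-1}df_\phat$; continuity of $\exp\colon TM\to M$ then yields $\exp_\phat H_\infty=\phat$, forcing $H_\infty=0$ \emph{provided} $\|H_\infty\|$ is below the injectivity radius at~$\phat$. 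Without some such proviso (or the paper's implicit assumption that $\phat$ is already critical), the literal statement could in principle fail on a manifold with short closed geodesics through~$\phat$.
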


\begin{example}[Rayleigh quotient on the
sphere]\label{eg:newtonray}\ignorespaces Let $S^{n-1}$ and
$\rho(x)=x^\T Qx$ be as in Example~\ref{eg:rayqgrad}. It will be
convenient to work with the coordinates $x^1$,~\dots, $x^n$ of the
ambient space~$\R^n$, treat the tangent plane $T_xS^{n-1}$ as a vector
subspace of~$\R^n$, and make the identification $T_xS^{n-1}\cong
T_x^*S^{n-1}$ via the metric. In this coordinate system, geodesics on
the sphere obey the second order differential equation $\ddot
x^k+x^k=0$, $k=1$,~\dots, $n$. Thus the Christoffel symbols are given
by $\Gamma_{ij}^k=\delta_{ij}x^k$, where $\delta_{ij}$ is the
Kronecker delta.  The $ij$th component of the second covariant
differential of~$\rho$ at~$x$ in~$S^{n-1}$ is given by (cf.\
Equation~(\ref{eq:d2f})) $$\bigl((\Dsqr\rho)_x\bigr)_{ij} =2Q_{ij}
-\sum_{k,l}\delta_{ij}x^k\cdot 2Q_{kl}x^l =2\bigl(Q_{ij}
-\rho(x)\delta_{ij}\bigr),$$ or, written as matrices,
\begin{equation}\label{eq:d2ray} \half(\Dsqr\rho)_x=Q-\rho(x)I.
\end{equation} Let $u$ be a tangent vector in~$T_x S^{n-1}$.
A linear operator $A\colon\R^n\to\R^n$ defines a linear operator on
the tangent plane $T_xS^{n-1}$ for each $x$ in~$S^{n-1}$ such that
$$A\rdot u=Au-(x^\T\!Au)x =(I-xx^\T)Au.$$ If $A$ is invertible as an
endomorphism of the ambient space $\R^n$, the solution to the linear
equation $A\rdot u=v$ for $u$, $v$ in~$T_xS^{n-1}$ is
\begin{equation}\label{eq:tanginv} u=A^{-1}\left(v
-{(x^\T\!A^{-1}v)\over(x^\T\!A^{-1}x)}x\right).
\end{equation} For Newton's method, the direction
$H_i$ in~$T_xS^{n-1}$ is the solution of the equation
$$(\Dsqr\rho)_{x_i}\rdot H_i= -(\grad\rho)_{x_i}.$$ Combining Equations
(\ref{eq:raygrad}), (\ref{eq:d2ray}), and (\ref{eq:tanginv}), we
obtain $$H_i =-x_i +\alpha_i\bigl(Q-\rho(x_i)I\bigr)^{-1}x_i,$$ where
$\alpha_i=1\big/x_i^\T(Q-\rho(x_i)I)^{-1}x_i$.  This gives rise to the
following algorithm for computing eigenvectors of the symmetric
matrix~$Q$.
\end{example}

\begin{algorithm}[Newton-Rayleigh quotient
method]\label{al:newtonray}\ignorespaces Let $Q$ be a real symmetric
$n\by n$ matrix.
\begin{steps}
\step[0.] Select $x_0$ in~$\R^n$ such that $x_0^\T x_0=1$, and
     set $i=0$.
\step[1.] Compute $$y_i=\bigl(Q-\rho(x_i)I\bigr)^{-1}x_i$$ and
     set $\alpha_i=1\big/x_i^\T y_i$.
\step[2.] Compute $$\eqalign{H_i &=-x_i+\alpha_iy_i,\quad
     \theta_i=\|H_i\|,\cr
     x_{i+1} &=x_i\cos\theta_i +H_i\sin\theta_i/\theta_i,\cr}$$
     increment $i$, and go~to Step~1.
\end{steps}
\end{algorithm}

The quadratic convergence guaranteed by Theorem~\ref{th:newtonquad} is
in fact too conservative for Algorithm~\ref{al:newtonray}.  As
evidenced by Figure~\ref{fig:raysphere}, Algorithm~\ref{al:newtonray}
converges cubically.

\begin{proposition} If $\lambda$ is a distinct eigenvalue of the
symmetric matrix~$Q$, and Algorithm~\ref{al:newtonray} converges to
the corresponding eigenvector $\xhat$, then it converges cubically.
\end{proposition}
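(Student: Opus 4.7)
The plan is to revisit the proof of Theorem~\ref{th:newtonquad} applied to $\mu=d\rho$ and show that, at the eigenvector critical point $\xhat$, the term that gave rise to the quadratic error in the general case is in fact of cubic order for the Rayleigh quotient. Recall that the proof of Theorem~\ref{th:newtonquad} produced the central inequality
$$\delta'\,d(x_{i+1},\xhat)\le \half\bigl\|(\nabla^3\rho)_{p_\alpha}(\rdot,\tau_\alpha X_i,\tau_\alpha X_i)\bigr\| +\delta''\|\vrem_i\|,$$
where $p_\alpha=\exp_{x_i}\alpha X_i$ lies on the geodesic from $x_i$ to $\xhat$ and where $\|\vrem_i\|=O(d^3(x_i,\xhat))$ was already established. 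It therefore suffices to improve the uniform bound $\delta'''$ used for $(\nabla^3\rho)_p(\rdot,V,V)$ in the general theorem to a bound of order $d(p,\xhat)\|V\|^2$.

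The main claim, which I would establish by a direct coordinate computation in the ambient $\R^n$, is that $(\nabla^3\rho)_{\xhat}(Z,X,Y)=0$ whenever $Z,X,Y\in T_{\xhat}S^{n-1}$. Using $\Gamma_{ij}^k=\delta_{ij}x^k$ and the Hessian $\half(\Dsqr\rho)_x=Q-\rho(x)I$ from Equation~(\ref{eq:d2ray}), expanding the covariant-derivative formula $\nabla_k T_{ij}=\partial_k T_{ij}-\Gamma_{ki}^l T_{lj}-\Gamma_{kj}^l T_{il}$ with $T_{ij}=2Q_{ij}-2\rho(x)\delta_{ij}$ produces one term proportional to $(Z^\T Qx)(X\cdot Y)$ and two terms each carrying a factor of $Qx-\rho(x)x=\half\grad\rho$. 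At $x=\xhat$ the latter two factors vanish by Equation~(\ref{eq:raygrad}), and the surviving term collapses to $-4\lambda(Z\cdot\xhat)(X\cdot Y)$, which is zero since $Z\in T_{\xhat}S^{n-1}$ is orthogonal to $\xhat$.

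With the pointwise vanishing in hand, the same general-point formula gives an explicit smoothness bound: for $p$ near $\xhat$ and tangent $Z,V\in T_pS^{n-1}$, one has $\|Qp-\rho(p)p\|=O(d(p,\xhat))$, while $Z\cdot Qp=\lambda(Z\cdot\xhat)+O(\|Z\|\,d(p,\xhat))$ and $Z\cdot\xhat=Z\cdot(\xhat-p)=O(\|Z\|\,d(p,\xhat))$, so $(\nabla^3\rho)_p(\rdot,V,V)=O(d(p,\xhat)\|V\|^2)$ as a covector norm. Applying this to $p_\alpha$ and to $V=\tau_\alpha X_i$, and using $d(p_\alpha,\xhat)\le d(x_i,\xhat)=\|X_i\|$ together with the isometry $\|\tau_\alpha X_i\|=\|X_i\|$, upgrades the first summand of the central inequality to $O(d^3(x_i,\xhat))$. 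Combined with the already-cubic $\|\vrem_i\|$, this gives $d(x_{i+1},\xhat)\le C\,d^3(x_i,\xhat)$ for some constant $C$ and all $i$ large enough, which is cubic convergence. The one genuine obstacle is the coordinate computation of $(\nabla^3\rho)_{\xhat}$; everything else is a light rerun of the Theorem~\ref{th:newtonquad} book-keeping with the sharpened third-derivative bound in place of $\delta'''$.
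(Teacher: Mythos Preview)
Your argument is correct and coincides with the paper's first proof. The paper states, more tersely, that the ambient components of $(\nabla^3\rho)_{\xhat}$ are $-2\lambda\,\xhat^k\delta_{ij}$, hence $(\nabla^3\rho)_{\xhat}(\rdot,X,X)=0$ for tangent $X$, and then simply appeals to the smoothness of $\rho$ to conclude that the second-order term in Equation~(\ref{eq:quadproof3}) is actually of third order near~$\xhat$. Your write-up unpacks exactly this: the explicit formula $\nabla_kT_{ij}$ you compute agrees (up to an immaterial factor of~$2$) with the paper's stated components, your observation that the residual terms carry a factor of $\half\grad\rho$ is the mechanism behind the vanishing, and your Lipschitz-type bound $(\nabla^3\rho)_p(\rdot,V,V)=O(d(p,\xhat)\|V\|^2)$ is precisely what ``follows from smoothness'' means here.

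It is worth knowing that the paper also supplies a second, independent proof in the style of Parlett's analysis of the Rayleigh quotient iteration: one writes $x_i=\xhat\cos\psi_i+u_i\sin\psi_i$ with $u_i\perp\xhat$, substitutes the update rule of Algorithm~\ref{al:newtonray} directly, and after expanding the trigonometric and resolvent terms obtains $|\tan\psi_{i+1}|=O(|\psi_i|^3)$. That route bypasses the covariant-derivative machinery entirely in favor of explicit sphere coordinates, which makes the constants more transparent and ties the result visibly to the classical RQI analysis; your approach, by contrast, explains \emph{why} the cubic rate occurs in the language of Theorem~\ref{th:newtonquad}, namely because the obstruction tensor $\nabla^3\rho$ happens to vanish at the critical point.
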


\begin{proof}[Proof 1] In the coordinates $x^1$,~\dots, $x^n$ of the
ambient space $\R^n$, the $ijk$th component of the third covariant
differential of~$\rho$ at~$\xhat$ is $-2\lambda \xhat^k\delta_{ij}$.
Let $X\in T_\xhat S^{n-1}$.  Then $(\nabla^3\rho)_\xhat(\rdot,X,X)=0$
and the second order terms on the right hand side of
Equation~(\ref{eq:quadproof3}) vanish at the critical point.  The
proposition follows from the smoothness of~$\rho$.
\end{proof}

\begin{proof}[Proof 2] The proof follows
Parlett's~\citeyear[p.~72ff\/]{Parlett} proof of cubic convergence for the
Rayleigh quotient iteration.  Assume that for~all $i$, $x_i\ne\xhat$,
and denote $\rho(x_i)$ by~$\rho_i$.  For~all $i$, there is an angle
$\psi_i$ and a unit length vector $u_i$ defined by the equation $x_i
=\xhat\cos\psi_i +u_i\sin\psi_i$, such that $\xhat^\T u_i=0$. By
Algorithm~\ref{al:newtonray} $$\eqalign{x_{i+1} &=\xhat\cos\psi_{i+1}
+u_{i+1}\sin\psi_{i+1} =x_i\cos\theta_i +H_i\sin\theta_i/\theta_i\cr
&=\xhat\biggl({\alpha_i\sin\theta_i\over (\lambda-\rho_i)\theta_i}
+\beta_i\biggr)\cos\psi_i +\biggl( {\alpha_i\sin\theta_i\over\theta_i}
(Q-\rho_iI)^{-1}u_i +\beta_iu_i\biggr)\sin\psi_i,\cr}$$ where
$\beta_i=\cos\theta_i-\sin\theta_i/\theta_i$.  Therefore,
\begin{equation}\label{eq:tanpsi} |\tan\psi_{i+1}| ={\Bigl\|
{\alpha_i\sin\theta_i\over\theta_i} (Q-\rho_iI)^{-1}u_i
+\beta_iu_i\Bigr\|\over \Bigl| {\alpha_i\sin\theta_i\over
(\lambda-\rho_i)\theta_i} +\beta_i\Bigr|} \cdot|\tan\psi_i|.
\end{equation} The following equalities and low order approximations
in terms of the small quantities $\lambda-\rho_i$, $\theta_i$, and
$\psi_i$ are straightforward to establish: ${\lambda-\rho_i}
={(\lambda-\rho(u_i))}
\discretionary{}{\the\textfont2\char2\thinspace}{}
\sin^2\psi_i$, $\theta_i^2=\cos^2\psi_i\sin^2\psi_i +\hot$, 
$\alpha_i={(\lambda-\rho_i)} +\hot$, and $\beta_i=-\theta_i^2/3
+\hot$\spacefactor=3000\relax\space Thus, the denominator of the large
fraction in Equation~(\ref{eq:tanpsi}) is of order unity and the numerator
is of order $\sin^2\psi_i$.  Therefore, we have $$|\psi_{i+1}|={\rm
const.}\,|\psi_i|^3 +\hot\tombstone$$
\end{proof}

\begin{remark} If Algorithm~\ref{al:newtonray} is simplified by
replacing Step~2 with
\begin{steps}
\step[$2'$.] Compute $$x_{i+1}=y_i\big/\|y_i\|,$$ increment
     $i$, and go~to Step~1.
\end{steps}
then we obtain the Rayleigh quotient iteration.  These two algorithms
differ by the method in which they use the vector
$y_i=(Q-\rho(x_i)I)^{-1}x_i$ to compute the next iterate on the
sphere.  Algorithm~\ref{al:newtonray} computes the point $H_i$
in~$T_{x_i}S^{n-1}$ where $y_i$ intersects this tangent plane, then
computes $x_{i+1}$ via the exponential map of this vector (which
``rolls'' the tangent vector $H_i$ onto the sphere).  The Rayleigh
quotient iteration computes the intersection of~$y_i$ with the sphere
itself and takes this intersection to be $x_{i+1}$. The latter
approach approximates Algorithm~\ref{al:newtonray} up to quadratic
terms when $x_i$ is close to an eigenvector.
Algorithm~\ref{al:newtonray} is more expensive to compute
than---though of the same order as---the Rayleigh quotient iteration;
thus, the |RQI| is seen to be an efficient approximation of Newton's
method.
\end{remark}

If the exponential map is replaced by the chart $v\in T_x\mapsto
(x+v)/\|x+v\|\in S^{n-1}$, Shub~\citeyear{Shub:ray} shows that a
corresponding version of Newton's method is equivalent to the |RQI|.

\begin{example}[The function $\lyapunov$]\label{eg:trHNnewton}\ignorespaces
Let $\Theta$, $Q$, $H=\Ad_{\Theta^\T}(Q)$, and $\Omega$ be as in
Example~\ref{eg:trHNgrad}.  The second covariant differential of
$f(\Theta)=\lyapunov$ may be computed either by polarization of the
second order term of $\tr\Ad_{e^{-\Omega t}}(H)N$, or by covariant
differentiation of the differential $df_\Theta
=-\tr[H,N]\Theta^\T(\rdot)$: $$(\nabla^2\f)_\Theta(\Theta X,\Theta Y)
=-\half\tr\bigl([H,\ad_X N]-[\ad_X H,N]\bigr)Y,$$ where $X$,
$Y\in\so(n)$.  To compute the direction $\Theta X\in T_\Theta$,
$X\in\so(n)$, for Newton's method, we must solve the equation
$(\nabla^2\f)_\Theta(\Theta\rdot,\Theta X)=df_\Theta$, which yields
the linear equation $$L_\Theta(X)\buildrel{\rm def}\over=[H,\ad_X
N]-[\ad_X H,N] =2[H,N].$$ The linear operator
$L_\Theta\colon\so(n)\to\so(n)$ is self-adjoint for~all $\Theta$ and,
in a neighborhood of the maximum, negative definite.  Therefore,
standard iterative techniques in the vector space $\so(n)$, such as
the classical conjugate gradient method, may be used to solve this
equation near the maximum.  The results of a numerical experiment
demonstrating the convergence of Newton's method in~$\SO(20)$ are
shown in Figure~\ref{fig:SOnconv}.  As can be seen, Newton's method
converged within round-off error in 2 iterations.
\end{example}

\begin{remark}\label{rem:trHNcub}\ignorespaces If Newton's method
applied to the function $f(\Theta)=\lyapunov$ converges to the point
$\Thetahat$ such that $\Ad_{\Thetahat^\T}(Q)=H_\infty=\alpha N$,
$\alpha\in\R$, then it converges cubically.
\end{remark}

\begin{proof} By covariant differentiation of~$\nabla^2\f\!$, the third
covariant differential of~$f$ at~$\Theta$ evaluated at the tangent
vectors $\Theta X$, $\Theta Y$, $\Theta Z\in T_\Theta$, $X$, $Y$,
$Z\in\so(n)$, is $$\eqalign{(\nabla^3\f)_\Theta(\Theta X,\Theta
Y,\Theta Z) =-\quarter\tr\bigl(&[\ad_Y\ad_ZH,N] -[\ad_Z\ad_YN,H]\cr
{}+[H,\ad_{\ad_YZ}N]-{}&[\ad_YH,\ad_ZN] +[\ad_YN,\ad_ZH]\bigr)
X.\cr}$$ If $H=\alpha N$, $\alpha\in\R$, then $(\nabla^3\f)
_\Theta(\rdot,\Theta X,\Theta X)=0$.  Therefore, the second order
terms on the right hand side of Equation~(\ref{eq:quadproof3}) vanish at
the critical point.  The remark follows from the smoothness of~$f$.
\end{proof}

This remark illuminates how rapid convergence of Newton's method
applied to the function $f$ can be achieved in some instances.  If
$E_{ij}\in\so(n)$ ($i<j$) is a matrix with entry $+1$ at element
$(i,j)$, $-1$ at element $(j,i)$, and zero elsewhere,
$X=\sum_{i<j}x^{ij}E_{ij}$, $H=\diag(h_{1},\ldots,h_{n})$, and
$N=\diag(\nu_1,\ldots,\nu_n)$, then
$$\eqalign{&(\nabla^3\f)_\Theta(\Theta E_{ij},\Theta X,\Theta X)={}\cr
&\qquad{-2}\sum_{k\neq i,j}x^{ik}x^{jk}\bigl((h_i\nu_j-h_j\nu_i)
+(h_j\nu_k-h_k\nu_j) +(h_k\nu_i-h_i\nu_k)\bigr).\cr}$$ If the $h_i$
are close to $\alpha \nu_i$, $\alpha\in\R$, for~all~$i$, then
$(\nabla^3\f)_\Theta(\rdot,\Theta X,\Theta X)$ may be small, yielding
a fast rate of quadratic convergence.

\begin{example}[Jacobi's method]\label{eg:jacobi}\ignorespaces Let
$\pi$ be the projection of a square matrix onto its diagonal, and let
$Q$ be as above.  Consider the maximization of the function
$f(\Theta)=\tr H\pi(H)$, $H=\Ad_{\Theta^\T}(Q)$, on the special
orthogonal group. This is equivalent to minimizing the sum of the
squares of the off-diagonal elements of~$H$ (Golub and
Van~Loan~\citeyear{GVL} derive the classical Jacobi method).  The gradient
of this function at~$\Theta$ is $2\Theta[H,\pi(H)]$~\cite{Chu:grad}.
By repeated covariant differentiation of~$f\!$, we find
$$\def\.{\kern-30pt}\eqalign{(\nabla\f)_I(X) &=-2\tr[H,\pi(H)]X,\cr
(\nabla^2\f)_I(X,Y) &=-\tr\bigl([H,\ad_X\pi(H)] -[\ad_XH,\pi(H)]
-2[H,\pi(\ad_XH)]\bigr)Y\cr (\nabla^3\f)_I(X,Y,Z)
&=-\half\tr\bigl([\ad_Y\ad_ZH,\pi(H)] -[\ad_Z\ad_Y\pi(H),H],\cr
&\.{}+[H,\ad_{\ad_YZ}\pi(H)]-[\ad_YH,\ad_Z\pi(H)]+[\ad_Y\pi(H),\ad_ZH]\cr
&\.{}+2[H,\pi(\ad_Y\ad_ZH)]+2[H,\pi(\ad_Z\ad_YH)]\cr
&\.{}+2[\ad_YH,\pi(\ad_ZH)]-2[H,\ad_Y\pi(\ad_ZH)]\cr
&\.{}+2[\ad_ZH,\pi(\ad_YH)]-2[H,\ad_Z\pi(\ad_YH)]\bigr)X,\cr}$$ where
$I$ is the identity matrix and $X$, $Y$, $Z\in\so(n)$.  It is easily
shown that if $[H,\pi(H)]=0$, i.e., if $H$ is diagonal, then
$(\nabla^3\f)_\Theta(\rdot,\Theta X,\Theta X)=0$ (n.b.\
$\pi(\ad_XH)=0$). Therefore, by the same argument as the proof of
Remark~\ref{rem:trHNcub}, Newton's method applied to the function~$\tr
H\pi(H)$ converges cubically.
\end{example}

\section{Conjugate gradient method on Riemannian
manifolds}\label{sec:conjgrad}

The method of steepest descent provides an optimization technique
which is relatively inexpensive per iteration, but converges
relatively slowly.  Each step requires the computation of a geodesic
and a gradient direction.  Newton's method provides a technique which
is more costly both in terms of computational complexity and memory
requirements, but converges relatively rapidly.  Each step requires
the computation of a geodesic, a gradient, a second covariant
differential, and its inverse.  In this section we describe the
conjugate gradient method, which has the dual advantages of
algorithmic simplicity and superlinear convergence.

Hestenes and Stiefel~\citeyear{HestenesStiefel} first used conjugate
gradient methods to compute the solutions of linear equations, or,
equivalently, to compute the minimum of a quadratic form on~$\R^n$.
This approach can be modified to yield effective algorithms to compute
the minima of nonquadratic functions on~$\R^n$.  In particular,
Fletcher and Reeves~\citeyear{FletcherReeves} and Polak and
Ribi\`ere~\cite{Polak} provide algorithms based upon the assumption
that the second order Taylor expansion of the function to be minimized
sufficiently approximates this function near the minimum.  In
addition, Davidon, Fletcher, and Reeves developed the variable metric
methods~\cite{Davidon,Fletcher,Polak}, but these will not be discussed
here.  One noteworthy feature of conjugate gradient algorithms
on~$\R^n$ is that when the function to be minimized is quadratic, they
compute its minimum in no more than $n$ iterations, i.e., they have
the property of quadratic termination.

The conjugate gradient method on Euclidean space is uncomplicated.
Given a function $f\colon\R^n\to\R$ with continuous second derivatives
and a local minimum at~$\xhat$, and an initial point $x_0\in\R^n$, the
algorithm is initialized by computing the (negative) gradient
direction $G_0=H_0=-(\grad\f)_{x_0}$.  The recursive part of the
algorithm involves (i)~a line minimization of~$f$ along the affine
space $x_i+tH_i$, $t\in\R$, where the minimum occurs at, say,
$t=\lambda_i$, (ii)~computation of the step
$x_{i+1}=x_i+\lambda_iH_i$, (iii)~computation of the (negative)
gradient $G_{i+1}=-(\grad\f)_{x_{i+1}}$, and (iv)~computation of the
next direction for line minimization, \begin{equation}\label{eq:EucCG}
H_{i+1}=G_{i+1}+\gamma_iH_i, \end{equation} where $\gamma_i$ is chosen
such that $H_i$ and $H_{i+1}$ conjugate with respect to the Hessian
matrix of~$f$ at~$\xhat$.  When $f$ is a quadratic form represented by
the symmetric positive definite matrix~$Q$, the conjugacy condition
becomes $H_i^\T QH_{i+1}=0$; therefore, $\gamma_i=-H_i^\T
QG_{i+1}/H_i^\T QH_i$.  It can be shown in this case that the sequence
of vectors $G_i$ are all mutually orthogonal and the sequence of
vectors $H_i$ are all mutually conjugate with respect to~$Q$.  Using
these facts, the computation of $\gamma_i$ may be simplified with the
observation that $\gamma_i =\|G_{i+1}\|^2/\|G_i\|^2$ (Fletcher-Reeves)
or $\gamma_i =(G_{i+1}-G_i)^\T G_{i+1}/\|G_i\|^2$ (Polak-Ribi\`ere).
When $f$ is not quadratic, it is assumed that its second order Taylor
expansion sufficiently approximates $f$ in a neighborhood of the
minimum, and the $\gamma_i$ are chosen so that $H_i$ and $H_{i+1}$ are
conjugate with respect to the matrix $(\partial^2\f/\partial
x^i\partial x^j)(x_{i+1})$ of second partial derivatives of~$f$
at~$x_{i+1}$.  It may be desirable to ``reset'' the algorithm by
setting $H_{i+1}=G_{i+1}$ every $r$th step (frequently, $r=n$) because
the conjugate gradient method does not, in general, converge in $n$
steps if the function $f$ is nonquadratic.  However, if $f$ is closely
approximated by a quadratic function, the reset strategy may be
expected to converge rapidly, whereas the unmodified algorithm may not
be.

Many of these ideas have straightforward generalizations in the
geometry of Riemannian manifolds; several of them have already
appeared.  We need only make the following definition.

\begin{definition} Given a tensor field $\omega$ of type~$(0,2)$ on~$M$
such that for $p$ in~$M$, $\omega_p\colon T_p\times T_p\to\R$ is a
symmetric bilinear form, the tangent vectors $X$ and $Y$ in~$T_p$ are
said to be {\it $\omega_p$-conjugate\/} or {\it conjugate with respect
to $\omega_p$} if $\omega_p(X,Y)=0$.
\end{definition}

An outline of the conjugate gradient method on Riemannian manifolds
may now be given.  Let $M$ be an $n\hyphen$dimensional Riemannian
manifold with Riemannian structure $g$ and Levi-Civita connection
$\nabla$, and let $f\in C^\infty(M)$ have a local minimum at~$\phat$.
As in the conjugate gradient method on Euclidean space, choose an
initial point $p_0$ in~$M$ and compute the (negative) gradient
directions $G_0=H_0=-(\grad\f)_{p_0}$ in~$T_{p_0}$.  The recursive
part of the algorithm involves minimizing $f$ along the geodesic
$t\mapsto\exp_{p_i}tH_i$, $t\in\R$, making a step along the geodesic
to the minimum point $p_{i+1}=\exp\lambda_iH_i$, computing
$G_{i+1}=-(\grad\f)_{p_{i+1}}$, and computing the next direction in
$T_{p_{i+1}}$ for geodesic minimization. This direction is given by
the formula \begin{equation}\label{eq:RieCG}
H_{i+1}=G_{i+1}+\gamma_i\tau H_i, \end{equation} where $\tau$ is the
parallel translation with respect to the geodesic step from~$p_i$
to~$p_{i+1}$, and $\gamma_i$ is chosen such that $\tau H_i$ and
$H_{i+1}$ are $(\nabla^2\f)_{p_{i+1}}$-conjugate, i.e.,
\begin{equation}\label{eq:gammadef} \gamma_i =-{(\nabla^2\f)_{p_{i+1}}
(\tau H_i,G_{i+1})\over (\nabla^2\f)_{p_{i+1}}(\tau H_i,\tau H_i)}.
\end{equation}

Equation~(\ref{eq:gammadef}) is, in general, expensive to use because the
second covariant differential of~$f$ appears.  However, we can use the
Taylor expansion of~$df$ about $p_{i+1}$ to compute an efficient
approximation of~$\gamma_i$.  By the fact that
$p_i=\exp_{p_{i+1}}(-\lambda_i\tau H_i)$ and by Equation~(\ref{eq:taylort}),
we have $$\tau df_{p_i} =\tau df_{\exp_{p_{i+1}}(-\lambda_i\tau H_i)}
=df_{p_{i+1}} -\lambda_i(\nabla^2\f)_{p_{i+1}}(\rdot,\tau H_i) +\hot$$
Therefore, the numerator of the right hand side of
Equation~(\ref{eq:gammadef}) multiplied by the step size $\lambda_i$ can be
approximated by the equation $$\eqalign{\lambda_i
(\nabla^2\f)_{p_{i+1}} (\tau H_i, G_{i+1}) &=df_{p_{i+1}}(G_{i+1})
-(\tau df_{p_i})(G_{i+1})\cr &=-\(G_{i+1} -\tau G_i, G_{i+1}\)\cr}$$
because, by definition, $G_i=-(\grad\f)_{p_i}$, $i=0$, $1$,~\dots, and
for any $X$ in~$T_{p_{i+1}}$, $(\tau df_{p_i})(X)
=df_{p_i}(\tau^{-1}X) =\((\grad\f)_{p_i}, \tau^{-1}X\)
=\(\tau(\grad\f)_{p_i}, X\)$. Similarly, the denominator of the right
hand side of Equation~(\ref{eq:gammadef}) multiplied by $\lambda_i$ can be
approximated by the equation $$\eqalign{\lambda_i
(\nabla^2\f)_{p_{i+1}} (\tau H_i,\tau H_i) &=df_{p_{i+1}}(\tau H_i)
-(\tau df_{p_i})(\tau H_i)\cr &=\(G_i,H_i\)\cr}$$ because
$\(G_{i+1},\tau H_i\)=0$ by the assumption that $f$ is minimized along
the geodesic $t\mapsto\exp tH_i$ at~$t=\lambda_i$.  Combining these
two approximations with Equation~(\ref{eq:gammadef}), we obtain a formula
for~$\gamma_i$ that is relatively inexpensive to compute:
\begin{equation}\label{eq:gammaeff} \gamma_i ={\(G_{i+1} -\tau
G_i,G_{i+1}\)\over \(G_i,H_i\)}. \end{equation} Of course, as the
connection $\nabla$ is compatible with the metric $g$, the denominator
of Equation~(\ref{eq:gammaeff}) may be replaced, if desired, by $\(\tau
G_i,\tau H_i\)$.

The conjugate gradient method may now be presented in full.

\begin{algorithm}[Conjugate gradient
method]\label{al:cgman}\ignorespaces Let $M$ be a complete Riemannian
manifold with Riemannian structure $g$ and Levi-Civita connection
$\nabla$, and let $f$ be a $C^\infty$ function on~$M$.
\begin{steps}
\step[0.] Select $p_0\in M$,
     compute $G_0=H_0=-(\grad\f)_{p_0}$, and set $i=0$.
\step[1.] Compute $\lambda_i$ such that
     $$f(\exp_{p_i}\lambda_iH_i)\le f(\exp_{p_i}\lambda H_i)$$ for all
     $\lambda\ge0$.
\step[2.] Set $p_{i+1}=\exp_{p_i}\lambda_iH_i$.
\step[3.] Set $$\eqalign{G_{i+1}
     &=-(\grad\f)_{p_{i+1}},\cr H_{i+1} &=G_{i+1} +\gamma_i\tau H_i,
     \qquad\gamma_i={\(G_{i+1}-\tau G_i,G_{i+1}\)\over
     \(G_i,H_i\)},\cr \noalign{\vskip2pt}}$$ where $\tau$ is the
     parallel translation with respect to the geodesic from~$p_i$
     to~$p_{i+1}$.  If $i\equiv n-1\ (\bmod\ n)$, set
     $H_{i+1}=G_{i+1}$.  Increment $i$, and go~to Step~1.
\end{steps}
\end{algorithm}

\begin{theorem}\label{th:cgsuper}\ignorespaces Let $f\in C^\infty(M)$
have a nondegenerate critical point at $\phat$ such that the Hessian
$(d^2\f)_\phat$ is positive definite.  Let $p_i$ be a sequence of
points in~$M$ generated by Algorithm~\ref{al:cgman} converging
to~$\phat$.  Then there exists a constant $\theta>0$ and an integer
$N$ such that for~all $i\ge N$,
$$d(p_{i+n},\phat)\le \theta d^2(p_i,\phat).$$
\end{theorem}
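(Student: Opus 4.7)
The plan is to reduce the theorem to the known fact from Hestenes--Stiefel that the classical conjugate gradient method on $\R^n$ applied to a positive definite quadratic form terminates in at most $n$ steps, and then to control the deviation of Algorithm~\ref{al:cgman} from this ideal behavior by a perturbation argument. Using the exponential map at $\phat$ as a normal coordinate chart, I would introduce the lifted quantities $\hat X_i := \exp_\phat^{-1} p_i \in T_\phat$ and, by parallel translating along the geodesic from $p_i$ to $\phat$, the lifted directions $\hat G_i,\hat H_i\in T_\phat$. Set $q(X) = \half(\nabla^2 f)_\phat(X,X)$ and let $\{\tilde X_i,\tilde G_i,\tilde H_i\}$ denote the sequence produced by the standard Euclidean conjugate gradient method on $T_\phat$ applied to $q$, starting from $\tilde X_0 = \hat X_0$; classical theory guarantees $\tilde X_n = 0$.

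The bulk of the argument consists of four perturbation estimates, each at most $O(\|\hat X_0\|)$ relative to the leading Euclidean term. First, by the Taylor expansion~(\ref{eq:dtaylorf1}) of $df$ about $\phat$, the gradient satisfies $\hat G_i = -(\nabla^2 f)_\phat \hat X_i + O(\|\hat X_i\|^2)$. Second, by Equation~(\ref{eq:taylorf2}) and the positive definiteness of $(\nabla^2 f)_\phat$ (as exploited in Theorem~\ref{th:linearconv}), the geodesic line-minimizing step length $\lambda_i$ differs from the exact Euclidean line minimizer for $q$ by a factor $1+O(\|\hat X_i\|)$. Third, Karcher's estimate~(\ref{eq:Karcher}) already used in the proof of Theorem~\ref{th:newtonquad} controls the failure of $\exp$ and parallel translation to coincide with straight-line vector arithmetic, giving $\|\hat X_{i+1} - (\hat X_i + \lambda_i \hat H_i)\| = O(\|\hat X_i\|\,\|\lambda_i \hat H_i\|^2)$. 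Fourth, applying Remark~\ref{rem:taylort} to $\mu = df$ between $p_i$ and $p_{i+1}$ shows that the numerator and denominator of the Polak--Ribi\`ere-type formula $\gamma_i = \langle G_{i+1}-\tau G_i, G_{i+1}\rangle / \langle G_i, H_i\rangle$ in Step~3 of Algorithm~\ref{al:cgman} agree, modulo $O(\|\hat X_i\|)$ relative error, with the corresponding Euclidean quantities $\lambda_i(\nabla^2 f)_\phat(\tilde H_i,\tilde G_{i+1})$ and $\lambda_i(\nabla^2 f)_\phat(\tilde H_i,\tilde H_i)$, so the conjugacy parameter $\gamma_i$ itself agrees with its Euclidean counterpart to the same order.

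Combining the four estimates, the single-step map $(\hat X_i,\hat H_i,\hat G_i)\mapsto(\hat X_{i+1},\hat H_{i+1},\hat G_{i+1})$ differs from the Euclidean CG map by $O(\|\hat X_0\|^3)$. Because the latter map is smooth with bounded derivative on a compact set avoiding the finitely many places where the Euclidean $\tilde H_i$ could degenerate (and by quadratic termination none of them do on $0\le i\le n-1$), an induction on $i\le n-1$ yields $\|\hat X_i - \tilde X_i\| = O(\|\hat X_0\|^2)$ for each such $i$. At $i=n$ the Euclidean iterate satisfies $\tilde X_n = 0$, so
\begin{equation*}
d(p_n,\phat) = \|\hat X_n\| = \|\hat X_n-\tilde X_n\| \le \theta\|\hat X_0\|^2 = \theta\, d(p_0,\phat)^2.
\end{equation*}
Since Step~3 of Algorithm~\ref{al:cgman} resets $H_{i+1}=G_{i+1}$ at every $i\equiv n-1\pmod n$, the same argument applies to each block of $n$ consecutive iterations starting from $p_{kn}$. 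Choosing $N$ so large that $p_N$ lies in the neighborhood of $\phat$ on which all estimates are valid yields the claimed inequality $d(p_{i+n},\phat)\le\theta\,d(p_i,\phat)^2$ for all $i\ge N$.

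The main obstacle is the third perturbation estimate: one must verify that the denominator $\langle G_i,H_i\rangle$, and its Euclidean counterpart $\langle\tilde G_i,\tilde H_i\rangle$, stay uniformly bounded below by a positive multiple of $\|\hat X_0\|^2$ throughout the finite window $0\le i\le n-1$, so that the $O(\|\hat X_0\|)$ relative error in $\gamma_i$ does not blow up and is not amplified by subsequent iterations. This uses the fact that in exact arithmetic on $q$ the directions $\tilde H_i$ are $(\nabla^2 f)_\phat$-conjugate and nonzero until termination, together with positive definiteness of the Hessian, to obtain a uniform lower bound that persists under $O(\|\hat X_0\|)$ perturbation for sufficiently small $\|\hat X_0\|$.
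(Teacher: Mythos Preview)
Your approach is correct in spirit and more self-contained than the paper's, but it takes a different route. The paper does not compare the Riemannian iterates to conjugate gradient on the \emph{quadratic} $q(X)=\half(\nabla^2 f)_\phat(X,X)$; instead it pushes $f$ forward through normal coordinates to obtain $\nu_*f=f\circ\nu^{-1}\colon\R^n\to\R$, invokes Polak's theorem that Euclidean CG with resets applied to a general smooth function has $n$-step quadratic convergence near a nondegenerate minimum, and then argues---via the Gauss Lemma/Jacobi equation estimate $\|d\exp(tY)\|^2=t^2-\tfrac13 Kt^4+\hbox{h.o.t.}$---that the Riemannian algorithm differs from the Euclidean one on $\nu_*f$ only at third order, hence shares its convergence rate. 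Your proof effectively folds Polak's argument and the Riemannian correction into a single perturbation analysis against exact quadratic termination; this buys you independence from the external citation and makes the error tracking explicit, at the cost of length. Two minor points: your claim that the single-step map differs from the Euclidean CG map by $O(\|\hat X_0\|^3)$ is too optimistic---estimates (1), (2), and (4) contribute absolute errors of order $\|\hat X_0\|^2$, not $\|\hat X_0\|^3$, though this still suffices for $\|\hat X_n-\tilde X_n\|=O(\|\hat X_0\|^2)$; and the Lipschitz-stability induction you invoke is exactly where the denominator lower bound you flag as ``the main obstacle'' must be inserted, so that paragraph is doing essential work and should be woven into the body rather than appended.
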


Note that linear convergence is already guaranteed by
Theorem~\ref{th:linearconv}.

\begin{proof} If $p_j=\phat$ for some integer $j$, the assertion
becomes trivial; assume otherwise.  Recall that if $X_1$,~\dots, $X_n$
is some basis for $T_\phat$, then the map $\exp_\phat(a^1X_1 +\cdots+
a^nX_n) \buildrel\nu\over\to (a^1,\ldots,a^n)$ defines a set of normal
coordinates at~$\phat$.  Let $N_\phat$ be a normal neighborhood
of~$\phat$ on which the normal coordinates $\nu=(x^1,\ldots,x^n)$ are
defined. Consider the map $\nupushf\buildrel {\scriptscriptstyle\rm
def}\over= f\circ\nu^{-1}\colon\R^n\to\R$.  By the smoothness of $f$
and $\exp$, $\nupushf$ has a critical point at $0\in\R^n$ such that
the Hessian matrix of~$\nupushf$ at~$0$ is positive definite.  Indeed,
by the fact that $(d\exp)_0=\id$, the $ij$th component of the Hessian
matrix of $\nupushf$ at~$0$ is given by $(d^2\f)_\phat(X_i,X_j)$.

Therefore, there exists a neighborhood $U$ of $0\in\R^n$, a constant
$\theta'>0$, and an integer $N$, such that for any initial point
$x_0\in U$, the conjugate gradient method on Euclidean space (with
resets) applied to the function $\nupushf$ yields a sequence of points
$x_i$ converging to~$0$ such that for~all $i\ge N$,
$$\|x_{i+n}\|\le\theta'\|x_i\|^2.$$ See Polak~\citeyear[p.~260ff\/]{Polak}
for a proof of this fact.  Let $x_0=\nu(p_0)$ in~$U$ be an initial
point.  Because $\exp$ is not an isometry, Algorithm~\ref{al:cgman}
yields a different sequence of points in~$\R^n$ than the classical
conjugate gradient method on $\R^n$ (upon equating points in a
neighborhood of~$\phat\in M$ with points in a neighborhood
of~$0\in\R^n$ via the normal coordinates).

Nevertheless, the amount by which $\exp$ fails to preserve inner
products can be quantified via the Gauss Lemma and Jacobi's equation;
see, e.g., Cheeger and Ebin~\citeyear{Cheegin}, or the appendices of
Karcher~\citeyear{Karcher}.  Let $t$ be small, and let $X\in T_\phat$ and
$Y\in T_{tX}(T_\phat)\cong T_\phat$ be orthonormal tangent vectors.
The amount by which the exponential map changes the length of tangent
vectors is approximated by the Taylor expansion $$\|d\exp(tY)\|^2 =t^2
-\third Kt^4 +\hot,$$ where $K$ is the sectional curvature of~$M$ along
the section in~$T_\phat$ spanned by $X$ and $Y$.  Therefore, near
$\phat$ Algorithm~\ref{al:cgman} differs from the conjugate gradient
method on~$\R^n$ applied to the function $\nupushf$ only by third
order and higher terms.  Thus both algorithms have the same rate of
convergence.  The theorem follows.
\end{proof}

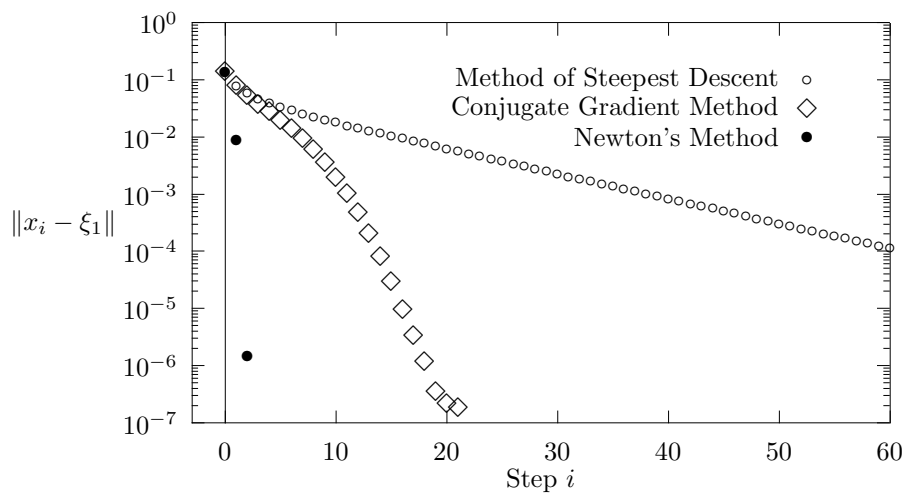
\begin{figure}
\begin{gnuplot}{20pt}
\let\p=\put
\let\r=\rule
\def\c{\circle{12}}
\def\D{\raisebox{-1.2pt}{\makebox(0,0){$\Diamond$}}}
\def\s{\circle*{18}}
\setlength{\unitlength}{0.240900pt}
\ifx\plotpoint\undefined\newsavebox{\plotpoint}\fi
\begin{picture}(1424,900)(0,0)
\tenrm
\sbox{\plotpoint}{\r[-0.175pt]{0.350pt}{0.350pt}}%
\p(316,158){\r[-0.175pt]{0.350pt}{151.526pt}}
\p(264,158){\r[-0.175pt]{4.818pt}{0.350pt}}
\p(242,158){\makebox(0,0)[r]{$10^{\gnulog 1e-07}$}}
\p(1340,158){\r[-0.175pt]{4.818pt}{0.350pt}}
\p(264,185){\r[-0.175pt]{2.409pt}{0.350pt}}
\p(1350,185){\r[-0.175pt]{2.409pt}{0.350pt}}
\p(264,201){\r[-0.175pt]{2.409pt}{0.350pt}}
\p(1350,201){\r[-0.175pt]{2.409pt}{0.350pt}}
\p(264,212){\r[-0.175pt]{2.409pt}{0.350pt}}
\p(1350,212){\r[-0.175pt]{2.409pt}{0.350pt}}
\p(264,221){\r[-0.175pt]{2.409pt}{0.350pt}}
\p(1350,221){\r[-0.175pt]{2.409pt}{0.350pt}}
\p(264,228){\r[-0.175pt]{2.409pt}{0.350pt}}
\p(1350,228){\r[-0.175pt]{2.409pt}{0.350pt}}
\p(264,234){\r[-0.175pt]{2.409pt}{0.350pt}}
\p(1350,234){\r[-0.175pt]{2.409pt}{0.350pt}}
\p(264,239){\r[-0.175pt]{2.409pt}{0.350pt}}
\p(1350,239){\r[-0.175pt]{2.409pt}{0.350pt}}
\p(264,244){\r[-0.175pt]{2.409pt}{0.350pt}}
\p(1350,244){\r[-0.175pt]{2.409pt}{0.350pt}}
\p(264,248){\r[-0.175pt]{4.818pt}{0.350pt}}
\p(242,248){\makebox(0,0)[r]{$10^{\gnulog 1e-06}$}}
\p(1340,248){\r[-0.175pt]{4.818pt}{0.350pt}}
\p(264,275){\r[-0.175pt]{2.409pt}{0.350pt}}
\p(1350,275){\r[-0.175pt]{2.409pt}{0.350pt}}
\p(264,291){\r[-0.175pt]{2.409pt}{0.350pt}}
\p(1350,291){\r[-0.175pt]{2.409pt}{0.350pt}}
\p(264,302){\r[-0.175pt]{2.409pt}{0.350pt}}
\p(1350,302){\r[-0.175pt]{2.409pt}{0.350pt}}
\p(264,311){\r[-0.175pt]{2.409pt}{0.350pt}}
\p(1350,311){\r[-0.175pt]{2.409pt}{0.350pt}}
\p(264,318){\r[-0.175pt]{2.409pt}{0.350pt}}
\p(1350,318){\r[-0.175pt]{2.409pt}{0.350pt}}
\p(264,324){\r[-0.175pt]{2.409pt}{0.350pt}}
\p(1350,324){\r[-0.175pt]{2.409pt}{0.350pt}}
\p(264,329){\r[-0.175pt]{2.409pt}{0.350pt}}
\p(1350,329){\r[-0.175pt]{2.409pt}{0.350pt}}
\p(264,334){\r[-0.175pt]{2.409pt}{0.350pt}}
\p(1350,334){\r[-0.175pt]{2.409pt}{0.350pt}}
\p(264,338){\r[-0.175pt]{4.818pt}{0.350pt}}
\p(242,338){\makebox(0,0)[r]{$10^{\gnulog 1e-05}$}}
\p(1340,338){\r[-0.175pt]{4.818pt}{0.350pt}}
\p(264,365){\r[-0.175pt]{2.409pt}{0.350pt}}
\p(1350,365){\r[-0.175pt]{2.409pt}{0.350pt}}
\p(264,381){\r[-0.175pt]{2.409pt}{0.350pt}}
\p(1350,381){\r[-0.175pt]{2.409pt}{0.350pt}}
\p(264,392){\r[-0.175pt]{2.409pt}{0.350pt}}
\p(1350,392){\r[-0.175pt]{2.409pt}{0.350pt}}
\p(264,401){\r[-0.175pt]{2.409pt}{0.350pt}}
\p(1350,401){\r[-0.175pt]{2.409pt}{0.350pt}}
\p(264,408){\r[-0.175pt]{2.409pt}{0.350pt}}
\p(1350,408){\r[-0.175pt]{2.409pt}{0.350pt}}
\p(264,414){\r[-0.175pt]{2.409pt}{0.350pt}}
\p(1350,414){\r[-0.175pt]{2.409pt}{0.350pt}}
\p(264,419){\r[-0.175pt]{2.409pt}{0.350pt}}
\p(1350,419){\r[-0.175pt]{2.409pt}{0.350pt}}
\p(264,423){\r[-0.175pt]{2.409pt}{0.350pt}}
\p(1350,423){\r[-0.175pt]{2.409pt}{0.350pt}}
\p(264,428){\r[-0.175pt]{4.818pt}{0.350pt}}
\p(242,428){\makebox(0,0)[r]{$10^{\gnulog 1e-04}$}}
\p(1340,428){\r[-0.175pt]{4.818pt}{0.350pt}}
\p(264,455){\r[-0.175pt]{2.409pt}{0.350pt}}
\p(1350,455){\r[-0.175pt]{2.409pt}{0.350pt}}
\p(264,470){\r[-0.175pt]{2.409pt}{0.350pt}}
\p(1350,470){\r[-0.175pt]{2.409pt}{0.350pt}}
\p(264,482){\r[-0.175pt]{2.409pt}{0.350pt}}
\p(1350,482){\r[-0.175pt]{2.409pt}{0.350pt}}
\p(264,490){\r[-0.175pt]{2.409pt}{0.350pt}}
\p(1350,490){\r[-0.175pt]{2.409pt}{0.350pt}}
\p(264,497){\r[-0.175pt]{2.409pt}{0.350pt}}
\p(1350,497){\r[-0.175pt]{2.409pt}{0.350pt}}
\p(264,504){\r[-0.175pt]{2.409pt}{0.350pt}}
\p(1350,504){\r[-0.175pt]{2.409pt}{0.350pt}}
\p(264,509){\r[-0.175pt]{2.409pt}{0.350pt}}
\p(1350,509){\r[-0.175pt]{2.409pt}{0.350pt}}
\p(264,513){\r[-0.175pt]{2.409pt}{0.350pt}}
\p(1350,513){\r[-0.175pt]{2.409pt}{0.350pt}}
\p(264,517){\r[-0.175pt]{4.818pt}{0.350pt}}
\p(242,517){\makebox(0,0)[r]{$10^{\gnulog 1e-03}$}}
\p(1340,517){\r[-0.175pt]{4.818pt}{0.350pt}}
\p(264,544){\r[-0.175pt]{2.409pt}{0.350pt}}
\p(1350,544){\r[-0.175pt]{2.409pt}{0.350pt}}
\p(264,560){\r[-0.175pt]{2.409pt}{0.350pt}}
\p(1350,560){\r[-0.175pt]{2.409pt}{0.350pt}}
\p(264,572){\r[-0.175pt]{2.409pt}{0.350pt}}
\p(1350,572){\r[-0.175pt]{2.409pt}{0.350pt}}
\p(264,580){\r[-0.175pt]{2.409pt}{0.350pt}}
\p(1350,580){\r[-0.175pt]{2.409pt}{0.350pt}}
\p(264,587){\r[-0.175pt]{2.409pt}{0.350pt}}
\p(1350,587){\r[-0.175pt]{2.409pt}{0.350pt}}
\p(264,593){\r[-0.175pt]{2.409pt}{0.350pt}}
\p(1350,593){\r[-0.175pt]{2.409pt}{0.350pt}}
\p(264,599){\r[-0.175pt]{2.409pt}{0.350pt}}
\p(1350,599){\r[-0.175pt]{2.409pt}{0.350pt}}
\p(264,603){\r[-0.175pt]{2.409pt}{0.350pt}}
\p(1350,603){\r[-0.175pt]{2.409pt}{0.350pt}}
\p(264,607){\r[-0.175pt]{4.818pt}{0.350pt}}
\p(242,607){\makebox(0,0)[r]{$10^{\gnulog 1e-02}$}}
\p(1340,607){\r[-0.175pt]{4.818pt}{0.350pt}}
\p(264,634){\r[-0.175pt]{2.409pt}{0.350pt}}
\p(1350,634){\r[-0.175pt]{2.409pt}{0.350pt}}
\p(264,650){\r[-0.175pt]{2.409pt}{0.350pt}}
\p(1350,650){\r[-0.175pt]{2.409pt}{0.350pt}}
\p(264,661){\r[-0.175pt]{2.409pt}{0.350pt}}
\p(1350,661){\r[-0.175pt]{2.409pt}{0.350pt}}
\p(264,670){\r[-0.175pt]{2.409pt}{0.350pt}}
\p(1350,670){\r[-0.175pt]{2.409pt}{0.350pt}}
\p(264,677){\r[-0.175pt]{2.409pt}{0.350pt}}
\p(1350,677){\r[-0.175pt]{2.409pt}{0.350pt}}
\p(264,683){\r[-0.175pt]{2.409pt}{0.350pt}}
\p(1350,683){\r[-0.175pt]{2.409pt}{0.350pt}}
\p(264,688){\r[-0.175pt]{2.409pt}{0.350pt}}
\p(1350,688){\r[-0.175pt]{2.409pt}{0.350pt}}
\p(264,693){\r[-0.175pt]{2.409pt}{0.350pt}}
\p(1350,693){\r[-0.175pt]{2.409pt}{0.350pt}}
\p(264,697){\r[-0.175pt]{4.818pt}{0.350pt}}
\p(242,697){\makebox(0,0)[r]{$10^{\gnulog 1e-01}$}}
\p(1340,697){\r[-0.175pt]{4.818pt}{0.350pt}}
\p(264,724){\r[-0.175pt]{2.409pt}{0.350pt}}
\p(1350,724){\r[-0.175pt]{2.409pt}{0.350pt}}
\p(264,740){\r[-0.175pt]{2.409pt}{0.350pt}}
\p(1350,740){\r[-0.175pt]{2.409pt}{0.350pt}}
\p(264,751){\r[-0.175pt]{2.409pt}{0.350pt}}
\p(1350,751){\r[-0.175pt]{2.409pt}{0.350pt}}
\p(264,760){\r[-0.175pt]{2.409pt}{0.350pt}}
\p(1350,760){\r[-0.175pt]{2.409pt}{0.350pt}}
\p(264,767){\r[-0.175pt]{2.409pt}{0.350pt}}
\p(1350,767){\r[-0.175pt]{2.409pt}{0.350pt}}
\p(264,773){\r[-0.175pt]{2.409pt}{0.350pt}}
\p(1350,773){\r[-0.175pt]{2.409pt}{0.350pt}}
\p(264,778){\r[-0.175pt]{2.409pt}{0.350pt}}
\p(1350,778){\r[-0.175pt]{2.409pt}{0.350pt}}
\p(264,783){\r[-0.175pt]{2.409pt}{0.350pt}}
\p(1350,783){\r[-0.175pt]{2.409pt}{0.350pt}}
\p(264,787){\r[-0.175pt]{4.818pt}{0.350pt}}
\p(242,787){\makebox(0,0)[r]{$10^{\gnulog 1e+00}$}}
\p(1340,787){\r[-0.175pt]{4.818pt}{0.350pt}}
\p(316,158){\r[-0.175pt]{0.350pt}{4.818pt}} \p(316,113){\makebox(0,0){$0$}}
\p(316,767){\r[-0.175pt]{0.350pt}{4.818pt}}
\p(490,158){\r[-0.175pt]{0.350pt}{4.818pt}} \p(490,113){\makebox(0,0){$10$}}
\p(490,767){\r[-0.175pt]{0.350pt}{4.818pt}}
\p(664,158){\r[-0.175pt]{0.350pt}{4.818pt}} \p(664,113){\makebox(0,0){$20$}}
\p(664,767){\r[-0.175pt]{0.350pt}{4.818pt}}
\p(838,158){\r[-0.175pt]{0.350pt}{4.818pt}} \p(838,113){\makebox(0,0){$30$}}
\p(838,767){\r[-0.175pt]{0.350pt}{4.818pt}}
\p(1012,158){\r[-0.175pt]{0.350pt}{4.818pt}} \p(1012,113){\makebox(0,0){$40$}}
\p(1012,767){\r[-0.175pt]{0.350pt}{4.818pt}}
\p(1186,158){\r[-0.175pt]{0.350pt}{4.818pt}} \p(1186,113){\makebox(0,0){$50$}}
\p(1186,767){\r[-0.175pt]{0.350pt}{4.818pt}}
\p(1360,158){\r[-0.175pt]{0.350pt}{4.818pt}} \p(1360,113){\makebox(0,0){$60$}}
\p(1360,767){\r[-0.175pt]{0.350pt}{4.818pt}}
\p(264,158){\r[-0.175pt]{264.026pt}{0.350pt}}
\p(1360,158){\r[-0.175pt]{0.350pt}{151.526pt}}
\p(264,787){\r[-0.175pt]{264.026pt}{0.350pt}}
\p(-21,472){\makebox(0,0)[l]{\shortstack{$\|x_i-\xi_1\|$}}}
\p(812,68){\makebox(0,0){Step $i$}}
\p(264,158){\r[-0.175pt]{0.350pt}{151.526pt}}
\p(1186,697){\makebox(0,0)[r]{Method of Steepest Descent}}
\p(1230,697){\c} \p(316,710){\c} \p(334,688){\c} \p(351,676){\c}
\p(368,667){\c} \p(386,660){\c} \p(403,654){\c} \p(421,649){\c}
\p(438,644){\c} \p(455,639){\c} \p(473,634){\c} \p(490,630){\c}
\p(508,625){\c} \p(525,621){\c} \p(542,617){\c} \p(560,613){\c}
\p(577,609){\c} \p(595,605){\c} \p(612,601){\c} \p(629,597){\c}
\p(647,593){\c} \p(664,589){\c} \p(682,585){\c} \p(699,581){\c}
\p(716,577){\c} \p(734,573){\c} \p(751,569){\c} \p(769,565){\c}
\p(786,561){\c} \p(803,557){\c} \p(821,553){\c} \p(838,549){\c}
\p(855,545){\c} \p(873,541){\c} \p(890,538){\c} \p(908,534){\c}
\p(925,530){\c} \p(942,526){\c} \p(960,522){\c} \p(977,518){\c}
\p(995,514){\c} \p(1012,510){\c} \p(1029,506){\c} \p(1047,502){\c}
\p(1064,499){\c} \p(1082,495){\c} \p(1099,491){\c} \p(1116,487){\c}
\p(1134,483){\c} \p(1151,479){\c} \p(1169,475){\c} \p(1186,471){\c}
\p(1203,467){\c} \p(1221,463){\c} \p(1238,459){\c} \p(1256,455){\c}
\p(1273,452){\c} \p(1290,448){\c} \p(1308,444){\c} \p(1325,440){\c}
\p(1343,436){\c} \p(1360,432){\c} \p(1186,652){\makebox(0,0)[r]{Conjugate
Gradient Method}} \p(1230,652){\D} \p(316,710){\D} \p(334,688){\D}
\p(351,672){\D} \p(368,659){\D} \p(386,647){\D} \p(403,634){\D}
\p(421,621){\D} \p(438,605){\D} \p(455,588){\D} \p(473,567){\D}
\p(490,544){\D} \p(508,518){\D} \p(525,488){\D} \p(542,456){\D}
\p(560,420){\D} \p(577,380){\D} \p(595,337){\D} \p(612,295){\D}
\p(629,254){\D} \p(647,207){\D} \p(664,188){\D} \p(682,183){\D}
\p(1186,607){\makebox(0,0)[r]{Newton's Method}} \p(1230,607){\s}
\p(316,710){\s} \p(334,603){\s} \p(351,263){\s}
\end{picture}
\end{gnuplot}
\caption[Convergence with the Rayleigh quotient on
$S^{20}$]{Maximization of the Rayleigh quotient $x^\T Qx$
on~$S^{20}\subset\R^{21}$, where $Q=\diag(21,\ldots,1)$.  The $i$th
iterate is $x_i$, and $\xi_1$ is the eigenvector corresponding to the
largest eigenvalue of~$Q$.  Algorithm~\protect\ref{al:newtonray} was
used for Newton's method and Algorithm~\protect\ref{al:raysphere} was
used for the conjugate gradient method.\label{fig:raysphere}}
\end{figure}

\begin{example}[Rayleigh quotient on the
sphere]\label{eg:rayqcg}\ignorespaces Applied to the Rayleigh quotient
on the sphere, the conjugate gradient method provides an efficient
technique to compute the eigenvectors corresponding to the largest or
smallest eigenvalue of a real symmetric matrix.  Let $S^{n-1}$ and
$\rho(x)=x^\T Qx$ be as in Examples \ref{eg:rayqgrad} and
\ref{eg:newtonray}.  From Algorithm~\ref{al:cgman}, we have the
following algorithm.

\begin{algorithm}[Conjugate gradient for the extreme
eigenvalue/eigenvector]\label{al:raysphere}\ignorespaces Let $Q$ be a
real symmetric $n\by n$ matrix.
\begin{steps}
\step[0.] Select $x_0$ in~$\R^n$ such that $x_0^\T x_0=1$, compute
    $G_0=H_0=(Q-\rho(x_0)I)x_0$, and set $i=0$.
\step[1.] Compute $c$, $s$, and $v=1-c=s^2/(1+c)$, such that
    $\rho(x_ic+h_is)$ is maximized, where $c^2+s^2=1$ and
    $h_i=H_i/\|H_i\|$.  This can be accomplished by geodesic
    minimization, or by the formulae $$\eqalign{c
    &=\bigl(\half(1+b/r)\bigr)^\half\cr s &=a/(2rc)\cr} \quad\hbox{if
    $b\geq0$,} \quad\hbox{or}\quad \eqalign{s
    &=\bigl(\half(1-b/r)\bigr)^\half\cr c &=a/(2rs)\cr} \quad\hbox{if
    $b\leq0$,}$$ where $a=2x_i^\T Qh_i$, $b=x_i^\T Qx_i -h_i^\T Qh_i$,
    and $r=\surd(a^2+b^2)$.
\step[2.] Set $$x_{i+1}=x_ic+h_is,\quad \tau H_i=H_ic -
    x_i\|H_i\|s,\quad \tau G_i=G_i -(h_i^\T G_i)(x_is+h_iv).$$
\step[3.] Set $$\eqalign{G_{i+1} &=\bigl(Q-\rho(x_{i+1})I\bigr)x_{i+1},\cr
    H_{i+1} &=G_{i+1} +\gamma_i\tau H_i,
    \qquad\gamma_i={(G_{i+1}-\tau G_i)^\T G_{i+1}\over G_i^\T
    H_i}.\cr\noalign{\vskip2pt}}$$ If $i\equiv n-1\ (\bmod\ n)$, set
    $H_{i+1}=G_{i+1}$.  Increment $i$, and go~to Step~1.
\end{steps}
\end{algorithm}

\begin{figure}[t]
\def\contours{\pdfximage width 4.5in {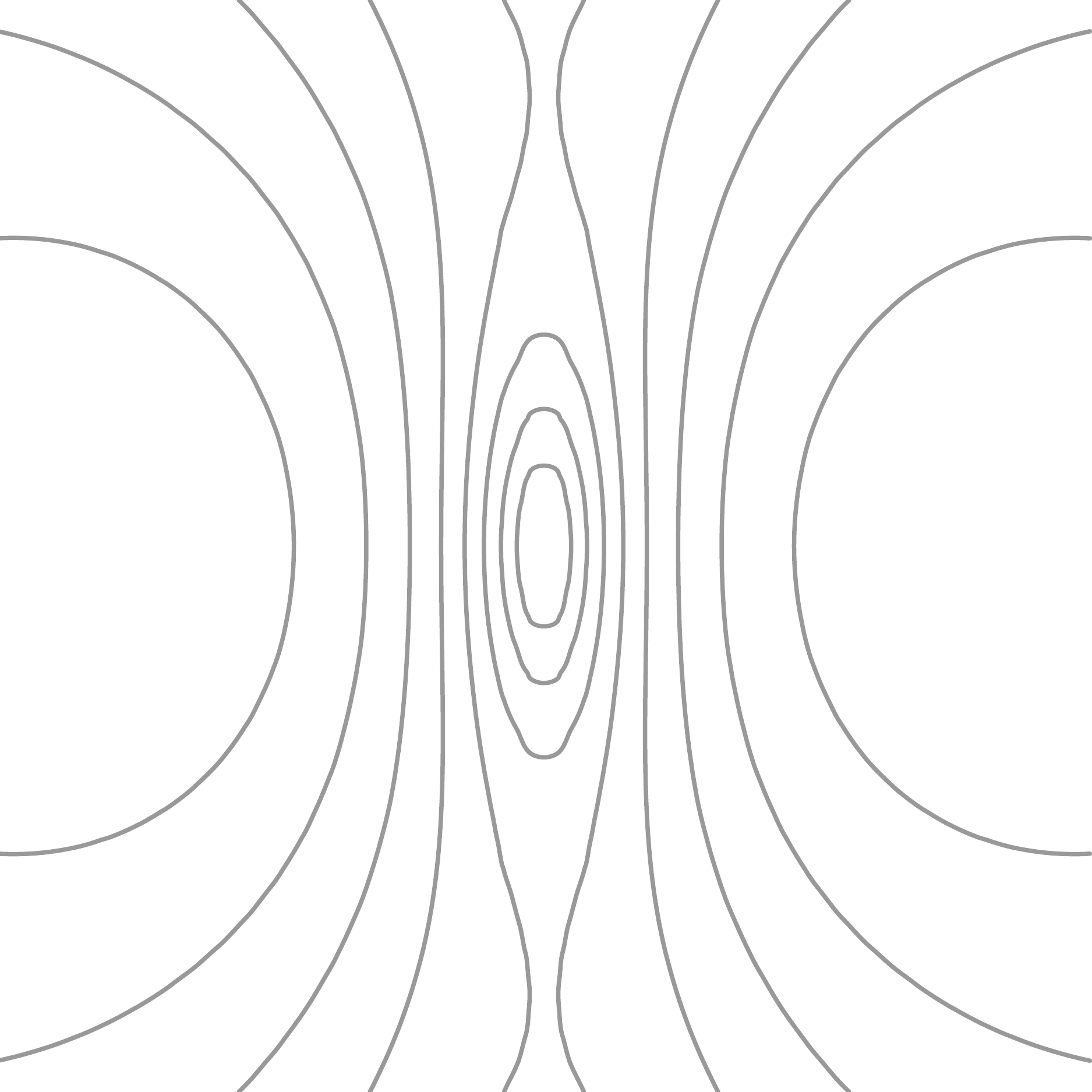}\pdfrefximage\pdflastximage}
\def\steepdesc{\pdfximage width 4.5in {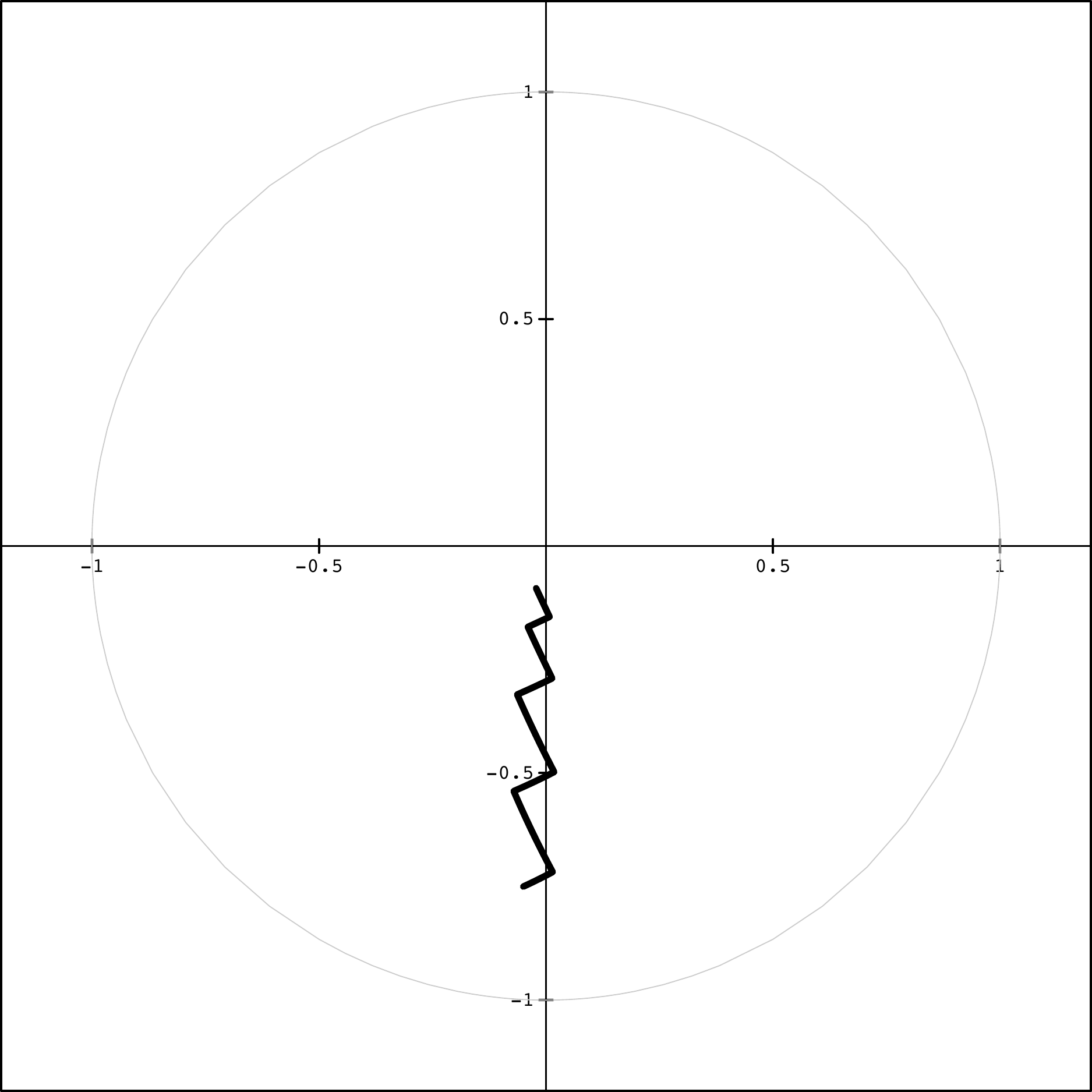}\pdfrefximage\pdflastximage}
\def\conjgrad{\pdfximage width 4.5in {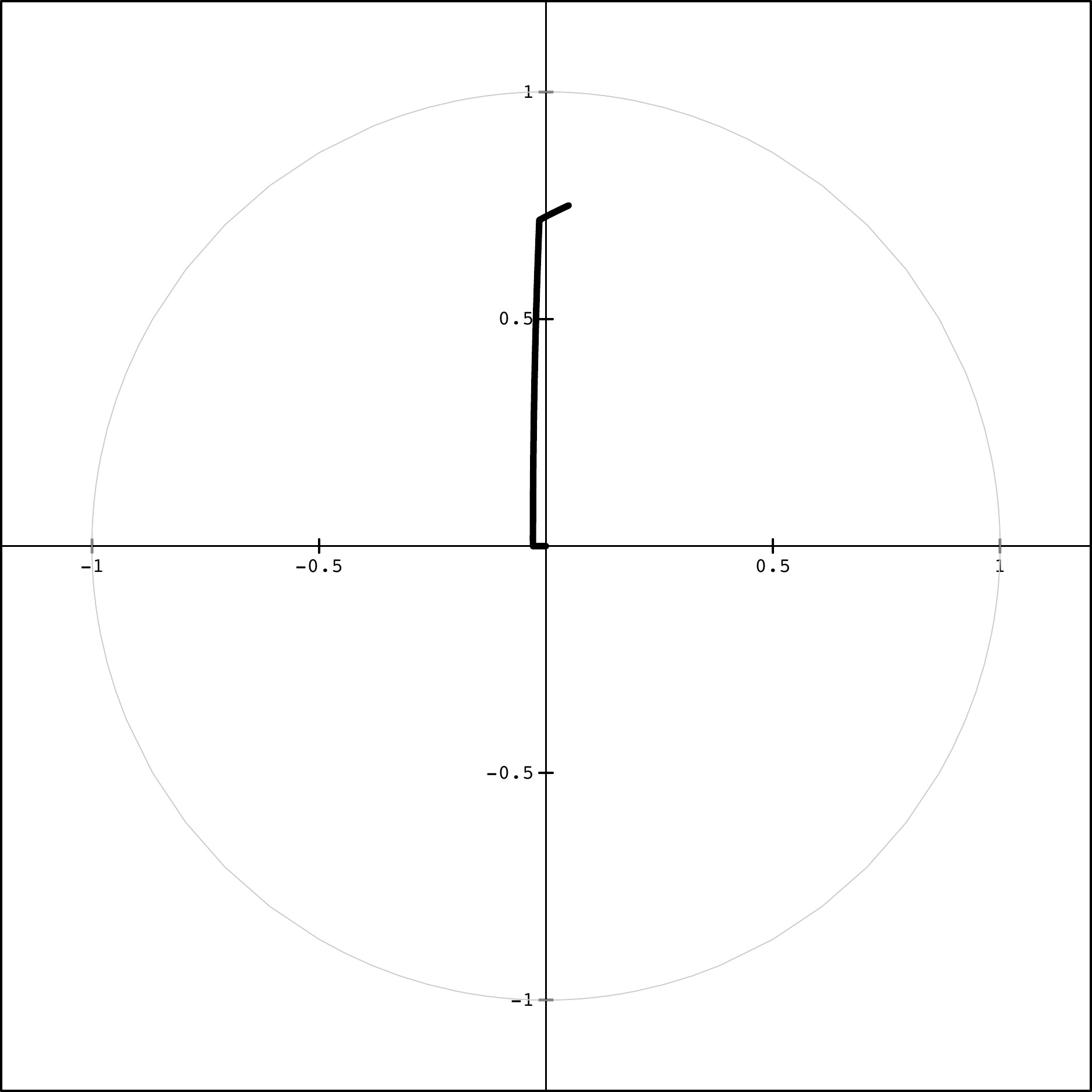}\pdfrefximage\pdflastximage}
\centerline{\rlap{\contours}
  \rlap{\steepdesc}\conjgrad}
\caption[Conjugate gradient iterates on the $2$-sphere]{Iterates of
the conjugate gradient method applied to the Rayleigh quotient on the
$2$-sphere.  The sphere is mapped sterographically onto~$\R^2$ with
the north pole at the origin and the equator represented by the thin
gray unit circle. Contours of the Rayleigh quotient $x^\T Qx$,
$Q=\diag(1,9,10)$, are represented by the dark gray curves. The
iterates of the Algorithm~\ref{al:raysphere} are connected by
geodesics shown in the upper black path. Note that this function has a
nonterminating Taylor series; therefore, the quadratic termination
property of the Euclidean conjugate gradient method is not seen.  The
iterates of the method of steepest descent are shown in the lower
black path.\looseness=-1\par \label{fig:2sphere}}
\end{figure}

The convergence rate of this algorithm to the eigenvector
corresponding to the largest eigenvalue of~$Q$ is given by
Theorem~\ref{th:cgsuper}.  This algorithm requires one matrix-vector
multiplication (relatively inexpensive when $Q$ is sparse), one
geodesic minimization or computation of~$\rho(h_i)$, and $10n$~flops
per iteration.  The results of a numerical experiment demonstrating
the convergence of Algorithm~\ref{al:raysphere} on~$S^{20}$ are shown
in Figure~\ref{fig:raysphere}.  A graphical illustration of the
conjugate gradient algorithm's performance on the $2$-sphere is shown
in Figure~\ref{fig:2sphere}. Stereographic projection is used to map
the sphere onto the plane.  There are maximum points at the north and
south poles, located at the center of the image and at infinity,
respectively. There are minimum points and saddle points antipodally
located along the equator, which is shown by the thin gray circle.
The light gray contours represent the level sets of the function $x^\T
Qx$ on~$S^2\subset\R^3$, where $Q=\diag(1,9,10)$.  The conjugate
gradient method was used to compute the sequence of points at the top
of the figure, and the method of steepest descent was used to compute
the sequence of points at the bottom.  Fuhrmann and
Liu~\citeyear{FuhrLiu} provide a conjugate gradient algorithm for the
Rayleigh quotient on the sphere that uses an azimuthal projection onto
tangent planes.
\end{example}

\begin{figure}
\begin{gnuplot}{20pt}
\let\p=\put
\let\r=\rule
\def\c{\circle{12}}
\def\D{\raisebox{-1.2pt}{\makebox(0,0){$\Diamond$}}}
\def\s{\circle*{18}}
\setlength{\unitlength}{0.240900pt}
\ifx\plotpoint\undefined\newsavebox{\plotpoint}\fi
\begin{picture}(1424,900)(0,0)
\tenrm
\sbox{\plotpoint}{\r[-0.175pt]{0.350pt}{0.350pt}}%
\p(333,158){\r[-0.175pt]{0.350pt}{151.526pt}}
\p(264,158){\r[-0.175pt]{4.818pt}{0.350pt}}
\p(242,158){\makebox(0,0)[r]{$10^{\gnulog 1e-05}$}}
\p(1340,158){\r[-0.175pt]{4.818pt}{0.350pt}}
\p(264,190){\r[-0.175pt]{2.409pt}{0.350pt}}
\p(1350,190){\r[-0.175pt]{2.409pt}{0.350pt}}
\p(264,208){\r[-0.175pt]{2.409pt}{0.350pt}}
\p(1350,208){\r[-0.175pt]{2.409pt}{0.350pt}}
\p(264,221){\r[-0.175pt]{2.409pt}{0.350pt}}
\p(1350,221){\r[-0.175pt]{2.409pt}{0.350pt}}
\p(264,231){\r[-0.175pt]{2.409pt}{0.350pt}}
\p(1350,231){\r[-0.175pt]{2.409pt}{0.350pt}}
\p(264,240){\r[-0.175pt]{2.409pt}{0.350pt}}
\p(1350,240){\r[-0.175pt]{2.409pt}{0.350pt}}
\p(264,247){\r[-0.175pt]{2.409pt}{0.350pt}}
\p(1350,247){\r[-0.175pt]{2.409pt}{0.350pt}}
\p(264,253){\r[-0.175pt]{2.409pt}{0.350pt}}
\p(1350,253){\r[-0.175pt]{2.409pt}{0.350pt}}
\p(264,258){\r[-0.175pt]{2.409pt}{0.350pt}}
\p(1350,258){\r[-0.175pt]{2.409pt}{0.350pt}}
\p(264,263){\r[-0.175pt]{4.818pt}{0.350pt}}
\p(242,263){\makebox(0,0)[r]{$10^{\gnulog 1e-04}$}}
\p(1340,263){\r[-0.175pt]{4.818pt}{0.350pt}}
\p(264,294){\r[-0.175pt]{2.409pt}{0.350pt}}
\p(1350,294){\r[-0.175pt]{2.409pt}{0.350pt}}
\p(264,313){\r[-0.175pt]{2.409pt}{0.350pt}}
\p(1350,313){\r[-0.175pt]{2.409pt}{0.350pt}}
\p(264,326){\r[-0.175pt]{2.409pt}{0.350pt}}
\p(1350,326){\r[-0.175pt]{2.409pt}{0.350pt}}
\p(264,336){\r[-0.175pt]{2.409pt}{0.350pt}}
\p(1350,336){\r[-0.175pt]{2.409pt}{0.350pt}}
\p(264,344){\r[-0.175pt]{2.409pt}{0.350pt}}
\p(1350,344){\r[-0.175pt]{2.409pt}{0.350pt}}
\p(264,351){\r[-0.175pt]{2.409pt}{0.350pt}}
\p(1350,351){\r[-0.175pt]{2.409pt}{0.350pt}}
\p(264,358){\r[-0.175pt]{2.409pt}{0.350pt}}
\p(1350,358){\r[-0.175pt]{2.409pt}{0.350pt}}
\p(264,363){\r[-0.175pt]{2.409pt}{0.350pt}}
\p(1350,363){\r[-0.175pt]{2.409pt}{0.350pt}}
\p(264,368){\r[-0.175pt]{4.818pt}{0.350pt}}
\p(242,368){\makebox(0,0)[r]{$10^{\gnulog 1e-03}$}}
\p(1340,368){\r[-0.175pt]{4.818pt}{0.350pt}}
\p(264,399){\r[-0.175pt]{2.409pt}{0.350pt}}
\p(1350,399){\r[-0.175pt]{2.409pt}{0.350pt}}
\p(264,418){\r[-0.175pt]{2.409pt}{0.350pt}}
\p(1350,418){\r[-0.175pt]{2.409pt}{0.350pt}}
\p(264,431){\r[-0.175pt]{2.409pt}{0.350pt}}
\p(1350,431){\r[-0.175pt]{2.409pt}{0.350pt}}
\p(264,441){\r[-0.175pt]{2.409pt}{0.350pt}}
\p(1350,441){\r[-0.175pt]{2.409pt}{0.350pt}}
\p(264,449){\r[-0.175pt]{2.409pt}{0.350pt}}
\p(1350,449){\r[-0.175pt]{2.409pt}{0.350pt}}
\p(264,456){\r[-0.175pt]{2.409pt}{0.350pt}}
\p(1350,456){\r[-0.175pt]{2.409pt}{0.350pt}}
\p(264,462){\r[-0.175pt]{2.409pt}{0.350pt}}
\p(1350,462){\r[-0.175pt]{2.409pt}{0.350pt}}
\p(264,468){\r[-0.175pt]{2.409pt}{0.350pt}}
\p(1350,468){\r[-0.175pt]{2.409pt}{0.350pt}}
\p(264,473){\r[-0.175pt]{4.818pt}{0.350pt}}
\p(242,473){\makebox(0,0)[r]{$10^{\gnulog 1e-02}$}}
\p(1340,473){\r[-0.175pt]{4.818pt}{0.350pt}}
\p(264,504){\r[-0.175pt]{2.409pt}{0.350pt}}
\p(1350,504){\r[-0.175pt]{2.409pt}{0.350pt}}
\p(264,523){\r[-0.175pt]{2.409pt}{0.350pt}}
\p(1350,523){\r[-0.175pt]{2.409pt}{0.350pt}}
\p(264,536){\r[-0.175pt]{2.409pt}{0.350pt}}
\p(1350,536){\r[-0.175pt]{2.409pt}{0.350pt}}
\p(264,546){\r[-0.175pt]{2.409pt}{0.350pt}}
\p(1350,546){\r[-0.175pt]{2.409pt}{0.350pt}}
\p(264,554){\r[-0.175pt]{2.409pt}{0.350pt}}
\p(1350,554){\r[-0.175pt]{2.409pt}{0.350pt}}
\p(264,561){\r[-0.175pt]{2.409pt}{0.350pt}}
\p(1350,561){\r[-0.175pt]{2.409pt}{0.350pt}}
\p(264,567){\r[-0.175pt]{2.409pt}{0.350pt}}
\p(1350,567){\r[-0.175pt]{2.409pt}{0.350pt}}
\p(264,573){\r[-0.175pt]{2.409pt}{0.350pt}}
\p(1350,573){\r[-0.175pt]{2.409pt}{0.350pt}}
\p(264,577){\r[-0.175pt]{4.818pt}{0.350pt}}
\p(242,577){\makebox(0,0)[r]{$10^{\gnulog 1e-01}$}}
\p(1340,577){\r[-0.175pt]{4.818pt}{0.350pt}}
\p(264,609){\r[-0.175pt]{2.409pt}{0.350pt}}
\p(1350,609){\r[-0.175pt]{2.409pt}{0.350pt}}
\p(264,627){\r[-0.175pt]{2.409pt}{0.350pt}}
\p(1350,627){\r[-0.175pt]{2.409pt}{0.350pt}}
\p(264,640){\r[-0.175pt]{2.409pt}{0.350pt}}
\p(1350,640){\r[-0.175pt]{2.409pt}{0.350pt}}
\p(264,651){\r[-0.175pt]{2.409pt}{0.350pt}}
\p(1350,651){\r[-0.175pt]{2.409pt}{0.350pt}}
\p(264,659){\r[-0.175pt]{2.409pt}{0.350pt}}
\p(1350,659){\r[-0.175pt]{2.409pt}{0.350pt}}
\p(264,666){\r[-0.175pt]{2.409pt}{0.350pt}}
\p(1350,666){\r[-0.175pt]{2.409pt}{0.350pt}}
\p(264,672){\r[-0.175pt]{2.409pt}{0.350pt}}
\p(1350,672){\r[-0.175pt]{2.409pt}{0.350pt}}
\p(264,677){\r[-0.175pt]{2.409pt}{0.350pt}}
\p(1350,677){\r[-0.175pt]{2.409pt}{0.350pt}}
\p(264,682){\r[-0.175pt]{4.818pt}{0.350pt}}
\p(242,682){\makebox(0,0)[r]{$10^{\gnulog 1e+00}$}}
\p(1340,682){\r[-0.175pt]{4.818pt}{0.350pt}}
\p(264,714){\r[-0.175pt]{2.409pt}{0.350pt}}
\p(1350,714){\r[-0.175pt]{2.409pt}{0.350pt}}
\p(264,732){\r[-0.175pt]{2.409pt}{0.350pt}}
\p(1350,732){\r[-0.175pt]{2.409pt}{0.350pt}}
\p(264,745){\r[-0.175pt]{2.409pt}{0.350pt}}
\p(1350,745){\r[-0.175pt]{2.409pt}{0.350pt}}
\p(264,755){\r[-0.175pt]{2.409pt}{0.350pt}}
\p(1350,755){\r[-0.175pt]{2.409pt}{0.350pt}}
\p(264,764){\r[-0.175pt]{2.409pt}{0.350pt}}
\p(1350,764){\r[-0.175pt]{2.409pt}{0.350pt}}
\p(264,771){\r[-0.175pt]{2.409pt}{0.350pt}}
\p(1350,771){\r[-0.175pt]{2.409pt}{0.350pt}}
\p(264,777){\r[-0.175pt]{2.409pt}{0.350pt}}
\p(1350,777){\r[-0.175pt]{2.409pt}{0.350pt}}
\p(264,782){\r[-0.175pt]{2.409pt}{0.350pt}}
\p(1350,782){\r[-0.175pt]{2.409pt}{0.350pt}}
\p(264,787){\r[-0.175pt]{4.818pt}{0.350pt}}
\p(242,787){\makebox(0,0)[r]{$10^{\gnulog 1e+01}$}}
\p(1340,787){\r[-0.175pt]{4.818pt}{0.350pt}}
\p(333,158){\r[-0.175pt]{0.350pt}{4.818pt}} \p(333,113){\makebox(0,0){$0$}}
\p(333,767){\r[-0.175pt]{0.350pt}{4.818pt}}
\p(470,158){\r[-0.175pt]{0.350pt}{4.818pt}} \p(470,113){\makebox(0,0){$20$}}
\p(470,767){\r[-0.175pt]{0.350pt}{4.818pt}}
\p(607,158){\r[-0.175pt]{0.350pt}{4.818pt}} \p(607,113){\makebox(0,0){$40$}}
\p(607,767){\r[-0.175pt]{0.350pt}{4.818pt}}
\p(744,158){\r[-0.175pt]{0.350pt}{4.818pt}} \p(744,113){\makebox(0,0){$60$}}
\p(744,767){\r[-0.175pt]{0.350pt}{4.818pt}}
\p(881,158){\r[-0.175pt]{0.350pt}{4.818pt}} \p(881,113){\makebox(0,0){$80$}}
\p(881,767){\r[-0.175pt]{0.350pt}{4.818pt}}
\p(1018,158){\r[-0.175pt]{0.350pt}{4.818pt}} \p(1018,113){\makebox(0,0){$100$}}
\p(1018,767){\r[-0.175pt]{0.350pt}{4.818pt}}
\p(1155,158){\r[-0.175pt]{0.350pt}{4.818pt}} \p(1155,113){\makebox(0,0){$120$}}
\p(1155,767){\r[-0.175pt]{0.350pt}{4.818pt}}
\p(1292,158){\r[-0.175pt]{0.350pt}{4.818pt}} \p(1292,113){\makebox(0,0){$140$}}
\p(1292,767){\r[-0.175pt]{0.350pt}{4.818pt}}
\p(264,158){\r[-0.175pt]{264.026pt}{0.350pt}}
\p(1360,158){\r[-0.175pt]{0.350pt}{151.526pt}}
\p(264,787){\r[-0.175pt]{264.026pt}{0.350pt}}
\p(-43,472){\makebox(0,0)[l]{\shortstack{$\|H_i-D_i\|$}}}
\p(812,68){\makebox(0,0){Step $i$}}
\p(264,158){\r[-0.175pt]{0.350pt}{151.526pt}}
\p(1257,504){\makebox(0,0)[r]{Method of Steepest Descent}}
\p(1301,504){\c} \p(333,712){\c} \p(339,698){\c} \p(346,693){\c}
\p(353,689){\c} \p(360,687){\c} \p(367,685){\c} \p(374,683){\c}
\p(380,682){\c} \p(387,680){\c} \p(394,679){\c} \p(401,678){\c}
\p(408,677){\c} \p(415,676){\c} \p(422,675){\c} \p(428,674){\c}
\p(435,673){\c} \p(442,672){\c} \p(449,671){\c} \p(456,670){\c}
\p(463,670){\c} \p(470,669){\c} \p(476,668){\c} \p(483,668){\c}
\p(490,667){\c} \p(497,666){\c} \p(504,666){\c} \p(511,665){\c}
\p(517,665){\c} \p(524,664){\c} \p(531,664){\c} \p(538,663){\c}
\p(545,663){\c} \p(552,662){\c} \p(559,662){\c} \p(565,661){\c}
\p(572,661){\c} \p(579,660){\c} \p(586,660){\c} \p(593,660){\c}
\p(600,659){\c} \p(607,659){\c} \p(613,658){\c} \p(620,658){\c}
\p(627,657){\c} \p(634,657){\c} \p(641,657){\c} \p(648,656){\c}
\p(654,656){\c} \p(661,656){\c} \p(668,655){\c} \p(675,655){\c}
\p(682,654){\c} \p(689,654){\c} \p(696,654){\c} \p(702,653){\c}
\p(709,653){\c} \p(716,653){\c} \p(723,652){\c} \p(730,652){\c}
\p(737,652){\c} \p(744,651){\c} \p(750,651){\c} \p(757,651){\c}
\p(764,650){\c} \p(771,650){\c} \p(778,650){\c} \p(785,649){\c}
\p(791,649){\c} \p(798,649){\c} \p(805,648){\c} \p(812,648){\c}
\p(819,648){\c} \p(826,647){\c} \p(833,647){\c} \p(839,647){\c}
\p(846,646){\c} \p(853,646){\c} \p(860,646){\c} \p(867,646){\c}
\p(874,645){\c} \p(881,645){\c} \p(887,645){\c} \p(894,644){\c}
\p(901,644){\c} \p(908,644){\c} \p(915,643){\c} \p(922,643){\c}
\p(928,643){\c} \p(935,643){\c} \p(942,642){\c} \p(949,642){\c}
\p(956,642){\c} \p(963,641){\c} \p(970,641){\c} \p(976,641){\c}
\p(983,641){\c} \p(990,640){\c} \p(997,640){\c} \p(1004,640){\c}
\p(1011,639){\c} \p(1018,639){\c} \p(1024,639){\c} \p(1031,639){\c}
\p(1038,638){\c} \p(1045,638){\c} \p(1052,638){\c} \p(1059,637){\c}
\p(1065,637){\c} \p(1072,637){\c} \p(1079,637){\c} \p(1086,636){\c}
\p(1093,636){\c} \p(1100,636){\c} \p(1107,635){\c} \p(1113,635){\c}
\p(1120,635){\c} \p(1127,635){\c} \p(1134,634){\c} \p(1141,634){\c}
\p(1148,634){\c} \p(1155,634){\c} \p(1161,633){\c} \p(1168,633){\c}
\p(1175,633){\c} \p(1182,632){\c} \p(1189,632){\c} \p(1196,632){\c}
\p(1202,632){\c} \p(1209,631){\c} \p(1216,631){\c} \p(1223,631){\c}
\p(1230,631){\c} \p(1237,630){\c} \p(1244,630){\c} \p(1250,630){\c}
\p(1257,629){\c} \p(1264,629){\c} \p(1271,629){\c} \p(1278,629){\c}
\p(1285,628){\c} \p(1292,628){\c} \p(1298,628){\c} \p(1305,628){\c}
\p(1312,627){\c} \p(1319,627){\c} \p(1326,627){\c} \p(1333,627){\c}
\p(1339,626){\c} \p(1346,626){\c} \p(1353,626){\c} \p(1360,625){\c}
\p(1257,459){\makebox(0,0)[r]{Conjugate Gradient Method}}
\p(1301,459){\D} \p(333,712){\D} \p(339,698){\D} \p(346,691){\D}
\p(353,685){\D} \p(360,680){\D} \p(367,675){\D} \p(374,671){\D}
\p(380,667){\D} \p(387,664){\D} \p(394,661){\D} \p(401,658){\D}
\p(408,654){\D} \p(415,649){\D} \p(422,646){\D} \p(428,643){\D}
\p(435,640){\D} \p(442,637){\D} \p(449,632){\D} \p(456,627){\D}
\p(463,624){\D} \p(470,618){\D} \p(476,615){\D} \p(483,606){\D}
\p(490,600){\D} \p(497,596){\D} \p(504,576){\D} \p(511,554){\D}
\p(517,546){\D} \p(524,541){\D} \p(531,536){\D} \p(538,533){\D}
\p(545,530){\D} \p(552,526){\D} \p(559,523){\D} \p(565,520){\D}
\p(572,517){\D} \p(579,512){\D} \p(586,508){\D} \p(593,504){\D}
\p(600,502){\D} \p(607,498){\D} \p(613,495){\D} \p(620,490){\D}
\p(627,484){\D} \p(634,479){\D} \p(641,474){\D} \p(648,469){\D}
\p(654,459){\D} \p(661,451){\D} \p(668,444){\D} \p(675,428){\D}
\p(682,410){\D} \p(689,403){\D} \p(696,399){\D} \p(702,396){\D}
\p(709,393){\D} \p(716,391){\D} \p(723,385){\D} \p(730,383){\D}
\p(737,379){\D} \p(744,376){\D} \p(750,372){\D} \p(757,368){\D}
\p(764,364){\D} \p(771,362){\D} \p(778,359){\D} \p(785,354){\D}
\p(791,351){\D} \p(798,347){\D} \p(805,342){\D} \p(812,335){\D}
\p(819,329){\D} \p(826,325){\D} \p(833,320){\D} \p(839,314){\D}
\p(846,304){\D} \p(853,299){\D} \p(860,292){\D} \p(867,287){\D}
\p(874,287){\D} \p(881,286){\D} \p(887,284){\D} \p(894,284){\D}
\p(901,286){\D} \p(908,286){\D} \p(1257,414){\makebox(0,0)[r]{Newton's
Method}} \p(1301,414){\s} \p(333,712){\s} \p(339,573){\s} \p(346,269){\s}
\p(353,244){\s} \p(360,246){\s} \p(367,240){\s} \p(374,240){\s}
\p(380,235){\s}
\end{picture}
\end{gnuplot}
\caption[Convergence with $\lyapunov$ on $\SO(20)$]{\protect\smallfrak
Maximization of $\lyapunov$ on $\SO(20)$ (dimension $\SO(20)=190$),
where $Q=\diag(20,\ldots,1)$ and $N=\diag(20,\ldots,1)$.  The $i$th
iterate is $H_i=\Theta_i^\T Q\Theta_i$, $D_i$ is the diagonal matrix
of eigenvalues of~$H_i$, $H_0$ is near $N$, and $\|\cdot\|$ is the
norm induced by the standard inner product on~$\gl(n)$.  Geodesics and
parallel translation were computed using the algorithm of Ward and
Gray~\protect\citeyear{WG:1,WG:2}; the step sizes for the method of
steepest descent and the conjugate gradient method were computed using
Brockett's
estimate~\protect\cite{Brockett:grad}.\parfillskip=0pt\label{fig:SOnconv}}
\end{figure}

\removelastskip
\vskip -10pt plus10pt
\vskip 0pt          

\begin{example}[The function
$\lyapunov$]\label{eg:trHNcg}\ignorespaces Let $\Theta$, $Q$, and $H$
be as in Examples \ref{eg:trHNgrad} and \ref{eg:trHNnewton}.  As
before, the natural Riemannian structure of~$\SO(n)$ is used, whereby
geodesics and parallel translation along geodesics are given by
Equations (\ref{eq:geodesicson}) and (\ref{eq:SOnpt}) of
Chapter~\ref{chap:geom}.  Brockett's estimate (n.b.\
Equation~(\ref{eq:Brockettest})) for the step size may be used in
Algorithm~\ref{al:cgman}.  The results of a numerical experiment
demonstrating the convergence of the conjugate gradient method
in~$\SO(20)$ are shown in Figure~\ref{fig:SOnconv}.  \end{example}

\makeatletter
\if@twoside    
\markboth{\uppercase{\ch@ptern@me\hfill\ifnum\c@secnumdepth>\m@ne
    \@chapabb\ {\numbersize\thechapter}.\fi}}{}
\fi
\makeatother

\eject 


\chapter{Application to Adaptive Filtering}\label{chap:af}

Principal component analysis and optimization methods are used to
solve a wide variety of engineering problems.  Optimization methods,
such as gradient following, are often used when the solution to a
given problem corresponds to the minimizing value of a real valued
function, such as a square error.  There are many terms for principal
component analysis---the eigenvalue problem in algebra, the
Karhunen-Lo\`eve expansion in stochastic processes, and factor
analysis in statistics---indicating the extent of its application.
Many applications use the fact that the best low rank approximation of
a symmetric or Hermitian linear mapping of a vector space onto itself
is given by the sum of outer products of eigenvectors corresponding to
the largest eigenvalues of the linear map.

In the case of linear systems modeling, a given state space model may
have an equivalent realization of lower dimension with identical
input\slash output characteristics.  Computing this lower dimensional
realization is called state space reduction, and the state space model
of smallest possible dimension is called a minimal realization.
\citeasnoun{Moore} uses the singular value decomposition of the
observability and controllability matrices of a specified
finite-dimensional state space model to derive a minimal realization.
The process of computing a state space model given its input\slash
output characteristics is called the identification problem.  This
problem is related to the field of adaptive control, where control
methods that use incomplete, inaccurate, or arbitrarily time-varying
models are considered.  \citeasnoun{MoonenDVV} use the singular value
decomposition of a block Hankel matrix constructed with measured
input\slash output data to identity linear systems.  On the other
hand, optimization methods for error minimization have long been used
for system identification and adaptive control
\cite{Lion,Astrom,CraigHS,SlotineLi,TosunogluTesar}, as well as
stochastic methods that use correlation data from input and output
measurements \cite{Akaike,Baram,KorenburgHunter}.

Furthermore, the computation of the dominant modes and buckling modes
of mechanical systems are important problems in mechanics.  These
problems may be expressed naturally either as infinite-dimensional
eigenvalue problems or as optimization problems on an infinite
dimensional Hilbert space.  Approximate solutions to these problems
may be obtained via finite element methods \cite{Hughes}, which rely
upon methods from numerical linear algebra discussed below, such as
Lanczos methods.  Projected conjugate gradient algorithms such as
Fried's \citeyear{Fried} algorithm have also been proposed.

In the past fifteen years, principle component techniques have become
increasingly important in the field of adaptive signal processing.
This is due primarily to the introduction of new methods for signal
parameter estimation which rely upon the signal's covariance
structure.  Notably, \citeasnoun{Schmidt} developed a signal subspace
algorithm called |MUSIC|, an acronym for multiple signal
classification, which from measurements taken from a completely
arbitrary sensor array provides accurate unbiased estimates of a
variety of signal parameters, such as number of signals, their
directions of arrival, their center frequency, and other parameters.
The central idea of~|MUSIC| is to exploit the sensor geometry and the
signal subspace determined by the data to compute the desired signal
parameters.  \citeasnoun{BienvenuKopp} demonstrate how related
techniques may be used when the background noise is nonisotropic.

With the |MUSIC| algorithm, the signal subspace is first computed from
the canonical eigenvalue decomposition of the data covariance matrix.
Then, knowledge of the array geometry is used to compute peaks of a
function defined on a parameter space.  This search is in general
computationally expensive. \citeasnoun{RoyKailath} have proposed an
algorithm which retains many advantages of the |MUSIC| algorithm with
a significantly reduced computational complexity.  This algorithm is
called |ESPRIT|, an acronym for estimation of signal parameters by
rotational invariant techniques.  It is important to note that the
rotational invariance refers to an intrinsic property of the algorithm
implied by a restriction on the sensor array; it does not refer to the
invariant methods discussed in Chapter~\ref{chap:geom}.  It is assumed
that the sensor array is comprised of a pair of subarrays that are
equivalent with respect to translation.  That is, there exists a
translation which maps one subarray into the other.  Except for this
restriction, the sensor array may be arbitrary.  This restriction
implies that the signal subspace of the array measurements is
invariant with respect to a certain complex rotation of the sensor
outputs.

The signal subspace methods used in the adaptive algorithms like
|MUSIC| and |ESPRIT| are especially important in the field of adaptive
signal processing.  In these contexts, the signal subspaces may be
thought to vary slowly with time, and it is desired to compute the
time varying eigenvalue decomposition of the covariance information.
Of course, one could use the symmetric |QR| algorithm at each time
step to obtain this decomposition; however, this is prohibitively
expensive, especially when only a few of the largest or smallest
eigenvalues are desired, and there is a wide choice of other
techniques available.  In their review, \citeasnoun{ComonGolub}
provide a thorough and descriptive list of many methods.  They are
careful to distinguish between methods that are of complexity
$O(nk^2)$ and complexity $O(n^2k)$, where $n$ is the dimension of the
total space and $k$ is the dimension of the signal subspace to be
tracked.

Several of the covariance matrix updating procedures rely upon
rank~one updates \cite{Owsley,Karhunen,Karasalo,Schreiber}.  There is
a well-known theory \cite{Wilkinson} of computing the updated
eigenvalue decomposition of a symmetric matrix updated by a rank~one
addition, and algorithms for this procedure are available
\cite{Bunchetal}.  However, this method requires knowledge of the full
eigenvalue decomposition to compute the rank~one updated
decomposition; the algorithm is $O(n^3)$ complexity, which is the same
order as the full |QR| algorithm, thus limiting its attractiveness.
If the covariance matrix has at most $k$ nonzero eigenvalues, then
this algorithm may be performed in $O(n^2k)$ steps \cite{Yu}. This
case holds approximately when the signal-to-noise ratio is high, and
when a ``forgetting'' factor is introduced into the covariance matrix
updates.

Other updating procedures are also important. For example, a rank~one
update of the covariance matrix corresponds to the addition of one
column to a data matrix.  The updated |QR| decomposition of the data
matrix is often desired.  \citeasnoun{GVL} provide several now
classical algorithms for this task. \citeasnoun{Rader} designed and
built a wafer scale integrated circuit utilizing on-chip |CORDIC|
transformations to compute the updated Cholesky factorization of a
data matrix.  \citeasnoun{MoonenVV} provide an updating method for the
singular value decomposition of the data matrix.
\citeasnoun{DemeureScharf} describe the use of updated Toeplitz
matrices in linear prediction theory.

Gradient-based algorithms are also widely used.  Some of the first
adaptive filtering algorithms, such as the |LMS| (least mean square)
and |SER| (sequential regression) algorithms \cite{WidrowStearns} are
gradient-based techniques.  These two algorithms provide a method to
compute a weighting vector for sensor outputs that provides the
minimal variance of the error between the weighted measurements and a
desired response.  These gradient techniques, as well as the ones
given by \citeasnoun{Owsley}, \citeasnoun{Larimore}, and
\citeasnoun{YangKaveh}, all have a fixed step length, which of course
affects their convergence rates.  Other gradient-based algorithms are
used to track the eigenvalue decomposition of a slowly varying
covariance matrix.  So called stochastic gradient methods
\cite{Larimore,Hu} are derived with the goal of maximizing the
Rayleigh quotient corresponding to the data covariance matrix.

The conjugate gradient method has been suggested by many researchers
as an appropriate tool for subspace tracking
\cite{BradFletch,Chenetal,FuhrLiu}, as well as for finite element
methods \cite{Fried}.  However, only Fuhrmann and Liu realized that
the formula $\gamma_i =\|G_{i+1}\|^2/\|G_i\|^2$ used to ensure
conjugate steps in the Euclidean case is not valid in the general case
of the constrained or Riemannian conjugate gradient method, as
discussed in Chapter~\ref{chap:orm}, Section~\ref{sec:conjgrad}.  They
provide a conjugate gradient algorithm on the sphere that depends upon
the choice of an azimuthal projection onto tangent planes.  This
algorithm is also distinguished from the others in that the steps are
constrained to the sphere, whereas the others take steps in the
ambient Euclidean space, then project onto the constraint surface.

In this chapter we present a new gradient-based algorithm for subspace
tracking that draws on the ideas developed in the preceding three
chapters.  As discussed in Chapter~\ref{chap:grad},
Section~\ref{sec:extremeig}, the eigenvectors corresponding to the
extreme eigenvalues of a symmetric matrix can be obtained by
maximizing the generalized Rayleigh quotient.  The Riemannian version
of the conjugate gradient method, Algorithm~\ref{al:cgman}, can be
implemented by an efficient $O(nk^2)$ algorithm by exploiting the
homogeneous space structure of the Stiefel manifold covered in
Chapter~\ref{chap:geom}, Section~\ref{sec:examples}.  The resulting
conjugate gradient algorithm can be modified so that it is useful in
the subspace tracking context described in the aforementioned
references.

\section{Adaptive estimation techniques}\label{sec:adapest}

In this section a general data model will be described that is used in
much of the literature on adaptive subspace tracking.  A discrete time
model is used, although this is not necessary; continuous models for
subspace tracking are possible \cite{Brockett:subspace}.  We imagine a
collection of~$m$ signals or states that span a subspace to be
identified.  To each signal or state there is associated a real value
at times $t=0$, $1$, \dots\spacefactor=3000\relax\space Many
applications require phase information and therefore use complex
numbers, but for simplicity we consider only the real case; the
complex version of this treatment and the algorithms to be presented
are obvious generalizations. Denote the $i$th signal or state ($1\le
i\le m$) by~$s^i$, whose value at time~$t$ is written as~$s^i(t)$
or~$s^i_t$. Hereafter we shall simply refer to states, although either
signals and states may be used. Thus the states can be viewed as a
vector $s$ with components $s^i$ in the $m\hyphen$dimensional affine space
$\R^m$; the designation of quiescent values for the states makes this
a vector space, which we shall endow with the standard metric.  The
vector $s$ is called the state vector.

A measurement model for the state vector is now provided.  It is
assumed that there are $n$ sensors whose outputs are denoted by the
real numbers $x^1$, \dots,~$x^n$, or simply by the data vector
$x\in\R^n$. The data vector at time~$t$ is given by the equation
$$x_t=As_t+w_t,$$ where $A$ is an $n\by m$ matrix, possibly
parameterized, and $w_t$ is a Gaussian independent random sequence.

\subsection{The stationary case}

Some simplifying assumptions about the state vector $s$ will be made.
It is assumed that $s_t$ is a wide-sense stationary random sequence
that is ergodic in the mean and ergodic in covariance, i.e., $$E[s_0]
=\lim_{T\to\infty} {1\over2T} \sum_{t=-T}^Ts_t \qquad\hbox{and}\qquad
\lim_{T\to\infty} E[s_0^{\vphantom{\T}}s_0^\T] = {1\over2T}
\sum_{t=-T}^T s_t^{\vphantom{\T}}s_t^\T.$$ Furthermore, it is assumed
for simplicity that $E[s_t]=0$.  Then the covariance matrix
$R_{xx}=E[x_t^{\vphantom{\T}}x_t^\T]$ of~$x$ is given by
$$R_{xx}=AR_{ss}A^\T +R_{ww},$$ where $R_{ss}$ and $R_{ww}$ are the
covariance matrices of~$s$ and $w$, respectively.  The goal is to
estimate the principal invariant subspaces of~$R_{xx}$.  Several of
the covariance estimation techniques mentioned above use an averaging
approach to compute an estimate of~$R_{xx}$.  For example, the
estimate $$\hat R_{xx}= {1\over
T}\sum_{t=0}^{T-1}x_t^{\vphantom{\T}}x_t^\T,$$ which is easily
implemented as a sequence of rank~one updates, is often used.
\citeasnoun{Karasalo} provides an algorithm for estimating the
covariance matrix of a signal which requires fewer computations that
this averaging technique.  Standard iterative techniques such as those
mentioned above may be used to compute the principal invariant
subspaces of~$\hat R_{xx}$.

\subsection{The nonstationary case}

If the sequence $s_t$ is nonstationary but has second order statistics
that vary slowly with respect to some practical time scale, then many
applications require estimates of the principal invariant subspaces of
the covariance matrix $R_{xx}$ at any given time.  This is known as
the tracking problem.  One common approach is to form an $n\by l$ data
matrix $X$ from a moving window of the data vectors. I.e., the $j$th
column of~$X$ is the data vector $x_{t+j}$, where $t+1$ is time of the
first sample in the moving window and $t+l$ is the last. Typically $l$
is greater than~$n$.  The estimate of~$R_{xx}$ at time $t+l$ is
\begin{equation} \label{eq:hatRxx} \hat R_{xx} ={1\over l}XX^\T
={1\over l} \sum_{\tau=t+1}^{t+l}x_\tau^{\vphantom{\T}} x_\tau^\T.
\end{equation} Other approach include the so-called fading memory
estimate given by the equations $$\eqalign{P_{t+1} &=\hat R_{xx}(t)
+x_{t+1}^{\vphantom{\T}}x_{t+1}^\T,\cr \hat R_{xx}(t+1)
&=P_{t+1}\big/\|P_{t+1}\|,\cr}$$ where $\|\cdot\|$ is the Frobenius
norm, or the equation $$\hat R_{xx}(t+1) =\alpha_t\hat R_{xx}(t)
+\beta_tx_{t+1}^{\vphantom{\T}}x_{t+1}^\T,$$ where $\alpha$ and
$\beta$ are real-valued time sequences.

\subsection{Numerical considerations}

On a finite precision machine, there is a loss in accuracy that comes
with squaring the data and using the estimated covariance matrix $\hat
R_{xx}$ explicitly.  It is therefore recommended that the data matrix
$X$ be used directly.  By Equation~(\ref{eq:hatRxx}), the eigenvectors
of~$\hat R_{xx}$ correspond to the left singular vectors of~$X$.  To
reduce the computational effort involved in the iterative eigenvalue
algorithms, the matrix $X$ is often decomposed at each time step into
the |QR|~decomposition $X=LQ$, where $L$ is an $n\by l$ lower
triangular matrix and $Q$ is an $l\by l$ orthogonal matrix. Because
only the left singular vectors of~$X$ are desired, the orthogonal
matrix~$Q$ is not required, which allows for a reduction of the
computational effort.  However, there must be a method for updating
the |QR| decomposition of~$X$ at each time step.

\section{Conjugate gradient method for largest
eigenvalues}\label{sec:cg-genray}

Computing the extreme eigenvalues and associated eigenvectors of a
symmetric matrix is an important problem in general, and specifically
in subspace tracking.  Perhaps the best known and most widely used
algorithm for this task is the Lanczos algorithm, which may be derived
by maximizing the Rayleigh quotient \cite{GVL}.  The convergence
properties of the unmodified Lanczos method on a finite-precision
machine are poor, however, because there is an increasing loss of
orthogonality among the Lanczos vectors as the algorithm proceeds and
Ritz pairs converge.  Several modifications have been proposed which
yield a successful algorithm, such as complete reorthogonalization,
which is prohibitively expensive, selective reorthogonalization
\cite{ParlettScott}, block Lanczos methods, and $s$-step Lanczos
methods \cite{CullumWill}.  The latter methods are an iterative
version of the block Lanczos method for computing the largest
eigenvalues.  Of necessity these algorithms are more costly than the
unmodified Lanczos algorithm.  \citeasnoun{CullumWill} provide a
detailed analysis of practical Lanczos methods as well as a thorough
bibliography. Xu and Kailath \citeyear{XuKailath:flanc,XuKailath:fsd}
provide fast Lanczos methods whose speed depends upon a special
structure of the covariance matrix.

Given a symmetric $n\by n$ matrix $A$, \citeasnoun{CullumBIT}
considers the optimization problem $$\max (\tr X^\T\!AX -\tr
Y^\T\!AY)$$ over all $n\by k$ matrices $X$ and all $n\by l$ matrices
$Y$ ($k+l\le n$) such that $X^\T X=I$ and $Y^\T Y=I$, i.e.,
$X\in\stiefel(n,k)$ and $Y\in \stiefel(n,l)$.  In her paper it is
noted that an $(s+1)$-step Lanczos method generates eigenvector
estimates that are as least as good as an $s$-step constrained
conjugate gradient algorithm.  However, the conjugate gradient
algorithm presented there is linearly convergent and does not exploit
the natural Riemannian structure of the manifold as does
Algorithm~\ref{al:cgman} of Chapter~\ref{chap:orm}. See also
\citeasnoun[pp.~217ff\/]{CullumWill}.  \citeasnoun{Parlettetal} also
use the Lanczos method for computing the largest eigenvalue of a
symmetric matrix.  Alternatively, \citeasnoun{O'Learyetal} propose an
algorithm for computing the dominant eigenvalue of a positive definite
matrix, which is based upon the power method.  This algorithm is
useful for rough approximation of the spectral radius of a positive
definite matrix. A different point of view is offered by
\citeasnoun{Overton}, who considers an eigenvalue optimization problem
on a set of parameterized symmetric matrices.

\subsection{Generalized Rayleigh quotient}

Let $\stiefel(n,k)$ be the compact Stiefel manifold of $n\by k$
matrices ($k\le n$) with orthonormal columns.  Recall from
Chapter~\ref{chap:grad}, Section~\ref{sec:extremeig} that given an
$n\by n$ symmetric matrix~$A$ and a $k\by k$ diagonal matrix $N$, the
generalized Rayleigh quotient is the function
$\rho\colon\stiefel(n,k)\to\R$ defined by $p\mapsto \tr p^\T\!ApN$.
As described in Corollary~\ref{cor:raygencrit},
Chapter~\ref{chap:grad}, if the extreme eigenvalues of~$A$ and the
diagonal elements of~$N$ are distinct, then this function has $2^k$
maxima where the corresponding eigenvectors of~$A$ comprise the
columns of the maximum points, modulo $k$ choices of sign.  Let us
assume that our application requires the eigenvectors of a data
covariance matrix corresponding to the largest eigenvalues, so that
the diagonal elements of~$N$ are all positive.

As discussed in Chapter~\ref{chap:geom}, Section~\ref{sec:examples},
the Stiefel manifold can be identified with the reductive homogeneous
space $\O(n)/\O(n-k)$.  Let $G=\O(n)$, $M=\stiefel(n,k)$,
$o=\bigl({I\atop0}\bigr)$ the origin in~$M$, and
$H=\O(n-k)$ the isotropy group at~$o$.  Denote the Lie algebras of~$G$
and $H$ by~$\g$ and $\h$, respectively, and let $\pi\colon G\to M$ be
the projection $g\mapsto g\cdot o$.  The tangent plane of~$M$ at~$o$
can be identified with the vector subspace $\m=\h^\perp$ of~$\g$,
where orthogonality is with respect to the canonical invariant metric
on~$G/H$.

Let $g\in G$ be a coset representative of~$p\in M$, i.e., $p=g\cdot
o$.  Then the tangent plane $T_pM$ can be identified with the vector
subspace $\Ad_g(\m)$ of~$\g$.  The choice of coset representative is
not unique, so neither is this subspace.  Given $x\in\m$,
$x_p=\Ad_g(x)\in\Ad_g(\m)$, the unique geodesic through $o\in M$ in
the direction corresponding to $x_p\in\Ad_g(\m)$ is given
by~$e^{x_pt}\cdot p=ge^{xt}\cdot o$, where $e^{xt}$ denotes matrix
exponentiation.  As shown in the proof of
Proposition~\ref{prop:genraygrad}, the first order term
of~$\rho(ge^{xt}\cdot o)$ can be used to compute the gradient of the
generalized Rayleigh quotient at~$p\in M$.  Given the coset
representative $g$ of~$p$, we have
{\makeatletter\refstepcounter{equation}\let\@currentlabel=\theequation
\label{eq:raygengrad}}$$\eqalignno{\Ad_{g^{-1}}\cdot(\grad\rho)_p
&=[g^\T\!Ag,oNo^\T] &(\theequation)\cr &=g^\T\!ApNo^\T-oNp^\T\!Ag.
&(\theequation')\cr}$$ From a computational standpoint,
Equation~$(\ref{eq:raygengrad}')$ is preferable to
Equation~$(\ref{eq:raygengrad})$ because it can be computed with $k$
matrix vector multiplications, whereas Equation~(\ref{eq:raygengrad})
requires $n$ matrix-vector multiplications.

Similarly, by Equation~(\ref{eq:taylorf}), Chapter~\ref{chap:orm}, the
second order term of~$\rho(ge^{xt}\cdot o)$ can be used to compute the
second covariant differential of~$\rho$ at~$p$ evaluated at~$(X,X)$,
where $X$ is the tangent vector in~$T_pM$ corresponding to~$x\in\m$.
Because the second covariant differential at~$p$ is a symmetric
bilinear form on~$T_pM$, polarization of~$(\Dsqr\rho)_p(X,X)$ may be
used to obtain \begin{equation} \label{eq:d2rho}
(\Dsqr\rho)_p(X,Y)=\tr\bigl(p^\T\!Ag(xy+yx)oN -2o^\T
xg^\T\!AgyoN\bigr), \end{equation} where $Y\in T_pM$ corresponds to
$y\in\m$.

Both Equations $(\ref{eq:raygengrad}')$ and $(\ref{eq:d2rho})$ will be
used to perform the Riemannian version of the conjugate gradient
method of the generalized Rayleigh quotient given in
Algorithm~\ref{al:cgman}.

\subsection{The choice of coset representatives}

Given $p$ in~$M=\stiefel(n,k)$, a coset representative $g$
in~$G=\O(n)$ must be computed to exploit the underlying structure of
the homogeneous space using the methods described above.  In the case
of the Stiefel manifold, a coset representative of~$p$ is simply any
$n\by n$ orthogonal matrix whose first $k$ columns are the $k$ columns
of~$p$, as easily seen by examining the equality $p=g\cdot o$.  The
choice of coset representative is completely arbitrary, thus it is
desirable to choose a representative that is least expensive in terms
of both computational effort and storage requirements.  For example,
the element $g$ in~$G$ could be computed by performing the
Gram-Schmidt orthogonalization process, yielding a real $n\by n$
orthogonal matrix.  This procedure requires $O(n^2k)$ operations and
$O(n^2)$ storage, which are relatively expensive.

The |QR| decomposition, however, satisfies our requirements for low
cost.  Recall that for any $n\by k$ matrix $F$ ($k\le n$), there
exists an $n\by n$ orthogonal matrix $Q$ and an $n\by k$ upper
triangular matrix $R$ such that $$F=QR.$$ There is an efficient
algorithm, called Householder orthogonalization, for computing the
|QR| decomposition of~$F$ employing Householder reflections.
Specifically, we have $$P_k\ldots P_1F=R,$$ where the $P_i$, $i=1$,
\dots,~$k$ are Householder reflections of the form
$$P_i=I-{1\over\beta_i}\nu_i^{\vphantom{\T}}\nu_i^\T,$$
$\nu_i\in\R^n$, and $\beta_i=2/\nu_i^\T\nu_i$. This algorithm requires
$k^2(n-k/3)+O(nk)$ operations and requires only $kn$ storage units
because the orthogonal matrix $Q$ may be stored as a sequence of
vectors used for the Householder reflections---the so-called factored
form.  See \citeasnoun{GVL} for details and explanations of these
facts.

\begin{remark} Let $F$ be an $n\by k$ matrix ($n\le k$) with
orthonormal columns.  Then the |QR| decomposition of~$F$ yields an
upper triangular matrix $R$ whose off-diagonal elements vanish and
diagonal elements are $\pm1$.
\end{remark}

Therefore, the |QR| decomposition provides an inexpensive method of
computing a coset representative of any point $p$ in~$\stiefel(n,k)$.
Specifically, let $p\in\stiefel(n,k)$ have the |QR| decomposition
$p=QR$, $Q^\T=P_k\ldots P_1$, and partition $R$ as $R=
\bigl(\!{R_1\atop0}\!\bigr)$, where $R_1$ is a $k\by k$ upper
triangular matrix.  Then the coset representative $g$ of~$p$ is given
by $$g=Q\cdot\diag(R_1,I).$$

As discussed above, the choice of a coset representative provides an
identification of the tangent plane $T_pM$ with the vector
subspace~$\m$.  The conjugate gradient algorithm computes a sequence
of points $p_i$ in~$M$, all of which necessarily have different coset
representatives, as well as a sequence of tangent vectors $H_i\in
T_{p_i}M$ which are compared by parallel translation.  Thus it will be
necessary to compute how the change in the coset representative of a
point changes the elements in~$\m$ corresponding to tangent vectors at
a point.  Let $g_1$ and $g_2$ be coset representative of the point $p$
in~$M$, and let $X$ be a tangent vector in~$T_pM$.  The elements $g_1$
and $g_2$ in~$G$ define elements $x_1$ and $x_2$ in~$\m$ by the
equation $$X=\Ad_{g_1}(x_1)=\Ad_{g_2}(x_2).$$ Given $x_1$, we wish to
compute $x_2$ efficiently.  By assumption, there exists an $h\in H$
such that $g_2=g_1h$.  Then $$\eqalign{x_2
&=(\Ad_{g_2^{-1}}\circ\Ad_{g_1})(x_1)\cr &=\Ad_{g_2^{-1}g_1}(x)\cr
&=\Ad_{h^{-1}}(x).\cr}$$ The vector subspace $\m$ is
$\Ad_H$-invariant; therefore, $x_2$ may be computed by conjugating
$x_1$ by~$g_1$, then by~$g_2^{-1}$.

Any element $x$ in~$\m$ and $h$ in~$H$ can be partitioned as
$$\diffeqalign{x &=\left({a\atop b}\>{-b^\T\atop0}\right) &\hbox{$a$
in $\so(k)$ and $b$ $(n-k)\by k$ arbitrary,}\cr h &=\left({I\atop
0}\>{0\atop h'}\right) &\hbox{$h'$ in $\O(n-k)$.}\cr}$$ It is easy to
see that if $x_2=\Ad_h(x_1)$, then $a_2=a_1$ and $b_2=h'b_1$.  Thus
elements $x\in\m$, i.e., $n\by n$ matrices of the form given above,
may be stored as $n\by k$ matrices of the form $$x=\left({a\atop
b}\right),$$ where $a$ is a $k\by k$ skew-symmetric matrix and $b$ is
an $(n-k)\by k$ matrix.

\subsection{Geodesic computation}

As discussed previously, the unique geodesic through $p=g\cdot o$
in~$M$ in direction $X\in T_pM$ is given by the formula $$\exp_p
tX=ge^{xt}\cdot o,$$ where $x\in\m$ corresponds to $X\in T_pM$ via
$\Ad_g$.  Thus geodesics in~$M=\stiefel(n,k)$ may be computed with
matrix exponentiation. The problem of computing the accurate matrix
exponential of a general matrix in~$\gl(n)$ is difficult
\cite{nineteendubious}. However, there are stable, accurate, and
efficient algorithms for computing the matrix exponential of symmetric
and skew-symmetric matrices that exploit the canonical symmetric or
skew-symmetric decompositions (Golub \& Van~Loan 1983; Ward \& Gray
1978a, 1978b).  Furthermore, elements in~$\m$ have a special block
structure that may be exploited to substantially reduce the required
computational effort.

For the remainder of this section, make the stronger assumption on the
dimension of~$\stiefel(n,k)$ that $2k\le n$.  Let $x=\bigl({a\atop
b}\>{-b^\T\atop0}\bigr)$ be an element in~$\m$, and let the $(n-k)\by
k$ matrix $b$ have the |QR| decomposition $b=QR$, where $Q$ is an
orthogonal matrix in~$\O(n-k)$ and $R=\bigl(\!{R_1\atop0}\!\bigr)$
such that $R_1$ is a $k\by k$ upper triangular matrix.  Then the
following equality holds: $$\left({I\atop0}\>{0\atop Q^\T}\right)
\left({a\atop b}\>{-b^\T\atop0}\right) \left({I\atop0}\>{0\atop
Q}\right)= \pmatrix{a&-R_1^\T&0\cr R_1&0&0\cr 0&0&0\cr}.$$ Thus,
matrix exponentiation of the $n\by n$ skew-symmetric matrix $x$ may be
obtained by exponentiating the $2k\by 2k$ skew-symmetric matrix
$$x'=\left({a\atop R_1}\>{-R_1^\T\atop0}\right).$$ Computing the
canonical decomposition of an $n\by n$ skew-symmetric matrix requires
about $8n^3+O(n^2)$ operations \cite{WG:1}.  In the case of computing
the canonical decomposition of elements in~$\m$, this is reduced to
$8(2k)^3+O(k^2)$ operations, plus the cost of $k^2(n-4k/3)+O(nk)$
operations to perform the |QR| decomposition of~$b$.

Let $p$ in~$\stiefel(n,k)$ have the |QR|~decomposition $p=\Psi D$,
where $\Psi^\T=(P_k\ldots P_1)\in\O(n)$ and $D$ is upper triangular
such that its top $k\by k$ block $D_1$ is of the form
$D_1=\diag(\pm1,\ldots,\pm1)$.  Given $x\in\m$, let $x$ be partitioned
as above such that $b$ has the |QR|~decomposition $b=QR$, where
$Q\in\O(n-k)$ and $R$ is upper triangular with top $k\by k$ block
$R_1$.  Let $x'$ be the $2k\by 2k$ reduced skew-symmetric matrix
obtained from~$x$ by the method described in the previous paragraph.
Let the $2k\by 2k$ matrix $x'$ have the canonical skew-symmetric
decomposition $$x'=\vartheta s\vartheta^\T,$$ where
$\vartheta\in\O(2k)$ and $s$ is of the form $$\def\quad{\hbox{$\>$}}
\let\s=\sigma \def\ddots{\mathinner{\mkern1mu\raise7pt\vbox{\kern7pt
\hbox{.}}\mkern1mu \raise4pt\hbox{.}\mkern1mu
\raise1pt\hbox{.}\mkern1mu}} s=\pmatrix{0&\s_1\cr-\s_1&0\cr
&&0&\s_2\cr&&-\s_2&0\cr &&&&\ddots\cr &&&&&0&\s_k\cr
&&&&&-\s_k&0\cr}.$$ Then the geodesic $t\mapsto \exp_p tX=ge^{xt}\cdot
o$ may be computed as follows: \begin{eqnarray} ge^{xt}\cdot o &=&
P_1\ldots P_k \nonumber\\ &&\quad{}\cdot \left({D_1\atop0}\>{0\atop
I}\right) \left({I\atop0}\>{0\atop Q}\right)
\left({\vartheta\atop0}\>{0\atop I}\right)
\left({e^{st}\atop0}\>{0\atop I}\right)
\left({\vartheta^\T\atop0}\>{0\atop I}\right) \left({I\atop0}\>{0\atop
Q^\T}\right) \left({I\atop0}\right). \label{eq:geodcomp}
\end{eqnarray} Note well that these matrices are not partitioned
conformably, and that $$\left({I\atop0}\>{0\atop Q^\T}\right)
\left({I\atop0}\right)= \left({I\atop0}\right).$$ These steps may all
be performed with $O(nk)$ storage, and the computational requirements
are summarized in Table~\ref{tab:geodesiccost}.  One particularly
appealing feature of the geodesic computation of
Equation~(\ref{eq:geodcomp}) is that within the accuracy of this
computation, orthogonality of the columns of~$p_i$ is maintained
for~all $i$.  Thus it is never necessary to reorthogonalize the
columns of~$p_i$ as in the Lanczos algorithm.

\begin{table}[t]
\caption[Computational requirements of geodesic computation]
  {\label{tab:geodesiccost}\ignorespaces
    Computational requirements of geodesic computation}
\centerline{\vbox{\tabskip=0pt \def\opentable{\noalign{\vskip4pt}}
\halign to.75\textwidth{\strut#\hfil\tabskip=1em plus1fil
  &\hfil$#$\hfil\tabskip=0pt\cr
\noalign{\hrule}\opentable
\omit Procedure\hfil&\omit\hfil Cost\hfil\cr
\opentable\noalign{\hrule}\opentable
|QR|~decomposition of~$p$&k^2(n-k/3)+O(nk)\cr
|QR|~decomposition of~$x$&k^2(n-4k/3)+O(nk)\cr
Canonical decomposition of~$x'$&64k^3+O(k^2)\cr
$\diag(I,Q^\T)\cdot o$&0\cr
$\diag(\vartheta^\T,I)\cdot o$&0\cr
$\diag(e^{st},I)\cdot{}$&4k^2\cr
$\diag(\vartheta,I)\cdot{}$&4k^3\cr
$\diag(I,Q)\cdot{}$&k^2(2n-3k)+O(nk)\cr
$g\cdot{}$&k^2(2n-k)+O(nk)\cr
\opentable \noalign{\nointerlineskip}
\omit\hfil&\omit\hrulefill\cr \opentable
Total&6nk^2+62\third k^3+O(nk)\cr
\opentable\noalign{\hrule}}}}
\end{table}

\subsection{Step direction computation}

Let $p_i\in M$, $i\ge0$, be the iterates generated by
Algorithm~\ref{al:cgman} applied to the generalized Rayleigh quotient.
The successive direction for geodesic minimization at each iterate
$p_{i+1}\in M$ is given by the equation
\begin{equation} \label{eq:cgsteprev} H_{i+1}=G_{i+1}+\gamma_i\tau
H_i, \qquad\gamma_i ={\(G_{i+1}-\tau G_i,G_{i+1}\)\over \(G_i,H_i\)},
\end{equation} where $G_i$ the the gradient of the function at the
point $p_i$, and $\tau$ is the parallelism with respect to the
geodesic from~$p_i$ to~$p_{i+1}$.  Let $g_i$, $i\ge0$, be the coset
representative of~$p_i$ chosen to be the |QR| decomposition of~$p_i$
as described above, let $h_i\in\m$ correspond to~$H_i\in T_{p_i}M$
via~$\Ad_{g_i}$, and let $\lambda_i$ be the step length along this
curve such that $p_{i+1}=\exp_{p_i}\lambda_iH_i$.  The computation
of~$\tau H_i$ is straightforward because this this is simply the
direction of the curve $t\mapsto\exp_{p_i}tH_i$ at~$p_{i+1}$, i.e.,
$$\tau H_i ={d\over dt}\Big|_{t=\lambda_i} g_ie^{h_it}\cdot o.$$ Thus
the element $h_i$ in $\m$ corresponding to~$H_i$ in~$T_{p_i}M$
via~$\Ad_{g_i}$ is the same as the element in~$\m$ corresponding
to~$\tau H_i$ in~$T_{p_{i+1}}M$ via~$\Ad_{(g_ie^{h_i\lambda_i})}$.
However, the coset representative $g_{i+1}$ chosen for the point
$p_{i+1}$ is in general not equal to the coset representative
$g_ie^{h_i\lambda_i}$ of~$p_{i+1}$, so the element $h_i$ must be
transformed as \begin{equation}\label{eq:parthi} h_i\mapsto
(\Ad_{g_{i+1}^{-1}}\circ \Ad_{g_ie^{h_i\lambda_i}})(h_i).
\end{equation} This ensures that $h_i$ is represented in the basis
of~$T_{p_{i+1}}M$ implied by the conventions previously established.
Equation~(\ref{eq:parthi}) is thus the only computation necessary to
compute a representation of~$\tau H_i$ in~$\m$ with respect to the
coset representative $g_{i+1}$.

As discussed at the end of Section~\ref{sec:liegphomsp},
Chapter~\ref{chap:geom}, computing the parallel translation of an
arbitrary tangent vector along a geodesic requires the solution of the
set of structured $\half k(k-1)+(n-k)k$ linear differential equations
given in Equation~(\ref{eq:stiefelpt}), Chapter~\ref{chap:geom}.  In
the cases $k\ne1$ or~$n$, the solution to these differential equations
cannot be expressed as the exponential of an $n\by n$ matrix.
Therefore, it appears to be impractical to use parallel translation to
compute $\gamma_i$ of Equation~(\ref{eq:cgsteprev}).

Instead, we fall back upon the demand that subsequent directions be
conjugate with respect to the second covariant differential of the
function at a point, and use the formula \begin{equation}
\label{eq:gammadefrev} \gamma_i =-{(\nabla^2\f)_{p_{i+1}} (\tau
H_i,G_{i+1})\over (\nabla^2\f)_{p_{i+1}}(\tau H_i,\tau H_i)}.
\end{equation} This avoids the computation of~$\tau G_i$, which is
used in Equation~(\ref{eq:cgsteprev}), but introduces computation
given by Equation~(\ref{eq:d2rho}), which requires $O(nk^2)$ operations
plus $2k$ matrix-vector multiplications.  The cost of computing
$\gamma_i$ by Equation~(\ref{eq:gammadefrev}) is summarized in
Table~\ref{tab:2dcovd}.  The cost of changing the coset representative
using Equation~(\ref{eq:parthi}) is summarized in
Table~\ref{tab:changecoset}.

\begin{table}[t]
\caption[Computational requirements of $(\Dsqr\rho)_p(X,Y)$ computation]
  {\label{tab:2dcovd}\ignorespaces
    Computational requirements of\/ $(\Dsqr\rho)_p(X,Y)$ computation}
\centerline{\vbox{\tabskip=0pt \def\opentable{\noalign{\vskip4pt}}
\halign to.75\textwidth{\strut#\hfil\tabskip=1em plus1fil
  &\hfil$#$\hfil\tabskip=0pt\cr
\noalign{\hrule}\opentable
\omit Procedure\hfil&\omit\hfil Cost\hfil\cr
\opentable\noalign{\hrule}\opentable
$(xy+yx)$&k^2(3n-2k)\cr
$g\cdot{}$\quad(thrice)&3k^2(2n-3k)+O(nk)\cr
$A\cdot{}$\quad(twice)&\hbox{$2k$ mat-vec*}\cr
$\tr p^\T qN$&nk\cr
\opentable \noalign{\nointerlineskip}
\omit\hfil&\omit\hrulefill\cr \opentable
Total&9nk^2-11k^3+2k\,\hbox{mat-vec}+O(nk)\cr
\opentable\noalign{\hrule}\opentable
\multispan2\footnotesize* Represents one matrix-vector
  multiplication.\hfil\cr}}}
\end{table}

\begin{table}
\caption[Computational requirements of changing coset representative\null]
  {\label{tab:changecoset}\ignorespaces
    Computational requirements of changing coset representation}
\centerline{\vbox{\tabskip=0pt \def\opentable{\noalign{\vskip4pt}}
\halign to.75\textwidth{\strut#\hfil\tabskip=1em plus1fil
  &\hfil$#$\hfil\tabskip=0pt\cr
\noalign{\hrule}\opentable
\omit Procedure\hfil&\omit\hfil Cost\hfil\cr
\opentable\noalign{\hrule}\opentable
|QR|~decomposition of~$p_i$&\hbox{none*}\cr
|QR|~decomposition of~$p_{i+1}$&\hbox{none*}\cr
|QR|~decomposition of~$x_i$&\hbox{none*}\cr
Canonical decomposition of~$x_i'$&\hbox{none*}\cr
$\diag(I,Q^\T)\cdot x_i$&k^2(2n-3k)+O(nk)\cr
$\diag(\vartheta^\T,I)\cdot{}$&4k^3\cr
$\diag(e^{st},I)\cdot{}$&4k^2\cr
$\diag(\vartheta,I)\cdot{}$&4k^3\cr
$\diag(I,Q)\cdot{}$&k^2(2n-3k)+O(nk)\cr
$g_i\cdot{}$&k^2(2n-k)+O(nk)\cr
$g_{i+1}^{-1}\cdot{}$&k^2(2n-k)+O(nk)\cr
\opentable \noalign{\nointerlineskip}
\omit\hfil&\omit\hrulefill\cr \opentable
Total&8nk^2+O(nk)\cr
\opentable\noalign{\hrule}\opentable
\multispan2\footnotesize* Assumed to be pre-computed in the geodesic
  computation.\hfil\cr}}}
\end{table}

\subsection{The stepsize}

Once the direction for geodesic minimization $H_i$ is computed, a
stepsize $\lambda_i$ must be computed such that $$\exp\lambda_iH_i\le
\exp\lambda H_i \qquad\hbox{for all $\lambda\ge0$.}$$ In the case
$k=1$ ($\stiefel(n,1)=S^{n-1}$), Algorithm~\ref{al:raysphere},
Chapter~\ref{chap:orm}, provides an explicit formula for the stepsize
(which requires one matrix vector multiplication and a few $O(n)$
inner products).  In the case $k=n$ ($\stiefel(n,n)=\O(n)$),
\citeasnoun{Brockett:grad} provides an estimate of the stepsize, which
is covered in Example~\ref{eg:trHNgrad}, Chapter~\ref{chap:orm}.
Consider this approach in the general context $1\le k\le n$.  Given
$p\in M$, $g\in G$ a coset representative of~$p$, and $x\in\m$, we
wish to compute $t>0$ such that the function $t\mapsto \phi(t)
=\rho(p_t) =\tr p_t^\T\!Ap_tN$ is minimized, where $p_t=ge^{xt}\cdot
o$.  Differentiating $\phi$ twice shows that $$\eqalign{\phi'(t)
&=-\tr\Ad_{(ge^{xt})^\T}\bigl([\Ad_gx,A]\bigr)oNo^\T,\cr \phi''(t)
&=-\tr\Ad_{(ge^{xt})^\T}\bigl([\Ad_gx,A]\bigr) [x,oNo^\T].\cr}$$ Hence we
have $\phi'(0)=2\tr p^\T\!AgxoN$, which may be computed with $nk^2$~flops
if the matrix $Ap$ is known.  By Schwarz's inequality and the fact
that $\Ad$ is an isometry, we have $$|\phi''(t)|\le
\bigl\|[\Ad_gx,A]\bigr\|\;\bigl\|[x,oNo^\T]\bigr\|.$$ The term
$\bigl\|[x,oNo^\T]\bigr\|$ is easily computed, but there is no
efficient, i.e., $O(nk^2)$, method to compute the term
$\bigl\|[\Ad_gx,A]\bigr\|$.

However, there are several line minimization algorithms from classical
optimization theory that may be employed in this context.  In general,
there is a tradeoff between the cost of the line search algorithm and
its accuracy; good algorithms allow the user to specify accuracy
requirements.  The Wolfe-Powell line search algorithm \cite{Fletcher}
is one such algorithm.  It is guaranteed to converge under mild
assumptions, and allows the user to specify bounds on the error of the
approximate stepsize to the desired stepsize.  Near the minimum of the
function, an approximate stepsize may be computed via a Taylor
expansion about zero: $$\phi(t)=\phi(0) +t(\covD{X}\rho)_p
+{t^2\over2}(\covD{X}^2\rho)_p +\hot$$ Truncating this expansion at
the third order terms and solving the resulting quadratic optimization
problem yields the approximation \begin{equation} \label{eq:stepsize}
\argmax \phi(t)= -{(\covD{X}\rho)_p\over (\covD{X}^2\rho)_p}.
\end{equation} Some of the information used in the computation
of~$\gamma_i$ described above may be used to compute this choice of
stepsize.  In practice, this choice of stepsize may be used as a trial
stepsize for the Wolfe-Powell or similar line searching algorithm.  As
the conjugate gradient algorithm converges, it will yield increasingly
better approximations of the desired stepsize, and the iterations
required in the line searching algorithm may be greatly reduced.

\subsection{The sorting problem}

One interesting feature of this type of optimization algorithm,
discovered by \citeasnoun{Brockett:sort}, is its ability to sort lists
of numbers.  However, from the viewpoint of the tracking application,
this property slows the algorithm's convergence because the sequence
of points $p_i$ may pass near one of the many saddle points where the
columns of~$p_i$ are approximately eigenvectors.  A practical
algorithm would impose convergence near these saddle points because
the eigenvectors may be sorted inexpensively with an $O(k\log k)$
algorithm such as heap~sort.  In the algorithm used in the next
section, the diagonal elements of~$N$ are sorted similarly to the
diagonal elements of~$p^\T\!Ap$.  Whenever a resorting of~$N$ occurs,
the conjugate gradient algorithm is reset so that its next direction
is simply the gradient direction of~$\tr p^\T\!ApN$, where the
diagonal of~$N$ is a sorted version of the original.  Conversely, the
columns of the matrix~$p$ may be re-sorted so that the diagonal
of~$p^\T\!Ap$ is ordered similarly to the diagonal of~$N$.  This
latter procedure is accomplished efficiently if $p$ is represented in
the computer as an array of pointers to vectors.

\begin{figure}[t]
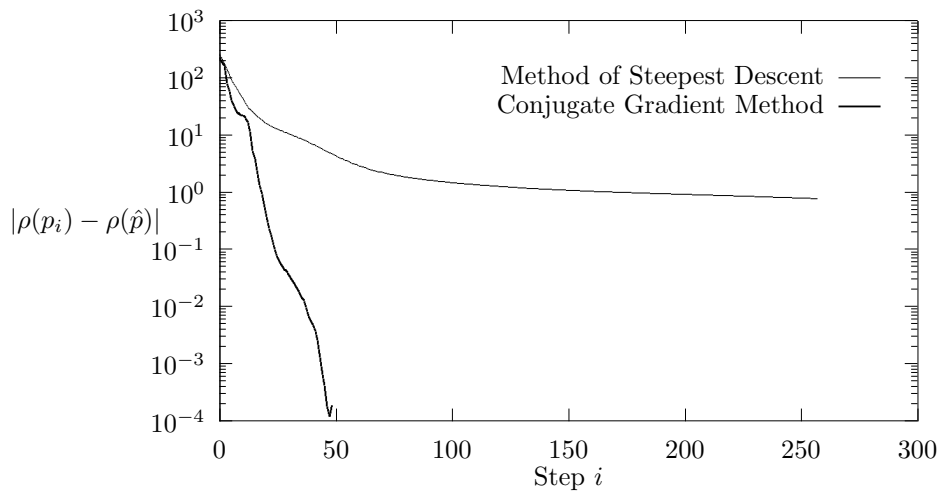

\begin{gnuplot}{20pt}
\input conv-V100,3
\end{gnuplot}
\caption[Convergence with the generalized Rayleigh quotient on
  $\stiefel(100,3)$]{Maximization of the generalized Rayleigh quotient
  $\tr p^\T\!ApN$ on~$\stiefel(100,3)$ (dimension
  $\stiefel(100,3)=294$) with both the method of steepest descent and
  the conjugate gradient algorithm of Section~\ref{sec:cg-genray}.
  Here $A=\diag(100,\dots,1)$ and $N=\diag(3,2,1)$. The $i$th iterate
  is $p_i$ and the maximum point is
  $\phat$.\label{fig:raygen}}
\end{figure}

\subsection{Experimental results}

Algorithm~\ref{al:cgman}, Chapter~\ref{chap:orm}, was applied to the
generalized Rayleigh quotient defined on the manifold
$\stiefel(100,3)$ with $A=\diag(100,99,\ldots,1)$, $N=\diag(3,2,1)$,
and $p_0$ chosen at random from $\stiefel(100,3)$ using Gram-Schmidt
orthogonalization.  The results are shown in Figure~\ref{fig:raygen}
along with the results of the method of steepest descent applied to
the generalized Rayleigh quotient.  Figure~\ref{fig:eigraygen} shows
the convergence of the estimated eigenvalues of the matrix~$A$.  As
can be seen in Figure~\ref{fig:raygen}, the algorithm converged to
machine accuracy in about 50 steps. Figure~\ref{fig:eigraygen} shows
that good estimates of the largest three eigenvalues are obtained in
less than 25 steps.  Instead of the formula for $\gamma_i$ specified
by this algorithm, which relies upon parallel translation of the
previous gradient direction, $\gamma_i$ was computed using
Equation~(\ref{eq:gammadefrev}) in conjunction with
Equation~(\ref{eq:d2rho}).  The stepsize was chosen with a modified
version of the the Wolfe-Powell line minimization algorithm described
by \citeasnoun{Fletcher} with $\rho=0.01$ (cf.\ p.~30 of~Fletcher),
$\sigma=0.1$ (ibid., 83), $\tau_1=9.0$, $\tau_2=0.1$, and $\tau_3=0.5$
(ibid., 34--36).  The initial test stepsize was computed using
Equation~(\ref{eq:stepsize}).  The diagonal elements of~$N$ were
sorted similarly to the diagonal elements of~$p_i^\T\!Ap_i$, $i\ge0$,
and the conjugate gradient algorithm was reset to the gradient
direction every time sorting took place.  The algorithm was also
programmed to reset every $r$ steps with $r=\mathop{\rm
dimension}\stiefel(100,3) =294$; however, as the results of
Figure~\ref{fig:raygen} show, the algorithm converged to machine
accuracy long before the latter type of reset would be used.  The
algorithm of Ward and Gray \citeyear{WG:1,WG:2} was used to compute
the canonical decomposition of the skew-symmetric matrix $x'$.

\begin{figure}[t]
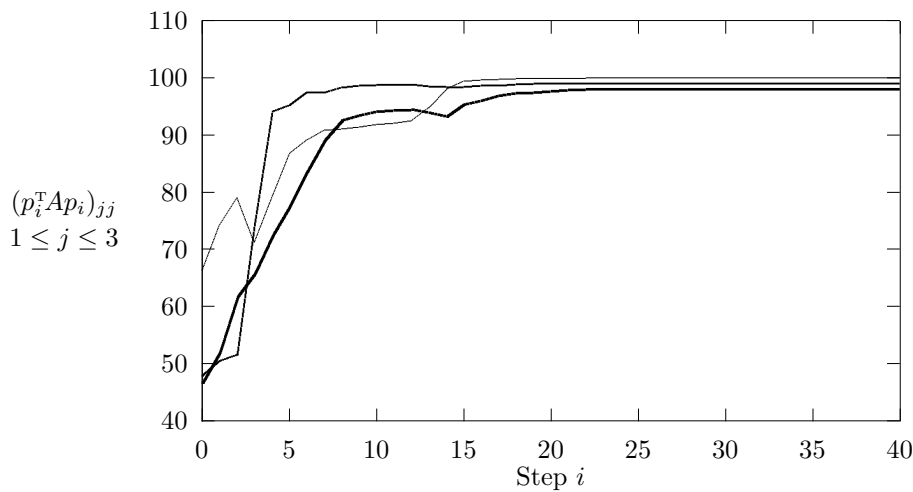

\begin{gnuplot}{20pt}
\input eig-V100,3
\end{gnuplot}
\caption[Convergence of the diagonal elements of
$p_i^\T\!Ap_i$]{Convergence of the diagonal elements of $p_i^\T\!Ap_i$
when the the conjugate gradient algorithm is applied to the
generalized Rayleigh quotient on~$\stiefel(100,3)$. Here the $i$th
iterate is $p_i$, and $A=\diag(100,\dots,1)$ and $N=\diag(3,2,1)$.
\label{fig:eigraygen}}
\end{figure}

\subsection{Largest left singular values}

Let $X$ be an $n\by l$ matrix with $n\le l$.  The matrix $X$ may be
thought of as a data matrix whose principal invariant subspaces are
desired, i.e., we wish to compute the eigenvectors corresponding to
the largest eigenvalues of $R=XX^\T$, or, equivalently, the left
singular vectors corresponding to the largest singular values of~$X$.
As explained at the end of Section~\ref{sec:adapest}, it is desirable
to work directly with the data matrix, or with the square root $L$
of~$R$, i.e., $R=LL^\T$.  This can be obtained from the
|QR|~decomposition $X=LQ$, where $L$ is a $n\by l$ lower triangular
matrix and $Q$ is a $l\by l$ orthogonal matrix.

The conjugate gradient algorithms presented in this section may be
modified to compute the largest singular vectors of~$X$.  Computations
of the form $p^\T\!Rq$, where $R$ is a symmetric matrix and $p$ and
$q$ are arbitrary $n\by k$ matrices, must be replaced with the
computation $(L^\T p)^\T(L^\T q)$, and computations of the form $Rp$
must be replaced with $L(L^\T p)$.  While not as bad as explicitly
computing $R=XX^\T$, these methods do involve squaring the data.

It is worthwhile to ask if this may be avoided.  Instead of optimizing
the generalized Rayleigh quotient to obtain the largest left singular
vectors, consider the function $\sigma\colon\stiefel(n,k)\to\R$
defined by the following steps.  Let $p\in\stiefel(n,k)$, $A$ an
arbitrary $n\by n$ matrix, and $N$ a real $k\by k$ diagonal matrix.
\begin{steps}
\step[1.] Compute $B=A^\T p$.
\step[2.] Compute the |QR|~decomposition of~$B=:QR$, where $Q$ is an
$n\by n$ orthogonal matrix and $R$ is an $n\by k$ upper triangular
matrix whose upper $k\by k$ block $R_1$ has positive real diagonal
entries ordered similarly to the diagonal of~$N$.
\step[3.] Set $\sigma(p)=\tr R_1N$.
\end{steps}
This approach avoids the data squaring problem.  Using the techniques
of Chapter~\ref{chap:grad}, it is straightforward to show that the
critical points of~$\sigma$ correspond to points~$p$ whose columns are
left singular vectors of~$A$.  The function $\sigma$ is maximized when
the corresponding singular values are similarly ordered to the
diagonal elements of~$N$.

However, computing a formula for the gradient and second covariant
differential of~$\sigma$ is difficult.  Indeed, when $R_1$ is
singular, this function is not differentiable on~$\stiefel(n,k)$.  To
compute the gradient of~$\sigma\colon\stiefel(n,k)\to\R$, the first
order perturbation of~$\sigma$ with respect to its argument must be
computed.  To do this, the first order perturbations of an arbitrary
|QR|~decomposition $B_t=Q_tR_t$, where $B_t$ is an $n\by k$ matrix
parameterized by~$t$, must be computed.  By assumption $B_0=Q_0R_0$
and $$\diffeqalign{B_t &=B_0+tB_0Y+\cdots &\hbox{$Y$ arbitrary $n\by
k$,}\cr Q_t &=Q_0(I+t\Omega+\cdots) &\hbox{$\Omega$ in $\so(n)$,}\cr
R_t &=R_0+t\Psi+\cdots &\hbox{$\Psi$ arbitrary $n\by k$.}\cr}$$ The
first order terms of $B_t=Q_tR_t$ may be written as $$R_0Y=\Omega
R_0+\Psi.$$ For the application we have in mind, $Y$ is a tangent
vector of the Stiefel manifold (by Step~1).  To fix ideas, we shall
consider the case $k=n=2$, and set $$Y
=\left({0\atop-y}\>{y\atop0}\right),\quad R_0
=\left({\alpha\atop0}\>{\beta\atop\gamma}\right), \quad \Omega
=\left({0\atop-\omega}\>{\omega\atop0}\right), \quad\hbox{and}\quad
\Psi =\left({\psi_1\atop 0}\>{\psi_2\atop\psi_3}\right).$$ Solving for
$\Omega$ and $\Psi$, we find $$\Omega={\gamma y\over\alpha}
\left({0\atop-1}\>{1\atop0}\right) \quad\hbox{and}\quad \Psi
={y\over\alpha} \left({-\alpha\gamma\atop
0}\>{\alpha^2-\gamma^2\atop\beta\gamma}\right).$$ There does not
appear to be an efficient $O(nk^2)$ algorithm for computing the
gradient of~$\sigma$ in general.

We can use Equation~(\ref{eq:svdrel}) of Chapter~\ref{chap:grad} to
define a more tractible function for optimization.  Given an arbitrary
$n\by n$ matrix $A$, let $\sigma'\colon \stiefel(n,k)\to\R$ be defined
by the following steps.
\begin{steps}
\step[1.] Compute $B=A^\T p$.
\step[2.] Compute the $n\by k$ matrix $q$ defined by the equation
  $B=:qD$ such that the columns of~$q$ have unit length and $D$ is a
  $k\by k$ diagonal matrix.
\step[3.] Set $\sigma'(p)=\tr q^\T\!A^\T pN$.
\end{steps}

This approach also avoids the data squaring problem.  It can be shown
that the critical points of~$\sigma'$ correspond to matrices
$p\in\stiefel(n,k)$ whose columns are the left singular vectors
corresponding to the $k$ largest singular values of~$A$.  The
differential and gradient of~$\sigma'$ are straightforward to compute.
Let $\zeta\colon \R^{n\times k}\to\R^{n\times k}$ be the projection
defined by setting the diagonal elements of an $n\by k$ matrix to
zero. Let $g$ be a coset representative of~$p$, and let $x\in\m$
correspond to~$X\in T_pM$.  Then $$d\sigma'_p(X)=\tr \bigl(\zeta(A^\T
gxo)^\T\!Ap +q^\T\!Agxo\bigr)N.$$ By the fact that
$\tr\zeta(a)^\T b=\tr a^\T\zeta(b)$ for $a$, $b\in\R^{n\times k}$, it
is seen that the vector $v\in\m$ corresponding to~$(\grad\sigma')_p$
is given by $$v=\bigl(oNq^\T\!A^\T g -q^\T\!A^\T\zeta(A^\T
pN)o^\T\bigr)_\m.$$ The second covariant differential of~$\sigma'$ may
be computed similarly, yielding the formulas necessary to implement a
conjugate gradient algorithm on~$\stiefel(n,k)$ yielding the left
singular vectors corresponding to the largest singular values of~$A$.

It is also possible to compute the corresponding right singular
vectors simultaneously.  Consider the function $\sigma''\colon
\stiefel(n,k)\times\stiefel(n,k)\to\R$ defined by $$\sigma''(p,q)=\tr
p^\T\!AqN.$$ The critical points of~$\sigma''$ correspond to matrices
$p$ and $q\in\stiefel(n,k)$ whose columns are left and right singular
vectors of~$A$, respectively.  Optimization algorithms developed in
this section may be generalized and applied to this function.

\section{Conjugate gradient method for subspace tracking}

Gradient-based algorithms are very appealing for tracking applications
because of their ability to move in the best direction to minimize
error. In the idealized scenario, the algorithm yields a sequence of
points that are at or near a minimum point.  When the minimum point
changes, it is assumed to change slowly or continuously so that the
gradient algorithm does not have far to go to follow the time varying
solution.

In their review of subspace tracking algorithms,
\citeasnoun{ComonGolub} provide computer simulations of the behavior
of a variety of algorithms tracking a step change in the signal
subspace.  Specifically, they track the principal subspaces of the
signal $$x_t=\cases{s^1_te_1 +s^2_te_2 &if $0\le t\le T$;\cr s^1_te_3
+s^2_te_4 &if $t>T$,\cr}$$ where $s^1_t$ and $s^2_t$ are wide-sense
stationary random sequences and $e_1$, $e_2$, $e_3$, and $e_4$ are the
first four standard basis elements of~$\R^{10}$.  To isolate the
tracking problem from the problem of covariance matrix estimation, we
choose a slightly different approach here.

Instead of changing the data sample $x$, and updating its covariance
matrix, we shall simply allow the symmetric matrix $A$ to change
arbitrarily over time, i.e., $A_t$ is an $n\by n$ symmetric matrix for
each $t=0$, $1$, \dots, and the goal shall be to track the largest $k$
eigenvalues of~$A_t$ and their associated eigenvectors.
Algorithm~\ref{al:cgman} of Chapter~\ref{chap:orm} may be modified as
follows so that one conjugate gradient step is performed at every time
step. Of course, more than one conjugate gradient step per time step
may be performed.

\begin{algorithm}[Conjugate gradient subspace
tracking]\label{al:cgtrack}\ignorespaces Let $A_i$ be a symmetric
matrix for $i=0$, $1$, \dots, and denote the generalized Rayleigh
quotient with respect to~$A_i$ by~$p\mapsto \rho(p)=\tr p^\T\!A_ipN$.
Select $p_0\in\stiefel(n,k)$ and set $i=0$.
\begin{steps}
\step[1.] Compute $$G_i=(\grad\rho)_{p_i}$$ via
  Equation~$(\ref{eq:raygengrad}')$.
\step[2.] If $i\equiv0\ (\bmod\ \dim\stiefel(n,k))$, then set
  $H_i=G_i$. If the diagonal elements of~$p_i^\T\!A_ip_i$ are not ordered
  similarly to those of~$N$, then re-sort the diagonal of~$N$, set
  $H_i=G_i$, and restart the step count.  Otherwise, set
  $$H_i=G_i+\gamma_{i-1}\tau H_{i-1},$$ where $\gamma_i$ is given by
  Equation~(\ref{eq:gammadefrev}).
\step[3.] Compute $\lambda_i$ such that
  $$\rho(\exp_{p_i}\lambda_iH_i)\le \rho(\exp_{p_i}\lambda H_i)$$
  for~all $\lambda>0$. Use Equation~(\ref{eq:stepsize}) for an initial
  guess of the stepsize for the Wolfe-Powell line search.
\step[4.] Set $p_{i+1}=\exp_{p_i} \lambda_i H_i$, increment $i$, and
  go~to Step~1.
\end{steps}
\end{algorithm}

\subsection{Experimental results}

Algorithm~\ref{al:cgtrack} with $n=100$ and $k=4$ was applied to the
time varying matrix \begin{equation} \label{eq:firsttest}
A_i=\cases{\diag(100,99,\ldots,1) &if $0\le i\le 40$;\cr \Theta_1\cdot
\diag(100,99,\ldots,1)\cdot \Theta_1^\T &if $i>40$,\cr} \end{equation}
where $$\Theta_1=\pmatrix{\cos135^\circ&\sin135^\circ&0\cr
-\sin135^\circ&\cos135^\circ&0\cr 0&0&I\cr}.$$ I.e., the invariant
subspace associated with the largest two eigenvalues of~$A_i$ is
rotated by~$135^\circ$ at time~$t=40$.  A slightly modified version of
the algorithm was also tested, whereby the conjugate gradient
algorithm was reset at $t=40$.  That is, at this time Step~2 was
replaced with
\begin{steps}
\step[$2'$.] Set $H_i=G_i$ and restart the step count.
\end{steps}

The values of $|\rho(p_i)-\rho(\phat)|$, where $\phat$ is the
minimizing value of~$\rho$, resulting from these two experiments are
shown in Figure~\ref{fig:track1raygen}.  As may be seen, both
algorithms track the step in the matrix $A$; however, the reset
algorithm, which ``forgets'' the directional information prior to
$t=40$, has better performance.  Thus in a practical subspace tracking
algorithm it may be desirable to reset the algorithm if there is a
large jump in the value of $|\rho(p_i)-\rho(\phat)|$.  The diagonal
elements of the matrix~$p_i^\T\!A_ip_i$ resulting from
Algorithm~\ref{al:cgtrack} (no~reset) are shown in
Figure~\ref{fig:trk1eigraygen}.  As may be seen, good estimates for
the largest eigenvalues of~$A_i$ are obtained in about 5~iterations
beyond the step at $t=40$.  This compares favorably to the subspace
tracking algorithms tested by \citeasnoun{ComonGolub}, where the
fastest convergence of about 20~iterations is obtained by the
Lanczos algorithm.  It is important to note however, that the two
experiments are different in several important ways, making a direct
comparison difficult.  The experiment of Comon and Golub incorporated
a covariance matrix estimation technique, whereas our matrix $A_i$
changes instantaneously.  Also, Comon and Golub implicitly use the
space $\stiefel(10,2)$, whose dimension is much smaller than that of
the space $\stiefel(100,3)$ which we have selected.

In the previous experiment, the principal invariant subspace was
unchanged and the corresponding eigenvalues were unchanged by the
rotation $\Theta_1$.  To test the algorithm's response to a step
change in the orientation of the principal invariant subspace along
with a step change in its corresponding eigenvalues, the algorithm was
applied to the time varying matrix \begin{equation}
\label{eq:secondtest} A_i=\cases{\diag(100,99,98, 97,96,95, \ldots,1)
&if $0\le i\le 40$;\cr \Theta_2\cdot \diag(100,99,98, 101,102,103,94,
\ldots,1)\cdot \Theta_2^\T &if $i>40$,\cr} \end{equation} where
$\Theta_2=R_{14}(135^\circ)\cdot R_{25}(135^\circ)\cdot
R_{36}(135^\circ)$, and $R_{ij}(\theta)$ is rotation by $\theta$ of the
plane spanned by the vectors $e_i$ and $e_j$.
Figure~\ref{fig:track2raygen} shows the value of
$|\rho(p_i)-\rho(\phat)|$ and Figure~\ref{fig:trk2eigraygen} shows the
estimated eigenvalues.

Finally, we wish to determine the algorithm's performance when
principal invariant subspace changes in one step to a mutually
orthogonal subspace of itself.  This is important because the
generalized Rayleigh quotient has many ($2^k\,{}_nP_k$) critical
points, most of which are saddle points.  If the algorithm has
converged exactly to a minimum point, and a step change is then
introduced which makes this point a saddle point, an exact
implementation of the conjugate gradient algorithm could not adapt to
this change because the gradient is zero at the saddle point.
However, numerical inaccuracies on a finite-precision machine
eventually drive the iterates from the saddle point to the minimum
point.  The algorithm was applied to the time varying matrix
\begin{equation} \label{eq:thirdtest} A_i=\cases{\diag(100,99,98,
97,96,95, \ldots,1) &if $0\le i\le 40$;\cr \diag(97,96,95,100,99,98,
94, \ldots,1) &if $i>40$.\cr} \end{equation}
Figure~\ref{fig:track3raygen} shows the value of
$|\rho(p_i)-\rho(\phat)|$ and Figure~\ref{fig:trk3eigraygen} shows the
estimated eigenvalues.  As predicted, the iterates initially stay near
the old minimum point, which has become a saddle point.  After about
fifteen iterations, numerical inaccuracies drive the iterates away
from the saddle point to the new minimum point.

\vfil\eject  

\begin{figure}[p]
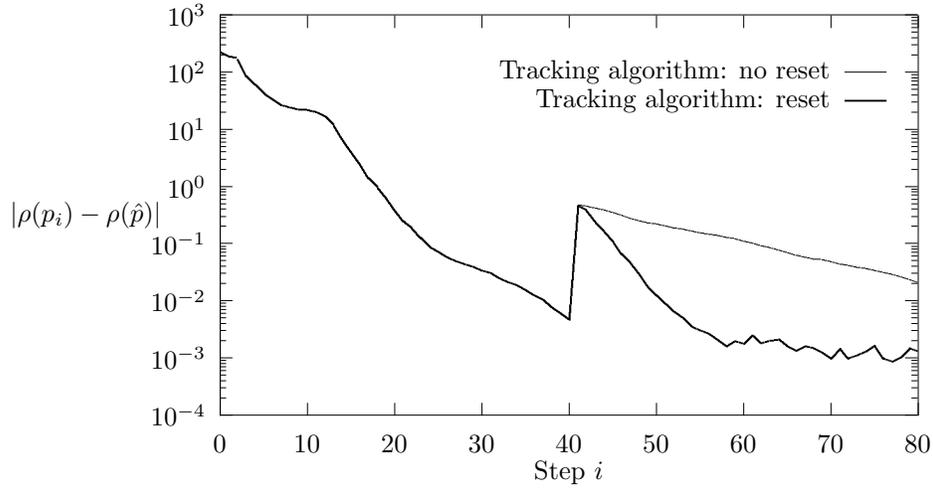

\begin{gnuplot}{20pt}
\input track1-V100,3
\end{gnuplot}
\caption[Convergence of the tracking algorithm: First
test]{Convergence of Algorithm~\ref{al:cgtrack} on~$\stiefel(100,3)$
applied to the matrix $A_i$ (from Eq.~(\ref{eq:firsttest})), which has
a step at $i=40$. The thin line represents values generated with no
reset at $i=40$, and the thicker line represents values generated when
the conjugate gradient algorithm is reset
at~$i=40$.\parfillskip=0pt\par \label{fig:track1raygen}}
\end{figure}

\begin{figure}[p]
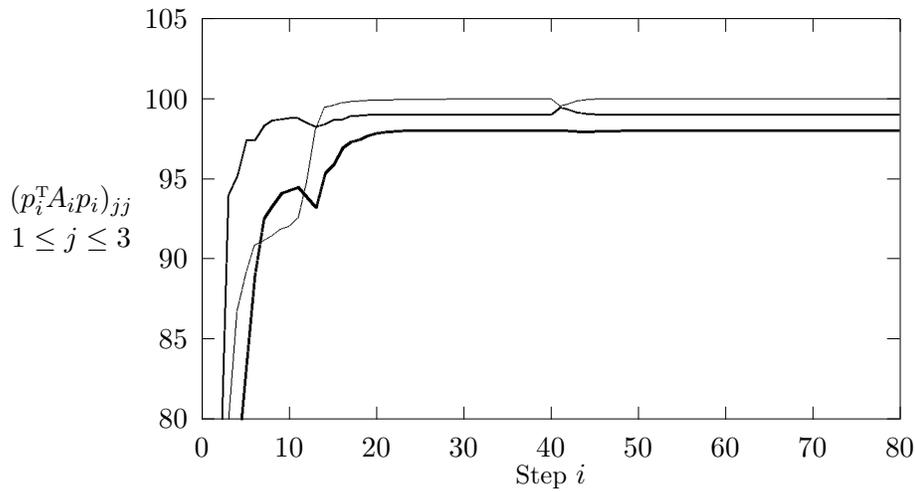

\begin{gnuplot}{20pt}
\input trk1reseig-V100,3
\end{gnuplot}
\caption[Tracking estimated eigenvalues: First test]{The diagonal
elements of $p_i^\T\!A_ip_i$ generated by Algorithm~\ref{al:cgtrack}
on~$\stiefel(100,3)$, where $A_i$ is define by
Eq.~(\ref{eq:firsttest}).  The conjugate gradient algorithm was reset
at $i=40$.\label{fig:trk1eigraygen}}
\end{figure}

\begin{figure}[p]
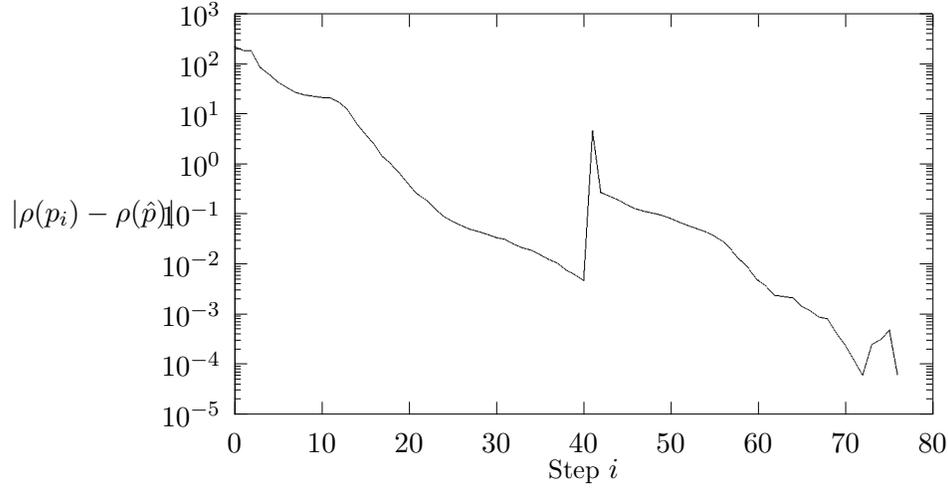

\begin{gnuplot}{20pt}
\input track2-V100,3
\end{gnuplot}
\caption[Convergence of the tracking algorithm: Second
test]{Convergence of Algorithm~\ref{al:cgtrack} on~$\stiefel(100,3)$
applied to the matrix $A_i$ (from Eq.~(\ref{eq:secondtest})), which
has a step at $i=40$.\label{fig:track2raygen}}
\end{figure}

\begin{figure}[p]
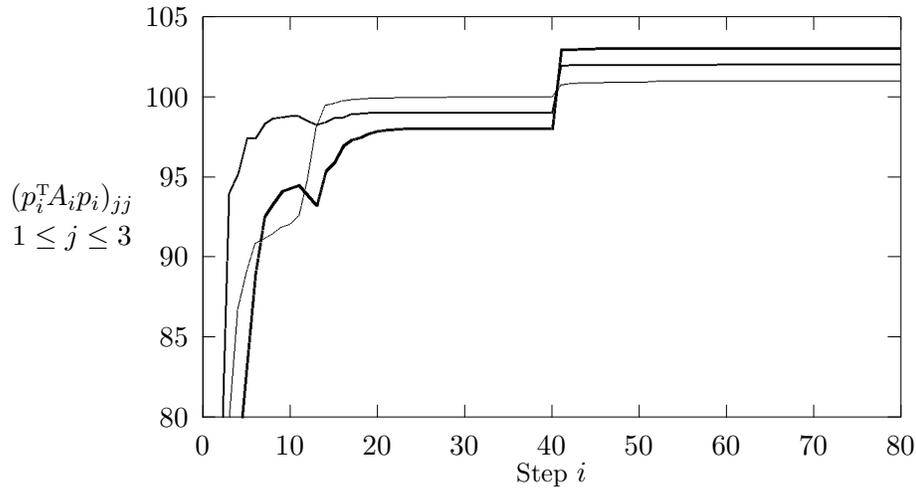

\begin{gnuplot}{20pt}
\input trk2eig-V100,3
\end{gnuplot}
\caption[Tracking estimated eigenvalues: Second test]{The diagonal
elements of $p_i^\T\!A_ip_i$ generated by Algorithm~\ref{al:cgtrack}
on~$\stiefel(100,3)$, where where $A_i$ is define by
Eq.~(\ref{eq:secondtest}).\label{fig:trk2eigraygen}}
\end{figure}

\makeatletter
\if@twoside    
\markboth{\uppercase{\ch@ptern@me\hfill
    \ifnum\c@secnumdepth>\m@ne\@chapabb\ 
      {\numbersize\thechapter}.\fi}}{\ifnum\c@secnumdepth>\m@ne
    \@sectionabb\thinspace{\numbersize\thesection}.\fi\hfill
  \uppercase{Conjugate gradient method for subspace tracking}}
\fi
\makeatother

\begin{figure}[p]
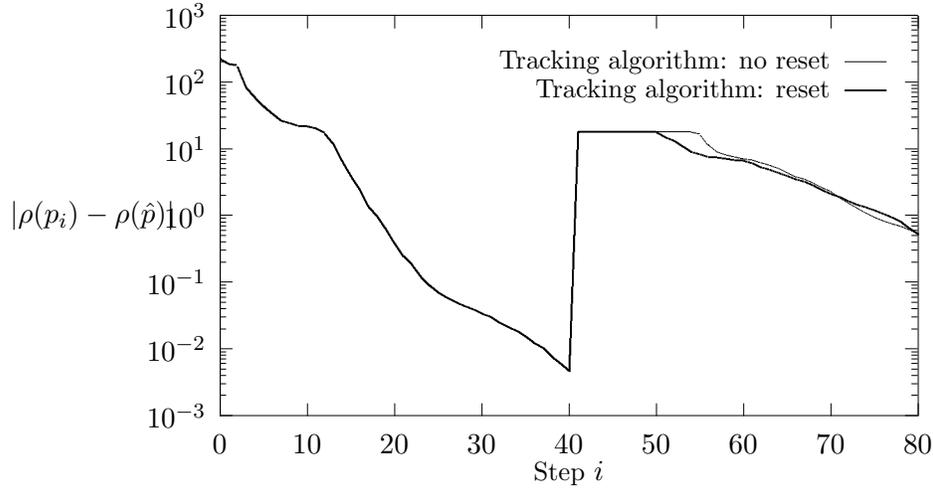

\begin{gnuplot}{20pt}
\input track3-V100,3
\end{gnuplot}
\caption[Convergence of the tracking algorithm: Third
test]{Convergence of Algorithm~\ref{al:cgtrack} on~$\stiefel(100,3)$
applied to the matrix $A_i$ (from Eq.~(\ref{eq:thirdtest})), which has
a step at $i=40$. The results show how the algorithm behaves when a
maximum point becomes a saddle point.\label{fig:track3raygen}}
\end{figure}

\begin{figure}[p]
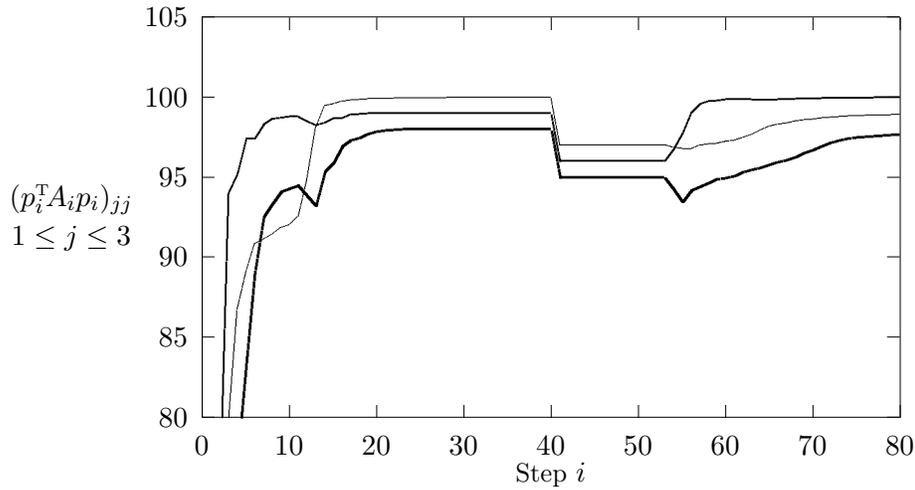

\begin{gnuplot}{20pt}
\input trk3eig-V100,3
\end{gnuplot}
\caption[Tracking estimated eigenvalues: Third test]{The diagonal
elements of $p_i^\T\!A_ip_i$ generated by Algorithm~\ref{al:cgtrack}
on~$\stiefel(100,3)$, where $A_i$ is define by
Eq.~(\ref{eq:thirdtest}). The results show how the eigenvalues
of~$A_i$ are tracked when a maximum point becomes a saddle point.
\label{fig:trk3eigraygen}}
\end{figure}

\chapter{Conclusions}

In this thesis a geometric framework for optimization problems and
their application in adaptive signal processing is established.  Many
approaches to the subspace tracking problem encountered in adaptive
filtering depend upon its formulation as an optimization problem,
namely optimizing a generalized form of the Rayleigh quotient defined
on a set of orthonormal vectors.  However, previous algorithms do not
exploit the natural geometric structure of this constraint manifold.
These algorithms are extrinsically defined in that they depend upon
the choice of an isometric imbedding of the constraint surface in a
higher dimensional Euclidean space.  Furthermore, the algorithms that
use a projected version of the classical conjugate gradient algorithm
on Euclidean space do not account for the curvature of the constraint
surface, and therefore achieve only linear convergence.

There exists a special geometric structure in the type of constraint
surfaces found in the subspace tracking problem.  The geometry of Lie
groups and homogeneous spaces, reviewed in Chapter~\ref{chap:geom},
provides analytic expressions for many fundamental objects of interest
in these spaces, such as geodesics and parallel translation along
geodesics.  While such objects may be computationally unfeasible for
application to general constrained optimization problems, there is an
important class of manifolds which have sufficient structure to yield
potentially practical algorithms.

The subspace tracking problem can be expressed as a gradient flow on a
Lie group or homogeneous space.  This idea, discussed in
Chapter~\ref{chap:grad}, covers several examples of gradient flows on
Lie groups and homogeneous spaces.  All of these gradient flows solve
the eigenvalue or singular value problem of numerical linear algebra.
The gradient flows considered demonstrate how understanding the
differential geometric structure of a problem in numerical linear
algebra can illuminate algorithms used to solve that problem.
Specifically, the gradient flow of the function $\tr\Theta^\T Q\Theta
N$ defined on the special orthogonal group $\SO(n)$ is reviewed.  This
flow yields an ordered eigenvalue decomposition of the matrix~$Q$.
The gradient flow of the generalized Rayleigh quotient $\tr p^\T\!ApN$
defined on the Stiefel manifold $\stiefel(n,k)$ is analyzed and its
stationary points classified.  Finally the gradient flow of the
function $\tr\Sigma^\T\!N$ defined on the set of matrices with fixed
singular values is analyzed.  This gradient flow and a related
gradient flow on the homogeneous space $\svd$ yield the singular value
decomposition of an arbitrary matrix.  A numerical experiment
demonstrating this gradient flow is provided and it is shown that the
experimental convergence rates are close to the predicted convergence
rates.

Because using gradient flows to solve problems in numerical linear
algebra is computationally impractical, the theory of large step
optimization methods on Riemannian manifolds is developed in
Chapter~\ref{chap:orm}.  The first method analyzed---the method of
steepest descent on a Riemannian manifold---is already well-known.  A
thorough treatment of this algorithm employing techniques from
Riemannian geometry is provided to fix ideas for the development of
improved methods.  A proof of linear convergence is given.  Next, a
version of Newton's method on Riemannian manifolds is developed and
analyzed.  It is shown that quadratic convergence may be obtained, and
that this method inherits several properties from the classical
version of Newton's method on a flat space.  Finally, the conjugate
gradient method on Riemannian manifolds is developed and analyzed, and
a proof of superlinear convergence is provided.  Several examples that
demonstrate the predicted convergence rates are given throughout this
chapter.  The Rayleigh quotient on the sphere is optimized using all
three algorithms.  It is shown that the Riemannian version of Newton's
method applied to this function is efficiently approximated by the
Rayleigh quotient iteration.  The conjugate gradient algorithm applied
to the Rayleigh quotient on the sphere yields a new algorithm for
computing the eigenvectors corresponding to the extreme eigenvalues of
a symmetric matrix.  This superlinearly convergent algorithm requires
two matrix-vector multiplications and $O(n)$ operations per iteration.

In Chapter~\ref{chap:af} these ideas are brought to bear on the
subspace tracking problem of adaptive filtering.  The subspace
tracking problem is reviewed and it is shown how this problem may be
viewed as an optimization problem on a Stiefel manifold.  The
Riemannian version of the conjugate gradient method is applied to the
generalized Rayleigh quotient.  By exploiting the homogeneous space
structure of the Stiefel manifold, an efficient superlinearly
convergent algorithm for computing the eigenvectors corresponding to
the $k$ extreme eigenvalues of a symmetric matrix is developed.  This
algorithm requires $O(k)$ matrix-vector multiplications per iteration
and $O(nk^2)$ operations.  This algorithm has the advantage of
maintaining orthonormality of the estimated eigenvectors at every
step.  However, it is important to note that the algorithm is only
efficient if~$2k\le n$. The results of a numerical experiment of this
algorithm which confirm the predicted convergence properties are
shown.  In the experiment, the conjugate gradient algorithm
on~$\stiefel(100,3)$, a manifold of dimension~$294$, converged to
machine accuracy within 50 steps.  Good estimates of the eigenvalues
are obtained in less than 25 steps.  A similar algorithm for computing
the largest left singular vectors corresponding to the extreme
singular values of an arbitrary matrix is discussed.

A new algorithm for subspace tracking based upon this conjugate
gradient algorithm is given.  To test the algorithm's tracking
properties, the algorithm is used to track several time varying
symmetric matrices, each of which has a discontinuous step of some
type.  The following examples are considered: the principal invariant
subspace rotating in its own plane with fixed eigenvalues, rotating
out of its plane with changing eigenvalues, and rotating
instantaneously to an orthogonal plane. Two versions of the algorithm
were tested: one version that reset the conjugate gradient algorithm
at the step, and one version that did not. In the first test, the
reset version reconverged to machine accuracy in less than 20 steps,
and provided accurate estimates of the eigenvalues in less than 10
steps. In the second test, the algorithm reconverged in 30 steps, and
provided accurate estimates of the eigenvalues in 5 iterations.  The
third and final experiment demonstrates how the algorithm behaves when
it has converged to a maximimum point that suddenly becomes a saddle
point.  The algorithm stayed close to the saddle point for about 15
iterations.

This thesis has only considered a few Riemannian manifolds which are
found in certain types of applications and have sufficient structure
to yield efficient algorithms. There are other useful examples which
have not yet been mentioned.  For example, many applications do not
require the eigenvectors and corresponding eigenvalues of principal
invariant subspace, but only an arbitrary orthonormal basis for this
subspace.  In this context, an optimization problem posed on the
Grassmann manifold $\grassmann(n,k)$ of $k\hyphen$planes in~$\R^n$
would be appropriate.  This manifold possesses the structure of a
symmetric space and therefore geodesics and parallel translation along
geodesics may be computed with matrix exponentiation.  Furthermore,
the tangent plane of~$\grassmann(n,k)$ at the origin as a vector
subspace of the Lie algebra of its Lie transformation group contains
large zero blocks that could be exploited to yield an efficient
algorithm.  This thesis also considered only real-valued cases; the
unitary version of these algorithms that would be necessary for many
signal processing contexts have not been explored.

The subspace tracking methods presented in this thesis have not been
applied to particular examples in adaptive filtering, so there is an
opportunity to explore the extent of their usefulness in this area.
There is a broad range of adaptive filtering applications which have
diverse computational requirements, dimensionality, and assumptions
about the signal properties and background noise.  The strengths and
weaknesses of subspace tracking techniques must be evaluated in the
context of the application's requirements.

\let\bibliographysize=\footnotesize  


\end{document}